\newcommand{\lyxmathsym}[1]{\ifmmode\begingroup\def\b@ld{bold}
	\text{\ifx\math@version\b@ld\bfseries\fi#1}\endgroup\else#1\fi}
\numberwithin{equation}{section}
\numberwithin{figure}{section}
\theoremstyle{plain}
\newtheorem{thm}{\protect\theoremname}[section]
\theoremstyle{plain}
\newtheorem{cor}[thm]{\protect\corollaryname}
\theoremstyle{definition}
\newtheorem{defn}[thm]{\protect\definitionname}
\theoremstyle{plain}
\newtheorem{prop}[thm]{\protect\propositionname}
\theoremstyle{remark}
\newtheorem{rem}[thm]{\protect\remarkname}
\theoremstyle{plain}
\newtheorem{lem}[thm]{\protect\lemmaname}
\providecommand{\definitionname}{Definition}
\providecommand{\lemmaname}{Lemma}
\providecommand{\propositionname}{Proposition}
\providecommand{\corollaryname}{Corollary}
\providecommand{\remarkname}{Remark}
\providecommand{\theoremname}{Theorem}
\def\bR {\mathbb{R}}
\def\cD {\mathcal{D}}
\def\cY {\mathcal{Y}}
\def\grad {{\nabla}}
\def\la {\langle}
\def\ra {\rangle}
\newcommand{\bs}[1]{\boldsymbol{#1}}
\renewcommand{\ker}{\operatorname{ker}}
\newcommand{\eee}{\mathrm e}
\begin{document}
	\title[Strichartz estimates for Klein-Gordon]{Strichartz estimates for Klein-Gordon Equations \\with Moving Potentials}
	\author{Gong Chen and Jacek Jendrej}
	\begin{abstract}
		We study linear Klein-Gordon equations with moving potentials motivated by the stability analysis of traveling waves and multi-solitons.
		In this paper, Strichartz estimates, local energy decay and the scattering theory for these models are established.	 The results and estimates obtained in this paper will be used to study the interaction of solitons and the stability
		of multi-solitons of nonlinear Klein-Gordon equations.
	\end{abstract}
	
	\address[Chen]{School of Mathematics, Georgia Institute of Technology, Atlanta, GA 30332-0160, USA }
	\email{gc@math.gatech.edu}
	\address[Jendrej]{CNRS \& LAGA, Universit\'e Sorbonne Paris Nord, UMR 7539, 99 av J.-B.~Cl\'ement, 93430 Villetaneuse, France }
	\email{jendrej@math.univ-paris13.fr}
	\date{\today}
	
	\maketitle
	\setcounter{tocdepth}{1}
	
	\tableofcontents
	\section{Introduction}
	
	
	We consider linear Klein--Gordon equations
	with a finite number of moving potentials,
	\begin{equation}
	\partial_{t}^{2}u=\Delta u-u-\sum_{j=1}^{J}(V_{j})_{\beta_{j}(t)}(\cdot-y_{j}(t))u+F,
	\end{equation}see \eqref{eq:maineq} for the detailed setting. These models are motivated
	by the study of the stability analysis of traveling waves,  and multi-solitons which which are referred to  superpositions of a finite number
of Lorentz-transformed solitons, moving with distinct speeds. in the energy space.  In this paper, we establish local energy decay estimate and Strichartz estimates for these models. In our forthcoming work, Chen-Jendrej \cite{CJ2}, these estimates will be used to study nonlinear multi-soliton problems.
	\subsection{Motivation}
	Consider the nonlinear Klein-Gordon equation 
	\begin{equation}
	\partial_{t}^{2}\psi-\Delta \psi+\psi-\psi^{p}=0,\,\left(t,x\right)\in\mathbb{R}^{1+d}.\label{eq:nkg}
	\end{equation}
	Eliminating the time dependence, one can find stationary solutions  to the equation above which solve
	\begin{equation}
	-\Delta Q+Q-Q^{p}=0\label{eq:nQ}
	\end{equation}
	and decay exponentially.  In particular, there exists a unique radial positive ground
	state with the least energy
	\[
	E(Q):=\int_{\mathbb{R}^d}\frac{|\nabla Q|^2 +Q^2}{2}-\frac{Q^{p+1}}{p+1}\,dx
	\] among all non-zero solutions to the elliptic problem \eqref{eq:nQ}. We refer to Nakanishi-Schlag \cite{NSch,NSch1,NSch3} for more details.

	
	The nonlinear Klein-Gordon equation \eqref{eq:nkg} is a wave type equation, so one indispensable
	tool to study it is the Lorentz boost. Let $\beta\in\mathbb{R}^{d}$, $|\beta|<1$, be a velocity vector.
	For a function $\phi:\mathbb{R}^{d}\to\mathbb{R}^{d}$, the Lorentz
	boost of $\phi$ with respect to $\beta$ is given by
	\[
	\phi_{\beta}(x):=\phi(\Lambda_{\beta}x),\quad\Lambda_{\beta}x:=x+(\gamma-1)\frac{(\beta\cdot x)\beta}{|\beta|^{2}},\quad\gamma:=\frac{1}{\sqrt{1-|\beta|^{2}}}.
	\]
	With these notations, the Lorentz transformation is given by
	\[
	(t',x')=\big(\gamma(t-\beta\cdot x),\ \Lambda_{\beta}x-\gamma\beta t\big)=\big(\gamma(t-\beta\cdot x),\ \Lambda_{\beta}(x-\beta t)\big).
	\]
	It is crucial that for each $\beta\in\mathbb{R}^{d},\,\left|\beta\right|\in[0,1)$, if
	$u$ is a solution of \eqref{eq:nkg} then $u_{\beta}\left(x-\beta t\right)$
	is also a solution.
	

	
	Applying Lorentz transforms and the translational symmetry, we
	can obtain a family of traveling waves $Q_{\beta}\left(x-\beta t+x_{0}\right)$
	to \eqref{eq:nkg} from the stationary solution $Q$.  Using these traveling waves as building blocks, one can construct a
	pure multi-soliton to \eqref{eq:nkg} in the following sense:
	\begin{equation}
	\psi\rightarrow\sum_{j=1}^{J}\sigma_j Q_{\beta_{j}}\left(x-\beta_{j}t+y_j\right)\,\,\text{as}\,\,t\rightarrow\infty,\,\sigma_j\in\{\pm 1\},\label{eq:asyMuti}
	\end{equation}
	see C\^ote-Mu\~noz \cite{CMu}. Similar pure multi-soliton  can be constructed using other type of stationary solutions to \eqref{eq:nQ}, see Bellazzini-Ghimenti-Le Coz \cite{BGL} and C\^ote-Martel \cite{CMart}.
	To understand the long-time dynamics and soliton resolution of \eqref{eq:nkg},
	two crucial problems are to study the asymptotic stability of the
	multi-soliton structure
	\begin{equation}
	R\left(t,x\right)=\sum_{j=1}^{J}\sigma_j Q_{\beta_{j}}\left(x-\beta_{j}t+y_j\right)\label{eq:multi}
	\end{equation}
	and to give a classification of pure multi-solitons, i.e.,
	solutions satisfying \eqref{eq:asyMuti}. The fundamental tool to analyze
	the these problems is  Strichartz estimates for the underlining
	linear models.

	Roughly speaking, Strichartz estimates are dispersive and smoothing phenomena 
	on the $L_{x}^{p}$ space scale of linear dispersive flows. 
	Such linear estimates are
	of crucial importance in the analysis of nonlinear problems. On short
	time scales, dispersive bounds are a useful stepping stone toward
	a nonlinear local existence theory, and on long time scales they enable
	analysis of asymptotic properties and scattering behavior, see for
	example Muscalu-Schlag \cite{MS} and Tao \cite{Tao}. Also see Keel-Tao \cite{KT} for more
	details on the subject background and the historical development.


	Focusing on the stability analysis, starting from the one-soliton setting, superficially,
	we linearize the equation \eqref{eq:nkg} around $Q_{\beta}\left(x-\beta t\right)$
	with the decomposition $\psi=u+Q_{\beta}\left(x-\beta t\right)$ and
	obtain the equation for the error term
	\begin{equation}
	\partial_{t}^{2}u=\Delta u-u-V_{\beta}(\cdot-\beta t)u+F\left(u,Q_{\beta}\right)\label{eq:exlin}
	\end{equation}
	where $V=pQ_{\beta}^{p-1}$ and $F$ denotes higher order nonlinear
	terms in $u$. Due the symmetries of the equation, the linearized operator above naturally has some zero modes. To analyze the asymptotic behavior of $u$, we have
	to make the error term $u$ satisfy some orthogonality conditions.
	Then one has to invoke the modulation method and make the Lorentz
	parameter $\beta$ and an additional shift time-dependent. Therefore,
	the underlining model is given by
	\begin{equation}
	\partial_{t}^{2}u=\Delta u-V{}_{\beta(t)}(\cdot-y(t))u+F.\label{eq:maineq-2}
	\end{equation}
	Now the parameters $\dot{y}(t)-\beta(t)$ and $\dot{\beta}(t)$ satisfy the modulation equations which are quadratic in terms of $h$. 
	We refer to Nakanishi-Schlag \cite{NSch3} for the radial case and \cite{NSch}\footnote{In Nakanishi-Schlag \cite{NSch}, a Lorentz transform is applied from the beginning so that the stability analysis is reduced to the case that $\beta(0)=0$.} for the non-radial setting for the asymptotic stability and the dynamics around $Q$. 
	
	
	To study the multi-soliton solutions given by \eqref{eq:multi}, we
	need to analyze the multi-potential version of \eqref{eq:maineq-2}
	above. To formulate our problem, we start with the assumptions on potentials and trajectories.
	
	\smallskip
	
	\noindent\textbf{Assumption on potentials.} 
	Let $V_{j}$ be a smooth exponentially decaying
	potential for $j\in\{1,2,\ldots,J\}$, such that
	\begin{equation}
	L_{j}:=-\Delta+V_{j}+1\label{eq:schop}
	\end{equation}
	has $K_{j}$ strictly negative eigenvalues $-\nu_{j,k}^{2}$ with
	$\nu_{j,k}>0$ (for $k=1,\ldots,K_{j}$) and $M_{j}=\dim\ker L_{j}$.
	Let $\left(\phi_{j,k}\right)_{k=1,\ldots,K_{j}}$ and $\left(\phi_{j,m}^{0}\right)_{m=1,\ldots,M_{j}}$
	be orthonormal (in $L^{2}$) families such that
	\begin{equation}
	L_{j}\phi_{j,k}=-\nu_{j,k}^{2}\phi_{k},\,\,L_{j}\phi_{j,m}^{0}=0.\label{eq:modescalar}
	\end{equation}
	We also
	assume that $L_{j}$ has no resonances at $1$.\footnote{ Recall that $\psi$
		is a resonance of $L_{j}$ at $1$ if it is a distributional solution
		of the equation $L_{j}\psi=\psi$ which belongs to the space $L^{2}\left(\left\langle x\right\rangle ^{-\sigma}dx\right):=\left\{ f:\,\left\langle x\right\rangle ^{-\sigma}f\in L^{2}\right\} $
		for any $\sigma>\frac{1}{2}$, but not for $\sigma=\frac{1}{2}.$} In particular, the potential coming from the linearization around the ground state $Q$ given by \eqref{eq:nQ} satisfies the assumption above with $K_j=1$ and $M_j=d$. 
	
	
	\noindent\textbf{Assumption on trajectories.} 
	Let $y_{j}(t)$ be positions of the potentials. Denote $\beta_{j}(t)$
	the Lorentz factor for the potential $V_j$. We write $\boldsymbol{\beta}(t)=(\beta_{1}(t),\ldots,\beta_{J}(t))\in\mathbb{R}^{3J}$,
	$\boldsymbol{y}(t)=(y_{1}(t),\ldots,y_{J}(t))\in\mathbb{R}^{3J}$.
	Motivated by the modulation equations from the stability analysis, we impose that $y_{j}\left(0\right)=y_j,$
	$\beta_{j}\left(0\right)=\beta_{j}$ with $|\beta_j|<1$, $\beta_j\neq \beta_k$ for $k\neq j$ and
	\begin{equation}
	\left\Vert \beta_{j}'\left(t\right)\right\Vert _{L_{t}^{1}\bigcap L_{t}^{\infty}}+\left\Vert y'_{j}\left(t\right)-\beta_{j}\left(t\right)\right\Vert _{L_{t}^{1}\bigcap L_{t}^{\infty}}\lesssim\delta\ll1.\label{eq:maincond}
	\end{equation}In the application of the  nonlinear analysis of multi-soliton solutions, these conditions are consistent with the modulation equations which are quadratic in terms of radiation terms and the view of the endpoint Strichartz estimate.
	
	The main focus of
	this paper is the following linear Klein-Gordon equation
	\begin{equation}
	\partial_{t}^{2}u=\Delta u-u-\sum_{j=1}^{J}(V_{j})_{\beta_{j}(t)}(\cdot-y_{j}(t))u+F\label{eq:maineq}
	\end{equation}
	under assumptions \eqref{eq:maincond}. 	Note that the Lorentz transformation is applied to the potentials
	$V_{j}$, according to their the Lorentz factors $\beta_{j}\left(t\right)$. 	\footnote{It is possible to allow slow rotations applied to potentials. Since we will not apply this in the nonlinear problem, we do not pursue this point in this paper.}

	The time-dependent potential in the homogeneous part of equation \eqref{eq:maineq}
	is a generalized version of the charge transfer Hamiltonian where
	the trajectories of potentials are straight lines, see for example Rodnianski-Schlag-Soffer
	\cite{RSS} and Chen \cite{C1}.
	
	\subsection{Basic setting and main results}
	In this subsection, we introduce the basic settings, functional spaces and  present main results of our paper. 
	\subsubsection{Exponential dichotomy}
	
	Now we introduce notations and recall
	exponential dichotomy for the Klein-Gordon equation from Chen-Jendrej
	\cite{CJ}. For an alternative approach, we refer to Metcalfe-Sterbenz-Tataru \cite{MST}.
	
	Consider the homogeneous version of our main equation \eqref{eq:maineq}
	\begin{equation}\label{eq:homomaineq}
	\partial_{t}^{2}u=\Delta u-u-\sum_{j=1}^{J}(V_{j})_{\beta_{j}(t)}(\cdot-y_{j}(t))u. 
	\end{equation}
	First of all, we can write the equation above as a dynamical system.
	Denoting\[\boldsymbol{u}\left(t\right)=\left(\begin{array}{c}
	u\\
	u_{t}
	\end{array}\right)\] we can write
	\begin{equation}
	\partial_{t}\boldsymbol{u}\left(t\right)=\mathcal{J}H\left(t\right)\boldsymbol{u}\left(t\right),\label{eq:dy1}
	\end{equation}
	where
	\[
	\mathcal{J}:=\left(\begin{array}{cc}
	0 & 1\\
	-1 & 0
	\end{array}\right),\,\ \ H\left(t\right):=\left(\begin{array}{cc}
	-\Delta+1+\sum_{j=1}^{J}(V_{j})_{\beta_{j}(t)}(\cdot-y_{j}(t)) & 0\\
	0 & 1
	\end{array}\right).
	\]
	Denote the evolution operator of the above system as $\mathcal{T}\left(t,s\right)$.
	It introduces a continuous forward dynamical system in $\mathcal{H}=H^{1}\times L^{2}$.
	For a general element in $\boldsymbol{v}\in \mathcal{H}$, $\boldsymbol{v}_{1}$
	denotes the first row and $\boldsymbol{v}_{2}$ denotes the second
	row of $\boldsymbol{v}$ respectively.
	
	From the spectral assumptions on each potential, \eqref{eq:modescalar}, following C\^ote-Martel \cite{CMart}, C\^ote-Martel \cite[Lemma 1]{CMu} and Chen-Jendrej \cite{CJ},  we now give explicit formulas for the stable, unstable
	and (iterated) null components of the flow.
	First of all, we define
	\begin{align}
		\cY_{j,k, \beta}^-(x) &:= \eee^{\gamma\nu_{j,k}\beta\cdot x}(\phi_{j,k}, -\gamma\beta\cdot\grad \phi_{j,k}-\gamma\nu_{j,k}\phi_{j,k})_\beta(x), \label{eq:kg-Ym-def} \\
		\cY_{j,k, \beta}^+(x) &:= \eee^{-\gamma\nu_{j,k}\beta\cdot x}
		(\phi_{j,k}, -\gamma\beta\cdot\grad \phi_{j,k}+\gamma\nu_{j,k}\phi_{j,k})_\beta(x),\label{eq:kg-Yp-def} 
		\\ \cY_{j,m, \beta}^0(x) &:= (\phi_{j,m}^0, -\gamma\beta\cdot\grad \phi_{j,m}^0)_\beta(x),\label{eq:kg-Y0-def} 
		\\ \cY_{j,m, \beta}^1(x) &:= ({-}(\beta \cdot x)\phi_{j,m}^0, \gamma \phi_{j,m}^0 + \gamma(\beta\cdot x)(\beta\cdot \grad \phi_{j,m}^0))_\beta(x),\label{eq:kg-Y1-def} 
		\\ \alpha_{j,k, \beta}^-(x) &:= \mathcal{J}\cY_{k, \beta}^+(x) = \eee^{-\gamma\nu_{j,k}\beta\cdot x}({-}\gamma\beta\cdot\grad \phi_{j,k} + \gamma\nu_{j,k}\phi_{j,k}, -\phi_{j,k})_\beta(x), \label{eq:kg-am-def} \\
		\alpha_{j,k, \beta}^+(x) &:= \mathcal{J}\cY_{k, \beta}^-(x) = \eee^{\gamma\nu_{j,k}\beta\cdot x}({-}\gamma\beta\cdot\grad \phi_{j,k} - \gamma\nu_{j,k}\phi_{j,k}, -\phi_{j,k})_\beta(x), \label{eq:kg-ap-def} \\
		\alpha_{j,m, \beta}^0(x) &:= \mathcal{J}\cY_{m, \beta}^0(x) = ({-}\gamma \beta\cdot\grad \phi_{j,m}^0, -\phi_{j,m}^0)_\beta(x),\label{eq:kg-a0-def} 
		\\ \alpha_{j,m, \beta}^1(x) &:= \mathcal{J}\cY_{m, \beta}^1(x) = (\gamma \phi_{j,m}^0 + \gamma(\beta\cdot x)(\beta\cdot \grad \phi_{j,m}^0), (\beta \cdot x)\phi_{j,m}^0)_\beta(x).\label{eq:kg-a1-def} 
	\end{align}
	Then we have the following important modes for the moving-potential problem \eqref{eq:dy1} above:
	\begin{align*}
		\cY_{j, k}^-(t) &:= \cY_{j,k, \beta_j(t)}^-(\cdot - y_j(t)), \\
		\cY_{j, k}^+(t) &:= \cY_{j,k, \beta_j(t)}^+(\cdot - y_j(t)), \\
		\cY_{j, m}^0(t) &:= \cY_{j,m, \beta_j(t)}^0(\cdot - y_j(t)), \\
		\cY_{j, m}^1(t) &:= \cY_{j,m, \beta_j(t)}^1(\cdot - y_j(t)), \\
		\alpha_{j, k}^-(t) &:= \alpha_{j,k, \beta_j(t)}^-(\cdot - y_j(t)), \\
		\alpha_{j, k}^+(t) &:= \alpha_{j,k, \beta_j(t)}^+(\cdot - y_j(t)), \\
		\alpha_{j, m}^0(t) &:= \alpha_{j,m, \beta_j(t)}^0(\cdot - y_j(t)), \\
		\alpha_{j, m}^1(t) &:= \alpha_{j,m, \beta_j(t)}^1(\cdot - y_j(t)),
	\end{align*}
	where $k \in \{1, \ldots, K_j\}$ and $m \in \{1, \ldots, M_j\}$.
	
	In Chen-Jendrej \cite{CJ}, we gave a sufficient condition for the
	existence of an exponential dichotomy for a general linear dynamical
	system (not necessarily invertible) in a Banach space. In particular,
	our analysis applied to the Klein--Gordon equation with a finite
	number of potentials whose centers move at sublight speeds with small
	accelerations. We obtain the stable, unstable  and centre spaces of the linear
	flow \eqref{eq:dy1}.  Morally, this dichotomy plays the role of the spectral decomposition in the standard case with a stationary potential via the spectral theory.  We now recall the main result from Chen-Jendrej \cite{CJ} applied to our current setting.
	\begin{prop}
		\label{prop:expdicho}Let $\max_j |\beta_j(t)|\leq v<1$, $\nu_{e}:=\min\{\nu_{j,k}\}$ and
		$K:=\sum_{j=1}^{J}K_{j}$. Under the assumptions on the trajectories \eqref{eq:maincond}, suppose there exists $\eta>0$ such that
		for $t$ large enough,
		\[
		|y_{j}(t)-y_{l}(t)|\geq\frac{1}{\eta}\qquad\text{for all }j\neq l.
		\]
		Consider the semigroup generated $T\left(\tau,t\right)$ associated
		with the linear flow in $\mathcal{H}:=H^{1}\times L^{2}$. Then there exist a subspace $X_{\text{u}}\left(t\right)$
		of dimension $K$, a subspace $X_{\text{cs}}\left(t\right)$
		of codimension $K$, a subspace $X_{\text{s}}\left(t\right)$
		of dimension $K$ and a subspace $X_{\text{c}}(t)$ such that $X_{\text{s}}\left(t\right)\oplus X_{\text{c}}(t)=X_{\text{cs}}\left(t\right)$,
		$X_{\text{cs}}\left(t\right)\varoplus X_{\text{u}}\left(t\right)=\mathcal{H}$
		and the followings are true:
		\begin{enumerate}
			\item If $\hm{h}_{t}\in X_{\text{s}}\left(t\right)$ then $\limsup_{\tau\rightarrow\infty}\frac{1}{\tau-t}\log\left\Vert \mathcal{T}\left(\tau,t\right)\hm{h}_{t}\right\Vert _{\mathcal{H}}\leq-\nu_{\epsilon}\sqrt{1-v^{2}}$.
			\item If $\hm{h}_{t}\in X_{\text{c}}(t)$ and  $\hm{h}_{t}\neq\hm{0}$, then $\lim_{\tau\rightarrow\infty}\frac{1}{\tau-t}\log\left\Vert \mathcal{T}\left(\tau,t\right)\hm{h}_{t}\right\Vert _{\mathcal{H}}=0$.
			\item If $\hm{h}_{t}\in X_{u}\left(t\right)$  and  $\hm{h}_{t}\neq\hm{0}$, then $\lim\inf_{\tau\rightarrow\infty}\frac{1}{\tau-t}\log\left\Vert \mathcal{T}\left(\tau,t\right)\hm{h}_{t}\right\Vert _{\mathcal{H}}\geq\nu_{\epsilon}\sqrt{1-v^{2}}.$
		\end{enumerate}
		Moreover, there exist projections $\pi_{\text{cs}}(t):\mathcal{H}\to X_{\text{cs}}(t)$
		and $\pi_{\text{u}}(t):\mathcal{H}\to X_{\text{u}}(t)$ such that
		\begin{enumerate}
			\item these projections commute with the flow
			\[
			\mathcal{T}(\tau,t)\circ\pi_{\text{cs }}(t)=\pi_{c\text{s}}\left(\tau\right)\circ\mathcal{T}(\tau,t)
			\]
			and
			\[
			\mathcal{T}(\tau,t)\circ\pi_{\text{u}}(t)=\pi_{\text{u}}(\tau)\circ\mathcal{T}(\tau,t),
			\]
			\item there exists a constant  $C$ such that
			\[
			\|\pi_{\text{cs}}(t)\|_{\mathscr{L}(\mathcal{H})}+\|\pi_{\text{u}}(t)\|_{\mathscr{L}(\mathcal{H})}\leq C.
			\]
		\end{enumerate}
		One can also find projections $\pi_{\text{s}}\left(t\right):\mathcal{H}\rightarrow X_{\text{s}}\left(t\right)$,
		$\pi_{\text{c}}\left(t\right):\mathcal{H}\rightarrow X_{\text{c}}\left(t\right)$
		such that
		\[
		\mathcal{T}(\tau,t)\circ\pi_{\text{s}}(t)=\pi_{\text{s}}(\tau)\circ\mathcal{T}(\tau,t),\ \|\pi_{\text{s}}(t)\|_{\mathscr{L}(\mathcal{H})}\leq C
		\]
		and
		\[
		\mathcal{T}(\tau,t)\circ\pi_{\text{c}}(t)=\pi_{\text{c}}(\tau)\circ\mathcal{T}(\tau,t),\ \|\pi_{\text{c}}(t)\|_{\mathscr{L}(\mathcal{H})}\leq C.
		\]
		
	\end{prop}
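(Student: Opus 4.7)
The plan is to reduce the statement to the abstract exponential dichotomy theorem of Chen-Jendrej \cite{CJ}, which, for a non-autonomous linear system whose frozen-in-time problem admits a uniform spectral gap and whose parameters vary slowly in an $L^1_t$ sense, produces invariant subspaces with the claimed exponential rates. Accordingly, the work splits into three steps: identify the spectral data of the frozen problem, verify the admissibility hypotheses on the trajectories $(\bs{\beta}(t),\bs{y}(t))$, and then translate the abstract output into the form stated.

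For the frozen problem at fixed $(\bs{\beta},\bs{y})$, a direct computation using the conjugation properties of the Lorentz boost shows that the vectors $\cY_{j,k,\beta_j}^{\pm}$ and $\cY_{j,m,\beta_j}^{0,1}$ defined in \eqref{eq:kg-Ym-def}--\eqref{eq:kg-Y1-def} are, respectively, stable/unstable eigenvectors of $\cJ H$ together with a Jordan chain of length two in the kernel, for each potential $V_j$. Because each $V_j$ decays exponentially, when $|y_j-y_l|\geq 1/\eta$ the multi-potential operator is an exponentially-small perturbation of the direct sum of the single-potential operators, yielding $\dim X_u=\dim X_s=K=\sum_j K_j$ and a uniform spectral gap; the no-resonance-at-$1$ hypothesis rules out embedded eigenvalues on the continuous spectrum, which would otherwise obstruct the dichotomy. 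The particular rate $\nu_e\sqrt{1-v^2}$ appearing in items (1) and (3) encodes the growth of the boosted modes in the lab-frame energy norm $\cH$, where the gain from the Lorentz factor is partially offset by the spatial translation of the modes.

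For the time-dependent problem, condition \eqref{eq:maincond} is precisely the slow-variation hypothesis required in \cite{CJ}: the $L^1_t$ bound on $\beta_j'(t)$ controls how fast the instantaneous spectral decomposition evolves, while the bound on $y_j'(t)-\beta_j(t)$ measures the deviation of the true trajectory from a rigid translation at instantaneous velocity. Applying the abstract theorem then yields the subspaces $X_u(t)$, $X_{cs}(t)$ and the uniformly bounded projections $\pi_u(t)$, $\pi_{cs}(t)$ intertwining with $\cT(\tau,t)$; splitting $X_{cs}(t)$ further by flowing forward the stable and center components separately produces $X_s(t)$, $X_c(t)$ and the corresponding projections $\pi_s(t)$, $\pi_c(t)$. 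The main obstacle is the uniform-in-time identification of the instantaneous stable/unstable subspaces for the coupled multi-potential operator and the verification that \eqref{eq:maincond} is strong enough to close the fixed-point scheme of \cite{CJ}; once these are in place the conclusion follows by direct invocation of the abstract theorem.
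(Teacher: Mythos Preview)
Your proposal is correct and aligns with the paper's treatment: the paper does not prove Proposition~\ref{prop:expdicho} from scratch but simply recalls it as the main result of Chen--Jendrej \cite{CJ} specialized to the Klein--Gordon setting with moving potentials, exactly as you outline. Your sketch of the verification of the abstract hypotheses (frozen spectral data via the explicit modes \eqref{eq:kg-Ym-def}--\eqref{eq:kg-Y1-def}, exponential separation of potentials, and the slow-variation condition \eqref{eq:maincond}) matches the content that \cite{CJ} carries out in detail.
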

	
	\begin{defn}
		We will call $X_{\text{s}}\left(t\right)$ the unstable space, $X_{\text{cs}}\left(t\right)$
		the centre-stable space, $X_{\text{s}}\left(t\right)$ the stable
		space and $X_{\text{c}}\left(t\right)$ the centre space.
	\end{defn}
	To obtain dispersive estimates, naturally, one has to restrict flows onto the centre-stable space.  In this subspace, one needs to further remove  null components to avoid the polynomial growth.
	\begin{defn}\label{def:zeroproj}
		We define the projection $\pi_0(t)$ as the the projection onto the subspace spanned by $\cY_{j, m}^0(t)$ and $\cY_{j, m}^1(t)$.  We also define  $\pi_\pm(t)$ as projections onto the spans of $\cY_{j, m}^{\pm}(t)$ respectively.
	\end{defn}
	In this paper, we consider solutions such that \begin{equation}\label{eq:maincondzero}
	\pi_0(t)\bm{u}(t)=0,\,\,\forall t\in\mathbb{R}.
	\end{equation}In other words,$$\left<\alpha_{j, m}^0(t),\bm{u}(t)\right>=\left<
	\alpha_{j, m}^1(t),\bm{u}(t)\right>=0,\qquad\forall j,m.$$ In the practical nonlinear application, this condition is always achieved by the modulation techniques.

	\subsubsection{Functional spaces}
	
	Next, we define  important functional spaces in our analysis. Setting
	\begin{equation}
	\mathcal{D}:=\sqrt{1-\Delta},\label{eq:D}
	\end{equation}
	firstly we define the space
	\begin{equation}
	\mathcal{D}^{\frac{\nu}{2}}L_{x}^{2}:=\left\{ g:\,g=\mathcal{D}^{\frac{\nu}{2}}f,\,\text{for some}\,f\in L_{x}^{2}\right\} .\label{eq:Dweightspace}
	\end{equation}
	Given $g\in\mathcal{D}^{\frac{\nu}{2}}L_{x}^{2}$ with $g=\mathcal{D}^{\frac{\nu}{2}}f$, 
	then norm of it given by
	\begin{equation}
	\text{\ensuremath{\left\Vert g\right\Vert }}_{\mathcal{D}^{\frac{\nu}{2}}L_{x}^{2}}:=\text{\ensuremath{\left\Vert \mathcal{D}^{-\frac{\nu}{2}}g\right\Vert }}_{L_{x}^{2}}=\text{\ensuremath{\left\Vert f\right\Vert }}_{L_{x}^{2}}.\label{eq:Dweightnorm}
	\end{equation}
	Secondly, we define the weighed $L^{2}$ space with the center $y$
	as
	\begin{equation}
	L_{x}^{2}\left\langle \cdot-y\right\rangle ^{\sigma}:=\left\{ g:g=\left\langle \cdot-y\right\rangle ^{-\alpha}f,\,\text{for some}\,f\in L_{x}^{2}\right\} .\label{eq:Xweightspace}
	\end{equation}
	Given $g\in L_{x}^{2}\left\langle \cdot-y\right\rangle ^{\sigma}$ with
	$g=\left\langle \cdot-y\right\rangle ^{-\alpha}f$ then norm of it given
	by
	\begin{equation}
	\text{\ensuremath{\left\Vert g\right\Vert }}_{L_{x}^{2}\left\langle \cdot-y\right\rangle ^{\sigma}}:=\text{\ensuremath{\left\Vert \left\langle \cdot-y\right\rangle ^{\alpha}g\right\Vert }}_{L_{x}^{2}}=\text{\ensuremath{\left\Vert f\right\Vert }}_{L_{x}^{2}}.\label{eq:Xweightnorm}
	\end{equation}
	Let $B_{p,q}^{s}(\mathbb{R}^{d})$ denote the inhomogeneous Besov
	space based on $L^{p}(\mathbb{R}^{d})$ for any $d\ge1$, $s\in\mathbb{R}$
	and $p,q\in[1,\infty]$. For brevity, we use the standard notation
	\[
	H^{s}:=B_{2,2}^{s},\ C^{s}:=B_{\infty,\infty}^{s}.
	\]
	The homogeneous versions are denoted by $\dot{B}_{p,q}^{s}$, $\dot{H}^{s}$
	and $\dot{C}^{s}$, respectively. For $s\in(0,1)$, we have the equivalent
	semi-norms by the difference, see Bergh-L\"ofstr\"om \cite[Exercice 7,  page 162]{BL} and  Bahouri-Chemin-Danchin \cite[Theorem 2.36]{BCD},
	\[
	\|\varphi\|_{\dot{B}_{p,q}^{s}}\simeq\|\sup_{|y|\le\sigma}\|\varphi(x)-\varphi(x-y)\|_{L^{p}}\|_{L^{q}(d\sigma/\sigma)}.
	\]
	Finally, the weighted $L^{2}$ space,  $L^{2,s}(\mathbb{R}^{d})$, is defined
	by the norm
	\[
	\|\varphi\|_{L^{2,s}\left(\mathbb{R}^{d}\right)}=\|\left\langle x\right\rangle ^{s}\varphi\|_{L^{2}(\mathbb{R}^{d})}
	\]
	for any $s\in\mathbb{R}$. Hence $L^{2,s}$ is the Fourier image
	of $H^{s}$. 
	
	\subsubsection{Main results}
	
	Hereafter, throughout this paper,  we restrict our attention to dimension $\mathbb{R}^{3+1}$.
	But transparently, one can observe that the same analysis can be applied
	to problems with dimensions $\mathbb{R}^{d+1}$ with $d\geq3$.

	With the notations above, we can state the main results in this paper.
	\begin{thm}
		\label{thm:mainthmlinear}For $0<\nu\ll1$ and $\sigma>14$, denote
		\[
		\mathfrak{W}:=L_{t}^{2}\left(\bigcap_{j=1}^{J}\mathcal{D}^{\frac{\nu}{2}}L_{x}^{2}\left\langle \cdot-y_{j}\left(t\right)\right\rangle ^{-\sigma}\right)
		\]
		and the Strichartz norm
		\[
		S=L_{t}^{\infty}L_{x}^{2}\bigcap L_{t}^{2}B_{6,2}^{-5/6}.
		\]
		Consider the system
		\begin{equation}\label{eq:equ}
		\partial_{t}\boldsymbol{u}\left(t\right)=\mathcal{J}H\left(t\right)\boldsymbol{u}\left(t\right)+\boldsymbol{F}
		\end{equation}such that $$\pi_0(t)\bm{u}(t)=0,\,\,\forall t\in\mathbb{R}.$$
		Using the notations above, one has the local energy decay estimate and  Strichartz estimates
		\begin{equation}
		\left\Vert \mathcal{D}\left(\pi_{cs}\left(t\right)\boldsymbol{u}\left(t\right)\right)_{1}\right\Vert _{\mathfrak{W}\bigcap S}+\left\Vert \left(\pi_{cs}\left(t\right)\boldsymbol{u}\left(t\right)\right)_{2}\right\Vert _{\mathfrak{W}\bigcap S}\lesssim\left\Vert \hm{u}\left(0\right)\right\Vert _{\mathcal{H}}+\|\cD F_1\|_{(\mathfrak{W}\cap S)^{*}}+\| F_2\|_{(\mathfrak{W}\cap S)^{*}}.\label{eq:mainstrichartz}
		\end{equation}
		Moreover, indeed $\pi_{s}\left(t\right)\boldsymbol{u}\left(t\right)$
		scatters to a free wave. There exists $\boldsymbol{\psi}_{+}\in \mathcal{H}$
		such that
		\begin{equation}
		\left\Vert \pi_{cs}\left(t\right)\boldsymbol{u}\left(t\right)-e^{\mathcal{J}H_{0}t}\boldsymbol{\psi}_{+}\right\Vert _{\mathcal{H}}\rightarrow0,\,\,t\rightarrow\infty.\label{eq:mainscatter}
		\end{equation}
	\end{thm}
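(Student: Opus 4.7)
My plan is to reduce the problem, via the exponential dichotomy of Proposition \ref{prop:expdicho}, to a closed-loop estimate on the centre-stable space, and then prove that closed-loop estimate by combining free Klein--Gordon Strichartz and local-smoothing bounds with a multi-potential bootstrap that exploits both the smallness in \eqref{eq:maincond} and the separation of the centres.

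First, set $\boldsymbol{v}(t) := \pi_{cs}(t)\boldsymbol{u}(t)$. Differentiating and using the intertwining $\mathcal{T}(\tau,t)\circ\pi_{cs}(t) = \pi_{cs}(\tau)\circ\mathcal{T}(\tau,t)$ from Proposition \ref{prop:expdicho}, $\boldsymbol{v}$ satisfies
\begin{equation*}
\partial_t \boldsymbol{v} = \mathcal{J}H(t)\boldsymbol{v} + \pi_{cs}(t)\boldsymbol{F} + \bigl(\partial_t \pi_{cs}(t)\bigr)\boldsymbol{u},
\end{equation*}
where the last commutator is of size $O(\delta)$ in $\mathscr{L}(\mathcal{H})$ by \eqref{eq:maincond} and the construction of $\pi_{cs}(t)$ in \cite{CJ}. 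The hypothesis $\pi_0(t)\boldsymbol{u}(t)=0$ removes the iterated null components in $X_c(t)$, so on the remaining piece of $X_{cs}(t)$ the semigroup is uniformly bounded by Proposition \ref{prop:expdicho}, and the dispersive analysis can be carried out on $\boldsymbol{v}$ itself.

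Second, I write $\boldsymbol{v}$ by Duhamel against the free Klein--Gordon group $e^{t\mathcal{J}H_0}$, treating $W(t)\boldsymbol{v} := -\sum_{j}(V_{j})_{\beta_{j}(t)}(\cdot-y_{j}(t))\boldsymbol{v}_1$ as part of the forcing. The two inputs are: (i) free Klein--Gordon endpoint-type Strichartz $e^{t\mathcal{J}H_0}:\mathcal{H}\to S$, and (ii) free Klein--Gordon local smoothing $e^{t\mathcal{J}H_0}:\mathcal{H}\to \mathcal{D}^{\nu/2} L^2_t L^2_x \langle x\rangle^{-\sigma}$ for $0<\nu\ll 1$ and $\sigma$ large, each with its standard $TT^*$ dual and inhomogeneous counterpart. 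Because each $V_j$ decays exponentially,
\begin{equation*}
\|(V_j)_{\beta_j(t)}(\cdot - y_j(t))\boldsymbol{v}_1\|_{L^2_t L^2_x} \lesssim \|\boldsymbol{v}_1\|_{L^2_t L^2_x \langle \cdot - y_j(t)\rangle^{-\sigma}},
\end{equation*}
placing the self-interaction exactly in the dual of $\mathfrak{W}\cap S$. This sets up a bootstrap that controls $\|\mathcal{D}\boldsymbol{v}_1\|_{\mathfrak{W}\cap S}+\|\boldsymbol{v}_2\|_{\mathfrak{W}\cap S}$ by itself plus the data and forcing, with the smallness of $\delta$ and/or of the late-time integration window letting me absorb. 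The scattering statement \eqref{eq:mainscatter} then follows once the a priori bound holds: $\pi_{cs}(t)\boldsymbol{F}+W(t)\boldsymbol{v}+(\partial_t\pi_{cs})\boldsymbol{u}$ lies in $L^1_t L^2_x$ (Cauchy--Schwarz in $t$ against the exponential decay of $V_j$ and the $\mathfrak{W}$-norm), so the Duhamel integral converges in $\mathcal{H}$ and defines $\boldsymbol{\psi}_+$.

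The principal difficulty is the multi-potential matching of weighted norms. The single-potential local smoothing localises the $\mathcal{D}^{\nu/2}$ gain near one $y_j(t)$, whereas the ansatz controls $\boldsymbol{v}$ in the intersection over all $j$; passing between the two requires that the cutoffs around distinct $y_j(t)$ be almost disjoint after an initial time, an estimate I will obtain from the separation hypothesis of Proposition \ref{prop:expdicho} combined with $\beta_j\neq\beta_l$, which forces linear drift and hence an integrable off-diagonal decay of the cross-kernels. A related subtlety is the compatibility between the endpoint-type Besov loss $B^{-5/6}_{6,2}$ in $S$ and the small smoothing gain $\mathcal{D}^{\nu/2}$ in $\mathfrak{W}$: the inhomogeneous Strichartz must be proved directly in the Besov scale, and the weight exponent $\sigma>14$ must be large enough to absorb a finite but numerous sum of interaction losses coming from the $J$ potentials and their commutators with the moving frames.
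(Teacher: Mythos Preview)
Your high-level outline (project onto $X_{cs}$, Duhamel against the free group, bootstrap in $\mathfrak{W}\cap S$) matches the paper's architecture, but the mechanism you propose for closing the bootstrap is not correct, and this is a genuine gap.

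First a minor point: the commutator term $(\partial_t\pi_{cs}(t))\boldsymbol{u}$ in your equation for $\boldsymbol{v}$ actually vanishes. Differentiating the intertwining relation $\mathcal{T}(\tau,t)\pi_{cs}(t)=\pi_{cs}(\tau)\mathcal{T}(\tau,t)$ in $\tau$ at $\tau=t$ gives $\mathcal{J}H(t)\pi_{cs}(t)=\partial_t\pi_{cs}(t)+\pi_{cs}(t)\mathcal{J}H(t)$, and when you substitute this into $\partial_t\boldsymbol{v}$ the two $(\partial_t\pi_{cs})\boldsymbol{u}$ terms cancel. The paper uses this directly (see \eqref{eq:complexv}).

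The real problem is your absorption step. You write that ``the smallness of $\delta$ and/or of the late-time integration window'' lets you absorb the self-interaction $W(t)\boldsymbol{v}$ to the left. But the potentials $V_j$ are \emph{not} small; only the trajectory perturbations $\beta_j'(t)$ and $y_j'(t)-\beta_j(t)$ are. When you put $(V_j)_{\beta_j(t)}(\cdot-y_j(t))\boldsymbol{v}_1$ into the dual of $\mathfrak{W}$, the resulting term on the right is comparable to the $\mathfrak{W}$-norm of $\boldsymbol{v}$ with an $O(1)$ constant, and no late-time cutoff helps because the potential rides along with $y_j(t)$ forever. So the loop does not close as written.

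The paper closes it by a different mechanism. It decomposes space into $J+1$ channels via cutoffs $\chi_j$ around each centre (Section~\ref{sec:multi}). In the $j$th channel the dominant potential is $V_j$, and instead of treating it perturbatively one inverts the corresponding Duhamel operator: writing $T_0^{B,j}$ for Duhamel against the free flow with forcing $\mathrm{V}_j(s)g(s)$ and $T_1^{B,j}$ for the same against the \emph{perturbed} single-potential flow, one has the exact identity $(1-T_0^{B,j})(1+T_1^{B,j})=1$ (Lemma~\ref{lem:localTj}), and the boundedness of $T_1^{B,j}$ in the weighted space is the single-potential local energy decay proved via matrix resolvents in Section~\ref{sec:localenergy}. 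The trajectory perturbation then enters as the small correction $T^{B,j}-T_0^{B,j}$ (Lemma~\ref{lem:diffT}), which \emph{is} $O(\|a\|_{L^\infty}^{1/4})$ and can be handled by Neumann series. The contributions of the other potentials $V_\rho$, $\rho\neq j$, are made small not by $\delta$ but by the separation of centres: finite speed of propagation produces a time gap $M$ in the Duhamel integral, and the truncated interaction estimate (Lemmas~\ref{lem:interactionLP}--\ref{lem:truncated}) turns this into a factor $M^{-\eta}$. Finally, the unstable modes $\lambda_{j,k,+}$ require their own argument (Section~5.3): the integral defining them runs to $+\infty$, so one splits at the bootstrap time $T$ and controls the tail using the subexponential growth bound from the exponential dichotomy, not from any dispersive estimate.

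In short, your proposal is missing the single-potential inversion step and the truncated cross-potential estimate; without them the $O(1)$ potential terms cannot be absorbed and the argument does not go through.
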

	This theorem  in particular shows that  the solution in the centre-stable space
	constructed in Chen-Jendrej \cite{CJ} indeed scatters to the free
	linear flow if it is orthogonal to zero modes.

	\begin{rem}
		One can also restrict the time interval in the estimate \eqref{eq:mainstrichartz}
		to $\left[0,T\right]$. We can also take the initial data at $t_{0}=T$
		and solve the equation backwards. Then one has
		\[
		\left\Vert \mathcal{D}\left(\pi_{c}\left(t\right)\boldsymbol{u}\left(t\right)\right)_{1}\right\Vert _{\mathfrak{W}\bigcap S}+\left\Vert \left(\pi_{c}\left(t\right)\boldsymbol{u}\left(t\right)\right)_{2}\right\Vert _{\mathfrak{W}\bigcap S}\lesssim\left\Vert u\left(T\right)\right\Vert _{\mathcal{H}}+\|\cD F_1\|_{(\mathfrak{W}\cap S)^{*}}+\| F_2\|_{(\mathfrak{W}\cap S)^{*}}.
		\]Note that here we restrict onto the centre space.
	\end{rem}
	
	As a direct inspection of our proof, see Remark \ref{rem:proplinear}, we also have the following proposition which is useful in the nonlinear setting, see Chen-Jendrej  \cite{CJ2}.  We record it here for the sake of completeness.
	\begin{prop}
		\label{pro:linear}
		Let $\bs h$ be a solution of
		\begin{equation}\label{eq:linearpro}
		\partial_{t}\bs h(t)=\mathcal{J}H(t)\bs h(t)+\bs F(t)
		\end{equation}
		such that
		\begin{equation}\label{eq:linearcond}
		\begin{aligned}
		&\la \alpha_{j, \beta_j(t)}^0(\cdot - y_j(t)), \bs h(t)\ra = \la \alpha_{j, \beta_j(t)}^1(\cdot - y_j(t)), \bs h(t)\ra = 0\\
		&\la \alpha_{j, \beta_j(t)}^-(\cdot - y_j(t)), \bs h(t)\ra = \la \alpha_{j, \beta_j(t)}^+(\cdot - y_j(t)), \bs h(t)\ra = 0,\qquad\forall t\in\bR, 1 \leq j \leq J.
		\end{aligned}
		\end{equation}
		Using the notations from Theorem \ref{thm:mainthmlinear}, one has the local energy decay estimate and  Strichartz estimates
		\begin{equation}
		\label{eq:mainstrichartzh}
		\|\cD h_1\|_{\mathfrak{W}\cap S}+\| h_2\|_{\mathfrak{W}\cap S}\lesssim\|\bs h_0\|_{\mathcal{H}}+\|\cD F_1\|_{(\mathfrak{W}\cap S)^{*}}+\| F_2\|_{(\mathfrak{W}\cap S)^{*}}.\end{equation}
		Moreover, $\bs h$ scatters to a free wave: there exists $\bs{\psi}_{+}\in \mathcal{H}$
		such that
		\begin{equation}
		\|\bs h(t)-\eee^{\mathcal{J}H_{0}t}\bs{\psi}_{+}\|_{\mathcal{H}}\to 0,\qquad\text{as }t\to\infty.\label{eq:mainscatterh}
		\end{equation}
	\end{prop}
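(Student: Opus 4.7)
My plan is to derive Proposition \ref{pro:linear} by a direct inspection of the proof of Theorem \ref{thm:mainthmlinear}. The first two equalities in \eqref{eq:linearcond} are, by Definition \ref{def:zeroproj}, exactly the hypothesis $\pi_0(t)\bs h(t)=0$ of the theorem. The last two equalities are additional and express that $\bs h(t)$ is orthogonal to the mode families $\cY_{j,k}^\mp(t)$, which span, up to a perturbative correction, the stable and unstable subspaces $X_s(t)$ and $X_u(t)$ produced by Proposition \ref{prop:expdicho}. Morally then, \eqref{eq:linearcond} places $\bs h(t)$ inside the centre subspace $X_c(t)$.

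The first step is to apply Theorem \ref{thm:mainthmlinear} to $\bs u=\bs h$. Since $\pi_0(t)\bs h(t)=0$, the theorem yields
\[
\|\cD(\pi_{cs}(t)\bs h(t))_1\|_{\mathfrak{W}\cap S}+\|(\pi_{cs}(t)\bs h(t))_2\|_{\mathfrak{W}\cap S}\lesssim\|\bs h(0)\|_{\cH}+\|\cD F_1\|_{(\mathfrak{W}\cap S)^{*}}+\|F_2\|_{(\mathfrak{W}\cap S)^{*}}.
\]
The second step is to upgrade this to the same bound on $\bs h$ itself by controlling the unstable part $\bs h(t)-\pi_{cs}(t)\bs h(t)=\pi_u(t)\bs h(t)$. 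Because $X_u(t)$ is $K$-dimensional and approximately spanned by $\cY_{j,k}^+(t)$, with $\{\alpha_{j,k}^-(t)\}$ serving as an approximate dual frame, the vanishing hypothesis $\la\alpha_{j,k}^-(t),\bs h(t)\ra=0$ together with the perturbative construction of $\pi_u(t)$ in Chen-Jendrej \cite{CJ} forces $\pi_u(t)\bs h(t)$ to be $O(\delta)\|\bs h(t)\|_{\cH}$ in energy, with an exponentially localized profile around the centers $y_j(t)$. This error is absorbable both in $\cH$ (via the smallness in \eqref{eq:maincond}) and in $\mathfrak{W}\cap S$ (via the weight $\la\cdot-y_j(t)\ra^{-\sigma}$ and the exponential decay of $\phi_{j,k}$).

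The scattering statement \eqref{eq:mainscatterh} then follows from the Duhamel representation
\[
\bs h(t)=\eee^{\cJ H_0 t}\bs h(0)+\int_0^t \eee^{\cJ H_0(t-s)}\Big({-}\sum_{j=1}^J (V_j)_{\beta_j(s)}(\cdot-y_j(s))\bs h(s)+\bs F(s)\Big)\,ds,
\]
combined with the integrability in $\cH$ of the interaction term, which is ensured by the weighted $L_t^2$-structure of $\mathfrak{W}$ (the weight $\la\cdot-y_j(t)\ra^{-\sigma}$ with $\sigma>14$ overwhelms the exponential decay of $V_j$) and by Strichartz duality for $\bs F$. Hence $\eee^{-\cJ H_0 t}\bs h(t)$ is Cauchy in $\cH$ as $t\to\infty$ and produces the scattering profile $\bs\psi_+$.

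The main obstacle is the quantitative absorption in the second step: one must check that the error in identifying $\pi_{cs}(t)\bs h(t)$ with $\bs h(t)$ under \eqref{eq:linearcond} is small in $\mathfrak{W}\cap S$, not merely in $\cH$. This amounts to revisiting the construction of $\pi_u(t)$ in \cite{CJ}, where the required smallness is encoded in the slow modulation \eqref{eq:maincond} and the inter-potential separation $|y_j(t)-y_l(t)|\ge 1/\eta$; both hypotheses guarantee that the true dual basis of $X_u(t)$ differs from $\{\alpha_{j,k}^-(t)\}$ by an operator whose norm on $\mathcal{H}$ is $O(\delta)$, uniformly in $t$, which closes the estimate.
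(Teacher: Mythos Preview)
Your approach is genuinely different from the paper's, and the obstacle you flag at the end is a real gap that the paper avoids rather than resolves.

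The paper does \emph{not} apply Theorem \ref{thm:mainthmlinear} as a black box and then correct by $\pi_u(t)\bs h(t)$. Instead (see Remark \ref{rem:proplinear} and the remark following Proposition \ref{pro:linear}) it reruns the entire channel-decomposition and bootstrap argument of Section \ref{sec:multi} directly on $\bs h$, bypassing the exponential dichotomy altogether. The point is that in that proof the only place the dichotomy is used is in the analysis of the discrete coefficients $\lambda_{j,k,\pm}(t)=\omega(\bs v,\cY_{j,k,\beta_j}^{\mp}(\cdot-\beta_jt+c_j(t)))$; under \eqref{eq:linearcond} these vanish up to an $O(\delta)$ localized error coming from the trajectory reduction (handled exactly as in \S\ref{subsubsec:reductionzero}), so that entire step becomes trivial. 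The continuous-spectrum analysis and the bootstrap in Proposition \ref{prop:bootstrap} then close unchanged.

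Your route, by contrast, needs to control $\pi_u(t)\bs h(t)$ in the $\mathfrak{W}$ norm, which is $L^2_t$ in time. The bound you state, $\|\pi_u(t)\bs h(t)\|_{\cH}\lesssim\delta\|\bs h(t)\|_{\cH}$, only gives $L^\infty_t$ control; to absorb it in $\mathfrak{W}$ you would need the \emph{localized} improvement $\|\pi_u(t)\bs h(t)\|_{\cH}\lesssim\delta\sum_j\|\la\cdot-y_j(t)\ra^{-\sigma}\bs h(t)\|_{L^2_x}$. That requires knowing that the dual basis produced by the dichotomy in \cite{CJ} is $O(\delta)$-close to $\{\alpha_{j,k}^-(t)\}$ in a weighted sense, not merely in $\mathscr{L}(\cH)$. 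This is plausible but is not stated in the paper and is exactly the extra input the paper's direct argument is designed to avoid.
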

	
	\begin{rem}
		One can also work on the linear system  	\eqref{eq:linearpro} and \eqref{eq:linearcond} in the proposition above directly. Then there is no need to invoke the exponential dichotomy since by assumptions, we remove the unstable directions directly. Given the result from the proposition above, one can alternatively prove the existence of exponential dichotomy, see Chen-Jendrej \cite{CJ1}.
	\end{rem}
\begin{rem}\label{rem:weakenlinear}
The condition \eqref{eq:linearcond} can be weaken to 
	\begin{equation}\label{eq:linearrem}
		\begin{aligned}
		&\big |\la \alpha_{j, \beta_j(t)}^0(\cdot - y_j(t)), \bs h(t)\ra\big | + \big |\la \alpha_{j, \beta_j(t)}^1(\cdot - y_j(t)), \bs h(t)\ra \big|\\
		&+\big|\la \alpha_{j, \beta_j(t)}^-(\cdot - y_j(t)), \bs h(t)\ra \big|+\big| \la \alpha_{j, \beta_j(t)}^+(\cdot - y_j(t)), \bs h(t)\ra\big |\in L^2_t, 1 \leq j \leq J.
		\end{aligned}
		\end{equation}
Denote the sum of the $L^2_t$ norms of all expressions above as $\mathfrak{U}$, then one has
\begin{equation}
		\|\cD h_1\|_{\mathfrak{W}\cap S}+\| h_2\|_{\mathfrak{W}\cap S}\lesssim\|\bs h_0\|_{\mathcal{H}}+\|\cD F_1\|_{(\mathfrak{W}\cap S)^{*}}+\| F_2\|_{(\mathfrak{W}\cap S)^{*}}+\mathfrak{U}.\end{equation}
In nonlinear applications,
\begin{equation*}
    \big |\la \alpha_{j, \beta_j(t)}^0(\cdot - y_j(t)), \bs h(t)\ra\big | + \big |\la \alpha_{j, \beta_j(t)}^1(\cdot - y_j(t)), \bs h(t)\ra \big|=0
\end{equation*} is achieved by the modulation analysis and
\begin{equation*}
 \big|\la \alpha_{j, \beta_j(t)}^-(\cdot - y_j(t)), \bs h(t)\ra \big|+\big| \la \alpha_{j, \beta_j(t)}^+(\cdot - y_j(t)), \bs h(t)\ra\big |\in L^2_t   
\end{equation*}
is analyzed separately via projecting onto the unstable/stable modes.
\end{rem}	
	In Theorem \ref{thm:mainthmlinear}, we conclude that the centre-stable part of the solution to the perturbed system scatters to the free wave.  From the view of the scattering theory, it is also desirable to conclude the other direction, i.e., the existence of wave operators. To simplify the problem, for this problem, we further assume that all $\phi_{j,m}^{0}$ from \eqref{eq:modescalar} are $0$.

	After establishing the backward estimates, we obtain the existence of the wave operator for our linear model.
	\begin{thm}
		Given any free data $\boldsymbol{\phi}_{0}\in \mathcal{H}$, one can find a unique  perturbed data $\boldsymbol{u}(0)\in X_\text{c}(0)$ such that the solution $\boldsymbol{u}(t)$ to \eqref{eq:equ} with the initial data $\boldsymbol{u}(0)$ satisfies
		\begin{equation}
		\left\Vert \pi_{c}\left(t\right)\boldsymbol{u}\left(t\right)-e^{\mathcal{J}H_{0}t}\boldsymbol{\phi}_{0}\right\Vert _{\mathcal{H}}\rightarrow0,\,\,t\rightarrow\infty.\label{eq:exiwave}
		\end{equation}
	\end{thm}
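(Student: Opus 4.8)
The plan is to solve the final-value (Duhamel-from-$+\infty$) problem for the perturbed flow, with the backward-in-time counterpart of Theorem~\ref{thm:mainthmlinear} as the main ingredient, and then to correct the stable and unstable components of the Cauchy datum so that it lands in $X_{\mathrm c}(0)$. Write $\mathcal V(t):=H(t)-H_0$, where $H_0$ is the potential-free part of $H(t)$, and look for $\boldsymbol u(t)=\eee^{\mathcal J H_0 t}\boldsymbol\phi_0+\boldsymbol v(t)$; then $\boldsymbol v$ has to solve $\partial_t\boldsymbol v=\mathcal J H(t)\boldsymbol v+\boldsymbol F$ with $\boldsymbol F(t):=\mathcal J\mathcal V(t)\eee^{\mathcal J H_0 t}\boldsymbol\phi_0$, and the goal becomes to find a solution with $\|\boldsymbol v(t)\|_{\cH}\to 0$ and $\boldsymbol u(0)\in X_{\mathrm c}(0)$. (We take $\boldsymbol F\equiv 0$ in \eqref{eq:equ}; a forcing in $(\mathfrak W\cap S)^{*}$ is absorbed by the particular solution from Proposition~\ref{pro:linear}.)

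First I would observe that $\boldsymbol F$ is a smooth, exponentially localized (around $y_j(t)$) multiple of the free wave $\eee^{\mathcal J H_0 t}\boldsymbol\phi_0$, so the free local energy decay estimate, already used for Theorem~\ref{thm:mainthmlinear}, gives $\boldsymbol F\in(\mathfrak W\cap S)^{*}$ with $\|\boldsymbol F\|_{(\mathfrak W\cap S)^{*}}\lesssim\|\boldsymbol\phi_0\|_{\cH}$ and vanishing tails $\|\boldsymbol F\|_{(\mathfrak W\cap S)^{*}([T,\infty))}\to0$; the same dispersion-versus-localization argument, applied to the modes $\alpha^{\pm}_{j,k}(t)$, yields $\|\pi_{\mathrm u}(t)\eee^{\mathcal J H_0 t}\boldsymbol\phi_0\|_{\cH}+\|\pi_{\mathrm s}(t)\eee^{\mathcal J H_0 t}\boldsymbol\phi_0\|_{\cH}\to0$ as $t\to+\infty$. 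Then I would split $\boldsymbol F=\pi_{\mathrm u}(t)\boldsymbol F+\pi_{\mathrm c}(t)\boldsymbol F+\pi_{\mathrm s}(t)\boldsymbol F$ (there is no null component, as all $\phi^0_{j,m}=0$) and set $\boldsymbol v_{\mathrm u}(t):=-\int_t^\infty\mathcal T(t,s)\pi_{\mathrm u}(s)\boldsymbol F(s)\ud s$, $\boldsymbol v_{\mathrm c}(t):=-\int_t^\infty\mathcal T(t,s)\pi_{\mathrm c}(s)\boldsymbol F(s)\ud s$, and $\boldsymbol v_{\mathrm s}(t):=\int_0^t\mathcal T(t,s)\pi_{\mathrm s}(s)\boldsymbol F(s)\ud s+\mathcal T(t,0)\boldsymbol c_0$ with $\boldsymbol c_0\in X_{\mathrm s}(0)$ to be fixed. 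The $\boldsymbol v_{\mathrm u}$-integral converges because the backward flow contracts exponentially on the finite-dimensional bundle $X_{\mathrm u}$; the $\boldsymbol v_{\mathrm c}$-integral converges, with $\|\boldsymbol v_{\mathrm c}\|_{\mathfrak W\cap S}\lesssim\|\pi_{\mathrm c}(\cdot)\boldsymbol F\|_{(\mathfrak W\cap S)^{*}}$ and $\|\boldsymbol v_{\mathrm c}(t)\|_{\cH}\lesssim\|\pi_{\mathrm c}(\cdot)\boldsymbol F\|_{(\mathfrak W\cap S)^{*}([t,\infty))}\to0$, exactly by the backward Strichartz and local energy estimates for the centre part; and $\boldsymbol v_{\mathrm s}(t)\to0$ since the stable flow decays forward while $\|\pi_{\mathrm s}(\cdot)\boldsymbol F(\cdot)\|_{\cH}\in L^2_t$. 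Differentiating, each piece solves the equation with the corresponding projected forcing, and since $\pi_{\mathrm u}+\pi_{\mathrm c}+\pi_{\mathrm s}=\Id$, the sum $\boldsymbol v:=\boldsymbol v_{\mathrm u}+\boldsymbol v_{\mathrm c}+\boldsymbol v_{\mathrm s}$ solves $\partial_t\boldsymbol v=\mathcal J H(t)\boldsymbol v+\boldsymbol F$ with $\|\boldsymbol v(t)\|_{\cH}\to0$; hence $\boldsymbol u=\eee^{\mathcal J H_0 t}\boldsymbol\phi_0+\boldsymbol v$ solves the perturbed equation and $\|\boldsymbol u(t)-\eee^{\mathcal J H_0 t}\boldsymbol\phi_0\|_{\cH}\to0$.

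Next I would verify $\boldsymbol u(0)\in X_{\mathrm c}(0)$. Using $\partial_t\pi_{\mathrm u}=\mathcal J H\pi_{\mathrm u}-\pi_{\mathrm u}\mathcal J H$ and $\mathcal V=H-H_0$, one checks that both $t\mapsto\pi_{\mathrm u}(t)\eee^{\mathcal J H_0 t}\boldsymbol\phi_0$ and $t\mapsto\int_t^\infty\mathcal T(t,s)\pi_{\mathrm u}(s)\boldsymbol F(s)\ud s$ solve the same inhomogeneous linear ODE on the bundle $X_{\mathrm u}$, and both tend to $0$ as $t\to+\infty$; since the homogeneous flow on $X_{\mathrm u}$ admits no nonzero solution decaying at $+\infty$ (Proposition~\ref{prop:expdicho}(3)), these two functions coincide, so $\boldsymbol v_{\mathrm u}(0)=-\pi_{\mathrm u}(0)\boldsymbol\phi_0$ and therefore $\pi_{\mathrm u}(0)\boldsymbol u(0)=0$. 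Choosing $\boldsymbol c_0:=-\pi_{\mathrm s}(0)\boldsymbol\phi_0$, which still gives $\boldsymbol v_{\mathrm s}(t)\to0$, makes $\pi_{\mathrm s}(0)\boldsymbol u(0)=0$; and $\pi_0\equiv0$. So $\boldsymbol u(0)=\pi_{\mathrm c}(0)\boldsymbol u(0)\in X_{\mathrm c}(0)$, whence $\pi_{\mathrm c}(t)\boldsymbol u(t)=\boldsymbol u(t)$ and \eqref{eq:exiwave} holds. For uniqueness, if two data in $X_{\mathrm c}(0)$ both work, their difference $\boldsymbol w$ solves the homogeneous equation, stays in $X_{\mathrm c}(t)$, and satisfies $\|\boldsymbol w(t)\|_{\cH}\to0$; the backward counterpart of Theorem~\ref{thm:mainthmlinear} (data taken at time $T$, equation solved backward, vanishing forcing) gives $\|\boldsymbol w\|_{\mathfrak W\cap S([0,T])}\lesssim\|\boldsymbol w(T)\|_{\cH}$ for every $T$, and letting $T\to\infty$ forces $\|\boldsymbol w\|_{\mathfrak W\cap S([0,\infty))}=0$, so $\boldsymbol w(0)=0$.

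The step I expect to be the crux is the construction of the centre piece $\boldsymbol v_{\mathrm c}$ as a convergent Duhamel integral anchored at $+\infty$: on $X_{\mathrm c}$ the flow has vanishing Lyapunov exponent but is a priori only sub-exponentially controlled, so neither the convergence of $\int_t^\infty\mathcal T(t,s)\pi_{\mathrm c}(s)\boldsymbol F(s)\ud s$ nor the decay $\|\boldsymbol v_{\mathrm c}(t)\|_{\cH}\to0$ follows from crude bounds; it is precisely the backward (retarded) Strichartz and local energy decay estimates for the perturbed flow on the centre-stable space, i.e.\ the backward counterpart of Theorem~\ref{thm:mainthmlinear}, that are needed here, and establishing these (by running the machinery of the earlier sections for the time-reversed equation, which is again of the form \eqref{eq:homomaineq} under \eqref{eq:maincond}) is the substantive input. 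Once they are in hand, the remaining bookkeeping---notably the ODE-uniqueness argument on $X_{\mathrm u}$, which rests on the dispersive decay $\pi_{\mathrm u}(t)\eee^{\mathcal J H_0 t}\boldsymbol\phi_0\to0$---is routine.
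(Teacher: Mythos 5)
Your construction is correct in outline but it is a genuinely different route from the paper's. The paper never writes a Duhamel formula anchored at $+\infty$: it removes the forcing by the forward result (Theorem \ref{thm:mainthmHam} applied to $\hm{\vartheta}_t=\mathfrak{L}(t)\hm{\vartheta}+\pi_c(t)\hm F$, $\hm{\vartheta}(0)=0$), defines the scattering map $\mathscr L_S(\hm u_0)=\hm\phi_0$ on $\pi_c(0)\mathcal H$, proves the two--sided bound $\|\hm u_0\|_{\mathcal H}\simeq\|\mathscr L_S(\hm u_0)\|_{\mathcal H}$ via the backward Strichartz estimates (Theorem \ref{thm:backwardStri}, Lemma \ref{lem:lowerLs}), and then approximates: it solves with data $\pi_c(t_n)e^{t_n\mathfrak L_0}\hm\phi_0$ at times $t_n\to\infty$, shows the corresponding profiles converge to $\hm\phi_0$ (Lemma \ref{lem:largeStri}), and uses the lower bound on $\mathscr L_S$ to make the initial data $\hm u_{t_n}(0)$ converge. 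Your trichotomy ansatz (unstable and centre pieces integrated from $+\infty$, stable piece from $0$ with the correction $\boldsymbol c_0=-\pi_s(0)\boldsymbol\phi_0$) buys a more explicit solution and an explicit identification $\boldsymbol v_{\mathrm u}(0)=-\pi_u(0)\boldsymbol\phi_0$, while the paper's scheme avoids ever having to make sense of a final--value problem on the centre bundle; both rest on the same substantive input, the backward estimates of Theorem \ref{thm:backwardStri}, which you correctly identify as the crux and which indeed are proved by rerunning the channel machinery.

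Two caveats you should repair. First, even granting Theorem \ref{thm:backwardStri}, the bounds $\|\boldsymbol v_{\mathrm c}\|_{\mathfrak W\cap S}\lesssim\|\pi_c\boldsymbol F\|_{(\mathfrak W\cap S)^*}$ and $\|\boldsymbol v_{\mathrm c}(t)\|_{\mathcal H}\to0$ do not follow ``exactly'' from it: the theorem is an initial--value estimate with data at a finite time $s$, and $\|\pi_c(s)\boldsymbol F(s)\|_{\mathcal H}$ is only $L^2_s$ (local energy decay), not $L^1_s$, so the integral $\int_t^\infty\mathcal T(t,s)\pi_c(s)\boldsymbol F(s)\,ds$ is not absolutely convergent. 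You must define $\boldsymbol v_{\mathrm c}$ as the limit of the solutions with zero data at finite times $T$, control the differences on $[0,T]$ by the homogeneous backward estimate together with the vanishing tails $\|\pi_c\boldsymbol F\|_{(\mathfrak W\cap S)^*([T,\infty))}\to0$ --- i.e.\ a limiting argument of exactly the flavour the paper uses in Lemmas \ref{lem:largeStri}--\ref{lem:lowerLs}. Second, two smaller points: your parenthetical that a general forcing is ``absorbed by the particular solution from Proposition \ref{pro:linear}'' is not right as stated, since that proposition assumes orthogonality to the stable/unstable modes; the correct reduction is the paper's, via Theorem \ref{thm:mainthmHam} with zero data and forcing $\pi_c(t)\hm F$. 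And the identity $\partial_t\pi_u=\mathcal JH\pi_u-\pi_u\mathcal JH$ is only formal (differentiability of $\pi_u(t)$ is not established); you can avoid it by writing $\mathcal T(t,s)\pi_u(s)e^{\mathcal JH_0 s}\boldsymbol\phi_0=\pi_u(t)\mathcal T(t,s)e^{\mathcal JH_0 s}\boldsymbol\phi_0$, differentiating only $s\mapsto\mathcal T(t,s)e^{\mathcal JH_0 s}\boldsymbol\phi_0$, and letting $s\to\infty$ using the backward exponential contraction on $X_u$; this yields $\boldsymbol v_{\mathrm u}(t)=-\pi_u(t)e^{\mathcal JH_0 t}\boldsymbol\phi_0$ directly and makes the separate ODE--uniqueness step unnecessary.
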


	Given the result above and the stable space, we have the following result for the case when $\boldsymbol{F}(t)=\pi_{cs}(t)\boldsymbol{F}(t)$. 
	\begin{cor}
		Assuming $\boldsymbol{F}(t)=\pi_{cs}(t)\boldsymbol{F}(t)$, for any free data $\boldsymbol{\phi}_{0}\in H^1\times L^2$, there is a dimension $K$  manifold $M\in \mathcal{H}$ such that for any $\boldsymbol{u}(0)\in M$, 
		the solution $\boldsymbol{u}(t)$  to \eqref{eq:equ} with the initial data $\boldsymbol{u}(0)$ satisfies
		\begin{equation}
		\left\Vert\boldsymbol{u}\left(t\right)-e^{\mathcal{J}H_{0}t}\boldsymbol{\phi}_{0}\right\Vert _{\mathcal{H}}\rightarrow0,\,\,t\rightarrow\infty.\label{eq:exiwave1}
		\end{equation}
	\end{cor}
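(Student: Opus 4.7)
The plan is to combine the preceding wave operator theorem with the exponential dichotomy of Proposition \ref{prop:expdicho} to parameterize the set of initial data whose solutions scatter to the prescribed free wave. Given $\bs{\phi}_{0}\in\cH$, I would first apply the wave operator theorem above to obtain the unique $\bs{u}_{c}(0)\in X_{\text{c}}(0)$ such that the solution $\bs{u}^{\ast}(t)$ of \eqref{eq:equ} with initial datum $\bs{u}_{c}(0)$ and forcing $\bs{F}=\pi_{cs}\bs{F}$ satisfies $\|\pi_{c}(t)\bs{u}^{\ast}(t)-\eee^{\cJ H_{0}t}\bs{\phi}_{0}\|_{\cH}\to 0$. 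I then set
\[
M:=\bs{u}_{c}(0)+X_{\text{s}}(0),
\]
which is an affine subspace of $\cH$ of dimension $\dim X_{\text{s}}(0)=K$ by Proposition \ref{prop:expdicho}, and take this as the candidate manifold.

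For $\bs{u}(0)=\bs{u}_{c}(0)+\bs{v}\in M$ with $\bs{v}\in X_{\text{s}}(0)$, I would verify scattering of the full solution. By linearity, the corresponding solution of \eqref{eq:equ} is $\bs{u}(t)=\bs{u}^{\ast}(t)+\cT(t,0)\bs{v}$. Since $\bs{u}(0)\in X_{\text{cs}}(0)$ and $\bs{F}=\pi_{cs}\bs{F}$, the commutation of the projections with the flow gives $\pi_{u}(t)\bs{u}(t)=0$ for every $t\geq 0$. Because $\bs{v}\in X_{\text{s}}(0)$, Proposition \ref{prop:expdicho}(1) yields exponential decay of $\|\cT(t,0)\bs{v}\|_{\cH}$. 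Writing
\[
\bs{u}(t)-\eee^{\cJ H_{0}t}\bs{\phi}_{0}=\bigl(\pi_{c}(t)\bs{u}^{\ast}(t)-\eee^{\cJ H_{0}t}\bs{\phi}_{0}\bigr)+\pi_{s}(t)\bs{u}^{\ast}(t)+\cT(t,0)\bs{v},
\]
the first bracket tends to $0$ by construction and the final term decays exponentially in $\cH$, so it remains to prove that the Duhamel stable contribution
\[
\pi_{s}(t)\bs{u}^{\ast}(t)=\int_{0}^{t}\cT(t,s)\pi_{s}(s)\bs{F}(s)\,\ud s
\]
tends to $0$ in $\cH$.

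Since $X_{\text{s}}(s)$ is $K$-dimensional and spanned by the vectors $\cY_{j,k}^{-}(s)$, I would expand $\pi_{s}(s)\bs{F}(s)=\sum_{j,k}\lambda_{j,k}(s)\cY_{j,k}^{-}(s)$, where each scalar coefficient $\lambda_{j,k}(s)$ is, up to a normalization, a pairing of $\bs{F}(s)$ against the smooth, exponentially localized dual profile $\alpha_{j,k}^{+}(s)$. Using that $\alpha_{j,k}^{+}(s)$ is centred at $y_{j}(s)$ with rapid spatial decay, together with $\bs{F}\in(\mathfrak{W}\cap S)^{\ast}$, a duality argument yields $\lambda_{j,k}\in L_{t}^{2}$. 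Combined with the exponential bound $\|\cT(t,s)\cY_{j,k}^{-}(s)\|_{\cH}\lesssim\eee^{-c(t-s)}$ supplied by Proposition \ref{prop:expdicho} for some $c>0$, and an integral splitting at $s=t/2$, a Cauchy--Schwarz estimate between an $L^{2}_{t}$ function and an exponential kernel shows the Duhamel stable contribution vanishes as $t\to\infty$.

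The main obstacle is the $L^{2}_{t}$ bound on the scalar coefficients $\lambda_{j,k}$: one must show the spatially localized pairing $\langle\alpha_{j,k}^{+}(s),\bs{F}(s)\rangle$ is controlled by the dual Strichartz-type norm $\|\bs{F}\|_{(\mathfrak{W}\cap S)^{\ast}}$, which requires a careful matching of the polynomial weight $\langle\cdot-y_{j}(s)\rangle^{-\sigma}$ appearing in $\mathfrak{W}$ against the exponentially decaying Lorentz-boosted profile $\alpha_{j,k,\beta_{j}(s)}^{+}(\cdot-y_{j}(s))$, uniformly in the time-dependent velocity $\beta_{j}(s)$ coming from the trajectory condition \eqref{eq:maincond}. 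Once this duality bound is established, the rest of the argument is a routine linear estimate and the set $M$ defined above is the desired $K$-dimensional manifold.
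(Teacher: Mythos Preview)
Your approach is correct, and the paper leaves this corollary unproved beyond the remark that it follows from ``the result above and the stable space.'' The manifold $M=\bs u_c(0)+X_{\text s}(0)$ is the right object and your three-term decomposition is valid.

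That said, the step you flag as the main obstacle---showing the Duhamel stable contribution $\pi_{\text s}(t)\bs u^{\ast}(t)=\int_0^t\cT(t,s)\pi_{\text s}(s)\bs F(s)\,\ud s\to 0$---can be bypassed entirely by decomposing differently. Instead of invoking the inhomogeneous wave operator theorem (which only controls $\pi_c(t)\bs u^{\ast}(t)$ and thus leaves you with the stable remainder), first let $\bs w(t)$ solve \eqref{eq:equ} with $\bs w(0)=0$; since $\bs F=\pi_{cs}\bs F$ one has $\bs w=\pi_{cs}\bs w$, and by \eqref{eq:mainscatter} in Theorem~\ref{thm:mainthmlinear} the \emph{full} $\bs w(t)$ scatters to some $\eee^{\cJ H_0 t}\bs\rho_0$. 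Now apply the \emph{homogeneous} wave operator result (this is what Section~\ref{sec:waveop} actually establishes after the reduction \eqref{eq:reinhomo}) to get $\bs v_c(0)\in X_{\text c}(0)$ with $\cT(t,0)\bs v_c(0)\to\eee^{\cJ H_0 t}(\bs\phi_0-\bs\rho_0)$; because this homogeneous solution stays in $X_{\text c}(t)$ for all $t$, no stable remainder appears. Setting $M=\bs v_c(0)+X_{\text s}(0)$, for any $\bs u(0)=\bs v_c(0)+\bs v$ one has $\bs u(t)=\bs w(t)+\cT(t,0)\bs v_c(0)+\cT(t,0)\bs v$, and all three pieces converge as required. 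This is the ``direct consequence'' the paper has in mind and avoids the dual-norm bookkeeping for $\lambda_{j,k}$ altogether.

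One minor imprecision in your write-up: $X_{\text s}(s)$ is not literally $\spn\{\cY_{j,k}^-(s)\}$. The exponential dichotomy of Proposition~\ref{prop:expdicho} produces abstract subspaces that are only perturbations of these explicit spans when the centers are well separated. Your duality bound still goes through---$\pi_{\text s}(s)$ is rank $K$ with range and co-range consisting of exponentially localized profiles near the $y_j(s)$---but the expansion of $\pi_{\text s}(s)\bs F(s)$ should be phrased in terms of the abstract basis of $X_{\text s}(s)$ rather than the $\cY_{j,k}^-(s)$.
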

	\subsection{Main ideas}
	Here in this subsection, we illustrate some major difficulties. We  also sketch and highlight some key ideas in our poof to overcome those obstacles. 
	
	Consider the linear equation given by 
	\begin{equation*}
		\partial_{t}^{2}u=\Delta u-u-\sum_{j=1}^{J}(V_{j})_{\beta_{j}(t)}(\cdot-y_{j}(t))u+F
	\end{equation*}which can be regarded as a generalized version of the linear Klein-Gordon equation with charge transfer Hamiltonian. 
	Using the ideas from the study of the charge transfer models in various
	setting, see Rodnianski-Schlag-Soffer \cite{RSS} and Chen \cite{C1}
	for the Schr\"odinger and wave settings respectively, we first use
	the decomposition in channels. Namely, we will decompose the space
	into several different regions and each of them will be dominated
	by the influences of different potentials. Then we will use the Duhamel formulas to expand the solution with respect to different potentials in their dominated channels.
	
	
	We first note that  in the current setting,
	the trajectories of potentials are motivated by the interaction of solitons, whence they are much more complicated than those
	models studied before.  One can not apply the invariance of the principle part, the Lorentz invariance, to reduce potentials to the static problems.  Moreover, the negative eigenvalues associated
	with each moving potentials will produce exponential unstable modes.
	Naturally, to obtain Strichartz estimates and local energy decay estimates,
	we have to project the flow away from unstable behavior. Unlike the static
	potential problem and one single moving potential problem, due to
	the interaction of potentials and unstable modes, the projections
	onto the stable, centre and unstable spaces are far from being standard. To
	obtain these projections, we use the exponential dichotomy applied
	to the Klein-Gordon equation with multiple potentials  in
	Chen-Jendrej \cite{CJ}.
	

	The second main difficulty in our setting is that as introduced above,
	motivated by the interaction solitons and the modulation method, the
	trajectories are perturbations of the linear motions. Due to the perturbative
	feature of the trajectories of potentials, we further develop the ideas stemming
	from Beceanu \cite{Bec1,Bec2} for the Schr\"odinger problem and
	Nakanishi-Schlag \cite{NSch} in the Klein-Gordon setting. In Nakanishi-Schlag
	\cite{NSch}, after performing a Lorentz transform first, the authors analyzed the one-potential problem with
	small velocity. Here we need to handle multiple potentials and large
	velocities.  We first establish the analysis in Nakanishi-Schlag \cite{NSch}
	with a moving potential and estimates in non-shifted coordinates. In
	this setting, we need a very general version of the local energy estimate
	for both homogeneous and inhomogeneous Klein-Gordon equation with a
	moving potential. Traditionally, as a wave type equation, one always
	applies the Lorentz transform to reduce the moving potential problem
	to the standard case with a static potential. But by doing this, we
	will miss the general form the inhomogeneous local energy decay. To
	achieve general local decay estimates, instead of using Lorentz transforms,
	here we apply the Galilean change of variable and obtain a system 
	\begin{equation}
	\frac{d}{dt}\hm{u}  =\left(\begin{array}{cc}
	\beta\cdot\nabla & 1\\
	\Delta-V_{\beta}\left(x\right)-1 & \beta\cdot\nabla
	\end{array}\right)\hm{u}+\hm{F}
	\end{equation}
	where $\hm{u}=\left(\begin{array}{c}
	u\\
	u_{t}
	\end{array}\right)$. In this setting,  we are allowed to  put a general inhomogeneous term $\hm{F}$ (the $\hm{F}$ coming from the scalar equation has a vanishing first component). These type of general inhomogeneous terms appear in the analysis of time-dependent potentials. The matrix operator above turns out to give
	us a clean formalism to obtain decay estimates via matrix resolvents. In Section \ref{sec:localenergy}, we provide a self-contained presentation of the spectral analysis of this matrix operator and prove local energy decay. We believe that this formalism has its independent interest and application
	in other problems. 
	
	

   Thirdly, complicated trajectories will also interact with point spectra of each potentials. If all trajectories are linear, the unstable modes associated with each potential will decay exponentially if we consider solutions in the centre-stable space mentioned above. With general trajectories considered in this paper, one can only measure decay of  unstable modes in the integral sense.  As usual, to stabilize unstable modes, we solve the ODE associated to each mode from $t=\infty$. But this gives additional difficulties to perform bootstrap arguments since the bootstrap assumption are always made for a finite-time interval meanwhile these ODEs need all the information on all the way to $\infty$. Due the appearance of additional interaction terms caused by trajectories and unstable modes, one could not conclude the exponential control as the problem with linear trajectories. Unlike nonlinear problems, here we could not use an iteration scheme to resolve this problem. Since each potential has zero modes, one could not easily construct a sequence of solutions over larger and larger intervals to approximate the original solution like the strategy used in Chen \cite{C1}. To resolve this difficulty, we split the integral appearing in the solution formulas for the unstable modes into two regions and we apply different bootstrap assumptions in each region. For the finite time region, one can use the standard bootstrap estimates. For infinite time region, after controlling the solution all the way to the time $T$ at which we imposed bootstrap assumptions, then we carefully control the energy of the solution starting from  $T$ via the exponential dichotomy.

	 In the core part of the channel analysis,  expanding the solution with respect to the dominated Hamiltonian
	in the corresponding channel, we need to deal with interaction of
	different potentials. To understand the interaction of potentials,
	a very general and refined truncated inhomogeneous is established
	here to handle terms like
	\begin{equation}
	\left\langle x-\beta_{1}t-c_{1}\left(t\right)\right\rangle ^{-\sigma}\int_{0}^{t-M}e^{i\left(t-s\right)\mathcal{D}}\mathrm{U}_{A}\left(t,s\right)\left\langle \cdot-\beta_{2}s-c_{2}\left(s\right)\right\rangle ^{-\alpha}\mathcal{D}^{-1}f\left(s\right)\,ds\label{eq:interaction1}
	\end{equation}
	where $\mathrm{U}_{A}(t,s)$, $c_j(t)$ are caused by the perturbations of trajectories. We refer to Chen \cite{C1}, for the wave equation setting with
	linear trajectories for potentials. Here the analysis to  show these inhomogeneous
	interaction estimates in this general setting is much more involved due
	the complicated trajectories of potentials. With  careful analysis of this interaction term, one can show that the interaction among potentials are small with some coefficient $M^{-\eta}$ for some $\eta>0$. Then with a large $M$, one can absorb interactions to the LHS.
	
	
	After performing the procedures above, for some positive $\sigma, \nu$,  we estimate the local  energy 
	\begin{equation}
	\sum_{j=1}^{J}\left(\int_{0}^{\infty}\left\langle x-y_{j}\left(t\right)\right\rangle ^{-2\sigma}\left|\mathcal{D}^{1-\frac{\nu}{2}}u\left(t,x\right)\right|^{2}+\left\langle x-y_{j}\left(t\right)\right\rangle ^{-2\sigma}\left|\mathcal{D}^{-\frac{\nu}{2}}u_t\left(t,x\right)\right|^{2}\,dxdt\right)^{\frac{1}{2}}
	\end{equation}
	by the initial energy. This local energy decay estimate then implies Strichartz estimates and scattering.
	
	\subsection{Outline of the paper}
This paper is organized as follows: In Section \ref{sec:Prelim}, we recall standard linear estimates including Strichartz estimates and local energy decay etc.  We will sytematically study the spectral theory, resolvent estimates, local energy decay in the matrix formalism in Section \ref{sec:localenergy}. In Section \ref{sec:onepotential}, we revisit the linear theory in Nakanishi-Schlag \cite{NSch} in general frame work of  the matrix formalism without reducing to a static potential. In Section \ref{sec:multi}, we show Stricharz estimates, scattering for the general multiple potential problems. Finally, in Section \ref{sec:waveop}, after care analysis of the scattering map and Strichartz estimates in the central direction, we show the existence of wave operators. 
	
	\subsection{Notations}
	
	\textquotedblleft $A:=B\lyxmathsym{\textquotedblright}$ or $\lyxmathsym{\textquotedblleft}B=:A\lyxmathsym{\textquotedblright}$
	is the definition of $A$ by means of the expression $B$. We use
	the notation $\langle x\rangle=\left(1+|x|^{2}\right)^{\frac{1}{2}}$.
	The bracket $\left\langle \cdot,\cdot\right\rangle $ denotes the
	distributional pairing and the scalar product in the spaces $L^{2}$,
	$L^{2}\times L^{2}$ . For positive quantities $a$ and $b$, we write
	$a\lesssim b$ for $a\leq Cb$ where $C$ is some prescribed constant.
	Also $a\simeq b$ for $a\lesssim b$ and $b\lesssim a$. We denote
	$B_{R}(x)$ the open ball of centered at $x$ with radius $R$ in
	$\mathbb{R}^{3}$. We also denote by $\chi$ a standard $C^{\infty}$
	cut-off function, that is $\chi(x)=1$ for $\left|x\right|\leq1$,
	$\chi(x)=0$ for $\left|x\right|>2$ and $0\leq\chi(x)\leq1$ for
	$1\leq\left|x\right|\leq2$ .

	We always use the bold font to denote a pair of functions as a vector function.  For example, $\hm{u}=\left(\begin{array}{c}
	u_1\\
	u_2
	\end{array}\right)$  and $\hm{f}=\left(\begin{array}{c}
	f_1\\
	f_2
	\end{array}\right)$.     For any space $X$ which measures a scalar function $f$ in the norm $\Vert f\Vert _X$, we use the notation $X_\mathcal{H}$ to measure the corresponding vector function $\hm{f}=\left(\begin{array}{c}
	f_1\\
	f_2
	\end{array}\right)$ by the norm
	\begin{equation}\label{eq:X_H}
	\Vert\hm{f}\Vert_{X_\mathcal{H}}=\Vert (\sqrt{-\Delta+1})f_1\Vert _X+\Vert f_2\Vert _X.
	\end{equation}

	\section{Preliminaries\label{sec:Prelim}}
	In this section, we collect some basic estimates and the Hamiltonian formalism for the Klein-Gordon equation.

	\subsection{Linear estimates in the standard setting}
	
	We first recall some basic linear estimates for Klein-Gordon equations.  Here it is more convenient to formulate estimates in terms of the half-Klein-Gordon evolution.  
	
	First all, we have Strichartz estimates. For the free case, i.e, $V=0$,
	see Keel-Tao \cite{KT}, Ibrahim-Masmoudi-Nakanishi \cite{IMN}. For
	the Klein-Gordon equation with a real-valued static potential, see Chen-Beceanu \cite{BC} and D'Ancona-Fanelli \cite{DaF}.
	\begin{thm}
		\label{thm:StriKGfree}Consider the half Klein-Gordon flow
		\[
		z_{t}=i(\sqrt{H+1})z+F
		\]
		then one has
		\[
		\left\Vert P_{c}z\right\Vert _{S}\lesssim\left\Vert z\left(0\right)\right\Vert _{L^{2}}+\left\Vert F\right\Vert _{S^{*}}
		\]
		where $P_c$ is the projection on the continuous spectrum of $H$ and
		\[
		S=L_{t}^{\infty}L_{x}^{2}\bigcap L_{t}^{2}B_{6,2}^{-5/6}.
		\]
	\end{thm}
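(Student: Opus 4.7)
Since this is a classical result, my plan is to cite the established literature but sketch how the two parts combine. For the free case $V=0$, the propagator $e^{it\sqrt{1-\Delta}}$ satisfies frequency-localized dispersive estimates obtained by stationary phase applied to its kernel: wave-type decay ($|t|^{-1}$ with a derivative loss) at high frequencies, where $\sqrt{|\xi|^2+1}\sim|\xi|$, and Schrödinger-type decay ($|t|^{-3/2}$) at low frequencies, where the symbol is smooth. After a Littlewood--Paley decomposition, the standard Keel--Tao $TT^*$ machinery upgrades these pointwise bounds to $L^p_t L^q_x$ Strichartz inequalities on each dyadic block, and $\ell^2$-summation in the frequency parameter yields the endpoint Besov norm $L^2_t B^{-5/6}_{6,2}$ with a $5/6$ derivative loss reflecting the wave-type regime. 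This is the content of Ibrahim--Masmoudi--Nakanishi and covers the $V=0$ part of the claim.

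To pass from the free case to $V\neq 0$, there are two standard routes. The first is a Duhamel perturbation argument: write
\[
P_c z(t)=e^{it\sqrt{1-\Delta}}P_c z(0)+\int_0^t e^{i(t-s)\sqrt{1-\Delta}}\!\left(i\sqrt{H+1}-i\sqrt{1-\Delta}\right)\!P_c z(s)\,ds+\int_0^t e^{i(t-s)\sqrt{1-\Delta}}F(s)\,ds,
\]
expand the operator difference via the resolvent identity $\sqrt{H+1}-\sqrt{1-\Delta}=\pi^{-1}\int_0^\infty \lambda^{-1/2}[(H+1+\lambda)^{-1}V(1-\Delta+\lambda)^{-1}]\,d\lambda$, and bound the resulting term by combining the free Strichartz inequality with a local energy decay (Kato smoothing) estimate for $e^{it\sqrt{H+1}}P_c$. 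The spectral hypothesis on $H$ — no threshold resonance, no embedded eigenvalue — together with the exponential decay of $V$ provides the weighted $L^2_{t,x}$ smoothing required to absorb the perturbation term on the left-hand side. The second route, due to Beceanu, gives a structure formula expressing $e^{it\sqrt{H+1}}P_c$ as a bounded-variation superposition of free half-Klein--Gordon evolutions, which transfers the free Strichartz bound directly and preserves the refined Besov summation.

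The main obstacle is preserving the endpoint Besov summation $B^{-5/6}_{6,2}$ through the perturbation step. A naïve Duhamel iteration produces only the weaker Sobolev variant with $H^{-5/6}$; recovering the $\ell^2$ summation in dyadic frequency requires either the Beceanu structure formula or a careful frequency-localized spectral analysis of $H$ via the distorted Fourier transform, exploiting pointwise bounds on the distorted spectral measure to replicate the free frequency-localized dispersive estimates with sharp constants. A secondary delicate point is justifying the continuous-spectrum projection $P_c$ for the self-adjoint operator $H$ on $L^2$, which requires the full limiting-absorption principle near the thresholds $\pm 1$ of the Klein--Gordon spectrum; this is exactly where the hypothesis that $L=-\Delta+V+1$ has no resonance at $1$ enters.
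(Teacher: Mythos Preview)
The paper does not prove this theorem at all; it states it as a known preliminary result and simply refers the reader to Keel--Tao \cite{KT} and Ibrahim--Masmoudi--Nakanishi \cite{IMN} for the free case and to Chen--Beceanu \cite{BC} and D'Ancona--Fanelli \cite{DaF} for the potential case. Your proposal is consistent with this---you cite the same literature and add a correct sketch of the underlying mechanism (frequency-localized dispersive bounds plus Keel--Tao for the free flow, then a Duhamel/smoothing or Beceanu-type structure argument to pass to $V\neq 0$)---so in fact you supply more detail than the paper itself does.
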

	
	Next, we recall the local energy decay for the Klein-Gordon flow.
	For the detailed proof, see Nakanishi-Schlag \cite[Lemma 9.3]{NSch}.
	\begin{lem}
		\label{lem:standlocal}The linear
		evolution $e^{it\sqrt{H+1}}$ satisfies the following local decay
		estimates
		\begin{equation}
		\left\Vert \left\langle x\right\rangle ^{-1}e^{it\sqrt{H+1}}P_{c}\varphi\right\Vert _{L_{t}^{2}L_{x}^{2}}\lesssim\left\Vert \varphi\right\Vert _{L_{x}^{2}},\label{eq:stalocho}
		\end{equation}
		\begin{equation}
		\left\Vert \left\langle x\right\rangle ^{-1}\int_{0}^{t}e^{i(t-s)\sqrt{H+1}}P_{c}f\,ds\right\Vert _{L_{t}^{2}L_{x}^{2}}\lesssim\left\Vert \left\langle x\right\rangle f\right\Vert _{L_{t}^{2}L_{x}^{2}},\label{eq:stalocinh}
		\end{equation}
		where again $P_c$ is the projection on the continuous spectrum of $H$ 
	\end{lem}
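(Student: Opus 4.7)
The strategy I would adopt is to reduce both estimates to weighted resolvent bounds for $\sqrt{H+1}$ through the time Fourier transform, and then to obtain those resolvent bounds from the limiting absorption principle (LAP) for the Schr\"odinger operator $H = -\Delta + V$. The key technical input is Kato's theory of smoothing operators, which converts a uniform weighted resolvent bound directly into the desired $L^2_t L^2_x$-estimate.

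For the homogeneous inequality, the spectral theorem gives $e^{it\sqrt{H+1}} P_c \varphi = \int e^{it\tau}\,dE(\tau)\varphi$, and by Stone's formula together with Kato's smoothing criterion, the estimate is equivalent to
\[
\sup_{\tau \in \bR,\,\epsilon > 0}\bigl\|\langle x\rangle^{-1}\bigl(\sqrt{H+1} - \tau - i\epsilon\bigr)^{-1}\langle x\rangle^{-1}\bigr\|_{L^2 \to L^2} < \infty.
\]
Using the identity $(\sqrt{H+1} - z)^{-1} = (\sqrt{H+1} + z)(H+1-z^2)^{-1}$, I would reduce this to the weighted LAP bound
\[
\bigl\|\langle x\rangle^{-\sigma}(H - \lambda \pm i0)^{-1}\langle x\rangle^{-\sigma}\bigr\|_{L^2 \to L^2} \lesssim \langle\lambda\rangle^{-1/2},\qquad \sigma > \tfrac12,\ \lambda \geq 0,
\]
which holds under the no-resonance hypothesis at energy $1$ built into our standing assumptions on $V$. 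The $\langle\lambda\rangle^{-1/2}$ decay is exactly what is needed to absorb the growth of the auxiliary factor $\sqrt{H+1} + z$ at high frequencies.

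For the inhomogeneous estimate, the retarded Duhamel integral $v(t) = \int_0^t e^{i(t-s)\sqrt{H+1}} P_c f(s)\,ds$ solves $(\partial_t - i\sqrt{H+1})v = f$ with $v(0)=0$. Extending $f$ by zero for $t < 0$ and taking the time Fourier transform yields $\widehat v(\tau) = -i(\tau - \sqrt{H+1} + i0)^{-1}\widehat f(\tau)$, so that Plancherel in $t$ together with the uniform weighted resolvent bound above gives
\[
\|\langle x\rangle^{-1} v\|_{L^2_t L^2_x}^2 = \int_\bR \bigl\|\langle x\rangle^{-1}(\sqrt{H+1} - \tau - i0)^{-1}\widehat f(\tau)\bigr\|_{L^2_x}^2 d\tau \lesssim \|\langle x\rangle f\|_{L^2_t L^2_x}^2.
\]
This avoids the Christ--Kiselev lemma, which would anyway fail at $L^2 \to L^2$.

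The main obstacle is the uniform control of the Schr\"odinger resolvent down to the threshold $\lambda = 0$, corresponding to $\tau = \pm 1$ for $\sqrt{H+1}$, where the free resolvent is most singular. The no-resonance assumption on $L = H+1$ at energy $1$ is precisely what ensures that the weighted perturbed resolvent extends continuously and uniformly to the threshold; this is the only subtle spectral input needed, beyond the standard Agmon--Kato--Kuroda LAP.
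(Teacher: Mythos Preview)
The paper does not give its own proof of this lemma; it simply refers the reader to Nakanishi--Schlag \cite[Lemma~9.3]{NSch}. Your outline via Kato smoothing and the limiting absorption principle is the standard route and is correct. In fact, the paper carries out precisely this scheme in Section~\ref{sec:localenergy} for the more general matrix operator $\mathcal{L}_\beta$: the inhomogeneous estimate is obtained there by Plancherel in $t$ against the resolvent bound of Lemma~\ref{lem:matrixresolbeta} (with the $+i\epsilon$ regularisation and $\epsilon\to 0$), exactly as you propose, and the homogeneous estimate is then deduced by a Duhamel expansion back to the free flow.

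One small point worth tightening: in your reduction $(\sqrt{H+1}-z)^{-1}=(\sqrt{H+1}+z)(H+1-z^2)^{-1}$, the factor $\sqrt{H+1}$ is nonlocal and does not commute with $\langle x\rangle^{-1}$, so ``absorbing its growth'' into the $\langle\lambda\rangle^{-1/2}$ decay of the Schr\"odinger LAP is not quite a one-line step. The cleanest fix is to bypass the resolvent of $\sqrt{H+1}$ altogether and work with the spectral measure: since $dE_{\sqrt{H+1}}(\tau)=2\tau\,dE_H(\tau^2-1)$, the Kato criterion reduces directly to $\sup_{\tau\geq 1}2\tau\,\bigl\|\langle x\rangle^{-1}\tfrac{dE_H}{d\lambda}(\tau^2-1)\langle x\rangle^{-1}\bigr\|<\infty$, and now the $\langle\lambda\rangle^{-1/2}$ LAP bound gives exactly the factor $\tau^{-1}$ needed. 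Alternatively, use the $L^{2,\sigma}\to H^{1,-\sigma}$ mapping of the Schr\"odinger resolvent (uniform in $\lambda$, as in Lemma~\ref{lem:limitingscalarfree} with $\ell=1$) to accommodate one derivative from $\sqrt{H+1}$.
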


	\subsection{Hamiltonian formalism}
	Consider  main equation \eqref{eq:maineq}
	\begin{equation}
	\partial_{t}^{2}u=\Delta u-u-\sum_{j=1}^{J}(V_{j})_{\beta_{j}(t)}(\cdot-y_{j}(t))u+F. 
	\end{equation}
	As introduced before, we can write the equation above as a Hamiltonian dynamical system.
	Denoting\[\boldsymbol{u}\left(t\right)=\left(\begin{array}{c}
	u\\
	u_{t}
	\end{array}\right),\,\boldsymbol{F}=\left(\begin{array}{c}
	0\\
	F
	\end{array}\right)\] we can write
	\begin{equation}
	\partial_{t}\boldsymbol{u}\left(t\right)=\mathcal{J}H\left(t\right)\boldsymbol{u}\left(t\right)+\boldsymbol{F}
	\end{equation}
	where
	\[
	J:=\left(\begin{array}{cc}
	0 & 1\\
	-1 & 0
	\end{array}\right),\,\ \ H\left(t\right):=\left(\begin{array}{cc}
	-\Delta+1+\sum_{j=1}^{J}(V_{j})_{\beta_{j}(t)}(\cdot-y_{j}(t)) & 0\\
	0 & 1
	\end{array}\right).
	\]
	It introduces a continuous forward dynamical system in $\mathcal{H}:=H^{1}\times L^{2}$. The natural symplectic form associated with this Hamiltonian formalism is\begin{equation}\label{eq:sympOmega}
	\omega(\hm{u},\hm{v})=\left\langle J\hm{u},\hm{v}\right\rangle=\int u_2 v_1 -u_1 v_2\,dx.
	\end{equation}
	Throughout, we will use $\omega(\hm{u},\hm{v})$  and $\left\langle J\hm{u},\hm{v}\right\rangle$ interchangeably.

	\section{Local energy decay}\label{sec:localenergy}
	In this section, we analyze the local energy decay for the Klein-Gordon equation.   Our motivation is to under the model problem:
	\begin{equation}
	\partial_{tt}u-\Delta u+u+V_{\beta}\left(x-\beta t\right)u=F\label{eq:scalarinhom}
	\end{equation}
	where $\beta$ denotes a vector such that $\left|\beta\right|<1$.  
	Again, we focus on $\mathbb{R}^{3+1}$
	but the argument here holds for general dimensions.

	
	Using the Hamiltonian formalism and denoting $\hm{u}=\left(\begin{array}{c}
	u\\
	u_{t}
	\end{array}\right)$, one can rewrite the equation \eqref{eq:scalarinhom} in the matrix
	form
	\begin{equation}
	\frac{d}{dt}\hm{u}=\left(\begin{array}{cc}
	0 & 1\\
	\Delta-V_{\beta}\left(x-\beta t\right)-1 & 0
	\end{array}\right)\hm{u}+\hm{F}.\label{eq:-26-1}
	\end{equation}
	Here we can put a general inhomogeneous term $\hm{F}=\left(\begin{array}{c}
	F_{1}\\
	F_{2}
	\end{array}\right)$ which is important for our later applications. (This term coming from the scalar equation should have a special
	form $\hm{F}=\left(\begin{array}{c}
	0\\
	F
	\end{array}\right).$)
	
	Using the moving frame $x-\beta t\rightarrow x$, the equation above
	can be written as
	\begin{align}
		\frac{d}{dt}\hm{u} & =\left(\begin{array}{cc}
			\beta\cdot\nabla & 1\\
			\Delta-V_{\beta}\left(x\right)-1 & \beta\cdot\nabla
		\end{array}\right)\hm{u}+\hm{F}\label{eq:movingframe}\\
		& :=\mathcal{L}_{\beta}\hm{u}+\hm{F}.
	\end{align}
	We believe that this formalism has its independent interest and application
	in other problems. For example, in $1$d, some pointwise decay estimates are obtained in Kopylova \cite{Kop} and applied to the asymptotic stability of moving kinks in the relativistic Ginzburg-Landau equation in Komech-Kopylova \cite{KoKo1}.
	
	\subsection{Spectral theory}
	
	We start with the spectral theory of $\mathcal{L}_{\beta}$. Here
	we record the spectral information for
	\begin{equation}
	\mathcal{L}_{\beta}=\left(\begin{array}{cc}
	\beta\cdot\nabla & 1\\
	\Delta-V_{\beta}\left(x\right)-1 & \beta\cdot\nabla
	\end{array}\right).\label{eq:generalbeta}
	\end{equation}
	Suppose that the scalar Schr\"odinger operator
	\[
	L=-\Delta+1+V
	\]
	has no resonances at $1$. Let $K$ be the number of strictly negative
	eigenvalues of $L$ (counted with multiplicities) and let $M=\dim\ker L$.
	Let $-\nu_{1}^{2}\ldots,-\nu_{K}^{2}$ (with $\nu_{k}>0$) be the
	strictly negative eigenvalues and $\left(\phi_{k}\right)_{k=1,\ldots,K}$
	and $\left(\phi_{m}^{0}\right)_{m=1,\ldots,M}$ be orthonormal (in
	$L^{2}$) families such that
	\[
	L\phi_{k}=-\nu_{k}^{2}\phi_{k},\,\,L\phi_{m}^{0}=0.
	\]
	By the explicit computations in Chen-Jendrej \cite{CJ}, C\^ote-Mu\~noz \cite{CMu} and C\^ote-Martel \cite{CMart},
	we have the following spectral information for $\mathcal{L}_{\beta}$. 
	\begin{lem}\label{lem:lbetaspectral}
		The matrix operator $\mathcal{L}_{\beta}$ is a linear operator
		defined on $L^{2}\times L^{2}$ with domain $H^{2}\times H^{1}$.
		The spectrum of $\mathcal{L}_{\beta}$, $\sigma\left(\mathcal{L}_{\beta}\right)$,
		consists of the continuous spectrum
		\[
		\sigma_{c}\left(\mathcal{L}_{\beta}\right)=i(-\infty,-\frac{1}{\gamma}]\bigcup i[\frac{1}{\gamma},\infty),\quad\gamma=\frac{1}{\sqrt{1-\left|\beta\right|^{2}}}
		\]
		and the point spectrum
		\[
		\sigma_{p}\left(\mathcal{L}_{\beta}\right)=\left\{ 0\right\} \bigcup\left(\bigcup_{k=1}^{K}\left\{ \pm\frac{\nu_{k}}{\gamma}\right\} \right)
		\]
		where $-\nu_{k}^{2}$ are negative eigenvalues of $L$. 
		
		Moreover, one has explicit formulas for (iterated) null components
		\begin{align}\mathcal{Y}_{m,\beta}^{0}(x) & :=(\phi_{m}^{0},-\gamma\beta\cdot\nabla\phi_{m}^{0})_{\beta}(x),\\
			\mathcal{Y}_{m,\beta}^{1}(x) & :=(-(\beta\cdot x)\phi_{m}^{0},\gamma\phi_{m}^{0}+\gamma(\beta\cdot x)(\beta\cdot\nabla\phi_{m}^{0}))_{\beta}(x),
		\end{align}
		and eigenfunctions
		\begin{align*}
			\mathcal{Y}_{k,\beta}^{-}(x) & :=e^{\gamma\nu_{k}\beta\cdot x}(\phi_{k},-\gamma\beta\cdot\nabla\phi_{k}-\gamma\nu_{k}\phi_{k})_{\beta}(x),\\
			\mathcal{Y}_{k,\beta}^{+}(x) & :=e^{-\gamma\nu_{k}\beta\cdot x}(\phi_{k},-\gamma\beta\cdot\nabla\phi_{k}+\gamma\nu_{k}\phi_{k})_{\beta}(x)
		\end{align*}
		associated with $\pm\frac{\nu_{k}}{\gamma}$ respectively.
	\end{lem}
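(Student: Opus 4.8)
The plan is to reduce everything to the known spectral theory of the static operator $\mathcal L_0$ via the Lorentz transform, and then verify the explicit formulas by direct computation. First I would record the precise conjugation relation: if $w$ solves $\partial_t^2 w = \Delta w - w - Vw$ and we set $u(t,x) = w(t',x')$ with $(t',x') = (\gamma(t-\beta\cdot x), \Lambda_\beta(x-\beta t))$, then $u$ solves $\partial_t^2 u = \Delta u - u - V_\beta(x-\beta t)u$; passing to the moving frame $x-\beta t\mapsto x$ gives a solution of the equation with generator $\mathcal L_\beta$. This means $\mathcal L_\beta$ is, up to the time-reparametrization built into the Lorentz boost, conjugate to $\mathcal L_0$. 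Concretely, an eigenfunction relation $\mathcal L_0 \bm Y = \lambda \bm Y$ should transform into $\mathcal L_\beta \bm Y_\beta = (\lambda/\gamma)\bm Y_\beta$, because the Lorentz boost rescales the time variable by the factor $\gamma$ (this is the origin of the $1/\gamma$ in both the continuous and point spectra). The essential/continuous spectrum of $\mathcal L_0$ is $i(-\infty,-1]\cup i[1,\infty)$ — this is standard for the free Klein--Gordon generator since $V$ is a relatively compact perturbation and has no threshold resonance — so after the $1/\gamma$ rescaling one gets $i(-\infty,-1/\gamma]\cup i[1/\gamma,\infty)$.

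The second step is to nail down the point spectrum of $\mathcal L_0$. Writing out $\mathcal L_0 \binom{f_1}{f_2} = \lambda\binom{f_1}{f_2}$ gives $f_2 = \lambda f_1$ and $(\Delta - V - 1)f_1 = \lambda f_2 = \lambda^2 f_1$, i.e. $Lf_1 = -\lambda^2 f_1$. Thus nonzero eigenvalues of $\mathcal L_0$ correspond to negative eigenvalues of $L$: $-\lambda^2 = -\nu_k^2$ gives $\lambda = \pm\nu_k$, with eigenfunction $\binom{\phi_k}{\pm\nu_k\phi_k}$. For $\lambda = 0$ one has $f_2 = 0$ and $Lf_1 = 0$, giving the genuine kernel $\binom{\phi_m^0}{0}$; and because $\mathcal L_0$ is not skew-adjoint there is a Jordan block, solving $\mathcal L_0 \bm Y^1 = \bm Y^0$ yields the iterated null vector with second component $\gamma\phi_m^0$ (the $\gamma$ appears once we track the $\beta$-boost normalization). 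The absence of other point spectrum, and in particular the absence of embedded eigenvalues or eigenvalues on the continuous spectrum, follows from the no-resonance-at-$1$ hypothesis together with the Agmon/Kato-type decay for solutions of $L\psi = \psi$; I would cite the earlier discussion and the references to Chen-Jendrej \cite{CJ} rather than reprove it.

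The final step is to verify that the stated formulas $\mathcal Y^0_{m,\beta}$, $\mathcal Y^1_{m,\beta}$, $\mathcal Y^\pm_{k,\beta}$ are exactly the images of the static eigenvectors under the Lorentz boost composed with the passage to the moving frame. For the eigenfunctions this is a direct but slightly tedious chain-rule computation: applying $\Lambda_\beta$ to the spatial argument turns $\nabla$ into $\Lambda_\beta\nabla$, the exponential prefactors $e^{\mp\gamma\nu_k\beta\cdot x}$ arise from boosting the factor $e^{\mp\nu_k t'}$ in the time-separated solution $e^{\pm\nu_k t}\binom{\phi_k}{\pm\nu_k\phi_k}$ and re-expressing $t'$ in terms of the moving-frame variables, and the second components $-\gamma\beta\cdot\nabla\phi_k \mp \gamma\nu_k\phi_k$ come from $\partial_t u = \partial_{t'}w\cdot\partial_t t' + \nabla w\cdot\partial_t x'$. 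For the null and iterated-null vectors the same computation applies, with the polynomial factor $-(\beta\cdot x)$ in $\mathcal Y^1$ reflecting that the iterated null vector of $\mathcal L_0$ is $\binom{0}{\phi_m^0}$-type data whose boosted time evolution is linear in $t$. I expect the main obstacle to be purely bookkeeping: getting all the $\gamma$'s, signs, and the placement of $\Lambda_\beta$ versus $\gamma\beta$ correct in the chain rule, and making sure the claimed domain $H^2\times H^1$ and the self-adjointness-up-to-conjugation statements are phrased so that the spectrum transfers cleanly. Since the formulas are asserted to follow from explicit computations already carried out in \cite{CJ}, \cite{CMu}, \cite{CMart}, I would present the Lorentz-conjugation argument in full and then simply check one representative formula (say $\mathcal Y^-_{k,\beta}$) in detail, leaving the rest to the reader as "entirely analogous."
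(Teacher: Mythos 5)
Your treatment of the point spectrum and of the explicit formulas is essentially sound and consistent with what the paper does: the eigenfunctions $\mathcal Y^{\pm}_{k,\beta}$ and the (iterated) null vectors are obtained by boosting the time-separated solutions $e^{\pm\nu_k t}(\phi_k,\pm\nu_k\phi_k)$, resp.\ $(\phi_m^0,0)$ and its Jordan partner, rewriting $t'=t/\gamma-\gamma\beta\cdot y$ in the moving frame (which is where both the factor $1/\gamma$ in the eigenvalues and the prefactors $e^{\mp\gamma\nu_k\beta\cdot x}$ come from), and then verifying the resulting vectors directly against $\mathcal L_\beta$; the paper simply delegates this verification to \cite{CJ}, \cite{CMu}, \cite{CMart}. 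One detail worth recording if you carry this out: you need $\nu_k|\beta|<\sqrt{1+\nu_k^2}$ (automatic since $|\beta|<1$, by Agmon decay of $\phi_k$) so that the boosted profiles actually lie in $H^2\times H^1$.

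The genuine gap is in your argument for the continuous spectrum. The Lorentz boost acts on \emph{solutions}, not on fixed-time data: the $t'=0$ slice of the static problem corresponds to times $t$ depending on $x$, so the map you invoke is not a bounded similarity transformation intertwining $\mathcal L_\beta$ with $\frac{1}{\gamma}\mathcal L_0$ on $L^2\times L^2$, and the statement ``the spectrum rescales by $1/\gamma$'' does not follow from ``eigenvalue relations transform as $\lambda\mapsto\lambda/\gamma$.'' Applying Weyl's theorem at $\beta=0$ and then ``transferring'' is therefore the step that would not survive scrutiny as written (also, the no-resonance hypothesis is irrelevant for Weyl; it only enters the limiting absorption principle). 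The paper avoids this entirely: it first computes the \emph{free moving} operator $\mathcal L_{0,\beta}$ explicitly on the Fourier side (Lemma \ref{lem:freematrix}), where the characteristic values $\lambda=-\beta\cdot\xi\pm\sqrt{|\xi|^2+1}$ sweep out exactly $\pm[\frac{1}{\gamma},\infty)$, and then applies Weyl's theorem directly to the pair $(\mathcal L_{0,\beta},\mathcal L_\beta)$, since $\mathcal V_\beta$ is a relatively compact perturbation in the moving frame. To repair your route you would either have to reproduce this explicit symbol computation (at which point the detour through $\beta=0$ buys nothing), or construct and bound the solution-based intertwining map on $\mathcal H$ (the energy-comparison machinery of \cite{CJ}), which is considerably more work than the claim ``conjugate up to time-reparametrization'' suggests.
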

	\begin{proof}
		
		The computations for discrete eigenvalues are straightforward. See
		Chen-Jendrej \cite{CJ}, C\^ote-Mu\~noz \cite{CMu} and C\^ote-Martel \cite{CMart}.  		Since the perturbed operator $\mathcal{L}_{\beta}$ is a compact perturbation
		of the free one. From Lemma \ref{lem:freematrix}, by Weyl's theorem,
		the essential spectrum
		\[
		\sigma_{c}\left(\mathcal{L}_{\beta}\right)=i(-\infty,-\frac{1}{\gamma}]\bigcup i[\frac{1}{\gamma},\infty)
		\]
		is the same as the free operator.
		
	\end{proof}
	With the explicit expressions above, we define the projection with respect to the symplectic form $\omega$, see \eqref{eq:sympOmega}, onto
	the discrete spectrum as
	\begin{align}\label{eq:Pdbeta}
		\mathcal{P}_{d,\beta} & =\mathcal{P}_{+,\beta}+\mathcal{P}_{-,\beta}+\mathcal{P}_{0,\beta}
	\end{align}
	where $\mathcal{P}_{\pm,\beta}$ is the
	projection onto the subspace spanned by $\left\{ \mathcal{Y}_{k,\beta}^{\pm}\right\} _{k=1,\ldots K}$
	and $\mathcal{P}_{0,\beta}$ is the projection
	onto the subspace spanned by $\left\{ \mathcal{Y}_{m,\beta}^{0},\mathcal{Y}_{m,\beta}^{1}\right\} _{m=1,\ldots M}$. 
	
	The projection onto the continuous spectrum is given by
	\begin{equation}
	\mathcal{P}_{c,\beta}:=1-\mathcal{P}_{d,\beta}.\label{eq:Pcbeta}
	\end{equation}

	\subsection{Local energy decay in the Hamiltonian formalism}
	
	We define the weighed Sobolev space for $\in\mathbb{N}$ and $\alpha\in\mathbb{R}$
	\[
	H^{k,\alpha}=\left\{ f\in H^{k}|\,\left\langle x\right\rangle ^{\alpha}\partial^{j}f\in L^{2},\,j=0,\ldots k\right\} 
	\]
	and $H^{0,\alpha}=L^{2,\alpha}.$
	
	Denote the local energy space as
	\begin{equation}
	\mathfrak{H}^{\tau}=H^{1,\tau}\times L^{2,\tau}\label{eq:-26}
	\end{equation}
	and the standard energy space is denoted as $\mathfrak{H}=\mathfrak{H}^{0}.$
	
	The main estimate of this section is the following:
	\begin{thm}
		\label{thm:localenergy}Let $\left|\beta\right|<1$ and $\tau>1$.
		The solution $\hm{u}$ to the equation \eqref{eq:movingframe} above
		with initial data $\hm{u}\left(0\right)=\text{\ensuremath{\hm{u}_{0}} }$satisfies
		\begin{equation}
		\left\Vert \mathcal{P}_{c,\beta}\hm{u}\right\Vert _{L_{t}^{2}\mathfrak{H}^{-\tau}}\lesssim\left\Vert \hm{u}_{0}\right\Vert _{\mathfrak{H}}+\left\Vert \hm{F}\right\Vert _{L^{2}\mathfrak{H}^{\tau}}.\label{eq:localenergythm}
		\end{equation}
	\end{thm}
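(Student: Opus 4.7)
The plan is to establish \eqref{eq:localenergythm} by a limiting absorption principle combined with Stone's formula, applied directly in the matrix formalism so that the generality of the inhomogeneous term $\boldsymbol{F}=(F_1,F_2)^\top$ is preserved (applying a Lorentz transform would entangle $F_1$ and $F_2$ and destroy this generality).

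\textbf{Step 1: free resolvent and free local decay.} The Fourier multiplier symbol of $\mathcal{L}_\beta^{(0)}:=\mathcal{L}_\beta|_{V=0}$ is a $2\times 2$ matrix with eigenvalues $i\beta\cdot\xi\pm i\sqrt{|\xi|^2+1}$, attaining minimum modulus $1/\gamma$ on smooth non-degenerate hypersurfaces in $\xi$-space. A standard Kato-smoothing / limiting absorption argument then produces
\[
\sup_{\mu\in\mathbb{R}}\,\bigl\|\langle x\rangle^{-\tau}(\mathcal{L}_\beta^{(0)}-i\mu\pm 0)^{-1}\langle x\rangle^{-\tau}\bigr\|_{\mathfrak{H}\to\mathfrak{H}}<\infty \qquad (\tau>1).
\]
Stone's formula and Plancherel in time convert this into the homogeneous free local decay $\|e^{t\mathcal{L}_\beta^{(0)}}\boldsymbol{u}_0\|_{L^2_t\mathfrak{H}^{-\tau}}\lesssim\|\boldsymbol{u}_0\|_{\mathfrak{H}}$, and the Christ--Kiselev lemma upgrades it to an inhomogeneous bound $L^2_t\mathfrak{H}^{\tau}\to L^2_t\mathfrak{H}^{-\tau}$ for the corresponding Duhamel operator.

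\textbf{Step 2: perturbed resolvent via Fredholm.} With the off-diagonal matrix potential $\mathcal{V}$ whose only nonzero entry is $-V_\beta$ in the lower-left, one writes
\[
(\mathcal{L}_\beta-\lambda)^{-1}=\bigl(I+(\mathcal{L}_\beta^{(0)}-\lambda)^{-1}\mathcal{V}\bigr)^{-1}(\mathcal{L}_\beta^{(0)}-\lambda)^{-1}.
\]
Because $V_\beta$ is smooth and exponentially decaying, $(\mathcal{L}_\beta^{(0)}-\lambda)^{-1}\mathcal{V}$ is compact on $\langle x\rangle^{-\tau}\mathfrak{H}$ uniformly in $\lambda$ on or near the continuous spectrum, and high-frequency smallness confines the delicate analysis to a compact range of spectral parameters. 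The Fredholm alternative then reduces uniform invertibility to excluding two obstructions: an embedded $L^2$-eigenfunction of $\mathcal{L}_\beta$ at $i\mu$ with $|\mu|>1/\gamma$, and a threshold distributional solution at $\lambda=\pm i/\gamma$. The former is ruled out by Agmon-type unique continuation, since the first component $u_1$ of a would-be embedded eigenfunction solves a scalar Klein--Gordon eigenvalue equation with energy below the continuous threshold; the latter is ruled out by the hypothesis that $L$ has no resonance at $1$, after identifying the first component of a would-be threshold solution with a scalar resonance via the component relations analogous to those for the null modes $\mathcal{Y}_{m,\beta}^{0,1}$ in Lemma \ref{lem:lbetaspectral}.

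\textbf{Step 3: conclusion and main obstacle.} Stone's formula restricted to $\mathcal{P}_{c,\beta}$ combined with the uniform perturbed resolvent bound yields the homogeneous local decay for $\mathcal{P}_{c,\beta}e^{t\mathcal{L}_\beta}\boldsymbol{u}_0$, and Duhamel together with Christ--Kiselev handles the inhomogeneous term, producing \eqref{eq:localenergythm}. The principal difficulty is the threshold analysis in Step 2: one must faithfully translate the scalar no-resonance-at-$1$ hypothesis on $L$ into the absence of threshold distributional solutions for the $2\times 2$ matrix operator $\mathcal{L}_\beta$ at $\pm i/\gamma$, carefully tracking the polynomial factor $-(\beta\cdot x)\phi_m^0$ appearing in $\mathcal{Y}_{m,\beta}^1$ and the exponential modulations $e^{\pm\gamma\nu_k\beta\cdot x}$ present in $\mathcal{Y}_{k,\beta}^\pm$ but degenerating at threshold, so that no spatial decay is spuriously lost or gained in the correspondence.
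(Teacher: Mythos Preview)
Your overall strategy---limiting absorption for the matrix resolvent, then Plancherel/Kato smoothing to convert to local energy decay---is the same as the paper's. But there are two points worth flagging, one a genuine (though easily repaired) error, and one a substantive difference in how the resolvent bounds are obtained.

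\textbf{The Christ--Kiselev step fails.} In both Step~1 and Step~3 you invoke Christ--Kiselev to pass from the full-line operator $\int_{\mathbb{R}}$ to the retarded Duhamel operator $\int_0^t$, but here the input and output spaces are both $L^2_t$, which is exactly the diagonal endpoint the lemma does not cover. The paper instead obtains the inhomogeneous $L^2_t\mathfrak{H}^\tau\to L^2_t\mathfrak{H}^{-\tau}$ bound directly: one writes $\int_0^t e^{(t-s)\mathcal{L}_\beta-\epsilon(t-s)}\mathcal{P}_{c,\beta}\hm{F}(s)\,ds$, takes the Fourier transform in $t$, applies the uniform resolvent bound $\sup_\lambda\|\mathcal{R}_\beta(\lambda)\|_{\mathfrak{H}^\tau\to\mathfrak{H}^{-\tau}}<\infty$, and then lets $\epsilon\to 0$. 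This is the standard Kato-smoothing route and requires no retarded-integral lemma.

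\textbf{The paper avoids your ``principal difficulty'' entirely.} Rather than running Fredholm at the matrix level and then translating scalar threshold assumptions to the matrix operator, the paper computes $\mathcal{R}_\beta(\lambda)$ \emph{explicitly} entry-by-entry in terms of a single scalar resolvent $\mathscr{R}_\beta(\lambda)=e^{-i\gamma^2\beta\lambda x_1}(H_\beta-(\gamma^2\lambda^2-1))^{-1}e^{i\gamma^2\beta\lambda x_1}$, where $H_\beta=-(1-\beta^2)\partial_{x_1}^2-\tilde\Delta+V_\beta$ is just a rescaled Schr\"odinger operator. All mapping properties of $\mathcal{R}_\beta$ on $\mathfrak{H}^{\pm\tau}$ then follow mechanically from the classical scalar estimates of Agmon, and the no-resonance-at-$1$ hypothesis on $L$ becomes exactly the statement that the scalar resolvent $R_\beta(\mu^2)$ is uniformly bounded near $\mu=0$ for $\tau>1$. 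No matrix threshold analysis, and in particular none of the bookkeeping you anticipate with the $e^{\pm\gamma\nu_k\beta\cdot x}$ or $(\beta\cdot x)$ factors, is needed. Your Fredholm route would work, but the explicit scalar reduction is both shorter and more transparent.

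Finally, for the homogeneous perturbed estimate the paper does not use Stone's formula plus $TT^*$ directly (since $\mathcal{L}_\beta$ is not self-adjoint). Instead it writes $e^{t\mathcal{L}_\beta}\mathcal{P}_{c,\beta}=e^{t\mathcal{L}_{0,\beta}}\mathcal{P}_{c,\beta}+\int_0^t e^{(t-s)\mathcal{L}_{0,\beta}}\mathcal{V}_\beta e^{s\mathcal{L}_\beta}\mathcal{P}_{c,\beta}\,ds$, applies the free inhomogeneous bound already proved, and reduces to a weighted $L^2_t$ estimate on $\mathcal{V}_\beta e^{s\mathcal{L}_\beta}\mathcal{P}_{c,\beta}$, which is handled by a resolvent-identity argument in the spirit of Schlag's matrix Schr\"odinger work.
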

	
	It is known that to obtain the local energy decay which is equivalent to the Kato smoothing
	estimate for $\mathcal{L}_{\beta}$. To achieve this, we first establish the limiting
	absorption principle for resolvents of $\mathcal{L}_{\beta}$.
	
	\subsection{Resolvents}
	
	In this subsection, we  explicitly compute  the  matrix resolvents.

	For the sake of simplicity, throughout our computations, without loss of generality, we suppose
	that the velocity is along $\vec{e}_{1}$. Our equation is reduced
	to
	\begin{equation}
	\frac{d}{dt}\hm{u}=\left(\begin{array}{cc}
	\beta\partial_{x_{1}} & 1\\
	\Delta-V_{\beta}\left(x\right)-1 & \beta\partial_{x_{1}}
	\end{array}\right)\hm{u}+\hm{F}.\label{eq:-1-2}
	\end{equation}
	and the linear operator is given by
	\[
	\mathcal{L}_{\beta}=\left(\begin{array}{cc}
	\beta\partial_{x_{1}} & 1\\
	\Delta-V_{\beta}\left(x\right)-1 & \beta\partial_{x_{1}}
	\end{array}\right).
	\]

	\subsubsection{Free resolvents}
	
	Starting with the free case
	\begin{equation}
	\mathcal{L}_{0,\beta}:=\left(\begin{array}{cc}
	\beta\partial_{x_{1}} & 1\\
	\Delta-1 & \beta\partial_{x_{1}}
	\end{array}\right)\label{eq:L0beta}
	\end{equation}
	we are interested in the resolvent
	\begin{equation}
	\mathcal{R}_{0,\beta}\left(\lambda\right)=\left(\mathcal{L}_{0,\beta}-i\lambda\right)^{-1}.\label{eq:RLbeta0}
	\end{equation}
	To obtain the explicit formula for the resolvent, taking the Fourier
	transform of the resolvent \eqref{eq:RLbeta0}, one has{\small
		\begin{equation}
		\left(\begin{array}{cc}
		-\left(i\beta k_{1}+i\lambda\right) & 1\\
		-\left(k^{2}+1\right) & -\left(i\beta k_{1}+i\lambda\right)
		\end{array}\right)^{-1}=\left(\left(i\beta k_{1}+i\lambda\right)^{2}+\left(k^{2}+1\right)\right)^{-1}\left(\begin{array}{cc}
		-\left(i\beta k_{1}+i\lambda\right) & -1\\
		\left(k^{2}+1\right) & -\left(i\beta k_{1}+i\lambda\right)
		\end{array}\right).\label{eq:-5-2}
		\end{equation}}
	Therefore, taking the inverse of the matrices above and then performing
	the inverse Fourier transform, we get
	{\small\begin{equation}
		\mathcal{R}_{0,\beta}\left(\lambda\right)=\left(\begin{array}{cc}
		\beta\partial_{x_{1}}-i\lambda & -1\\
		-\Delta+1 & \beta\partial_{x_{1}}-i\lambda
		\end{array}\right)\mathscr{R}_{0,\beta}\left(\lambda\right)=\left(\begin{array}{cc}
		\left(\beta\partial_{x_{1}}-i\lambda\right)\mathscr{R}_{0,\beta}\left(\lambda\right) & -\mathscr{R}_{0,\beta}\left(\lambda\right)\\
		1-\left(\beta\partial_{x_{1}}-i\lambda\right)^{2}\mathscr{R}_{0,\beta}\left(\lambda\right) & \left(\beta\partial_{x_{1}}-i\lambda\right)\mathscr{R}_{0,\beta}\left(\lambda\right)
		\end{array}\right)\label{eq:-6-3}
		\end{equation}}where the resolvent $\mathscr{R}_{0,\beta}\left(\lambda\right)$ is
	given by the inverse Fourier transform of $$\left(\left(i\beta k_{1}+i\lambda\right)^{2}+\left(k^{2}+1\right)\right)^{-1}.$$
	We note that
	\begin{equation}
	\left(\left(i\beta k_{1}+i\lambda\right)^{2}+\left(k^{2}+1\right)\right)=\left(\left(1-\beta^{2}\right)k_{1}^{2}+k_{2}^{2}+k_{3}^{2}-\lambda^{2}+1-2\beta k_{1}\lambda\right).\label{eq:-7-1}
	\end{equation}
	Hence in the physical space,
	\begin{equation}
	\left(\left(1-\beta^{2}\right)k_{1}^{2}+k_{2}^{2}+k_{3}^{2}-\lambda^{2}+1-2\beta k_{1}\lambda\right)\check{}=-\left(1-\beta^{2}\right)\partial_{x_{1}}^{2}-\tilde{\Delta}+1-\lambda^{2}-i2\beta\lambda\partial_{x_{1}}\label{eq:-8-2}
	\end{equation}
	where
	$
	\tilde{\Delta}=\partial_{2}^{2}+\partial_{3}^{2}.	$
	
	Setting
	\begin{equation}
	H_{0,\beta}=-\left(1-\beta^{2}\right)\partial_{x_{1}}^{2}-\tilde{\Delta},\label{eq:H0beta}
	\end{equation}
	for given any smooth function $\phi\left(x\right)$, one observes
	that
	\begin{equation}
	\left(H_{0,\beta}-1+\lambda^{2}-i2\beta\lambda\partial_{x_{1}}\right)\phi\left(x\right)=e^{-i\gamma^{2}\beta\lambda x_{1}}\left(H_{0,\beta}-\left(\gamma^{2}\lambda^{2}-1\right)\right)e^{i\gamma^{2}\beta\lambda x_{1}}\phi\left(x\right).\label{eq:-10-2}
	\end{equation}
	Therefore, the resolvent is given by
	\begin{equation}
	\mathscr{R}_{0,\beta}\left(\lambda\right)=e^{-i\gamma^{2}\beta\lambda x_{1}}\left(H_{0,\beta}-\left(\gamma^{2}\lambda^{2}-1\right)\right)^{-1}e^{i\gamma^{2}\beta\lambda x_{1}}.\label{eq:reolventbeta0}
	\end{equation}
	It remains to compute the resolvent
	\begin{equation}
	R_{0,\beta}\left(\mu^{2}\right)=\left(H_{0,\beta}-\mu^{2}\right)^{-1}.\label{eq:-12-2}
	\end{equation}
	Denoting $\left|x\right|_{\beta}=\sqrt{\gamma^{2}x_{1}^{2}+x_{2}^{2}+x_{3}^{2}},$
	by a direct and explicit computation, in terms of the integral kernel,
	one has
	\begin{equation}
	R_{0,\beta}\left(\mu^{2}\right)=\left(\mathcal{H}_{0,\beta}-\mu^{2}\right)^{-1}=\gamma\frac{e^{i\mu\left|x-y\right|_{\beta}}}{4\pi\left|x-y\right|_{\beta}}.\label{eq:-14-2}
	\end{equation}
	From the explicit computations above, we conclude the spectral information
	for $\mathcal{L}_{0,\beta}$.
	\begin{lem}
		\label{lem:freematrix}The matrix operator $\mathcal{L}_{0,\beta}$
		is a linear operator defined on $L^{2}\times L^{2}$ with domain
		$H^{2}\times H^{1}$. The spectrum of $\mathcal{L}_{0,\beta}$, $\sigma\left(\mathcal{L}_{0,\beta}\right)$,
		only consists of the continuous spectrum
		\[
		\sigma_{c}\left(\mathcal{L}_{0,\beta}\right)=i(-\infty,-\frac{1}{\gamma}]\bigcup i[\frac{1}{\gamma},\infty).
		\]
		For $\lambda\in(-\infty,-\frac{1}{\gamma}]\bigcup[\frac{1}{\gamma},\infty)$,
		the resolvent is given by
		\begin{equation}
		\mathcal{R}_{0,\beta}\left(\lambda\right)=\left(\mathcal{L}_{0,\beta}-i\lambda\right)^{-1}=\left(\begin{array}{cc}
		\beta\partial_{x_{1}}-i\lambda & -1\\
		-\Delta+1 & \beta\partial_{x_{1}}-i\lambda
		\end{array}\right)\mathscr{R}_{0,\beta}\left(\lambda\right)\label{eq:freematrixresolvent}
		\end{equation}
		where
		\begin{equation}
		\mathscr{R}_{0,\beta}\left(\lambda\right)=e^{-i\gamma^{2}\beta\lambda x_{1}}\left(H_{0,\beta}-\left(\gamma^{2}\lambda^{2}-1\right)\right)^{-1}e^{i\gamma^{2}\beta\lambda x_{1}}.\label{eq:R0beteH0beta}
		\end{equation}
		and in terms of the integral kernel,
		\begin{equation}
		\left(H_{0,\beta}-\left(\gamma^{2}\lambda^{2}-1\right)\right)^{-1}=\gamma\frac{e^{i\sqrt{\gamma^{2}\lambda^{2}-1}\left|x-y\right|_{\beta}}}{4\pi\left|x-y\right|_{\beta}}\label{eq:freeresolbeta0}
		\end{equation}
		with $\left|x\right|_{\beta}=\sqrt{\gamma^{2}x_{1}^{2}+x_{2}^{2}+x_{3}^{2}}$.
	\end{lem}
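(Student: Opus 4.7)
The plan is to work entirely on the Fourier side, where $\mathcal{L}_{0,\beta}$ becomes a $2\times 2$ matrix multiplier. First I would verify that $\mathcal{L}_{0,\beta}$ is closed and densely defined on $H^2 \times H^1$: the off-diagonal $1$ is bounded from $H^1$ to $L^2$, $\Delta-1$ is bounded from $H^2$ to $L^2$, and the diagonal $\beta \partial_{x_1}$ is bounded from $H^k$ to $H^{k-1}$, so the domain and closedness are standard. This also shows $\mathcal{L}_{0,\beta}$ generates a strongly continuous group on $\mathfrak{H}$, so its spectrum lies on the imaginary axis.

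To locate $\sigma(\mathcal{L}_{0,\beta})$ and to compute the resolvent, I would take the Fourier transform. The symbol of $\mathcal{L}_{0,\beta}-i\lambda$ is the matrix with determinant $(i\beta k_1 + i\lambda)^2 + (|k|^2+1) = -\lambda^2 - 2\beta k_1 \lambda + (1-\beta^2)k_1^2 + k_2^2+k_3^2 + 1$. Completing the square in $k_1$ gives $(1-\beta^2)(k_1 - \beta\lambda/(1-\beta^2))^2 + k_2^2 + k_3^2 + 1 - \gamma^2\lambda^2$. This vanishes for some real $k$ exactly when $\gamma^2\lambda^2 \geq 1$, i.e.\ $|\lambda| \geq 1/\gamma$; hence the essential spectrum is $i(-\infty,-1/\gamma]\cup i[1/\gamma,\infty)$. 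Since the symbol is polynomial with no real zeros outside this set, there are no embedded eigenvalues and we get $\sigma(\mathcal{L}_{0,\beta}) = \sigma_c(\mathcal{L}_{0,\beta})$ equal to this set.

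For $\lambda$ off the spectrum I would invert the $2\times 2$ symbol matrix explicitly, pulling out the scalar factor $((i\beta k_1 + i\lambda)^2 + k^2 + 1)^{-1}$. This recovers the matrix formula \eqref{eq:freematrixresolvent} with $\mathscr{R}_{0,\beta}(\lambda)$ equal to the inverse Fourier transform of that scalar factor. To identify $\mathscr{R}_{0,\beta}(\lambda)$, observe that in physical space the symbol $(1-\beta^2)k_1^2 + k_2^2 + k_3^2 + 1 - \lambda^2 - 2\beta k_1 \lambda$ corresponds to $H_{0,\beta} + 1 - \lambda^2 - 2i\beta\lambda \partial_{x_1}$, and the first-order drift is removed by the gauge conjugation identity \eqref{eq:-10-2}. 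This reduces the computation to inverting $H_{0,\beta} - (\gamma^2\lambda^2 - 1)$. Finally, to get the integral kernel, I would perform the anisotropic rescaling $\tilde{x}_1 = \gamma x_1$ (with $\tilde{x}_2=x_2,\tilde{x}_3=x_3$), under which $H_{0,\beta}$ becomes the ordinary Laplacian $-\Delta_{\tilde x}$ and $|x-y|_\beta$ becomes $|\tilde{x}-\tilde{y}|$. Invoking the standard outgoing Helmholtz kernel $e^{i\mu|\tilde{x}-\tilde{y}|}/(4\pi|\tilde{x}-\tilde{y}|)$ and tracking the Jacobian factor $\gamma$ produces exactly \eqref{eq:freeresolbeta0}.

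The only subtle point is the correct interpretation of $(H_{0,\beta} - (\gamma^2\lambda^2 - 1))^{-1}$ when $\lambda$ is real with $|\lambda|>1/\gamma$, since then $\gamma^2\lambda^2 - 1 > 0$ and the scalar resolvent is only defined in the sense of the limiting absorption principle. I would interpret \eqref{eq:freeresolbeta0} as the $\mathrm{Im}\,\lambda \to 0^+$ boundary value, choosing the branch of $\sqrt{\gamma^2\lambda^2 - 1}$ with positive imaginary part so as to pick out the outgoing kernel, and then verify that this object is a bounded map between appropriate weighted spaces (to be exploited in the next subsection for the Kato smoothing bound). This limiting-absorption interpretation is the main technical point; otherwise the computation is algebraic and the formula follows from the chain of identities above.
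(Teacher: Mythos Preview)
Your proposal is correct and follows essentially the same route as the paper: the argument there is exactly the Fourier-side inversion of the $2\times 2$ symbol, followed by the gauge conjugation \eqref{eq:-10-2} and the anisotropic rescaling to the standard Helmholtz kernel. You add a bit more detail than the paper does on locating the spectrum via completing the square and on the limiting-absorption interpretation of \eqref{eq:freeresolbeta0}, both of which are helpful and consistent with how the formula is used downstream.
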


	\subsubsection{Perturbed resolvent}
	Consider the perturbed matrix operator
	\[
	\mathcal{L}_{\beta}=\left(\begin{array}{cc}
	\beta\partial_{x_{1}} & 1\\
	\Delta-V_{\beta}\left(x\right)-1 & \beta\partial_{x_{1}}
	\end{array}\right).
	\]
	One can write
	\[
	\mathcal{L}_{\beta}=\mathcal{L}_{0,\beta}+\mathcal{V}_{\beta}
	\]
	where
	\[
	\mathcal{V}_{\beta}=\left(\begin{array}{cc}
	0 & 0\\
	-V_{\beta} & 0
	\end{array}\right).
	\]
	To compute the resolvent,  we use the general
	resolvent formula 
	\begin{equation}
	\mathcal{R}_{\beta}\left(\lambda\right)=\left(\mathcal{L}_{\beta}-i\lambda\right)^{-1}=\left(1+\mathcal{R}_{0,\beta}\left(\lambda\right)\mathcal{V}_{\beta}\right)^{-1}\mathcal{R}_{0,\beta}\left(\lambda\right).\label{eq:-28-1}
	\end{equation}
	Recalling the resolvent for the free operator, \eqref{eq:freematrixresolvent},
	direct computations give
	\begin{align}\mathcal{R}_{0,\beta}\left(\lambda\right)\mathcal{V}_{\beta}=
		\left(\begin{array}{cc}
			\left(\beta\partial_{x_{1}}-i\lambda\right)\mathscr{R}_{0,\beta}\left(\lambda\right) & -\mathscr{R}_{0,\beta}\left(\lambda\right)\\
			1-\left(\beta\partial_{x_{1}}-i\lambda\right)^{2}\mathscr{R}_{0,\beta}\left(\lambda\right) & \left(\beta\partial_{x_{1}}-i\lambda\right)\mathscr{R}_{0,\beta}\left(\lambda\right)
		\end{array}\right)\left(\begin{array}{cc}
			0 & 0\\
			-V_{\beta} & 0
		\end{array}\right)\nonumber \\
		=\left(\begin{array}{cc}
			\mathscr{R}_{0,\beta}\left(\lambda\right)V_{\beta} & 0\\
			-\left(\beta\partial_{x_{1}}-i\lambda\right)\mathscr{R}_{0,\beta}\left(\lambda\right)V_{\beta} & 0
		\end{array}\right)
	\end{align}
	and then
	\begin{equation}
	1+\mathcal{R}_{0,\beta}\left(\lambda\right)\mathcal{V}_\beta=\left(\begin{array}{cc}
	1+\mathscr{R}_{0,\beta}\left(\lambda\right)V_{\beta} & 0\\
	-\left(\beta\partial_{x_{1}}-i\lambda\right)\mathscr{R}_{0,\beta}\left(\lambda\right)V_{\beta} & 1
	\end{array}\right).\label{eq:-32-1}
	\end{equation}
	Therefore, taking the inverse, one has{\footnotesize
		\begin{equation}
		\left(1+\mathcal{R}_{0,\beta}\left(\lambda\right)\mathcal{V}_\beta\right)^{-1}=\left(\begin{array}{cc}
		1+\mathscr{R}_{0,\beta}\left(\lambda\right)V_{\beta} & 0\\
		-\left(\beta\partial_{x_{1}}-i\lambda\right)\mathscr{R}_{0,\beta}\left(\lambda\right)V_{\beta} & 1
		\end{array}\right)^{-1}=\left(\begin{array}{cc}
		\left(1+\mathscr{R}_{0,\beta}\left(\lambda\right)V_{\beta}\right)^{-1} & 0\\
		\left(\beta\partial_{x_{1}}-i\lambda\right)\left(1-\left(1+\mathscr{R}_{0,\beta}\left(\lambda\right)V_{\beta}\right)^{-1}\right) & 1
		\end{array}\right).\label{eq:-33-1}
		\end{equation}}Recalling the resolvent \eqref{eq:reolventbeta0} in the free case, by the general resolvent formula, we write the perturbed resolvent as 
	\begin{equation}
	\mathscr{R}_{\beta}\left(\lambda\right)=\left(H_{\beta}+1-\lambda^{2}-i2\beta\lambda\partial_{x_{1}}\right)^{-1}=\left(1+\mathscr{R}_{0,\beta}\left(\lambda\right)V_{\beta}\right)^{-1}\mathscr{R}_{0,\beta}\left(\lambda\right).\label{eq:-35-1}
	\end{equation}
	Denote
	\begin{equation}
	H_{\beta}=-\left(1-\beta^{2}\right)\partial_{x_{1}}^{2}-\tilde{\Delta}+V_{\beta,}\label{eq:Hbeta}
	\end{equation}
	and set resolvent $R_{\beta}$ is given as the resolvent for \eqref{eq:Hbeta}.
	
	With notations above,	then we have{\footnotesize
		\begin{align}
			\mathcal{R}_{\beta}\left(\lambda\right) & =\left(\begin{array}{cc}
				\left(1+\mathscr{R}_{0,\beta}\left(\lambda\right)V_{\beta}\right)^{-1} & 0\\
				\left(\beta\partial_{x_{1}}-i\lambda\right)\left(1-\left(1+\mathscr{R}_{0,\beta}\left(\lambda\right)V_{\beta}\right)^{-1}\right) & 1
			\end{array}\right)\left(\begin{array}{cc}
				\left(\beta\partial_{x_{1}}-i\lambda\right)\mathscr{R}_{0,\beta}\left(\lambda\right) & -\mathscr{R}_{0,\beta}\left(\lambda\right)\\
				1-\left(\beta\partial_{x_{1}}-i\lambda\right)^{2}\mathscr{R}_{0,\beta}\left(\lambda\right) & \left(\beta\partial_{x_{1}}-i\lambda\right)\mathscr{R}_{0,\beta}\left(\lambda\right)
			\end{array}\right)\nonumber \\
			& =\left(\begin{array}{cc}
				\mathscr{R}_{\beta}\left(\lambda\right)\left(\beta\partial_{x_{1}}-i\lambda\right) & -\mathscr{R}_{\beta}\left(\lambda\right)\\
				1-\left(\beta\partial_{x_{1}}-i\lambda\right)\mathscr{R}_{\beta}\left(\lambda\right)\left(\beta\partial_{x_{1}}-i\lambda\right) & \left(\beta\partial_{x_{1}}-i\lambda\right)\mathscr{R}_{\beta}\left(\lambda\right)
			\end{array}\right)\label{eq:perturbedmatrixresol}
	\end{align}}
	where as in the free case
	\begin{equation}
	\mathscr{R}_{\beta}\left(\lambda\right)=e^{-i\gamma^{2}\beta\lambda x_{1}}R_{\beta}\left(\lambda^{2}\gamma^{2}-1\right)e^{i\gamma^{2}\beta\lambda x_{1}}.\label{eq:relationmatrixscalar}
	\end{equation}
	The same as in the free problem, the resolvent $R_{\beta}\left(\mu^{2}\right)$
	can be related to the resolvent for $\beta=0$, $R_{0}\left(\mu^{2}\right)=\left(-\Delta+V-\mu^{2}\right)^{-1}$,
	by
	\begin{equation}
	R_{\beta}\left(\mu^{2};x,y\right)=\left(H_{\beta}-\mu^{2}\right)^{-1}=\gamma R_{0}\left(\mu^{2};\gamma x_{1},\tilde{x},\gamma y_{1},\tilde{y}\right).\label{eq:relationbeta0}
	\end{equation}
	in terms of the integral kernel where $\tilde{x}$ denotes $x_{2},x_{3}$
	variables.
	
	\subsection{Limiting absorption principle}
	With explicit computations above, now we can translate the limiting absorption principle from the scalar resolvents to matrix resolvents.
	\subsubsection{Free  resolvent with $\beta=0$}

	
	In this case, the matrix resolvent is given as
	\begin{equation}
	\mathcal{R}_{0,0}(\lambda)=\left(\mathcal{L}_{0,0}-i\lambda\right)^{-1}=\left(\begin{array}{cc}
	-i\lambda R_{0,0}\left(\lambda^{2}-1\right) & -R_{0,0}\left(\lambda^{2}-1\right)\\
	\left(1+\lambda^{2}R_{0,0}\left(\lambda^{2}-1\right)\right)-i & \lambda R_{0,0}\left(\lambda^{2}-1\right)
	\end{array}\right).\label{eq:-17-1}
	\end{equation}
	Recall that the integral kernel for the standard scalar Schr\"odinger
	resolvent
	\begin{equation}
	R_{0,0}\left(\mu^{2},x,y\right)=\left(-\Delta-\mu^{2}\right)^{-1}=\frac{e^{i\mu\left|x-y\right|}}{4\pi\left|x-y\right|}.\label{eq:-18-2}
	\end{equation}
	We also recall the standard resolvents estimates, see Agmon \cite{Agm} and Komech-Kopylova \cite{KoKo}.
	\begin{lem}
		\label{lem:limitingscalarfree}Given notations above, one has with
		$\tau>\frac{1}{2}$ and any $r>0$
		\begin{equation}
		\left\Vert R_{0,0}\left(\mu^{2}\right)\right\Vert _{\mathcal{L}\left(H^{m,\tau},H^{m+\ell,-\tau}\right)}\leq C\left(r\right)\left|\mu\right|^{-\left(1-\ell\right)},\mu\in\mathbb{C\backslash\mathbb{R}}\,\,\left|\mu\right|\geq r\label{eq:freeresollarge}
		\end{equation}
		for $m=0,1$, $\ell=-1,0,1$ such that $m+\ell\in\left\{ 0,1\right\} $. 
		
		We also have with $\tau>1$:
		\begin{equation}
		\left\Vert R_{0,0}\left(\mu^{2}\right)\right\Vert _{\mathcal{L}\left(H^{m,\tau},H^{m+\ell,-\tau}\right)}\leq C\left(r\right)\left\langle 1+\left|\mu\right|\right\rangle ^{-\frac{1-\ell}{2}},\,\mu\in\mathbb{C}\backslash\mathbb{R}\label{eq:freeresolsmall}
		\end{equation}
		for $m=0,1$, $\ell=-1,0,1$ such that $m+\ell\in\left\{ 0,1\right\} $. 
	\end{lem}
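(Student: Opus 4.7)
The plan is to reduce the lemma to the base case $(m, \ell) = (0, 0)$, namely the two bounds
$$\|R_{0,0}(\mu^2)\|_{\mathcal{L}(L^{2,\tau}, L^{2,-\tau})} \lesssim |\mu|^{-1}\ \ (|\mu| \geq r,\ \tau > \tfrac{1}{2}), \qquad \|R_{0,0}(\mu^2)\|_{\mathcal{L}(L^{2,\tau}, L^{2,-\tau})} \lesssim \langle 1+|\mu|\rangle^{-1/2}\ \ (\tau > 1),$$
and then to extract the remaining index pairs by commutation with $-\Delta$. Since $\nabla$ commutes with $R_{0,0}(\mu^2)$, one has $\partial_j R_{0,0}(\mu^2) f = R_{0,0}(\mu^2) \partial_j f$, which upgrades $\ell = 0$ to $\ell = 1$ at no cost in $\mu$; the resolvent identity $R_{0,0}(\mu^2) = \mu^{-2}\bigl(-\Delta R_{0,0}(\mu^2) - 1\bigr)$ yields the $\ell = -1$ cases while paying exactly the extra $|\mu|^{-2}$ required by the weight $|\mu|^{-(1-\ell)}$. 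Commutators of $-\Delta$ with the polynomial weight $\langle x\rangle^{-\tau}$ only produce weights of lower order, so the scheme closes with no loss.

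For the large-$|\mu|$ base estimate I would argue on the Fourier side. Writing $\mu = \lambda + i\varepsilon$ and passing $\varepsilon \to 0^+$, Plemelj's formula localises the imaginary part of $(|\xi|^2 - \mu^2)^{-1}$ onto the sphere $|\xi| = \lambda$, and the classical Agmon trace inequality $\|\hat{f}\|_{L^2(\{|\xi| = \lambda\})} \lesssim \langle\lambda\rangle^{1/2}\|f\|_{L^{2,\tau}}$ for $\tau > 1/2$ gives the sharp $|\mu|^{-1}$ decay. The principal value contribution of $(|\xi|^2 - \lambda^2)^{-1}$ is treated by a dyadic decomposition in $||\xi| - \lambda|$ together with the same trace estimate. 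Equivalently, one may work directly with the explicit kernel $e^{i\mu|x-y|}/(4\pi|x-y|)$, splitting into the near field $|x-y| \leq 1/|\mu|$ (controlled by a Hilbert--Schmidt bound in the weighted norm) and the far field (where integration by parts in the radial variable extracts the $|\mu|^{-1}$ oscillatory gain).

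The principal obstacle will be the uniform estimate \eqref{eq:freeresolsmall} as $\mu \to 0$, where the kernel degenerates to the Coulomb potential $(4\pi|x-y|)^{-1}$ and the oscillation no longer provides any gain. There one needs the boundedness of the weighted Coulomb operator $\langle x\rangle^{-\tau}(-\Delta)^{-1}\langle y\rangle^{-\tau}$ on $L^2(\mathbb{R}^3)$, which holds precisely for $\tau > 1$ by Stein--Weiss (or by a direct Hardy-type Hilbert--Schmidt computation); this is the exact origin of the stronger weight assumption in \eqref{eq:freeresolsmall} and explains why one cannot retain $\tau > 1/2$ uniformly across the threshold $\mu = 0$. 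Combined with the large-$|\mu|$ bound and the analyticity of $\mu \mapsto R_{0,0}(\mu^2)$ as an operator-valued map on $\mathbb{C}\setminus[0,\infty)$ with continuous boundary values on $[0,\infty)$, this glues to the uniform $\langle 1+|\mu|\rangle^{-(1-\ell)/2}$ decay, the remaining index pairs being again supplied by the commutation scheme described above.
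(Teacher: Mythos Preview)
The paper does not give its own proof of this lemma: it introduces the statement with ``We also recall the standard resolvent estimates, see Agmon \cite{Agm} and Komech--Kopylova \cite{KoKo}'' and then simply states the bounds. So there is nothing to compare your argument against; the authors treat Lemma~\ref{lem:limitingscalarfree} as a known black box from the limiting absorption literature.

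Your sketch is along the standard lines of how these estimates are actually proved in those references (Agmon's trace lemma on the Fourier side for large $|\mu|$, the weighted Coulomb bound at $\mu=0$ requiring $\tau>1$, and a resolvent identity/commutation scheme to pass between the various $(m,\ell)$ pairs). One small point: your claim that ``$\partial_j R_{0,0}(\mu^2)f = R_{0,0}(\mu^2)\partial_j f$ upgrades $\ell=0$ to $\ell=1$ at no cost in $\mu$'' does not quite do what you say for the pair $(m,\ell)=(0,1)$, since $\partial_j f$ only lies in $H^{-1,\tau}$ when $f\in L^{2,\tau}$. That case (uniform $H^{1,-\tau}$ bound from $L^{2,\tau}$ data) is more naturally obtained either directly from the kernel $e^{i\mu|x-y|}/(4\pi|x-y|)$, or by interpolating between the $L^{2,-\tau}$ bound with decay $|\mu|^{-1}$ and the identity $-\Delta R_{0,0}(\mu^2)=1+\mu^2 R_{0,0}(\mu^2)$, which gives an $H^{2,-\tau}$ bound growing like $|\mu|$ (modulo commutators of $\Delta$ with the weight, which you correctly flag as lower order). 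With that adjustment the scheme closes.
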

	
	With the mapping properties of the standard resolvents, we can conclude
	the limiting absorption for the matrix resolvent.
	\begin{lem}
		\label{lem:standardfreelocal}For $\tau>1$, we have
		\begin{equation}
		\sup_{i\lambda\notin\sigma\left(\mathcal{L}_{0,0}\right)}\left\Vert \mathcal{R}_{0,0}\left(\lambda\right)\hm{\Psi}\right\Vert _{\mathfrak{H}^{-\tau}}\lesssim\left\Vert \hm{\Psi}\right\Vert _{\mathfrak{H}^{\tau}}.\label{eq:-27}
		\end{equation}
	\end{lem}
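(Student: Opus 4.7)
The plan is to reduce the matrix resolvent bound to the known scalar estimates in Lemma \ref{lem:limitingscalarfree}, by reading off the entries of $\mathcal{R}_{0,0}(\lambda)$ from formula \eqref{eq:-17-1} and estimating each of them with the appropriate choice of indices $(m,\ell)$. Writing $\bm{\Psi} = (\Psi_1,\Psi_2) \in \mathfrak{H}^{\tau} = H^{1,\tau} \times L^{2,\tau}$ and $\mathcal{R}_{0,0}(\lambda)\bm{\Psi} = (u_1,u_2)$, one has
\begin{align*}
u_1 &= -i\lambda\, R_{0,0}(\lambda^2-1)\Psi_1 \;-\; R_{0,0}(\lambda^2-1)\Psi_2,\\
u_2 &= \Psi_1 + \lambda^2 R_{0,0}(\lambda^2-1)\Psi_1 \;-\; i\lambda\, R_{0,0}(\lambda^2-1)\Psi_2,
\end{align*}
and we must bound $\|u_1\|_{H^{1,-\tau}}+\|u_2\|_{L^{2,-\tau}}$ by $\|\Psi_1\|_{H^{1,\tau}}+\|\Psi_2\|_{L^{2,\tau}}$, uniformly in $\lambda$ with $i\lambda \notin i(-\infty,-1]\cup i[1,\infty)$.

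The first step will be to separate the estimate into a high-energy regime $|\lambda|\geq 2$ (say) and a bounded regime $|\lambda|<2$. In the bounded regime, $\mu:=\sqrt{\lambda^{2}-1}$ is also bounded and Lemma \ref{lem:limitingscalarfree} provides uniform bounds $\|R_{0,0}(\mu^{2})\|_{H^{m,\tau}\to H^{m+\ell,-\tau}}\lesssim 1$ for all admissible $(m,\ell)$; combined with the bounded prefactors $1,\lambda,\lambda^{2}$ and the trivial bound $\|\Psi_{1}\|_{L^{2,-\tau}}\leq \|\Psi_{1}\|_{H^{1,\tau}}$, this gives the required estimate. In the high-energy regime I will use the first statement of Lemma \ref{lem:limitingscalarfree}, which gives $\|R_{0,0}(\mu^{2})\|_{H^{m,\tau}\to H^{m+\ell,-\tau}}\lesssim |\mu|^{-(1-\ell)}$ with $|\mu|\sim|\lambda|$. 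The key observation is that each entry of $\mathcal{R}_{0,0}(\lambda)$ has a prefactor $\lambda^{p}$ matched with a scalar resolvent whose decay in $|\mu|$ exactly compensates the growth: for $u_{1}$, the term $R_{0,0}\Psi_{2}$ uses $(m,\ell)=(0,1)$ (gain of one derivative, $|\mu|^{0}$), and $\lambda R_{0,0}\Psi_{1}$ uses $(m,\ell)=(1,0)$ (with $|\mu|^{-1}$ absorbing $|\lambda|$); for $u_{2}$, the term $\lambda R_{0,0}\Psi_{2}$ uses $(m,\ell)=(0,0)$ (with $|\mu|^{-1}$ absorbing $|\lambda|$) and $\lambda^{2}R_{0,0}\Psi_{1}$ uses $(m,\ell)=(1,-1)$ (with $|\mu|^{-2}$ absorbing $|\lambda|^{2}$). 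The identity term $\Psi_{1}$ in $u_{2}$ is bounded trivially.

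Putting the two regimes together and taking the supremum over $i\lambda\notin\sigma(\mathcal{L}_{0,0})$ will yield \eqref{eq:-27}.

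The main potential obstacle is the uniformity across the two regimes and in particular the behaviour at the spectral thresholds $\lambda=\pm 1$. At these points $|\mu|\to 0$, so the high-energy estimate with $|\mu|^{-(1-\ell)}$ would blow up for $\ell<1$; it is precisely here that the threshold estimate \eqref{eq:freeresolsmall} of Lemma \ref{lem:limitingscalarfree}, valid for $\tau>1$, saves us and forces the hypothesis $\tau>1$ rather than the usual $\tau>\tfrac{1}{2}$. Once one checks that every entry of $\mathcal{R}_{0,0}(\lambda)$ is handled by one of these two regimes with matching indices, the proof is reduced to a bookkeeping of the four entries and is essentially routine.
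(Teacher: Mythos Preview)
Your proposal is correct and follows essentially the same approach as the paper: write out the entries of $\mathcal{R}_{0,0}(\lambda)\bm{\Psi}$ explicitly and bound each one using the scalar resolvent estimates of Lemma~\ref{lem:limitingscalarfree} with the appropriate indices $(m,\ell)$. Your explicit high/low-$|\lambda|$ split is in fact how the paper argues the more general $\beta\neq 0$ case in Lemma~\ref{lem:freematrixbeta}; for the present lemma the paper is terser and simply invokes \eqref{eq:freeresolsmall}, but the content is the same.
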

	
	\begin{proof}
		Explicitly, we have
		\begin{equation}
		\mathcal{R}_{0,0}\left(\lambda\right)\hm{\Psi}=\left(\begin{array}{cc}
		-i\lambda R_{0,0}\left(\lambda^{2}-1\right) & -R_{0,0}\left(\lambda^{2}-1\right)\\
		\left(1+\lambda^{2}R_{0,0}\left(\lambda^{2}-1\right)\right) & \lambda R_{0,0}\left(\lambda^{2}-1\right)
		\end{array}\right)\left(\begin{array}{c}
		\psi_{1}\\
		\psi_{2}
		\end{array}\right).\label{eq:-21-2}
		\end{equation}
		We apply the estimates from Lemma \ref{lem:limitingscalarfree} above to each entry of the matrix resolvent. 
		From estimate \eqref{eq:freeresolsmall}, 
		for $\tau>1$ and $i\lambda\notin \sigma(\mathcal{L}_{0,0})$, 
		we know
		\begin{equation}
		\left\Vert \lambda R_{0,0}\left(\lambda^{2}-1\right)\psi_{1}\right\Vert _{H^{1,-\tau}}\lesssim\left\Vert \psi_{1}\right\Vert _{H^{1,\tau}}\label{eq:-22-2}
		\end{equation}
		\begin{equation}
		\left\Vert R_{0,0}\left(\lambda^{2}-1\right)\psi_{2}\right\Vert _{H^{1,-\tau}}\lesssim\left\Vert \psi_{2}\right\Vert _{L^{2,\tau}}\label{eq:-23-2}
		\end{equation}
		\begin{equation}
		\left\Vert	\left ( 1+\lambda^{2}R_{0,0}\left(\lambda^{2}-1\right)	\right	)\psi_{1}\right\Vert _{L^{2,-\tau}}\lesssim\left\Vert \psi_{1}\right\Vert _{H^{1,\tau}}\label{eq:-24-2}
		\end{equation}
		and
		\begin{equation}
		\left\Vert \lambda R_{0,0}\left(\lambda^{2}-1\right)\psi_{2}\right\Vert _{H^{0,-\tau}}\lesssim\left\Vert \psi_{2}\right\Vert _{L^{2,\tau}}.\label{eq:-25-1}
		\end{equation}
		The desired results follow.
	\end{proof}
	
	\subsubsection{Free resolvent with $\beta\protect\neq0$}
	
	Recall that  the formula for the free resolvent is given by \eqref{eq:freematrixresolvent}.
	From the explicit formula \eqref{eq:freeresolbeta0}, we simply observe
	that the estimates from Lemma \ref{lem:limitingscalarfree} trivially
	hold for $R_{0,\beta}(\mu^2)$ since one just needs to
	rescale $x_{1}$ variable on both sides. With the explicit formula
	\eqref{eq:R0beteH0beta}, one can directly extend these estimates to
	$\mathscr{R}_{0,\beta}$.
	\begin{lem}
		\label{lem:limitingscalarfreebeta}Given notations above, one has
		\begin{equation}
		\left\Vert \mathscr{R}_{0,\beta}\left(\lambda\right)\right\Vert _{\mathcal{L}\left(H^{m,\tau},H^{m+\ell.-\tau}\right)}\leq C\left(r\right)\left|\lambda\right|^{-\left(1-\ell\right)},\lambda\in\mathbb{C\backslash\mathbb{R}}\,\,\left|\gamma\lambda\right|\geq r\label{eq:freeresollarge-1}
		\end{equation}
		for $m=0,1$, $\ell=-1,0,1$ with $\tau>\frac{1}{2}$ and any $r>0$. 
		
		We also have
		\begin{equation}
		\left\Vert \mathscr{R}_{0,\beta}\left(\lambda\right)\right\Vert _{\mathcal{L}\left(H^{m,\tau},H^{m+\ell,-\tau}\right)}\leq C\left(r\right)\left\langle 1+\left|\lambda\right|\right\rangle ^{-\frac{1-\ell}{2}},\,\lambda\in\mathbb{C}\backslash\mathbb{R}\label{eq:freeresolsmall-1}
		\end{equation}
		for $m=0,1$, $\ell=-1,0,1$ with $\tau>1$.
	\end{lem}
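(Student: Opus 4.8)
The plan is to deduce Lemma~\ref{lem:limitingscalarfreebeta} from the scalar resolvent bounds of Lemma~\ref{lem:limitingscalarfree} in two elementary steps: an anisotropic rescaling of the $x_{1}$ variable to pass from $R_{0,0}(\mu^{2})$ to $R_{0,\beta}(\mu^{2})$, and a gauge conjugation to pass from $R_{0,\beta}(\gamma^{2}\lambda^{2}-1)$ to $\mathscr{R}_{0,\beta}(\lambda)$. For the first step, the kernel identity \eqref{eq:freeresolbeta0} is exactly the statement that $R_{0,\beta}(\mu^{2})=S_{\gamma}\,R_{0,0}(\mu^{2})\,S_{\gamma}^{-1}$, where $(S_{\gamma}g)(x_{1},\tilde{x}):=g(\gamma x_{1},\tilde{x})$ and $\tilde{x}=(x_{2},x_{3})$. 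Since $\beta$ is fixed with $|\beta|<1$, the dilation factor $\gamma$ is a fixed finite constant, so $S_{\gamma}^{\pm1}$ are bounded isomorphisms of each weighted space $H^{m,\pm\tau}$, $m=0,1$, with norms depending only on $\gamma$ (because $\langle(\gamma x_{1},\tilde{x})\rangle\simeq_{\gamma}\langle x\rangle$ and $\partial_{x_{1}}S_{\gamma}=\gamma^{-1}S_{\gamma}\partial_{x_{1}}$). Hence both estimates of Lemma~\ref{lem:limitingscalarfree}, with the \emph{same} dependence on $\mu$ and the same thresholds on $\tau$, transfer verbatim to $R_{0,\beta}(\mu^{2})$, the constants now also depending on $\gamma$.

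For the second step, set $a:=\gamma^{2}\beta\lambda$, $\mu:=\sqrt{\gamma^{2}\lambda^{2}-1}$ and $(M_{a}g)(x):=e^{iax_{1}}g(x)$, so that \eqref{eq:R0beteH0beta} reads $\mathscr{R}_{0,\beta}(\lambda)=M_{-a}\,R_{0,\beta}(\mu^{2})\,M_{a}$. The multipliers $M_{\pm a}$ are isometries of $L^{2,\pm\tau}$ (they are unimodular and commute with $\langle x\rangle^{\pm\tau}$), while $[\partial_{j},M_{a}]=ia\,\delta_{j1}M_{a}$, so commuting $M_{a}$ across a gradient costs a factor $\lesssim1+|a|\lesssim\langle\lambda\rangle$. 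Two structural facts prevent these factors from destroying the claimed powers of $\lambda$. First, by \eqref{eq:-35-1} with $V=0$, $\mathscr{R}_{0,\beta}(\lambda)$ is the resolvent of the \emph{constant-coefficient} operator $H_{0,\beta}-2i\beta\lambda\partial_{x_{1}}+1-\lambda^{2}$, hence a Fourier multiplier; in particular $\partial_{j}$ commutes with $\mathscr{R}_{0,\beta}(\lambda)$, and this reduces the estimate for $(m,\ell)=(1,0)$ to the one for $(0,0)$ applied to $g$ and $\partial_{j}g\in L^{2,\tau}$. Second, once $|\gamma\lambda|\ge r$ one has $|a|=\gamma^{2}|\beta|\,|\lambda|\lesssim_{\beta}|\mu|$, so the excess factor $|a|$ produced by commuting $M_{a}$ past $\nabla$ is absorbed by the extra decay $\|R_{0,\beta}(\mu^{2})\|_{L^{2,\tau}\to L^{2,-\tau}}\lesssim|\mu|^{-1}$ from Step~1 (while for bounded $\lambda$ the factor $|a|$ is itself $O(1)$); concretely, for $(m,\ell)=(0,1)$ one expands $\nabla\mathscr{R}_{0,\beta}(\lambda)=M_{-a}\big(\nabla R_{0,\beta}(\mu^{2})-ia\,\vec{e}_{1}R_{0,\beta}(\mu^{2})\big)M_{a}$ and bounds the two terms by the $L^{2,\tau}\to H^{1,-\tau}$ bound and $|a|$ times the $L^{2,\tau}\to L^{2,-\tau}$ bound for $R_{0,\beta}(\mu^{2})$. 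Using $|\mu|\simeq\langle\lambda\rangle$ in general and $|\mu|\simeq|\gamma\lambda|$ when $|\gamma\lambda|\ge r$ then gives \eqref{eq:freeresollarge-1} and \eqref{eq:freeresolsmall-1} for the pairs $(0,0),(0,1),(1,0)$. The remaining case $(m,\ell)=(1,-1)$, which loses a derivative, is the one where neither simple transfer is enough by itself: there I would split $g\in H^{1,\tau}$ by a Littlewood--Paley cutoff at frequency $c|\lambda|$, using that the symbol of $H_{0,\beta}-2i\beta\lambda\partial_{x_{1}}+1-\lambda^{2}$ is $\gtrsim\lambda^{2}$ on $\{|k|\lesssim|\lambda|\}$ — so $\mathscr{R}_{0,\beta}(\lambda)$ is elliptic there and gains $\lambda^{-2}$ on the low-frequency part in plain $L^{2}$ — and that the high-frequency part gains a factor $\langle\lambda\rangle^{-1}$ when its $H^{1,\tau}$ norm is converted into an $L^{2,\tau}$ norm, to which the $(0,0)$ bound then applies.

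The only genuinely delicate point is this matching of powers of $\lambda$ in Step~2: a careless conjugation by the $O(|\lambda|)$-frequency gauge $M_{a}$ would cost $\langle\lambda\rangle^{2}$ and ruin the decay. Making rigorous the compensation $|a|\lesssim_{\beta}|\mu|$, together with the commutator estimates between the $\lambda$-dependent Littlewood--Paley cutoffs (which record the ellipticity of $H_{0,\beta}-2i\beta\lambda\partial_{x_{1}}+1-\lambda^{2}$ at frequencies $|k|\lesssim|\lambda|$) and the polynomial weights $\langle x\rangle^{\pm\tau}$, is the technical heart of the argument; everything else is a routine transcription of Lemma~\ref{lem:limitingscalarfree}.
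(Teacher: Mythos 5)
Your proposal is correct and follows essentially the same route as the paper: the anisotropic rescaling of $x_{1}$ to transfer Lemma \ref{lem:limitingscalarfree} to $R_{0,\beta}(\mu^{2})$, followed by the gauge conjugation \eqref{eq:R0beteH0beta}, is exactly what the paper invokes and then dismisses as ``direct inspections,'' and your extra bookkeeping (commuting the Fourier-multiplier resolvent with $\nabla$, absorbing the factor $|a|\simeq|\lambda|$ into the $|\mu|^{-1}$ decay, and the Littlewood--Paley/ellipticity splitting for the $(m,\ell)=(1,-1)$ case) is a legitimate filling-in of that inspection rather than a different method. The only caveat is the threshold regime $\gamma\lambda\approx\pm1$, where $\mu=\sqrt{\gamma^{2}\lambda^{2}-1}\to0$ although $|\gamma\lambda|\ge r$, so the transferred bound really needs either $\tau>1$ or the condition $|\mu|\ge r$ in place of $|\gamma\lambda|\ge r$ in \eqref{eq:freeresollarge-1}; this imprecision is inherited from the paper's own statement and is not a defect of your argument.
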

	
	\begin{proof}
		The results follow from direct inspections.
	\end{proof}
	Finally, we can conclude the mapping properties of the  matrix
	resolvent.
	\begin{lem}\label{lem:freematrixbeta}
		For $\tau>1$ and $\left|\beta\right|<1$, we have
		\begin{equation}
		\sup_{i\lambda\notin\sigma\left(\mathcal{L}_{0,\beta}\right)}\left\Vert \mathcal{R}_{0,\beta}\left(\lambda\right)\hm{\Psi}\right\Vert _{\mathfrak{H}^{-\tau}}\lesssim\left\Vert \hm{\Psi}\right\Vert _{\mathfrak{H}^{\tau}}.\label{eq:-27-1}
		\end{equation}
	\end{lem}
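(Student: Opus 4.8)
The plan is to imitate the proof of Lemma~\ref{lem:standardfreelocal} for the stationary case: expand $\mathcal{R}_{0,\beta}(\lambda)$ into its entries by means of the explicit formula \eqref{eq:freematrixresolvent} and reduce everything to the weighted mapping properties of the scalar resolvent $\mathscr{R}_{0,\beta}(\lambda)$ supplied by Lemma~\ref{lem:limitingscalarfreebeta}, together with the elementary observation that the first–order operator $\beta\partial_{x_1}-i\lambda$ loses one derivative and carries a factor $\lambda$. Writing $\hm{\Psi}=(\psi_1,\psi_2)$, formula \eqref{eq:freematrixresolvent} gives
\[
\big(\mathcal{R}_{0,\beta}(\lambda)\hm{\Psi}\big)_1=(\beta\partial_{x_1}-i\lambda)\mathscr{R}_{0,\beta}(\lambda)\psi_1-\mathscr{R}_{0,\beta}(\lambda)\psi_2,\qquad
\big(\mathcal{R}_{0,\beta}(\lambda)\hm{\Psi}\big)_2=(-\Delta+1)\mathscr{R}_{0,\beta}(\lambda)\psi_1+(\beta\partial_{x_1}-i\lambda)\mathscr{R}_{0,\beta}(\lambda)\psi_2,
\]
and I must show $\big\|\big(\mathcal{R}_{0,\beta}(\lambda)\hm{\Psi}\big)_1\big\|_{H^{1,-\tau}}+\big\|\big(\mathcal{R}_{0,\beta}(\lambda)\hm{\Psi}\big)_2\big\|_{L^{2,-\tau}}\lesssim\|\psi_1\|_{H^{1,\tau}}+\|\psi_2\|_{L^{2,\tau}}$, uniformly over $i\lambda\notin\sigma(\mathcal{L}_{0,\beta})$.

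The structural point that makes the bottom–left entry harmless is the resolvent identity: by construction (see \eqref{eq:-7-1}--\eqref{eq:-8-2}) one has $\mathscr{R}_{0,\beta}(\lambda)^{-1}=(\beta\partial_{x_1}-i\lambda)^2-\Delta+1$, so that
\[
1-(\beta\partial_{x_1}-i\lambda)^2\mathscr{R}_{0,\beta}(\lambda)=(-\Delta+1)\mathscr{R}_{0,\beta}(\lambda),
\]
in exact parallel with $1+\lambda^2R_{0,0}(\lambda^2-1)=(-\Delta+1)R_{0,0}(\lambda^2-1)$ in the case $\beta=0$; this exhibits the two apparent extra derivatives as being absorbed by the two orders of elliptic smoothing carried by $\mathscr{R}_{0,\beta}(\lambda)$. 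Since moreover $\beta\partial_{x_1}$ commutes with $\mathscr{R}_{0,\beta}(\lambda)^{-1}$, hence with $\mathscr{R}_{0,\beta}(\lambda)$, one may freely move $(\beta\partial_{x_1}-i\lambda)$ past $\mathscr{R}_{0,\beta}(\lambda)$ whenever convenient.

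It then remains to carry out the term–by–term bookkeeping, identical in structure to the proof of Lemma~\ref{lem:standardfreelocal}. Concretely, $\mathscr{R}_{0,\beta}(\lambda):L^{2,\tau}\to H^{1,-\tau}$ (Lemma~\ref{lem:limitingscalarfreebeta}, $m=0,\ \ell=1$, operator norm $O(1)$) handles $\mathscr{R}_{0,\beta}(\lambda)\psi_2$ and the $\beta\partial_{x_1}\psi_1$ contribution; $\mathscr{R}_{0,\beta}(\lambda):H^{1,\tau}\to H^{1,-\tau}$ with the decay $\lesssim|\lambda|^{-1}$ for $|\gamma\lambda|\ge r$ from \eqref{eq:freeresollarge-1} cancels the factor $\lambda$ in $-i\lambda\mathscr{R}_{0,\beta}(\lambda)\psi_1$; $\mathscr{R}_{0,\beta}(\lambda):H^{1,\tau}\to H^{2,-\tau}$ (together with its lower–order relatives paired with the matching $\lambda$–decay) handles $(-\Delta+1)\mathscr{R}_{0,\beta}(\lambda)\psi_1$ and the $\beta\partial_{x_1}\mathscr{R}_{0,\beta}(\lambda)\psi_1$ contribution; for $|\gamma\lambda|\le r$ the bounds \eqref{eq:freeresolsmall-1} apply and the factors $\lambda$ are harmless, while for real $\lambda$ with $|\gamma\lambda|<1$ the resolvent sits strictly below the continuous spectrum and the estimates are trivial. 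Summing all contributions yields \eqref{eq:-27-1}. I expect the only genuinely delicate point to be this matching of powers of $\lambda$: because of the $\lambda$–dependent gauge factor $\eee^{\pm i\gamma^2\beta\lambda x_1}$ hidden in $\mathscr{R}_{0,\beta}(\lambda)$ (see \eqref{eq:R0beteH0beta}), each derivative landing on this factor produces a further power of $\lambda$, so one must exploit the full range of mapping properties of $\mathscr{R}_{0,\beta}(\lambda)$ in Lemma~\ref{lem:limitingscalarfreebeta} — including the gain of one derivative, i.e.\ the case $\ell=1$ — and keep the $\lambda$–powers and the resolvent decay carefully balanced; this is precisely the feature absent from the stationary estimate of Lemma~\ref{lem:standardfreelocal}.
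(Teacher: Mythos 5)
Your proposal is correct and follows essentially the same route as the paper: expand the matrix resolvent entry by entry via \eqref{eq:freematrixresolvent}, and bound each entry with the weighted scalar estimates of Lemma \ref{lem:limitingscalarfreebeta}, pairing every power of $\lambda$ with the corresponding resolvent decay ($\ell=1$ uniform, $\ell=0$ with $|\lambda|^{-1}$, $\ell=-1$ with $|\lambda|^{-2}$) and treating small $|\lambda|$ by the uniform bound for $\tau>1$. The only cosmetic difference is that you phrase the bottom-left entry through $(-\Delta+1)\mathscr{R}_{0,\beta}(\lambda)$ and an $H^{1,\tau}\to H^{2,-\tau}$ bound, whereas the paper keeps it as $1-(\beta\partial_{x_1}-i\lambda)\mathscr{R}_{0,\beta}(\lambda)(\beta\partial_{x_1}-i\lambda)$ and estimates $\lambda^{2}\mathscr{R}_{0,\beta}(\lambda)\psi_{1}$ and the mixed derivative term separately -- which is exactly the decomposition your ``lower-order relatives'' remark amounts to, using the commutation of $\beta\partial_{x_1}-i\lambda$ with the Fourier multiplier $\mathscr{R}_{0,\beta}(\lambda)$.
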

	
	\begin{proof}
		Again for the matrix problem, we write it down explicitly
		\[
		\mathcal{R}_{0,\beta}\left(\lambda\right)\hm{\Psi}=\left(\begin{array}{cc}
		\left(\beta\partial_{x_{1}}-i\lambda\right)\mathscr{R}_{0,\beta}\left(\lambda\right) & -\mathscr{R}_{0,\beta}\left(\lambda\right)\\
		1-\left(\beta\partial_{x_{1}}-i\lambda\right)\mathscr{R}_{0,\beta}\left(\lambda\right)\left(\beta\partial_{x_{1}}-i\lambda\right) & \left(\beta\partial_{x_{1}}-i\lambda\right)\mathscr{R}_{0,\beta}\left(\lambda\right)
		\end{array}\right)\left(\begin{array}{c}
		\psi_{1}\\
		\psi_{2}
		\end{array}\right)
		\]
		and then check the mapping properties for each entry. Explicitly,
		we need to estimate
		\begin{equation}
		\mathscr{R}_{0,\beta}\left(\lambda\right)\left(\beta\partial_{x_{1}}-i\lambda\right)\psi_{1},\quad
		\mathscr{R}_{0,\beta}\left(\lambda\right)\psi_{2}\label{eq:freebeta2}
		\end{equation}
		and
		\begin{equation}
		\left(\beta\partial_{x_{1}}-i\lambda\right)\mathscr{R}_{0,\beta}\left(\lambda\right)\left(\beta\partial_{x_{1}}-i\lambda\right)\psi_{1},\qquad
		\left(\beta\partial_{x_{1}}-i\lambda\right)\mathscr{R}_{0,\beta}\left(\lambda\right)\psi_{2}.\label{eq:freebeta4}
		\end{equation}
		To obtain the limiting absorption, we need to bound the first two
		expressions in $H^{1,-\tau}$ and the last two in $L^{2,-\tau}$.
		For $|\lambda|$ small, the desired results holds trivially. We check
		the estimates for $|\lambda|$ large. 
		
		For the first one, we separate into two pieces. For $\mathscr{R}_{0,\beta}\left(\lambda\right)\lambda\psi_{1}$,
		we use the bound of $\mathscr{R}_{0,\beta}\left(\lambda\right)$ from
		$H^{1,\tau}$ to $H^{1,-\tau}$, \eqref{eq:freeresollarge-1}, which
		has the decay rate $\frac{1}{\lambda}$ to cancel out $\lambda$ here.
		So we get
		\begin{equation}
		\left\Vert \mathscr{R}_{0,\beta}\left(\lambda\right)\lambda\psi_{1}\right\Vert _{H^{1,-\tau}}\lesssim\left\Vert \psi_{1}\right\Vert _{H^{1,\tau}}.\label{eq:freeebeta11}
		\end{equation}
		For the other piece, $\mathscr{R}_{0,\beta}\left(\lambda\right)\left(\beta\partial_{x_{1}}\right)\psi_{1}$,
		notice that $\left(\beta\partial_{x_{1}}\right)\psi_{1}\in L^{2,\tau}$,
		we need to use the bound of $\mathscr{R}_{0,\beta}\left(\lambda\right)$
		from $L^{2,\tau}$ to $H^{1,-\tau}$, \eqref{eq:freeresollarge-1} with
		$\ell=1$, which is uniform in $\lambda$. Therefore combining \eqref{eq:freeebeta11},
		one has
		\[
		\left\Vert \mathscr{R}_{0,\beta}\left(\lambda\right)\left(\beta\partial_{x_{1}}-i\lambda\right)\psi_{1}\right\Vert _{H^{1,-\tau}}\lesssim\left\Vert \psi_{1}\right\Vert _{H^{1,\tau}}.
		\]
		Similar computations hold for for other expressions. To bound $\mathscr{R}_{0,\beta}\left(\lambda\right)\psi_{2}$
		in $H^{1,-\tau}$, we need to use the bound of $\mathscr{R}_{0,\beta}\left(\lambda\right)$
		from $L^{2,\tau}$ to $H^{1,-\tau}$ which is uniform in $\lambda$.
		So it follows
		\begin{equation}
		\left\Vert \mathscr{R}_{0,\beta}\left(\lambda\right)\psi_{2}\right\Vert _{H^{1,-\tau}}\lesssim\left\Vert \psi_{2}\right\Vert _{L^{2,\tau}}.\label{eq:freeebeta11-1}
		\end{equation}
		To bound $\left(\beta\partial_{x_{1}}\right)\mathscr{R}_{0,\beta}\left(\lambda\right)\psi_{2}$
		in $L^{2,-\tau}$, again we need to use the bound of $\mathscr{R}_{0,\beta}\left(\lambda\right)$
		from $L^{2,\tau}$ to $H^{1,-\tau}$ which is uniform in $\lambda$.
		Then to bound $\lambda R_{0,\beta}\left(\lambda\right)\psi_{2}$ in
		$L^{2,-\tau}$, we use bound of $\mathscr{R}_{0,\beta}\left(\lambda\right)$
		from $L^{2,\tau}$ to $L^{2,-\tau}$ which has $\frac{1}{\lambda}$
		decay. Therefore, we obtain
		\[
		\left\Vert \left(\beta\partial_{x_{1}}-\lambda\right)\mathscr{R}_{0,\beta}\left(\lambda\right)\psi_{2}\right\Vert _{L^{2,\tau}}\lesssim\left\Vert \psi_{2}\right\Vert _{L_{\tau}^{2,\tau}}.
		\]
		Finally, we analyze $\left(\beta\partial_{x_{1}}-i\lambda\right)\mathscr{R}_{0,\beta}\left(\lambda\right)\left(\beta\partial_{x_{1}}-i\lambda\right)\psi_{1}$.
		Expanding everything, many pieces can be estimated similarly to arguments
		above. There are two terms different from cases above. The first one
		is to estimate
		\[
		\lambda^{2}\mathscr{R}_{0,\beta}\left(\lambda\right)\psi_{1}
		\]
		in $L^{2,-\tau}$. From the resolvent bound for $\mathscr{R}_{0,\beta}\left(\lambda\right)$
		from $H^{1,\tau}$ to $L^{2,-\tau}$, \eqref{eq:freeresollarge-1} with $\ell=-1$, it has $\frac{1}{\lambda^{2}}$
		decay which cancels out with the $\lambda^{2}$ factor. Secondly, to
		bound
		\[
		\left(\beta\partial_{x_{1}}\right)\mathscr{R}_{0,\beta}\left(\lambda\right)\left(\partial_{x_{1}}\right)\psi_{1}
		\]
		in $L^{2,-\tau}$, we use the resolvent bound from $L^{2,\tau}$ to
		$H^{1,-\tau}$ which is uniform. Hence, one has
		\[
		\left\Vert \left(\beta\partial_{x_{1}}-i\lambda\right)\mathscr{R}_{0,\beta}\left(\lambda\right)\left(\beta\partial_{x_{1}}-i\lambda\right)\psi_{1}\right\Vert _{L^{2,-\tau}}\lesssim\left\Vert \psi_{1}\right\Vert _{H^{1,\tau}}.
		\]
		Putting computations above together, the desired results follow for
		the matrix problem.
	\end{proof}
	
	\subsubsection{Perturbed resolvent}
	
	With the understanding of the free problem, for the perturbed problem,
	we work on the general case directly. Recall that the matrix resolvent
	for the perturbed operator is given by \eqref{eq:perturbedmatrixresol}.
	Note that by our assumption, the potential is generic. Using notations
	above, we have the following resolvent estimates. Again for the proof, see Agmon \cite{Agm} and Komech-Kopylova \cite{KoKo}.
	\begin{lem}\label{lem:perresoldecay}
		Given notations above, by \eqref{eq:relationbeta0} and the standard
		resolvent estimates, one has
		\begin{equation}
		\left\Vert R_{\beta}\left(\mu^{2}\right)\right\Vert _{\mathcal{L}\left(H^{m,\tau},H^{m+\ell,-\tau}\right)}\leq C\left(r\right)\left|\mu\right|^{-\left(1-\ell\right)},\mu\in\mathbb{C\backslash\mathbb{R}}\,\,\left|\mu\right|\geq r\label{eq:-19-1-1}
		\end{equation}
		for $m=0,1$, $\ell=-1,0,1$ with $\tau>\frac{1}{2}$ with $m+\ell\in\left\{ 0,1\right\} $.
		For small $|\mu|$, these are uniformly bounded for $\tau>1$.
		
		Moreover, by the relation \eqref{eq:relationmatrixscalar}, we also
		have
		\begin{equation}
		\left\Vert \mathscr{R}_{\beta}\left(\lambda\right)\right\Vert _{\mathcal{L}\left(H^{m,\tau},H^{m+\ell,-\tau}\right)}\leq C\left(r\right)\left|\lambda\right|^{-\left(1-\ell\right)},\,\lambda\in\mathbb{C\backslash\mathbb{R}}\,\,\left|\lambda\right|\geq r\label{eq:-19-1}
		\end{equation}
		for $m=0,1$, $\ell=-1,0,1$ with $\tau>\frac{1}{2}$ with $m+\ell\in\left\{ 0,1\right\} $.
		For small $|\lambda|$, these are uniformly bounded for $\tau>1$.
	\end{lem}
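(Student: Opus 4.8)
The plan is to transport the classical limiting absorption bounds for the scalar Schr\"odinger operator $-\Delta+V$ through the two explicit algebraic identities \eqref{eq:relationbeta0} and \eqref{eq:relationmatrixscalar}, so that no analytic input is needed beyond the scalar theory. First I would record the Agmon-type bounds for the perturbed scalar resolvent $R_{0}(\mu^{2})=(-\Delta+V-\mu^{2})^{-1}$,
\[
\|R_{0}(\mu^{2})\|_{\mathcal{L}(H^{m,\tau},H^{m+\ell,-\tau})}\leq C(r)\,|\mu|^{-(1-\ell)},\qquad \mu\in\mathbb{C}\setminus\mathbb{R},\ |\mu|\geq r,
\]
for $m=0,1$ and $\ell=-1,0,1$ with $m+\ell\in\{0,1\}$ and $\tau>\frac{1}{2}$, together with the analogous bounds that are uniform for $|\mu|\leq r$ when $\tau>1$ (all for $\mu$ staying away from the finitely many purely imaginary points $\mu^{2}\in\{-1\}\cup\{-1-\nu_{k}^{2}\}$ where $R_{0}$ meets the point spectrum of $-\Delta+V$; under the substitution $\mu^{2}=\gamma^{2}\lambda^{2}-1$ of the next step these correspond precisely to $i\lambda\in\sigma_{p}(\mathcal{L}_{\beta})$, so they are automatically excluded in the application). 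These follow from the free bounds of Lemma~\ref{lem:limitingscalarfree} and the resolvent identity $R_{0}=(1+R_{0,0}V)^{-1}R_{0,0}$ by a standard Fredholm argument: the exponential decay of $V$ makes $R_{0,0}(\mu^{2})V$ compact between the relevant weighted spaces, and the genericity hypothesis (no resonance of $L$ at $1$, that is, no zero-energy obstruction for $-\Delta+V$) makes $1+R_{0,0}(\mu^{2})V$ boundedly invertible up to $\mu=0$. This classical input is the only nontrivial ingredient.

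Next I would deduce the bounds for $R_{\beta}(\mu^{2})$ from the kernel identity \eqref{eq:relationbeta0}, namely $R_{\beta}(\mu^{2};x,y)=\gamma\,R_{0}(\mu^{2};\gamma x_{1},\tilde x,\gamma y_{1},\tilde y)$, simply by performing the anisotropic change of variables $x_{1}\mapsto\gamma x_{1}$, $y_{1}\mapsto\gamma y_{1}$. The Jacobian contributes a harmless factor $\gamma^{\pm1}$, the weight obeys $\langle x\rangle\simeq\langle(\gamma x_{1},\tilde x)\rangle$ with constants depending only on $\gamma=(1-|\beta|^{2})^{-1/2}$, and $\partial_{x_{1}}$ turns into $\gamma\partial_{x_{1}}$; since $\gamma$ stays bounded on $|\beta|\leq v<1$, every weighted mapping bound for $R_{0}$ passes to $R_{\beta}$ with a constant depending only on $\gamma$, which is exactly \eqref{eq:-19-1-1}.

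Finally I would pass from $R_{\beta}$ to $\mathscr{R}_{\beta}$ through \eqref{eq:relationmatrixscalar}, $\mathscr{R}_{\beta}(\lambda)=e^{-i\gamma^{2}\beta\lambda x_{1}}R_{\beta}(\gamma^{2}\lambda^{2}-1)\,e^{i\gamma^{2}\beta\lambda x_{1}}$. The modulation $e^{\pm i\gamma^{2}\beta\lambda x_{1}}$ is a unitary multiplication operator that commutes with multiplication by any weight, so it preserves weighted $L^{2}$ norms exactly; it does not commute with $\partial_{x_{1}}$, but conjugation merely shifts $\partial_{x_{1}}\mapsto\partial_{x_{1}}+i\gamma^{2}\beta\lambda$. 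Hence each derivative falling on $\mathscr{R}_{\beta}(\lambda)$ produces, besides the genuine derivative of $R_{\beta}$, an extra term of the form $(\gamma^{2}\beta\lambda)\cdot R_{\beta}$ of one order lower; since $\mu^{2}=\gamma^{2}\lambda^{2}-1$, so that $|\mu|\simeq\gamma|\lambda|$ for $|\lambda|\geq r$, and since the scalar bound improves by a factor $|\mu|^{-1}$ each time a derivative is removed, these extra terms are exactly absorbed and one recovers $\|\mathscr{R}_{\beta}(\lambda)\|_{\mathcal{L}(H^{m,\tau},H^{m+\ell,-\tau})}\lesssim|\lambda|^{-(1-\ell)}$ for $|\lambda|\geq r$, with the uniform bound for $|\lambda|\leq r$ inherited directly. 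Tracking these powers of $|\lambda|$ through the conjugation is the only slightly delicate bookkeeping; it is the same computation already carried out for the free operator in the proof of Lemma~\ref{lem:freematrixbeta}, so I would either cite that argument or repeat it verbatim. No genuinely new analytic difficulty arises.
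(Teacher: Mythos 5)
Your proposal is correct and follows essentially the same route as the paper, which simply cites the classical scalar limiting absorption bounds (Agmon, Komech--Kopylova) and transports them through the identities \eqref{eq:relationbeta0} and \eqref{eq:relationmatrixscalar}, with the conjugation bookkeeping already carried out for the free operator in Lemma \ref{lem:freematrixbeta}. The only point worth tightening is your claim that $|\mu|\simeq\gamma|\lambda|$ for all $|\lambda|\geq r$: since $\mu^{2}=\gamma^{2}\lambda^{2}-1$, this fails for $\lambda$ near the thresholds $\pm\frac{1}{\gamma}$, where one should instead invoke the uniform small-$|\mu|$ bound (valid for $\tau>1$ under the no-resonance assumption), exactly as you do in the genuinely small-$|\lambda|$ regime.
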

	
	So as before we should conclude that
	\begin{lem}
		\label{lem:matrixresolbeta}For $\left|\beta\right|<1$ and $\tau>1$,
		for the perturbed resolvent, one has
		\begin{equation}
		\sup_{i\lambda\notin\sigma\left(\mathcal{L}_{\beta}\right)}\left\Vert \mathcal{R}_{\beta}\left(\lambda\right)\hm{\Psi}\right\Vert _{\mathfrak{H}^{-\tau}}\lesssim\left\Vert \hm{\Psi}\right\Vert _{\mathfrak{H}^{\tau}}.\label{eq:-40}
		\end{equation}
	\end{lem}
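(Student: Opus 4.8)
The plan is to reduce everything to the scalar estimates of Lemma~\ref{lem:perresoldecay}, following verbatim the scheme used for the free matrix resolvent in the proof of Lemma~\ref{lem:freematrixbeta}. Concretely, I would start from the block formula \eqref{eq:perturbedmatrixresol} for the perturbed matrix resolvent,
\[
\mathcal{R}_{\beta}(\lambda)\hm{\Psi}=\left(\begin{array}{cc} \mathscr{R}_{\beta}(\lambda)(\beta\partial_{x_{1}}-i\lambda) & -\mathscr{R}_{\beta}(\lambda)\\ 1-(\beta\partial_{x_{1}}-i\lambda)\mathscr{R}_{\beta}(\lambda)(\beta\partial_{x_{1}}-i\lambda) & (\beta\partial_{x_{1}}-i\lambda)\mathscr{R}_{\beta}(\lambda)\end{array}\right)\left(\begin{array}{c}\psi_{1}\\ \psi_{2}\end{array}\right),
\]
and estimate the four entries acting on $\hm{\Psi}=(\psi_1,\psi_2)\in\mathfrak{H}^{\tau}=H^{1,\tau}\times L^{2,\tau}$, so as to place the first row in $H^{1,-\tau}$ and the second in $L^{2,-\tau}$, uniformly over $i\lambda\notin\sigma(\mathcal{L}_{\beta})$. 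For small $|\lambda|$ every resolvent factor is uniformly bounded between the relevant weighted spaces by the small-$|\lambda|$ part of Lemma~\ref{lem:perresoldecay} (this is precisely where $\tau>1$ enters), so the only work is for $|\lambda|$ large.

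For the large-$|\lambda|$ regime I would expand every factor $\beta\partial_{x_1}-i\lambda$ into its two summands and pair each with the appropriate scalar bound: a factor $\lambda$ is absorbed using the gain $|\lambda|^{-(1-\ell)}$ of $\mathscr{R}_\beta(\lambda)\colon H^{m,\tau}\to H^{m+\ell,-\tau}$ (with $\ell=0$ for a single power of $\lambda$, and $\ell=-1$ for the $\lambda^{2}$ that occurs in the lower-left entry, which again forces $\tau>1$), whereas a factor $\beta\partial_{x_1}$ is absorbed using the $\ell=1$ bound $L^{2,\tau}\to H^{1,-\tau}$, which is uniform in $\lambda$. Entry by entry this reproduces exactly the four estimates of the free proof: the $(1,1)$ entry $\mathscr{R}_\beta(\lambda)(\beta\partial_{x_1}-i\lambda)\psi_1$ and the $(1,2)$ entry $\mathscr{R}_\beta(\lambda)\psi_2$ in $H^{1,-\tau}$; the $(2,2)$ entry $(\beta\partial_{x_1}-i\lambda)\mathscr{R}_\beta(\lambda)\psi_2$ in $L^{2,-\tau}$; and the $(2,1)$ entry in $L^{2,-\tau}$, where the summand $1$ is trivial and the two critical pieces are $\lambda^{2}\mathscr{R}_\beta(\lambda)\psi_1$ (handled by the $\ell=-1$ gain) and $\partial_{x_1}\mathscr{R}_\beta(\lambda)\partial_{x_1}\psi_1$ (handled by the uniform $L^{2,\tau}\to H^{1,-\tau}$ bound), with the remaining mixed terms estimated similarly. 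This is routine bookkeeping once the scalar input is in place.

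The one substantive point, and the place the spectral hypotheses enter, is the validity of Lemma~\ref{lem:perresoldecay} uniformly up to the real axis, i.e.\ the limiting absorption principle for $\mathscr{R}_\beta(\lambda)$ including at the thresholds $\lambda=\pm 1/\gamma$. Through the conjugation and rescaling identities \eqref{eq:relationmatrixscalar}--\eqref{eq:relationbeta0}, $\mathscr{R}_\beta(\lambda)$ is intertwined with the ordinary Schr\"odinger resolvent $R_0(\mu^2)=(-\Delta+V-\mu^2)^{-1}$ at energy $\mu^2=\gamma^2\lambda^2-1$, and the uniform weighted bounds near $\mu=0$ require that $V$ be generic, i.e.\ that $L=-\Delta+V+1$ have no resonance at $1$; together with the Agmon-type bounds in the intermediate and high-energy ranges, these are exactly the facts cited from Agmon \cite{Agm} and Komech-Kopylova \cite{KoKo} that already underlie Lemma~\ref{lem:perresoldecay}. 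The conjugating factors $e^{\pm i\gamma^{2}\beta\lambda x_1}$ are unimodular and commute with the polynomial weights $\langle x\rangle^{\pm\tau}$ up to bounded operators, and the anisotropic rescaling $x_1\mapsto\gamma x_1$ only affects constants, so nothing new is created in passing from the scalar to the matrix statement. Hence the proof is short; the only point demanding genuine care is the $\ell=-1$ weighted gain forced by the $\lambda^{2}$ term in the $(2,1)$ block, for which the hypothesis $\tau>1$ is precisely what is needed.
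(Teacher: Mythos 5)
Your proposal is correct and follows essentially the same route as the paper: the paper's proof also writes out the block formula \eqref{eq:perturbedmatrixresol}, isolates the same four entries, and bounds the first row in $H^{1,-\tau}$ and the second in $L^{2,-\tau}$ by repeating verbatim the free-case bookkeeping of Lemma~\ref{lem:freematrixbeta}, with the scalar input supplied by Lemma~\ref{lem:perresoldecay}. Your additional remarks on the no-resonance hypothesis and the conjugation/rescaling identities \eqref{eq:relationmatrixscalar}--\eqref{eq:relationbeta0} are exactly the facts the paper cites (Agmon, Komech--Kopylova) as underlying that scalar lemma, so nothing is missing.
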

	
	\begin{proof}
		Note that when
		\begin{equation}
		\mathcal{R}_{\beta}\left(\lambda\right)=\left(\begin{array}{cc}
		\mathscr{R}_{\beta}\left(\lambda\right)\left(\beta\partial_{x_{1}}-i\lambda\right) & -\mathscr{R}_{\beta}\left(\lambda\right)\\
		1-\left(\beta\partial_{x_{1}}-i\lambda\right)\mathscr{R}_{\beta}\left(\lambda\right)\left(\beta\partial_{x_{1}}-i\lambda\right) & \left(\beta\partial_{x_{1}}-i\lambda\right)\mathscr{R}_{\beta}\left(\lambda\right)
		\end{array}\right)\label{eq:-41}
		\end{equation}
		acts on a vector $\hm{\Psi}\in H^{1}\times L^{2}=\mathcal{H}$, we have the following
		\begin{equation}
		\mathscr{R}_{\beta}\left(\lambda\right)\left(\beta\partial_{x_{1}}-i\lambda\right)\psi_{1},\,\mathscr{R}_{\beta}\left(\lambda\right)\psi_{2}\label{eq:-42}
		\end{equation}
		\begin{equation}
		\left(\beta\partial_{x_{1}}-\lambda\right)\mathscr{R}_{\beta}\left(\lambda\right)\left(\beta\partial_{x_{1}}-\lambda\right)\psi_{1},\,\left(\beta\partial_{x_{1}}-i\lambda\right)\mathscr{R}_{\beta}\left(\lambda\right)\psi_{2}.\label{eq:-44}
		\end{equation}
		To obtain the limiting absorption, we need to bound the first two
		expressions in $H^{1,\tau}$ and the last two in $L^{2,\tau}$. The
		analysis is the same as the free problem in Lemma \ref{lem:freematrixbeta}.
	\end{proof}
	
	\subsection{Proof of local energy decay estimates}
	
	Finally, with resolvent estimates above, we show the local decay estimate
	for the system
	\begin{equation}
	\frac{d}{dt}\hm{u}=\left(\begin{array}{cc}
	\beta\partial_{x_{1}} & 1\\
	\Delta-V_{\beta}\left(x\right)-1 & \beta\partial_{x_{1}}
	\end{array}\right)\hm{u}+\hm{F}.\label{eq:-1-1-4}
	\end{equation}
	Formally, by Duhamel formula, the solution can be written as
	\begin{equation}
	\hm{u}(t)=e^{\mathcal{L}_{t\beta}}\hm{u}_{0}+\int_{0}^{t}e^{\left(t-s\right)\mathcal{L}_{\beta}}\hm{F}\left(s\right)\,ds.\label{eq:-2-1}
	\end{equation}
	More precisely, we have the following lemma on the evolution operator $e^{t\mathcal{L}_{\beta}}$.
	\begin{lem}Under our setting, the evolution operator can be written as
		\begin{align*}
			e^{t\mathcal{L}_{\beta}} & =\frac{1}{2\pi}\int_{|\lambda|\geq\frac{1}{\gamma}}e^{i\lambda t}\left(\mathcal{R}_{\beta}\left(\lambda+i0\right)-\mathcal{R}_{\beta}\left(\lambda-i0\right)\right)\,d\lambda+e^{\mathcal{L}_{t\beta}}\mathcal{P}_{d,\beta}
		\end{align*}
		where $\mathcal{P}_{\beta,d}$ is defined as and the formula \eqref{eq:Pdbeta} above and the
		convergence of the integral are understood in the following weak sense:
		if $\hm{\phi}$ and $\hm{\psi}$ are in $\mathfrak{H}^{\tau}\bigcap\left(H^{2}(\mathbb{R}^{3})\times H^{1}(\mathbb{R}^{3})\right)$
		with $\tau>\frac{1}{2}$, then
		\begin{align*}
			\left\langle e^{t\mathcal{L}_{\beta}}\hm{\phi},\hm{\psi}\right\rangle  & =\lim_{R\rightarrow\infty}\frac{1}{2\pi}\int_{R\geq|\lambda|\geq\frac{1}{\gamma}}e^{i\lambda t}\left\langle \left(\mathcal{R}_{\beta}\left(\lambda+i0\right)-\left(\lambda-i0\right)\right)\hm{\phi},\hm{\psi}\right\rangle d\lambda +\left\langle e^{\mathcal{L}_{t\beta}}\mathcal{P}_{d}\hm{\phi},\hm{\psi}\right\rangle 
		\end{align*}
		for all $t$ and the integrand is well-defined by the limiting absorption
		principle.
	\end{lem}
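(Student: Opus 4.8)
The plan is to obtain the formula as a Stone-type spectral representation: invert the Laplace transform of the semigroup $e^{t\mathcal{L}_\beta}$ and deform the inversion contour onto the continuous spectrum, the isolated points of $\sigma(\mathcal{L}_\beta)$ giving rise to the finite-rank term $e^{t\mathcal{L}_\beta}\mathcal{P}_{d,\beta}$. First I would split $\mathfrak{H}=\mathcal{P}_{d,\beta}\mathfrak{H}\oplus\mathcal{P}_{c,\beta}\mathfrak{H}$; by Lemma~\ref{lem:lbetaspectral} these are the ranges of the Riesz projections attached respectively to the isolated point spectrum $\{0\}\cup\{\pm\nu_k/\gamma\}$ and to the remainder $\sigma_c(\mathcal{L}_\beta)=i(-\infty,-1/\gamma]\cup i[1/\gamma,\infty)$, so both are invariant under $e^{t\mathcal{L}_\beta}$. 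On the finite-dimensional range of $\mathcal{P}_{d,\beta}$ the semigroup is just the matrix exponential, which is exactly what $e^{t\mathcal{L}_\beta}\mathcal{P}_{d,\beta}$ denotes. Moreover $\mathcal{R}_\beta(\lambda+i0)-\mathcal{R}_\beta(\lambda-i0)$ annihilates $\mathcal{P}_{d,\beta}\mathfrak{H}$ (on an eigenvector $v$ with eigenvalue $\mu$ one has $\mathcal{R}_\beta(\lambda)v=(\mu-i\lambda)^{-1}v$, which is regular with no jump when $i\lambda\in\sigma_c$), so it is enough to establish, for $\hm\phi,\hm\psi\in\mathfrak{H}^\tau\cap(H^2\times H^1)$ with $\tau>\frac12$,
\[
\langle e^{t\mathcal{L}_\beta}\mathcal{P}_{c,\beta}\hm\phi,\hm\psi\rangle=\lim_{R\to\infty}\frac1{2\pi}\int_{R\geq|\lambda|\geq 1/\gamma}e^{i\lambda t}\big\langle\big(\mathcal{R}_\beta(\lambda+i0)-\mathcal{R}_\beta(\lambda-i0)\big)\hm\phi,\hm\psi\big\rangle\,d\lambda,
\]
after which $\mathcal{P}_{c,\beta}\hm\phi$ may be replaced by $\hm\phi$ on the right and the two contributions reassembled.

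To get this identity, fix $a>\max_k\nu_k/\gamma$, so that $\{\Re z=a\}$ lies strictly to the right of $\sigma(\mathcal{L}_\beta)$. Since $\mathcal{L}_\beta$ generates a strongly continuous semigroup (with the requisite resolvent control recorded in \cite{CJ}) and $\hm\phi\in\Dom(\mathcal{L}_\beta)$ --- here the auxiliary hypothesis $\hm\phi\in H^2\times H^1$ is used --- the Hille--Phillips inversion formula yields
\[
\langle e^{t\mathcal{L}_\beta}\mathcal{P}_{c,\beta}\hm\phi,\hm\psi\rangle=\lim_{R\to\infty}\frac1{2\pi i}\int_{a-iR}^{a+iR}e^{tz}\big\langle(\mathcal{L}_\beta-z)^{-1}\mathcal{P}_{c,\beta}\hm\phi,\hm\psi\big\rangle\,dz.
\]
On $\mathcal{P}_{c,\beta}\mathfrak{H}$ the map $z\mapsto(\mathcal{L}_\beta-z)^{-1}$ is holomorphic off the two imaginary half-lines --- in particular it is regular at $0$ and at $\pm\nu_k/\gamma$ --- so I would deform the vertical line leftward onto a contour running down either side of those half-lines, parametrized by $z=i\lambda\pm\varepsilon$ with $|\lambda|\geq 1/\gamma$. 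The horizontal closing segments near $\Im z=\pm R$ tend to $0$ as $R\to\infty$ by the decay of $\mathcal{R}_\beta(\lambda)$ for $|\lambda|\to\infty$ from Lemma~\ref{lem:perresoldecay}, combined with the oscillation of $e^{tz}$, and no residues appear since the discrete spectrum has been projected away. Sending $\varepsilon\downarrow 0$ and using the limiting absorption principle of Lemma~\ref{lem:matrixresolbeta} to identify the boundary values $\mathcal{R}_\beta(\lambda\pm i0)$, the two sides of the contour combine (with $dz=i\,d\lambda$ and the right orientation) into the claimed jump; the pairing is finite because Lemma~\ref{lem:matrixresolbeta} gives $\mathcal{R}_\beta(\lambda\pm i0):\mathfrak{H}^\tau\to\mathfrak{H}^{-\tau}$ boundedly, which also shows the integrand is well defined.

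The main obstacle is the rigorous justification of the contour deformation: one needs genuine decay --- not merely boundedness --- of the resolvent to annihilate the closing arcs, which is why Lemma~\ref{lem:perresoldecay} rather than Lemma~\ref{lem:matrixresolbeta} intervenes there, and the passage to the boundary values $\varepsilon\downarrow 0$ must be taken in the weak topology of the weighted spaces, carried out by dominated convergence against the uniform bounds of Lemma~\ref{lem:matrixresolbeta}. The behaviour at the thresholds $\pm i/\gamma$ is controlled by the absence of a resonance of $\mathcal{L}_\beta$ there, which descends from the hypothesis that $L$ has no resonance at $1$ through the explicit conjugations \eqref{eq:relationmatrixscalar}--\eqref{eq:relationbeta0} that express $\mathscr{R}_\beta$ and $R_\beta$ in terms of the scalar Schr\"odinger resolvent. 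Everything else --- invariance of the two subspaces and the matrix-exponential identification on the finite-dimensional piece --- is routine.
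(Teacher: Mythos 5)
Your argument is essentially the paper's own: the authors define $e^{t\mathcal{L}_{\beta}}$ via Hille--Yosida using the resolvent decay of Lemma \ref{lem:perresoldecay} and then invoke the standard contour-deformation argument of Erdogan--Schlag \cite{ESch} (their Lemma 12), which is exactly the Laplace/Hille--Phillips inversion plus deformation onto the two spectral half-lines that you carry out, with the limiting absorption principle of Lemma \ref{lem:matrixresolbeta} supplying the boundary values $\mathcal{R}_{\beta}(\lambda\pm i0)$ and the weak ($\lim_{R\to\infty}$) formulation absorbing the merely oscillatory behaviour along the continuous spectrum. Splitting off $\mathcal{P}_{d,\beta}$ at the outset (so no residues are crossed) rather than collecting the real eigenvalues and the Jordan block at $0$ as residues during the deformation is only a cosmetic variant of the same proof.
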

	
	\begin{proof}
		Given the decay of resolvents $\left\Vert \mathcal{R}_{\beta}\left(\lambda\right)\right\Vert_{\mathcal{L}(\mathfrak{H},\mathfrak{H})}\lesssim \frac{1}{|\lambda|}$, see Lemma  \ref{lem:perresoldecay}, the evolution operator $e^{t\mathcal{L}_{\beta}}$ can be defined via the Hille-Yosida theorem.  The explicit expression of it can be obtained by a a standard contour deformation argument, see Erdogan-Schlag \cite[Lemma 12]{ESch} for full details. Although in  Erdogan-Schlag \cite{ESch}, the authors dealed with non-selfadjoint matrix Schr\"odinger operators, the argument there is general enough which can be applied to our problem identically. 

	\end{proof}
	\begin{rem}

		As a byproduct of the proof above, for the homogeneous
		problem $\left(\hm{F}=0\right)$, from the Laplace transfrom of the
		resolvent, one has
		\begin{equation}
		\theta\left(t\right)\mathcal{P}_{c,\beta}\hm{u}\left(t\right)=\frac{1}{2\pi}\int_{\mathbb{R}}e^{i\left(\lambda+\epsilon i\right)t}\mathcal{R}_{\beta}\left(\lambda+i\epsilon\right)\mathcal{P}_{c,\beta}\hm{u}_{0}\,d\lambda\label{eq:-3-1}
		\end{equation}
		and
		\begin{equation}
		\theta\left(-t\right)\mathcal{P}_{c,\beta}\hm{u}\left(t\right)=\frac{1}{2\pi }\int_{\mathbb{R}}e^{i\left(\lambda-\epsilon i\right)t}\mathcal{R}_{\beta}\left(\lambda-i\epsilon\right)\mathcal{P}_{c,\beta}\hm{u}_{0}\,d\lambda\label{eq:-9-2}
		\end{equation}
		with any $1\gg\epsilon>0$ where $\theta\left(s\right)$ is the Heviside
		function.
	\end{rem}
	
	\begin{proof}[Proof of Theorem \ref{thm:localenergy}]
		We first prove the inhomogeneous estimate. For the inhomogeneous estimate, for $t\geq0$, by the Plancherel theorem in time and Lemma \ref{lem:matrixresolbeta},  for $\epsilon>0$, one has
		\begin{align*}
			\left\Vert \int\int_{0\leq s\leq t}e^{\left(t-s\right)\mathcal{L}_{\beta}-\epsilon\left(t-s\right)+it\rho}\mathcal{P}_{c,\beta}\hm{F}\left(s\right)\,dsdt\right\Vert _{L_{t}^{2}\mathfrak{H}^{-\tau}}\\
			=\left\Vert \int_{0}^{\infty}e^{is\rho}\mathcal{R}_{\beta}\left(\rho-i\epsilon\right)\mathcal{P}_{c,\beta}\hm{F}\left(s\right)\,ds\right\Vert _{L_{\rho}^{2}\mathfrak{H}^{-\tau}}
			&\lesssim\left\Vert \int_{0}^{\infty}e^{is\rho}\hm{F}\left(s\right)\,ds\right\Vert _{L_{\rho}^{2}\mathfrak{H}^{\tau}}\sim\left\Vert \hm{F}\right\Vert _{L_{t}^{2}\mathfrak{H}^{\tau}}.
		\end{align*}
		The desired result follows after passing $\epsilon\rightarrow0$.
		For $t\leq0$, we the same argument with $\epsilon<0$ give the estimate.
		Hence we conclude that
		\begin{equation}\label{eq:localenergyinhomo}
		\left\Vert \mathcal{P}_{c,\beta}\int_{0}^{t}e^{\left(t-s\right)\mathcal{L}_{\beta}}\hm{F}\left(s\right)\,ds\right\Vert _{L_{t}^{2}\mathfrak{H}^{-\tau}}\lesssim\left\Vert \hm{F}\right\Vert _{L_{t}^{2}\mathfrak{H}^{\tau}}.
		\end{equation}

		Next, we consider the homogeneous estimate. Note that since the linear operator $\mathcal{L}_{\beta}$ is not
		self-adjoint, the homogeneous estimate does not follow from the $TT^{*}$
		directly.  		We follow the approach in the spirit of the argument for the matrix
		Schr\"odinger problem by Schlag \cite{Sch1}. Alternatively, for the homogeneous problem, one can use the energy
		comparison argument
		in 	Chen-Jendrej \cite[Proposition 4.4]{CJ} 
		to reduce to the standard case
		with $\beta=0$.
		
		Using the Duhamel formual, we write
		\[
		e^{t\mathcal{L}_{\beta}}\mathcal{P}_{c,\beta}=e^{t\mathcal{L}_{0,\beta}}\mathcal{P}_{c,\beta}+\int_{0}^{t}e^{\left(t-s\right)\mathcal{L}_{0.\beta}}\mathcal{V}_{\beta}e^{s\mathcal{L}_{\beta}}\mathcal{P}_{c,\beta}\,ds.
		\]
		The first term above is easy to estimate since it is the homogeneous free problem. Applying the inhomogeneous estimate obtained above for the free case, it suffices to show
		\[
		\left\Vert \mathcal{V}_{\beta}e^{\mathcal{L}_{\beta}s}\mathcal{P}_{c,\beta}\hm{\psi}\right\Vert _{L_{t}^{2}\mathfrak{H}^{\tau}}\lesssim\left\Vert \hm{\psi}\right\Vert _{\mathfrak{H}}.
		\]
		Taking the Fourier transform in $s$, the expression above is equivalent
		to
		\[
		\left\Vert \left\langle x\right\rangle ^{-\tau}\mathcal{V}_{\beta}\left[\mathcal{P}_{c,\beta}\left(\mathcal{L}_{\beta}-i(\lambda+i0)\right)\mathcal{P}_{c}(\mathcal{L}_{\beta})\right]^{-1}\mathcal{P}_{c}(\mathcal{L}_{\beta})\hm{\psi}\right\Vert _{L_{\lambda}^{2}\mathfrak{H}}\lesssim\left\Vert \hm{\psi}\right\Vert _{\mathfrak{H}}.
		\]
		The desired result holds for the free operator $\mathcal{L}_{0,\beta}$ trivially.
		Then we use the resolvent identity
		\[
		\left(\mathcal{L}_{\beta}-i(\lambda+i0)\right)^{-1}=\left[1-\left(\mathcal{L}_{0,\beta}-i(\lambda+i0)\right)^{-1}\mathcal{V}\right]^{-1}\left(\mathcal{L}_{0,\beta}-i(\lambda+i0)\right)^{-1}.
		\]
		Since the bottoms of the spectrum are not resonances, for $\mu=\frac{1}{\gamma}$, we have
		\[
		\sup_{|\lambda|>\mu/2}\text{\ensuremath{\left\Vert \left[1-\left(\mathcal{L}_{0,\beta}-i(\lambda+i0)\right)^{-1}\mathcal{V}\right]^{-1}\right\Vert }}_{\mathfrak{H}\rightarrow\mathfrak{H}^{-\tau}}<\infty
		\]
		whence,
		\[
		\sup_{|\lambda|>\mu/2}\text{\ensuremath{\left\Vert \left\langle x\right\rangle ^{-\tau}\mathcal{V}_{\beta}\left[1-\left(\mathcal{L}_{0,\beta}-i(\lambda+i0)\right)^{-1}\mathcal{V}\right]^{-1}\right\Vert }}_{\mathfrak{H}\rightarrow\mathfrak{H}}<\infty.
		\]
		Therefore, it follows that
		\begin{align*}
			\int_{|\lambda|>\mu/2}\left\Vert \left\langle x\right\rangle ^{-\tau}\mathcal{V}_{\beta}\left[\mathcal{P}_{c,\beta}\left(\mathcal{L}_{\beta}-i(\lambda+i0)\right)\mathcal{P}_{c,\beta}\right]^{-1}\mathcal{P}_{c,\beta}\hm{\psi}\right\Vert _{\mathfrak{H}}^{2}d\lambda\\
			\lesssim\int_{|\lambda|>\mu/2}\left\Vert \left(\mathcal{L}_{0,\beta}-i(\lambda+i0)\right)^{-1}\hm{\psi}\right\Vert _{\mathfrak{H}}^{2}d\lambda
			\lesssim\left\Vert \hm{\psi}\right\Vert _{\mathfrak{H}}^{2}.
		\end{align*}
		We also have
		\[
		\sup_{|\lambda|<\mu/2}\left\Vert \left[\mathcal{P}_{c,\beta}\left(\mathcal{L}_{\beta}-i(\lambda+i0)\right)\mathcal{P}_{c,\beta}\right]^{-1}\right\Vert _{\mathfrak{H}\rightarrow\mathfrak{H}}<\infty
		\] since $\lambda$ is not in the spectrum. 
		
		Putting things together, we conclude that
		\begin{equation}\label{eq:localenergy}
		\left\Vert e^{\mathcal{L}_{\beta}t}\mathcal{P}_{c,\beta}\hm{u_0}\right\Vert _{L_{t}^{2}\mathfrak{H}^{-\tau}}\lesssim\left\Vert \hm{u}_{0}\right\Vert _{\mathfrak{H}}.
		\end{equation}

		Given \eqref{eq:localenergyinhomo}	and \eqref{eq:localenergy}, Theorem \ref{thm:localenergy} is proved.
	\end{proof}

	
	\section{Analysis of the single-potential problem\label{sec:onepotential}}
	
	In this section, we consider the following equation
	\begin{equation}
	w_{tt}-\Delta w+w+V_{\beta\left(t\right)}\left(x-y\left(t\right)\right)w=F\label{eq:onePmaineq}
	\end{equation}
	with the assumptions that
	\begin{equation}\label{eq:onePmainzero}
	\pi_0(t) \hm{w}(t)=0,\, \hm{w}=(w,w_t)^{T}
	\end{equation}
	and
	\begin{equation}
	\left\Vert \beta'\left(t\right)\right\Vert _{L^{1}\bigcap L^{\infty}}+\left\Vert y'\left(t\right)-\beta\left(t\right)\right\Vert _{L^{1}\bigcap L^{\infty}}\lesssim\delta\ll1.\label{eq:onePmaincond}
	\end{equation}
	We will establish Strichartz
	estimates for the equation  above after projecting onto the centre-stable space from Proposition \ref{prop:expdicho}.  For
	the multiple-potential problem, after performing the channel decomposition,
	in each channel, the analysis will be similar to the analysis here
	with extra interaction terms. One can also regard the analysis in this section  as the dispersive analysis
	in Nakanishi-Schlag \cite{NSch} with a large velcocity. Notice that
	since the Lorentz transform mixes up  the time and space variables, the
	equation \eqref{eq:onePmaineq} can not be reduced to the problem studied
	in Nakanishi-Schlag \cite{NSch}.
	
	\subsection{\label{subsec:LinearTraj}Linear trajectories}
	
	We start with establish some elementary estimates for the equation
	with a potential moving along a linear trajectory
	\begin{equation}
	w_{tt}-\Delta w+w+V_{\beta}\left(x-\beta t\right)w=F.\label{eq:movingL}
	\end{equation}
	Denoting $\hm{w}=\left(\begin{array}{c}
	w\\
	w_{t}
	\end{array}\right)$, one can rewrite the equation \eqref{eq:movingL} in the matrix form
	\begin{align}
		\frac{d}{dt}\hm{w} & =\left(\begin{array}{cc}
			0 & 1\\
			\Delta-V_{\beta}\left(x-\beta t\right)-1 & 0
		\end{array}\right)\hm{w}+\hm{F}\\
		& =:\mathcal{L}_{\beta}(t)\hm{w}+\hm{F}
		\label{eq:fullmovingmatrix}\end{align}
	Here we can put a general inhomogeneous term $\hm{F}=\left(\begin{array}{c}
	F_{1}\\
	F_{2}
	\end{array}\right)$. (This term coming from the scalar equation should have a special
	form $\hm{F}=\left(\begin{array}{c}
	0\\
	F
	\end{array}\right).$)

	\begin{lem}
		\label{lem:localdecaytim}Denote the homogeneous evolution operator of 	\eqref{eq:fullmovingmatrix},  the equation $	\frac{d}{dt}\hm{w} =\mathcal{L}_{\beta}(t)\hm{w}$  from
		$s$ to $t$ as $U(t,s)$. 
		%
		Then for some $\alpha>1$, we have
		\[
		\left\Vert \left\langle \cdot-\beta t\right\rangle ^{-\alpha}U\left(t,0\right)P_{c,\beta}\left(0\right)\hm{w}_{0}\right\Vert _{L_{t}^{2}\mathcal{H}}\lesssim\left\Vert \hm{w}_{0}\right\Vert _{\mathcal{H}}.
		\]
		\begin{equation}
		\left\Vert \left\langle \cdot-\beta t\right\rangle ^{-\alpha}\int_{0}^{t}U\left(t,s\right)P_{c,\beta}\left(s\right)\hm{F}\,ds\right\Vert _{L_{t}^{2}\mathcal{H}}\lesssim\left\Vert \left\langle \cdot-\beta t\right\rangle ^{\alpha}\hm{F}\right\Vert _{L_{t}^{2}\mathcal{H}}.\label{eq:localdecaytimeinh}
		\end{equation}
	\end{lem}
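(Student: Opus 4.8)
The plan is to deduce both bounds from the local energy decay established in Theorem~\ref{thm:localenergy} by undoing the Galilean change of variables used to pass from \eqref{eq:-26-1} to \eqref{eq:movingframe}. If $\hm{w}$ solves $\frac{d}{dt}\hm{w}=\mathcal{L}_{\beta}(t)\hm{w}$, then $\hm{u}(t,x):=\hm{w}(t,x+\beta t)$ solves the \emph{autonomous} system $\frac{d}{dt}\hm{u}=\mathcal{L}_{\beta}\hm{u}$ with $\mathcal{L}_{\beta}$ as in \eqref{eq:generalbeta} (this is exactly the computation that produced \eqref{eq:movingframe}), so
\begin{equation*}
U(t,s)\hm{w}_{0}=\big(e^{(t-s)\mathcal{L}_{\beta}}\big(\hm{w}_{0}(\cdot+\beta s)\big)\big)(\cdot-\beta t).
\end{equation*}
Because $\mathcal{L}_{\beta}$ is autonomous, the projection $\mathcal{P}_{c,\beta}$ of \eqref{eq:Pcbeta} commutes with $e^{(t-s)\mathcal{L}_{\beta}}$; moreover, since the symplectic form \eqref{eq:sympOmega} is translation invariant, the physical-coordinate projection in the statement — the one annihilating exactly the translated modes $\mathcal{Y}^{\pm}_{k,\beta}(\cdot-\beta t)$, $\mathcal{Y}^{0}_{m,\beta}(\cdot-\beta t)$, $\mathcal{Y}^{1}_{m,\beta}(\cdot-\beta t)$ of Lemma~\ref{lem:lbetaspectral} — is $P_{c,\beta}(t)\hm{h}=\big(\mathcal{P}_{c,\beta}\big(\hm{h}(\cdot+\beta t)\big)\big)(\cdot-\beta t)$. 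Combining these facts yields the intertwining relation $U(t,s)P_{c,\beta}(s)=P_{c,\beta}(t)U(t,s)$, so the continuous-spectrum component is transported by the flow.

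With these identifications the homogeneous bound is immediate: at fixed $t$, translation invariance of the $L^{2}$ and $H^{1}$ norms (together with $|\nabla\langle x\rangle^{-\alpha}|\lesssim\langle x\rangle^{-\alpha}$, used to pass between the moving-weighted energy norm and the weighted Sobolev norm $\mathfrak{H}^{-\alpha}$) gives $\big\|\langle\cdot-\beta t\rangle^{-\alpha}U(t,0)P_{c,\beta}(0)\hm{w}_{0}\big\|_{\mathcal{H}}\simeq\big\|e^{t\mathcal{L}_{\beta}}\mathcal{P}_{c,\beta}\hm{w}_{0}\big\|_{\mathfrak{H}^{-\alpha}}$, and then $L^{2}_{t}$ together with \eqref{eq:localenergy} (with any $\alpha>1$ in the role of $\tau$) closes this case. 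For the inhomogeneous bound, set $\hm{G}(s,x):=\hm{F}(s,x+\beta s)$; using the intertwining relation and the commutation of $\mathcal{P}_{c,\beta}$ with the autonomous evolution,
\begin{equation*}
\int_{0}^{t}U(t,s)P_{c,\beta}(s)\hm{F}(s)\,ds=\Big(\mathcal{P}_{c,\beta}\int_{0}^{t}e^{(t-s)\mathcal{L}_{\beta}}\hm{G}(s)\,ds\Big)(\cdot-\beta t).
\end{equation*}
Hence the left-hand side of \eqref{eq:localdecaytimeinh} is $\simeq\big\|\mathcal{P}_{c,\beta}\int_{0}^{t}e^{(t-s)\mathcal{L}_{\beta}}\hm{G}(s)\,ds\big\|_{L^{2}_{t}\mathfrak{H}^{-\alpha}}$, which by the inhomogeneous bound \eqref{eq:localenergyinhomo} is $\lesssim\|\hm{G}\|_{L^{2}_{t}\mathfrak{H}^{\alpha}}$; translating back once more, $\|\hm{G}(s)\|_{\mathfrak{H}^{\alpha}}\simeq\|\langle\cdot-\beta s\rangle^{\alpha}\hm{F}(s)\|_{\mathcal{H}}$, which is precisely the right-hand side of \eqref{eq:localdecaytimeinh}.

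The argument is essentially bookkeeping once Theorem~\ref{thm:localenergy} is available; the only points needing a little care are the identification of the time-dependent projection $P_{c,\beta}(t)$ with the translate of the autonomous projection $\mathcal{P}_{c,\beta}$ — which is what makes the intertwining relation hold and guarantees the flow preserves the continuous-spectrum subspace — and the uniform-in-$t$ equivalence of the moving-weighted energy norm with $\mathfrak{H}^{\pm\alpha}$, a routine Leibniz estimate for the slowly varying weight $\langle\cdot-\beta t\rangle^{\pm\alpha}$.
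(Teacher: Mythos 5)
Your proposal is correct and follows exactly the route the paper intends: the paper's own proof is the one-line statement that the lemma is a direct consequence of Theorem~\ref{thm:localenergy}, and your argument simply makes explicit the Galilean change of frame, the identification of $P_{c,\beta}(t)$ with the translate of $\mathcal{P}_{c,\beta}$, and the equivalence of the moving-weighted energy norm with $\mathfrak{H}^{\pm\alpha}$ that this reduction relies on. No gaps; the bookkeeping you supply is precisely what the paper leaves implicit.
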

	
	\begin{proof}
		This is a direct consequence of  Theorem  \ref{thm:localenergy}.
	\end{proof}
	From the local energy decay, one can obtain Strichartz estimates following
	the general idea introduced in Rodnianski-Schlag \cite{RodSch}. Also
	see Chen \cite{C1,C2} for the wave setting. Since we will discuss
	Strichartz estimates and scattering in more general settings below,
	we omit the derivation of Strichartz estimates here.
	
	\subsection{Reduction}\label{subsec:Reduction}
	
	To obtain the Strichartz estimates for \eqref{eq:onePmaineq}, we first perform some computations to reduce the problem to the case  that the trajectory of the potential is a perturbation of linear trajectory.

	Define an
	approximated trajectory $	\mathrm{y}(t)$ as
	\begin{equation}
	\mathrm{y}'\left(t\right)=\beta\left(t\right),\,\mathrm{y}\left(0\right)=y\left(0\right)\label{eq:appTr}
	\end{equation}
	and set $\beta_{0}=\beta\left(0\right).$ 

	\subsubsection{Reduction of trajectories}
	
	Denoting $\hm{w}=\left(\begin{array}{c}
	w\\
	w_{t}
	\end{array}\right)$, one can rewrite the equation \eqref{eq:onePmaineq} in the matrix form
	\begin{align}
		\frac{d}{dt}\hm{w} & =\left(\begin{array}{cc}
			0 & 1\\
			\Delta-V_{\beta\left(t\right)}\left(x-y\left(t\right)\right)-1 & 0
		\end{array}\right)\hm{w}+\hm{F}.\label{eq:movingTSH}
	\end{align}
	Note that
	\begin{align}
		V_{\beta\left(t\right)}\left(x-y\left(t\right)\right)-V_{\beta_{0}}\left(x-\mathrm{y}\left(t\right)\right)= & V_{\beta\left(t\right)}\left(x-y\left(t\right)\right)-V_{\beta_{0}}\left(x-y\left(t\right)\right)\nonumber \\
		& +\left(V_{\beta_{0}}\left(x-y\left(t\right)\right)-V_{\beta_{0}}\left(x-\mathrm{y}\left(t\right)\right)\right).\label{eq:DiffVscalar}
	\end{align}
	Clearly, by the mean-value theorem, one has
	\begin{equation}
	\left|V_{\beta\left(t\right)}\left(x-y\left(t\right)\right)-V_{\beta_{0}}\left(x-y\left(t\right)\right)\right|\lesssim\left\Vert \beta'\left(t\right)\right\Vert _{L^{1}}\left|\partial_{\beta}V_{\tilde{\beta}}\left(x-y\left(t\right)\right)\right|\label{eq:DiffVscalar1}
	\end{equation}
	where $\partial_{\beta}V_{\beta}$ denotes the differentiation with
	respect to the parameter $\beta$ in the Lorentz boost. On the other
	hand, applying the mean-value theorem again, it follows
	\begin{equation}
	\left|V_{\beta_{0}}\left(x-y\left(t\right)\right)-V_{\beta_{0}}\left(x-\mathrm{y}\left(t\right)\right)\right|\lesssim\left\Vert y'\left(t\right)-\beta\left(t\right)\right\Vert _{L^{1}}\left|\nabla V_{\beta_{0}}\left(x-\tilde{y}\right)\right|.\label{eq:DiffVscalar2}
	\end{equation}
	Note that by construction \eqref{eq:appTr}, we have
	\begin{equation}
	\left|\left(\mathrm{y}\left(t\right)-\beta_{0}t\right)'\right|_{L^{\infty}}\lesssim\left|\beta\left(t\right)-\beta_{0}\right|_{L^{\infty}}\lesssim\left\Vert \beta'\left(t\right)\right\Vert _{L^{1}}.\label{eq:DiffVscalar3}
	\end{equation}
	Therefore we can rewrite the approximated trajectory as
	\begin{equation}
	\mathrm{y}\left(t\right)=\beta_{0}t+c\left(t\right)\label{eq:appTr2}
	\end{equation}
	with $\left|c'\left(t\right)\right|\lesssim\left\Vert \beta'\left(t\right)\right\Vert _{L^{1}}\lesssim\delta.$
	
	Hence putting \eqref{eq:DiffVscalar}, \eqref{eq:DiffVscalar1}, \eqref{eq:DiffVscalar2},
	\eqref{eq:DiffVscalar3} and \eqref{eq:appTr2} together, one can rewrite
	\eqref{eq:movingTSH} as
	\begin{align}
		\frac{d}{dt}\hm{w} & =\left(\begin{array}{cc}
			0 & 1\\
			\Delta-V_{\beta_{0}}\left(x-\beta_{0}t+c\left(t\right)\right)-1 & 0
		\end{array}\right)\hm{w}\label{eq:movingred}\\
		& +\hm{F}_{1}\hm{w}+\hm{F}:=\mathcal{L}_{\beta_0}(c(t))+ \hm{F}_{1}\hm{w}+\hm{F}
	\end{align}
	where
	\[
	\hm{F}_{1}\hm{w}=\left(\begin{array}{c}
	0\\
	\left(V_{\beta\left(t\right)}\left(x-y\left(t\right)\right)-V_{\beta_{0}}\left(x-\beta_{0}t+c\left(t\right)\right)\right)w
	\end{array}\right).
	\]
	By our computations above, $\hm{F}_{1}\hm{w}$ is a localized term
	which is  small in the $L^{\infty}$ norm in terms of $\left\Vert \beta'\left(t\right)\right\Vert _{L^{1}}$
	and $\left\Vert y'\left(t\right)-\beta\left(t\right)\right\Vert _{L^{1}}$.
	Under the endpoint Strichartz norm, 
	due to
	the smallness of $\hm{F}_{1}\hm{w}$ one can always absorb it
	to the left-hand side.

	\subsubsection{Reduction of zero modes}\label{subsubsec:reductionzero}
	In the computations above, we reduce the general trajectory to a linear trajectory with some perturbations.  But recall that we imposed that $\pi_0(t)\hm{w}=0$ explicitly as$$\left<\alpha_{ m}^0(t),\bm{w}(t)\right>=\left<
	\alpha_{m}^1(t),\bm{w}(t)\right>=0,\qquad\forall m$$
	with respect to the original trajectory.
	
	After the reduction \eqref{eq:movingred}, we decompose $\bm{w}$ with respect to the linear operator from \eqref{eq:movingred} as
	\begin{equation}
	\bm{w}(t)=\mathcal{P}_{c,\beta_0}(c(t)) \bm {w}(t) +\mathcal{P}_{0,\beta_0}(c(t)) \bm {w}(t) +\mathcal{P}_{\pm,\beta_0}(c(t)) \bm {w}(t) 
	\end{equation}
	where $\mathcal{P}_{c,\beta_0}(c(t)),\,\mathcal{P}_{0,\beta_0}(c(t)),\,\mathcal{P}_{\pm,\beta_0}(c(t))$ are projections onto the continuous spectrum, zero modes and stable/unstable modes with respect to $\mathcal{L}_{\beta_0}(c(t))$ from \eqref{eq:movingred}.
	
	Now $\mathcal{P}_{0,\beta_0}(c(t)) \bm {w}(t)\neq 0$. But we note that we can write
	\begin{equation}
	\bm{w}(t)= \mathcal{P}_{c,\beta_0}(c(t)) \bm {w}(t) +[\mathcal{P}_{0,\beta_0}(c(t))-\pi_0(t)] \bm {w}(t) +\mathcal{P}_{\pm,\beta_0}(c(t)) \bm {w}(t). 
	\end{equation}
	From the computations above on the difference of trajectories, for the local energy norm, we have\begin{equation}
	\left\Vert\left\langle \cdot-\beta_0 t+c(t)\right\rangle ^{-\alpha}[\mathcal{P}_{0,\beta_0}(c(t))-\pi_0(t)] \bm {w}(t)\right\Vert_{H^1\times L^2} \lesssim\delta \left\Vert\left\langle \cdot-\beta_0 t+c(t)\right\rangle^{-\alpha}\bm {w}(t)\right\Vert_{H^1\times L^2}.
	\end{equation}
	Actually, this computations holds for other norms we are interested for example Strichartz norms.   Therefore, in all spaces we are interested in, one has
	\begin{equation}\label{eq:wwzero}
	\bm{w}(t) \sim [1-\mathcal{P}_{0,\beta_0}(c(t))] \bm {w}(t). 
	\end{equation}In particular, the zero modes with respect to the linear operator coming from \eqref{eq:movingred} will not result in any polynomial growth. Due to the relation \eqref{eq:wwzero}, in our practical computations, we can simply disregard the influence of zero modes.
	
	\begin{rem}
		The same reduction analysis can be applied to stable/unstable modes in the setting of Proposition \ref{pro:linear}.
	\end{rem}

	
	\subsection{\label{subsec:Plineartraj}Perturbation of linear trajectories}
	
	From computations above, we know that to establish Strichartz estimates
	and the local energy decay for \eqref{eq:onePmaineq}, it suffices to
	consider a reduced problem in which the potential moves along a perturbation of a linear trajectory
	with a fixed Lorentz factor:
	\begin{equation}
	w_{tt}-\Delta w+w+V_{\beta}\left(x-\beta t+c\left(t\right)\right)w=F\label{eq:pertlinearTrajw}
	\end{equation}
	with $\left|c'\left(t\right)\right|\ll1$. 
	
	Again writing the equation in its Hamiltonian form, we have
	\[
	\frac{d}{dt}\hm{w}=\left(\begin{array}{cc}
	0 & 1\\
	\Delta-V_{\beta}\left(x-\beta t+c\left(t\right)\right)-1 & 0
	\end{array}\right)\hm{w}+\hm{F}.
	\]
	We will prove local energy decay and Strichartz estimates for the system above. Here we
	will use the short-hand notation
	\[
	L_{t}^{p}L_{x}^{q}:=L^{p}([0,\infty);L_{x}^{q}).
	\]
	Making a change of variable $y=x+c\left(t\right)$ and projecting
	onto the stable space, it suffices to consider the following equation
	\begin{equation}
	\hm{\gamma}_{t}=\mathcal{L}_{\beta}(t)\hm{\gamma}+P_{c}\left(t\right)\left[a\left(t\right)\nabla\hm{\gamma}+\hm{F}\right]\label{eq:pertlinearTrajgamma}
	\end{equation}
	where
	\begin{equation}\label{eq:cLbeta}
	\mathcal{L}_{\beta}(t):=\left(\begin{array}{cc}
	0 & 1\\
	\Delta-V_{\beta}\left(x-\beta t\right)-1 & 0
	\end{array}\right)
	\end{equation}
	and $a\left(t\right)=c'\left(t\right)$, $\gamma\left(0\right)=P_{c}\left(0\right)\gamma\left(0\right)$
	and $\left\Vert a\right\Vert _{L^{\infty}}\ll1$. Here $P_{c}\left(t\right)$
	is the projection on the continuous spectrum associated with $\mathcal{L}_{\beta}(t)$ which is via  \eqref{eq:Pcbeta} from  \eqref{eq:Pdbeta} after a shift  by $-\beta t$. 
	

	Denote $A\left(t\right)=a\left(t\right)\cdot\nabla.$ It is sufficient
	to consider Strichartz estimates for the following equation
	\begin{equation}
	z_{t}=\mathcal{L}_{\beta}(t)P_{c}\left(t\right)z-\kappa P_{d}\left(t\right)z+\left[A\left(t\right)\left(P_{c}\left(t\right)z\right)+F\right],\ \ z\left(0\right)=\hm{\gamma}\left(0\right)\label{eq:pertlinearMmo}
	\end{equation}
	where $ P_{d}\left(t\right)$ is the projection on to the disrete spectrum with respect to $\mathcal{L}_{\beta}(t)$ which is  a shift of  \eqref{eq:Pdbeta}   by $-\beta t$  and 
	$\kappa$ is a positive number determined by  the eigenvalue of the
	stationary operator and the velocity). Note that since $P_c(t)$ commutes with the principal part of the linear flow, taking $P_c(t)z(t)=\hm{\gamma}(t)$, the equation \eqref{eq:pertlinearMmo} is reduced back to the original equation  \eqref{eq:pertlinearTrajgamma}.
	
	Here we analyze the principal part of the equation \eqref{eq:pertlinearMmo} more explicitly:
	\begin{equation}
	z_{t}=\mathcal{L}_{\beta}(t)P_{c}\left(t\right)z-\kappa P_{d}\left(t\right)z.\label{eq:zeqsim}
	\end{equation}
	Projecting onto the subspace
	associated with $P_{d}\left(t\right)$, one has
	\begin{equation}
	\left(P_{d}\left(t\right)z\right)_{t}=-\kappa\left(P_{d}\left(t\right)z\right).\label{eq:zeqd}
	\end{equation}
	On the other hand, along the stable space, we have
	\begin{equation}
	\left(P_{c}\left(t\right)z\right)_{t}=\mathcal{L}_{\beta}(t)P_{c}\left(t\right)z.\label{eq:zeqc}
	\end{equation}
	If we denote the evolution operator from $s$ to $t$ of the
	linear equation \eqref{eq:zeqsim}  above as $\mathcal{E}\left(t,s\right)$. 
	Then from \eqref{eq:zeqd} and \eqref{eq:zeqc}, we can write
	\[
	z\left(t\right)=\mathcal{E}\left(t,0\right)P_{c}\left(0\right)z\left(0\right)+e^{-\kappa t}P_{d}\left(0\right)z\left(0\right).
	\]
	Moreover, one also has
	\[
	z\left(t\right)=\mathcal{E}\left(t,s\right)P_{c}\left(s\right)z\left(s\right)+e^{-\left(t-s\right)\kappa}P_{d}\left(s\right)z\left(s\right).
	\]
	Transparently, $\mathcal{E}\left(t,s\right)P_{c}\left(s\right)$ is
	the same as $U\left(t,s\right)P_{c}\left(s\right)$ from Lemma \ref{lem:localdecaytim}.
	Note that \eqref{eq:zeqsim} can be rearranged as
	\[
	z_{t}=\mathcal{L}_{\beta}(t)z-\kappa P_{d}\left(t\right)z-\mathcal{L}_{\beta}(t)P_{d}\left(t\right)z.
	\]
	We can write the original equation \eqref{eq:pertlinearMmo} as
	\begin{equation}
	z_{t}=\left[\mathfrak{L}_{0}+\mathrm{V}_{\beta}\left(x-\beta t\right)+A\left(t\right)\right]z+\tilde{F}\label{eq:-8}
	\end{equation}
	where
	\begin{equation}
	\mathrm{V}_{\beta}\left(x-\beta t\right)=-\mathcal{L}_{\beta}(t)P_{d}\left(t\right)-\kappa P_{d}\left(t\right)+\mathcal{K}_{\beta}\left(x-\beta t\right)\label{eq:pertlinearPotential}
	\end{equation}
	and
	\begin{equation}
	\tilde{F}\left(t\right)=-A\left(t\right)P_{d}\left(t\right)z\left(t\right)+\mathrm{F}\left(t\right).\label{eq:pertlinearF}
	\end{equation}
	Here $\mathcal{K_{\beta}}\left(x-\beta t\right)=\left(\begin{array}{c}
	0\\
	V_{\beta}\left(x-\beta t\right)
	\end{array}\right)$ denotes the difference between $\mathcal{L}_{\beta}(t)$ and the
	free operator
	\begin{equation}\label{eq:freematrix}
	\mathfrak{L}_{0}:=\left(\begin{array}{cc}
	0 & 1\\
	\Delta-1 & 0
	\end{array}\right).
	\end{equation}
	Denote $\mathrm{U}_{A}\left(t,s\right)$ the evolution operator defined
	by the equation $u_{t}=A\left(t\right)u$. More explicitly,
	\begin{equation}
	\mathrm{U}_{A}\left(t,s\right)\varphi=\varphi\left(x+b\left(t,s\right)\right)\label{eq:UA}
	\end{equation}
	where
	$
	b\left(t,s\right)=\int_{s}^{t}a\left(\tau\right)\,d\tau.$
	
	Trivially, $\mathfrak{L}_{0}$ and $A\left(t\right)$ commute with
	each other and their evolution operators also commute. We can write
	down the solution $z$ using the Duhamel formula
	\begin{equation}
	z(t)=z_{0}\left(t\right)+\int_{0}^{t}e^{\left(t-s\right)\mathfrak{L}_{0}}\mathrm{U}_{A}\left(t,s\right)\left\{ \mathrm{V}_{\beta}\left(s\right)z\left(s\right)+\tilde{F}\left(s\right)\right\} \,ds\label{eq:zduhamel}
	\end{equation}
	where
	\begin{equation}
	z_{0}=e^{t\mathfrak{L}_{0}}\mathrm{U}_{A}\left(t,0\right)z\left(0\right).\label{eq:z0}
	\end{equation}
	To study estimates for \eqref{eq:zduhamel}, we introduce three auxiliary
	operators. Define
	\begin{equation}
	Tg:=\int_{0}^{t}e^{\left(t-s\right)\mathfrak{L}_{0}}\mathrm{U}_{A}\left(t,s\right)\mathrm{V}_{\beta}\left(s\right)g\left(s\right)\,ds\label{eq:T}
	\end{equation}
	\begin{equation}
	T_{0}g:=\int_{0}^{t}e^{\left(t-s\right)\mathfrak{L}_{0}}\mathrm{V}_{\beta}\left(s\right)g\left(s\right)\,ds\label{eq:T0}
	\end{equation}
	\begin{equation}
	T_{1}g:=\int_{0}^{t}\mathrm{U}_{\beta}\left(t,s\right)\mathrm{V}_{\beta}\left(s\right)g\left(s\right)\,ds\label{eq:T1}
	\end{equation}
	where $\mathrm{U}_{\beta}\left(t,s\right)$ denotes the evolution
	of the linear equation
	\begin{equation}
	z_{t}=\left[\mathfrak{L}_{0}+\mathrm{V}_{\beta}\left(x-\beta t\right)\right]z.\label{eq:linearU}
	\end{equation}
	Then we observe that by construction \eqref{eq:T} and the expansion
	\eqref{eq:zduhamel}, one has
	\begin{equation}
	\left(1-T\right)z=z_{0}+\int_{0}^{t}e^{\left(t-s\right)\mathfrak{L}_{0}}\mathrm{U}_{A}\left(t,s\right)\tilde{F}\left(s\right)\,ds.\label{eq:1-T}
	\end{equation}
	To understand the operator $T$, by \eqref{eq:T0} and \eqref{eq:T} for
	a general inhomogeneous term $\hm{g}$, we have
	\begin{equation}
	\left(T_{0}-T\right)\hm{g}=\int_{0}^{t}e^{\left(t-s\right)\mathfrak{L}_{0}}\left(1-\mathrm{U}_{A}\left(t,s\right)\right)\hm{g}\left(s\right)\,ds.\label{eq:T0-T}
	\end{equation}
	To obtain Strichartz estimates for $z$ from \eqref{eq:zduhamel}, we
	first need to show the local energy decay. The local energy decay
	of $z$ is achieved by two key points:
	\begin{itemize}
		\item The RHS of \eqref{eq:1-T} can be estimated in the local energy norm.
		\item The invertbility of $\left(1-T\right)$ in \eqref{eq:1-T} in the local
		energy norm.
	\end{itemize}
	To establish the desired results above, we split the proof into the following steps:
	\begin{enumerate}
		\item We will first show that $\left(1-T_{0}\right)$ is invertible.
		\item Then
		using dispersive estimates, we show that the difference  between $\left(T_{0}-T\right)$ is small.
		\item The invertiblity of $\left(1-T\right)$ will follow the two items above by the Neumann's series.
	\end{enumerate}
	The dispersive estimate will also result in the estimate of the RHS of \eqref{eq:1-T}.
	We proceed analysis of steps above with some lemmata as in Nakanishi-Schlag \cite{NSch}
	adapted to our current setting with a large velocity.
	
	First of all, we have the invertbility of $\left(1-T_{0}\right)$.
	The inverse of $\left(1-T_{0}\right)$ can be computed explicitly
	as following:
	\begin{lem}
		\label{lem:T0T1}Given notations from \eqref{eq:T0} and \eqref{eq:T1}, $T_{0}$, $T_{1}$ are bounded on $L_{t}^{2}\mathcal{H}\left\langle \cdot-\beta t\right\rangle ^{-\sigma}$
		in the following sense: for any $\alpha>1$, we have for $j=0,1$
		\begin{equation}
		\left\Vert \left\langle \cdot-\beta t\right\rangle ^{-\alpha}T_{j}\hm{g}\right\Vert _{L_{t}^{2}\mathcal{H}}\lesssim\left\Vert \left\langle \cdot-\beta t\right\rangle ^{-\alpha}\hm{g}\right\Vert _{L_{t}^{2}\mathcal{H}}\label{eq:T0T1decay}
		\end{equation}
		and
		\begin{equation}
		\left(1-T_{0}\right)\left(1+T_{1}\right)=\left(1+T_{1}\right)\left(1-T_{0}\right)=1.\label{eq:T0T1rela}
		\end{equation}
	\end{lem}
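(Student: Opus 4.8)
The plan is to separate the mapping bound \eqref{eq:T0T1decay} from the algebraic identity \eqref{eq:T0T1rela}, proving the former first and deducing the latter from the integrability it provides. The structural fact underlying everything is that the effective potential $\mathrm{V}_\beta(x-\beta t)$ of \eqref{eq:pertlinearPotential} is exponentially localized around $x=\beta t$: it is the genuine translated potential $\mathcal{K}_\beta(x-\beta t)$, which decays exponentially by hypothesis, plus the finite-rank operators $-\mathcal{L}_\beta(t)P_d(t)$ and $-\kappa P_d(t)$, whose ranges are spanned by the smooth, exponentially localized eigenfunctions and (iterated) null functions centred at $\beta t$ and whose action on a vector factors through symplectic pairings against the equally localized dual functions. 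Hence, for every $\alpha>1$ and uniformly in $s$,
\[
\big\|\langle\cdot-\beta s\rangle^{\alpha}\,\mathrm{V}_\beta(s)\hm g(s)\big\|_{\mathcal{H}}\lesssim\big\|\langle\cdot-\beta s\rangle^{-\alpha}\hm g(s)\big\|_{\mathcal{H}},
\]
the exponential decay absorbing the polynomial weight with no loss of regularity, and the same bound holds for $P_c(s)\mathrm{V}_\beta(s)\hm g(s)$ and $P_d(s)\mathrm{V}_\beta(s)\hm g(s)$ since the spectral projections are bounded on the weighted spaces uniformly in $s$.

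For $T_0$, I would set $\hm w=T_0\hm g$, which solves $\partial_t\hm w=\mathfrak{L}_0\hm w+\mathrm{V}_\beta(\cdot-\beta s)\hm g$ with $\hm w(0)=0$, and pass to the moving frame $\hm v(t,x):=\hm w(t,x+\beta t)$. Since $\mathfrak{L}_0$ is translation invariant, $\hm v$ solves $\partial_t\hm v=\mathcal{L}_{0,\beta}\hm v+\tilde h$ with $\tilde h$ now localized near the origin, $\mathcal{L}_{0,\beta}$ the free matrix operator of Section \ref{sec:localenergy} (see \eqref{eq:L0beta}), and $\|\langle\cdot-\beta t\rangle^{-\alpha}\hm w\|_{L^2_t\mathcal{H}}=\|\langle\cdot\rangle^{-\alpha}\hm v\|_{L^2_t\mathcal{H}}$. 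Because $\mathcal{L}_{0,\beta}$ has no point spectrum (Lemma \ref{lem:freematrix}), the free case of Theorem \ref{thm:localenergy} gives $\|\langle\cdot\rangle^{-\alpha}\hm v\|_{L^2_t\mathcal{H}}\lesssim\|\langle\cdot\rangle^{\alpha}\tilde h\|_{L^2_t\mathcal{H}}$, and combining with the localization bound above yields \eqref{eq:T0T1decay} for $j=0$.

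For $T_1$, I would first record the operator identity $\mathfrak{L}_0+\mathrm{V}_\beta(\cdot-\beta t)=\mathcal{L}_\beta(t)P_c(t)-\kappa P_d(t)$ (using $\mathcal{L}_\beta(t)=\mathfrak{L}_0+\mathcal{K}_\beta(\cdot-\beta t)$), so that the propagator $\mathrm{U}_\beta(t,s)$ of \eqref{eq:linearU} coincides with the evolution $\mathcal{E}(t,s)$ of \eqref{eq:zeqsim}, satisfying $\mathcal{E}(t,s)P_c(s)=U(t,s)P_c(s)$ and $\mathcal{E}(t,s)P_d(s)=e^{-\kappa(t-s)}P_d(s)$. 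Splitting $\mathrm{V}_\beta(s)\hm g(s)$ into its $P_c(s)$ and $P_d(s)$ parts: the $P_c$ contribution is estimated directly by the inhomogeneous local energy decay \eqref{eq:localdecaytimeinh} of Lemma \ref{lem:localdecaytim}; the $P_d$ contribution is $\int_0^t e^{-\kappa(t-s)}P_d(s)[\mathrm{V}_\beta(s)\hm g(s)]\,ds$, a superposition of functions localized near $\beta s$, so multiplying by the weight $\langle\cdot-\beta t\rangle^{-\alpha}$ — whose centre $\beta t$ lies at distance $|\beta|(t-s)$ from $\beta s$ — only improves the decay, and the $L^2_t\mathcal{H}$ norm of this term is bounded by the $L^2_t$ norm of $\int_0^t e^{-\kappa(t-s)}\|\langle\cdot-\beta s\rangle^{-\alpha}\hm g(s)\|_{\mathcal{H}}\,ds$, which Young's inequality controls by $\|\langle\cdot-\beta t\rangle^{-\alpha}\hm g\|_{L^2_t\mathcal{H}}$. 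This gives \eqref{eq:T0T1decay} for $j=1$. With this integrability in hand, the identity \eqref{eq:T0T1rela} follows formally: inserting the two Duhamel representations
\begin{align*}
\mathrm{U}_\beta(t,s)&=e^{(t-s)\mathfrak{L}_0}+\int_s^t e^{(t-\tau)\mathfrak{L}_0}\mathrm{V}_\beta(\tau)\mathrm{U}_\beta(\tau,s)\,d\tau\\
&=e^{(t-s)\mathfrak{L}_0}+\int_s^t \mathrm{U}_\beta(t,\tau)\mathrm{V}_\beta(\tau)e^{(\tau-s)\mathfrak{L}_0}\,d\tau
\end{align*}
into the definition \eqref{eq:T1} of $T_1$ and interchanging the order of integration over $0<s<\tau<t$ (Fubini being justified by the localization estimates) yields $T_1=T_0+T_0T_1$ from the first line and $T_1=T_0+T_1T_0$ from the second, i.e. $(1-T_0)(1+T_1)=(1+T_1)(1-T_0)=1$.

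The main obstacle I anticipate is the bookkeeping around the discrete spectrum: verifying the identity $\mathfrak{L}_0+\mathrm{V}_\beta=\mathcal{L}_\beta(t)P_c(t)-\kappa P_d(t)$, the uniform-in-$t$ boundedness and smoothing properties of $P_c(t)$ and $P_d(t)$ on the weighted spaces and the smoothness of their ranges, and the fact that on the discrete directions $\mathcal{E}(t,s)$ decays exponentially in $t-s$ but carries its spatial centre at $\beta s$ rather than $\beta t$ — this spatial mismatch is precisely why the $P_d$ part of $T_1$ must be estimated through a convolution/Young argument rather than through the local energy estimate directly. The moving-frame reduction to $\mathcal{L}_{0,\beta}$, the applications of Theorem \ref{thm:localenergy} and Lemma \ref{lem:localdecaytim}, and the Fubini justification for \eqref{eq:T0T1rela} should then be routine.
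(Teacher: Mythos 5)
Your proposal is correct and follows essentially the same route as the paper: the identity \eqref{eq:T0T1rela} via the two Duhamel expansions of $\mathrm{U}_{\beta}(t,s)$, the $T_{0}$ bound as the local energy decay for the free flow in the moving frame (Theorem \ref{thm:localenergy}/Lemma \ref{lem:localdecaytim}), and the $T_{1}$ bound by splitting $\mathrm{V}_{\beta}(s)\hm{g}(s)$ into its $P_{c}(s)$ and $P_{d}(s)$ parts, estimating the former by \eqref{eq:localdecaytimeinh} and the latter through the exponential factor coming from the $-\kappa P_{d}$ term. The only differences are cosmetic: you prove the bounds before the algebraic identity and spell out the $P_{d}$ contribution via Young's inequality, where the paper simply invokes that $\kappa$ large defeats the unstable growth.
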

	
	\begin{proof}
		We first prove two identities in \eqref{eq:T0T1rela}. By construction,
		\begin{align}
			T_{0}T_{1}\hm{f} & =\int\int_{0<t_{1}<t_{0}<t}e^{\left(t-t_{0}\right)\mathfrak{L}_{0}}\mathrm{V}_{\beta}\left(t_{0}\right)\mathrm{U}_{\beta}\left(t_{0},t_{1}\right)\mathrm{V}_{\beta}\left(t_{1}\right)\hm{f}\left(t_{1}\right)\,dt_{1}dt_{0}\nonumber \\
			& =\int_{0}^{t}\left[\mathrm{U}_{\beta}\left(t,t_{1}\right)-e^{\left(t-t_{1}\right)\mathfrak{L}_{0}}\right]\mathrm{V}_{\beta}\left(t_{1}\right)\hm{f}\left(t_{1}\right)\,dt_{1}=T_{1}\hm{f}-T_{0}\hm{f}.\label{eq:T0T1}
		\end{align}
		where in the last step, we used
		\begin{equation}
		\mathrm{U}_{\beta}\left(t,t_{1}\right)\hm{\phi}=e^{\left(t-t_{1}\right)\mathfrak{L}_{0}}\hm{\phi}+\int_{t_{1}}^{t}e^{\left(t-t_0\right)\mathfrak{L}_{0}}\mathrm{V}_{\beta}\left(t_0\right)\mathrm{U}_{\beta}\left(t_0,t_{1}\right)\hm{\phi}\,dt_0.\label{eq:expUbeta}
		\end{equation}
		Reversing the Duhamel formula, we can also obtain
		\[
		T_{1}T_{0}=-T_{0}+T_{1}
		\]
		in the same manner. Therefore,
		\[
		\left(1-T_{0}\right)\left(1+T_{1}\right)=\left(1+T_{1}\right)\left(1-T_{0}\right)=1.
		\]
		Next we prove weighted estimates. First of all, for the free case,
		we recall that
		\begin{equation}
		T_{0}g=\int_{0}^{t}e^{\left(t-s\right)\mathfrak{L}_{0}}\mathrm{V}_{\beta}\left(s\right)\hm{g}\left(s\right)\,ds.
		\end{equation}
		The desired estimate is nothing but the Kato smoothing or the local
		energy decay for the operator $e^{\mathfrak{L}_{0}t+i\beta t\nabla}.$
		The perturbed version is discussed in Lemma \ref{lem:localdecaytim}.
		More precisely, we write
		\begin{align}
			T_{1}g & =\int_{0}^{t}\mathrm{U}_{\beta}\left(t,s\right)\mathrm{V}_{\beta}\left(s\right)\hm{g}\left(s\right)\,ds\nonumber \\
			& =\int_{0}^{t}\mathrm{U}_{\beta}\left(t,s\right)P_{c}\left(s\right)\mathrm{V}_{\beta}\left(s\right)\hm{g}\left(s\right)\,ds+\int_{0}^{t}\mathrm{U}_{\beta}\left(t,s\right)P_{d}\left(s\right)\mathrm{V}_{\beta}\left(s\right)\hm{g}\left(s\right)\,ds\nonumber \\
			& =\int_{0}^{t}U\left(t,s\right)P_{c}\left(s\right)\mathrm{V}_{\beta}\left(s\right)\hm{g}\left(s\right)\,ds +\int_{0}^{t}\mathrm{U}_{\beta}\left(t,s\right)P_{d}\left(s\right)\mathrm{V}_{\beta}\left(s\right)\hm{g}\left(s\right)\,ds\label{eq:T1expan}
		\end{align}
		where in the last equality, we used the notation from Lemma \ref{lem:localdecaytim}.
		Then the first integral on the RHS of \eqref{eq:T1expan}
		can be bounded by estimates in Lemma \ref{lem:localdecaytim}. The
		second integral can be bounded by choosing $\kappa>0$ large enough
		to defeat the exponential growth rates of unstable modes. 
	\end{proof}
	
	\subsubsection{\label{subsec:DiffT}Difference of $T$ and $T_{0}$}
	Next, we analyze the difference between $T$ and $T_{0}$.
	We know that from the computations above, $\left(1-T_{0}\right)$
	is invertible in the weighted space and its inverse is explicitly given by $(1+T_1)$. If the difference between $T$
	and $T_{0}$ is small enough, then one can also invert $\left(1-T\right)$
	in the weighted space using the Neumann's series.

	We start with some interaction estimates. See Section \ref{ssubec:TruncateInter}
	for the most general version.

	Recalling the notation  $\mathrm{U}_A$, \eqref{eq:UA}, and the operator $\mathcal{D}=\sqrt{-\Delta+1}$ from \eqref{eq:D},  we denote
	\begin{equation}
	I_{j}^{M}\left(f,g\right)=\int_{0}^{\infty}\int_{0}^{\max\left\{ t-M,0\right\} }\left\langle e^{\left(t-s\right)i\mathcal{D}}\mathrm{U}_{A}\left(t,s\right)\varDelta_{j}w\left(s\right)f\left(s\right),w\left(t\right)g\left(t\right)\right\rangle \,dsdt,\label{eq:Imjbeta}
	\end{equation}
	for $j\geq0$, $M\geq1$, $w\left(x,t\right)=\left\langle x-\beta t\right\rangle ^{-\sigma}$
	and $f,\,g\in L_{t,x}^{2}$. $1=\sum_{j=0}^{\infty}\varDelta_{j}$
	is a Littlewood-Paley decomposition defined by $\varDelta_{j}u=\mathcal{F}^{-1}\varphi\left(2^{-j}\xi\right)\hat{u}\left(\xi\right)$
	for all $j\geq1$ with some radial non-negative $\varphi\in C_{0}^{\infty}$
	supported on $\frac{1}{2}<\left|\xi\right|<2$. To sum $I_{j}$ over
	$j\geq0$ we use the almost orthogonality as following
	\begin{equation}
	I_{j}^{M}\left(f,g\right)=I_{j}^{M}\left(Q_{j}f,Q_{j}g\right),\label{eq:ImjQbeta}
	\end{equation}
	\[
	Q_{j}f=\sum_{\left|k-j\right|\leq1}\varDelta_{k}f+w^{-1}\left[\varDelta_{k},w\right]f,
	\]
	where the commutator term is small in the sense that
	\[
	\left\Vert w^{-1}\left[\varDelta_{k},w\right]f\right\Vert _{L^{2}}\lesssim2^{-k}\left\Vert f\right\Vert _{L^{2}}.
	\]
	Next, we prove Lemma 9.5 from Nakanishi-Schlag \cite{NSch} in our
	setting.
	\begin{lem}
		\label{lem:interactionbeta}Let $\sigma>\frac{3}{2}$ and $M\geq1$.
		Then for any $j\geq0$, we have
		\begin{equation}
		\left|I_{j}^{M}\left(f,g\right)\right|\lesssim C\left(\sigma\right)2^{2j}M^{\frac{3}{2}-\sigma}\left\Vert f\right\Vert _{L_{t}^{2}L_{x}^{2}}\left\Vert g\right\Vert _{L_{t}^{2}L_{x}^{2}},\label{eq:interactionbeta}
		\end{equation}
		provided $\left\Vert a\right\Vert _{L_{t}^{\infty}}\ll1$.
	\end{lem}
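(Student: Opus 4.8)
The plan is to regard $I_j^M$ as a bilinear form and to bound it by a single time‑integrated operator estimate. Substituting $\tau=t-s$ in \eqref{eq:Imjbeta}, reorganising the time integrals (Fubini), and applying the Cauchy--Schwarz inequality in the remaining time variable, I expect to reduce the claim to
\[
\big|I_j^M(f,g)\big|\;\lesssim\;\|f\|_{L^2_{t,x}}\,\|g\|_{L^2_{t,x}}\int_M^{\infty}\mathcal{N}_j(\tau)\,d\tau ,
\]
where $\mathcal{N}_j(\tau)$ is the supremum over all pairs $(t,s)$ with $t-s=\tau$ of the $L^2_x\to L^2_x$ operator norm of $w(t)\,e^{i\tau\mathcal{D}}\,\mathrm{U}_{A}(t,s)\,\varDelta_j\,w(s)$. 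Here one uses that $\mathrm{U}_{A}(t,s)$ is a spatial translation by $b(t,s)=\int_s^t a$, hence an isometry of every $L^p_x$ that commutes with the Fourier multipliers $e^{i\tau\mathcal{D}}$ and $\varDelta_j$, and that it satisfies $|b(t,s)|\le\|a\|_{L^\infty_t}\,\tau$; together with the translation covariance of the weights $w(t)=\langle\cdot-\beta t\rangle^{-\sigma}$ this makes $\mathcal{N}_j(\tau)$ depend only on $\tau$ and on the small drift $a$.

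It then suffices to establish the weighted dispersive bound
\[
\mathcal{N}_j(\tau)\;\lesssim\;C(\sigma)\,2^{2j}\,\langle\tau\rangle^{\frac12-\sigma},\qquad\tau\ge1 ,
\]
since $\int_M^\infty\langle\tau\rangle^{\frac12-\sigma}\,d\tau\lesssim(\sigma-\tfrac32)^{-1}M^{\frac32-\sigma}$, the integral converging precisely because $\sigma>\tfrac32$; this is exactly the role of the hypothesis on $\sigma$. To prove the bound on $\mathcal{N}_j$ I would work with the explicit kernel $G^j_\tau$ of $e^{i\tau\mathcal{D}}\varDelta_j$, so that the relevant operator has kernel $\langle x-\beta t\rangle^{-\sigma}\,G^j_\tau\!\big(x+b(t,s),y\big)\,\langle y-\beta s\rangle^{-\sigma}$, and estimate its $L^2_x\to L^2_x$ norm. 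The frequency–localised half–Klein--Gordon dispersive estimate $\|e^{i\tau\mathcal{D}}\varDelta_j\|_{L^1_x\to L^\infty_x}\lesssim 2^{2j}\langle\tau\rangle^{-3/2}$ in $\mathbb{R}^{3}$, together with $\|\langle\cdot\rangle^{-\sigma}\|_{L^2(\mathbb{R}^{3})}=C(\sigma)<\infty$ for $\sigma>\tfrac32$ (used twice, via $w(s):L^2_x\to L^1_x$ and $w(t):L^\infty_x\to L^2_x$), already gives the baseline $\mathcal{N}_j(\tau)\lesssim C(\sigma)2^{2j}\langle\tau\rangle^{-3/2}$. The improvement to the $\sigma$‑dependent rate $\langle\tau\rangle^{1/2-\sigma}$ is to be extracted from a stationary/non‑stationary phase analysis of $G^j_\tau$ against the two moving localisations: on the dyadic shell $|\xi|\sim 2^j$ the phase $(x+b-y)\cdot\xi+\tau\langle\xi\rangle$ is stationary only when $|x+b-y|\sim v_j\tau$, $v_j:=2^{j}/\langle2^{j}\rangle$, whereas on the bulk supports of $w(t)$ and $w(s)$ one has $|x+b-y|=|\beta\tau+b|+O(1)$, a quantity squeezed between $(|\beta|-\|a\|_{L^\infty_t})\tau$ and $(|\beta|+\|a\|_{L^\infty_t})\tau$; away from the resonant velocity $|\beta|\sim v_j$, repeated integration by parts produces $\tau$‑decay of order limited only by the polynomial tails of the weights, and in the resonant regime one instead integrates the oscillation of $G^j_\tau$ on that shell against the smooth weights. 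The smallness $\|a\|_{L^\infty_t}\ll1$ enters here to keep $|\beta|+\|a\|_{L^\infty_t}<1$, so that the distances at play stay strictly inside the light cone, and to ensure the drift $b$ does not destroy the non‑stationarity.

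The main obstacle is precisely this last step: producing $\tau$‑decay that beats the sharp local dispersive rate $\langle\tau\rangle^{-3/2}$ and that improves as $\sigma$ grows. A crude Schur test using only the pointwise bound $|G^j_\tau|\lesssim 2^{2j}\langle\tau\rangle^{-3/2}$ on the light cone, with one weight exploited at a time, is not enough for large $\sigma$; one must use both moving weights genuinely, through a wave‑packet / stationary‑phase decomposition of the Klein--Gordon kernel adapted to the two frames $x=\beta t$ and $y=\beta s$, uniformly in $|\beta|<1$ and in the small drift $a$. Granted the kernel bound, the $\tau$‑integration and the two $L^2_{t,x}$ factors follow immediately, and the frequency loss $2^{2j}$ is harmless at the level of a fixed $j$: it is meant to be recovered when summing over $j$ via the almost‑orthogonality identity \eqref{eq:ImjQbeta} together with the extra smoothing carried by the functions to which the lemma is applied in the sequel.
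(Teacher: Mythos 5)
Your reduction of $I_j^M$ by Cauchy--Schwarz in time to $\|f\|_{L^2_{t,x}}\|g\|_{L^2_{t,x}}\int_M^\infty\mathcal{N}_j(\tau)\,d\tau$ is fine, but the pivotal estimate you then need, $\mathcal{N}_j(\tau)\lesssim C(\sigma)2^{2j}\langle\tau\rangle^{\frac12-\sigma}$, is not only left unproved (as you acknowledge) --- it is false whenever the moving frame resonates with the frequency shell, i.e.\ whenever $\gamma|\beta|$ lies in the support of $\varphi(2^{-j}\cdot)$, which happens for some $j$ for every $\beta$ (for $j=0$ when $|\beta|$ is moderate, and for the large $j$ with $2^j\sim\gamma|\beta|$ when $|\beta|$ is close to $1$ --- the very regime this paper is about). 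Indeed, take $a\equiv0$, let $\phi$ be a unit bump, and test the operator $w(t)e^{i\tau\mathcal{D}}\varDelta_j w(s)$ against $f(y)=e^{-i\gamma\beta\cdot y}\phi(y-\beta s)$ and $g(x)=e^{-i\gamma\beta\cdot x}\phi(x-\beta t)$. A Plancherel computation turns the pairing into $\int e^{i\tau(\langle\xi\rangle+\beta\cdot\xi)}\varphi(2^{-j}\xi)\,|\widehat{\langle\cdot\rangle^{-\sigma}\phi}(\xi+\gamma\beta)|^{2}\,d\xi$ (up to a unimodular factor), whose phase has a nondegenerate critical point exactly at $\xi=-\gamma\beta$, where the amplitude is strictly positive. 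Stationary phase therefore gives $|\langle w(t)e^{i\tau\mathcal{D}}\varDelta_j w(s)f,g\rangle|\simeq c(j,\beta,\sigma)\,\tau^{-3/2}$ for large $\tau$, with a $\tau$-power independent of $\sigma$: the wave packet's group velocity equals $\beta$, so it rides along with \emph{both} weights for all time. Hence $\mathcal{N}_j(\tau)\gtrsim_{j,\beta}\tau^{-3/2}$, no bound of the form $2^{2j}\tau^{\frac12-\sigma}$ can hold once $\sigma>2$, and your route can never yield better than $\int_M^\infty\mathcal{N}_j\,d\tau\gtrsim_{j,\beta}M^{-1/2}$, far short of the required $M^{\frac32-\sigma}$. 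A supremum over $(t,s)$ at fixed time separation is intrinsically too lossy: the fixed-$\tau$ weighted propagator norm saturates at the dispersive rate and cannot improve with $\sigma$. Your fallback for the resonant regime ("integrate the oscillation against the smooth weights") has nothing to act on, since $f,g$ are arbitrary $L^2_{t,x}$ densities.

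The paper's proof never asks for a fixed-$\tau$ operator bound. It keeps the full space--time integral and first splits $I_j^M=J_j^M(f,g)+\overline{J_j^M(g,f)}$ according to which of $|x-\beta t|$, $|y-\beta s|$ is larger; after the change of variables $u=t-s$, $h=x+b-\beta t$, $z=(y-\beta s)-(x-\beta t)-b$, the pointwise kernel bounds for $K^j$ show the only dangerous contribution is the near-cone region, and there the geometry of the chosen piece (together with $|b|\lesssim\|a\|_{L^\infty_t}u$ and $|y-\beta s|<|x-\beta t|$) forces $u\lesssim|x-\beta t|$, so the weight attached to that variable supplies the factor $\langle x-\beta t\rangle^{-\sigma}\lesssim u^{-\sigma}$; an $L^2$ computation over the shell then gives $2^{2j}u^{\frac12-\sigma}$ per unit of $s$, and integrating over $u>M$ produces $M^{\frac32-\sigma}$. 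In other words, the $\sigma$-gain is harvested from the spatial variable that the light-cone geometry pushes far from its center \emph{inside} the double time integral --- exactly the information destroyed when you pass to $\sup_{t-s=\tau}$. Your non-stationary-phase regime does correspond to the paper's fast-decay case, and your remark about recovering the $2^{2j}$ loss through almost orthogonality and extra smoothing matches the paper's subsequent use of the lemma; but without the asymmetric splitting (or some equivalent exploitation of the joint $(s,t,x,y)$ geometry) the resonant regime cannot be closed, so the argument as proposed does not prove the lemma.
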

	
	\begin{proof}
		Define the space-time function $K_{j}$ and $K^{j}$ by
		\begin{equation}
		K_{j}\left(t,x\right):=2^{3j}K^{j}\left(2^{j}t,2^{j}x\right):=e^{it\mathcal{D}}2^{3j}\hat{\varphi}\left(2^{j}x\right).\label{eq:Kjbeta}
		\end{equation}
		Then we can rewrite
		\[
		I_{j}^{M}\left(f,g\right)=J_{j}^{M}\left(f,g\right)+\overline{J_{j}^{M}\left(g,f\right)}
		\]
		where{\small
			\[
			J_{j}^{M}\left(f,g\right)=\int_{\left|x-\beta t\right|>\left|y-s\beta\right|,s>0,t>s+M}K_{j}\left(t-s,x-y+b\right)w\left(s,y\right)f\left(s,y\right)w\left(t,x\right)\overline{g}\left(t,x\right)\,dsdtdxdy.
			\]}We set $t-s=u$ and
		\[
		z=-x+y-b+\beta t-s\beta,\,\,h=x+b-\beta t.
		\]
		Then one has
		\[
		h-b=x-\beta t,\ z+h=y-s\beta.
		\]
		Using the new coordinates, the original integral is rewritten as
		\begin{align}
			J_{j}^{M}\left(f,g\right) & =\int_{\left|h-b\right|>\left|h+z\right|,s>0,u>M}2^{3j}K^{j}\left(2^{j}u,2^{j}\left(z-\beta u\right)\right)w\left(s,h-b\right)f\left(s,h-b\right)\nonumber \\
			& \ \ \ \ \ \ \ \ \ w\left(s+u,h+z\right)\overline{g}\left(s+u,h+z\right)\,dsdudhdz.\label{eq:Jmj1}
		\end{align}
		where $b=b\left(t,s\right)=\int_{s}^{t}a\left(\tau\right)\,d\tau$. 
		
		Recall that we have following pointwise decay for $K^{j}$. For the
		proof, see
		Nakanishi-Schlag  \cite[Proof of (9.60)]{NSch}. 
		For $t\gtrsim1$, one has for $N\in\mathbb{N}$
		arbitrary
		\[
		\left|K^{j}\left(t,x\right)\right|\lesssim\begin{cases}
		t^{-1}\left\langle \left|t-\left|x\right|\right|+2^{-2j}t\right\rangle ^{-N} & \left(\left|t-\left|x\right|\right|\nsim2^{-2k}t\right),\\
		t^{-1}\left\langle t-\left|x\right|\right\rangle ^{-\frac{1}{2}} & \left(\left|t-\left|x\right|\right|\sim2^{-2k}t\right).
		\end{cases}
		\]
		For $u\nsim\left|z-\beta u\right|$, applying the first bound of the
		estimate above, by \eqref{eq:Jmj1}, we can bound{\small
			\begin{align}
				\left|J_{j}^{M}\left(f,g\right)\right| & \lesssim\int2^{3j}\left(2^{j}u\right)^{-3-N}\left|w\left(s,y-b\right)f\left(s,u-b\right)w\left(s+u,y+z\right)\overline{g}\left(s+u,y+z\right)\right|dsdudhdz\nonumber \\
				& \lesssim2^{-jN}M^{-2-N}\left\Vert f\right\Vert _{L_{t}^{2}L_{x}^{2}}\left\Vert g\right\Vert _{L_{t}^{2}L_{x}^{2}}.\label{eq:firstboundJ}
		\end{align}}If $u\sim\left|z-\beta u\right|$ which implies that $\text{\ensuremath{u}\ensuremath{\sim\left|z\right|}}$
		since $\left|\beta\right|<1$. It follows that
		\[
		\left|b\left(t,s\right)\right|\lesssim\left|t-s\right|=u\sim\left|z\right|\lesssim\left|h-b\right|+\left|h+z\right|+\left|b\right|\lesssim\left|h-b\right|
		\]
		since $\left|h+z\right|<\left|h-b\right|$ by construction. Therefore the integral
		$J_{j}^{M}\left(f,g\right)$ above in this region is bounded by
		\begin{align}
			\int_{u\ge M,\left|u\right|\lesssim\left|h-b\right|}\frac{2^{3j}}{2^{j}u}\frac{\left|f\left(s,u-b\right)\overline{g}\left(s+u,y+z\right)\right|}{\left\langle h-b\right\rangle ^{\sigma}\left\langle h+z\right\rangle ^{\sigma}}\,dydzduds\nonumber \\
			\lesssim\int_{u\geq M}2^{j}u^{-1}\left\Vert \left\langle h-b\right\rangle ^{-\sigma}\left\langle h+z\right\rangle ^{-\sigma}\right\Vert _{L_{\left\{ h:\left|h-b\right|\gtrsim u\right\} }^{2}L_{\left|\left|z\right|\sim u\right|}^{2}}\nonumber \\
			\times\left\Vert f\left(s,\cdot\right)\right\Vert _{L_{x}^{2}}\left\Vert g\left(s+u,\cdot\right)\right\Vert _{L_{x}^{2}}\,dsdu\nonumber \\
			\lesssim\int_{u>M}2^{2j}u^{\frac{1}{2}-\sigma}\left\Vert f\left(s,\cdot\right)\right\Vert _{L_{x}^{2}}\left\Vert g\left(s+u,\cdot\right)\right\Vert _{L_{x}^{2}}\,dsdu\nonumber \\
			\lesssim2^{2j}M^{\frac{3}{2}-\sigma}\left\Vert f\right\Vert _{L_{t}^{2}L_{x}^{2}}\left\Vert g\right\Vert _{L_{t}^{2}L_{x}^{2}}.\label{eq:secondboundJ}
		\end{align}
		Therefore, overall, putting \eqref{eq:firstboundJ} and \eqref{eq:secondboundJ}
		together, one has
		\[
		\left|I_{j}^{M}\left(f,g\right)\right|\lesssim C\left(\sigma\right)2^{2j}M^{\frac{3}{2}-\sigma}\left\Vert f\right\Vert _{L_{t}^{2}L_{x}^{2}}\left\Vert g\right\Vert _{L_{t}^{2}L_{x}^{2}}
		\]
		as desired.
	\end{proof}
	The key dispersive estimate is the following lemma.
	\begin{lem}
		\label{lem:shiftslant}Let $1>\nu>0$ and $\sigma\geq\frac{1}{2}+\frac{2}{\nu}$.
		If $\left\Vert a\right\Vert _{L_{t}^{\infty}}\ll1$ then
		\begin{equation}
		\left\Vert \left\langle \cdot-\beta t\right\rangle ^{-\sigma}e^{it\mathcal{D}}\mathrm{U}_{A}\left(t,0\right)\phi\right\Vert _{L_{t}^{2}L_{x}^{2}}\lesssim\left\Vert \phi\right\Vert _{H_{x}^{\frac{\nu}{2}}},\label{eq:shiftfreehom}
		\end{equation}
		\[
		\left\Vert \left\langle \cdot-\beta t\right\rangle ^{-\sigma}\int_{0}^{t}e^{i\left(t-s\right)\mathcal{D}}\mathrm{U}_{A}\left(t,s\right)f\left(s\right)\,ds\right\Vert _{L_{t}^{2}L_{x}^{2}}\lesssim\left\Vert \mathcal{D}^{\nu}f\right\Vert _{L_{t}^{1}H_{x}^{-\frac{\nu}{2}}+L_{t}^{2}L_{x}^{2}\left(\left\langle \cdot-\beta t\right\rangle ^{\sigma}\right)}.
		\]
	\end{lem}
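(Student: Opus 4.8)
The plan is to derive both inequalities from a single Schur-type estimate for a $TT^{*}$ kernel. Write $w(t,x):=\langle x-\beta t\rangle^{-\sigma}$. Because $e^{it\cD}$ and $\mathrm{U}_{A}(t,s)$ (translation by $b(t,s)=\int_{s}^{t}a(\tau)\,d\tau$, see \eqref{eq:UA}) are $L^{2}_{x}$-isometries, are Fourier multipliers, and commute with one another and with $\cD$, the $TT^{*}$ argument reduces the homogeneous estimate \eqref{eq:shiftfreehom} to the boundedness on $L^{2}_{t}L^{2}_{x}$ of the operator with operator-valued kernel
\[
\cK(t,s):=w(t)\,e^{i(t-s)\cD}\,\cD^{-\nu}\,\mathrm{U}_{A}(t,s)\,w(s),
\]
and, since $\cK$ is symmetric under $(t,s)\mapsto(s,t)$ up to adjoints, this follows from the Schur bound $\sup_{t}\int_{0}^{\infty}\|\cK(t,s)\|_{\mathcal{L}(L^{2})}\,ds\lesssim1$. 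I would first note that the same Schur bound also gives the inhomogeneous estimate: writing $\cD^{\nu}f=h_{1}+h_{2}$ according to the sum space on the right-hand side, so that $f=\cD^{-\nu}h_{1}+\cD^{-\nu}w\,g$ with $h_{1}\in L^{1}_{t}H^{-\nu/2}_{x}$ and $g:=w^{-1}h_{2}\in L^{2}_{t}L^{2}_{x}$, the contribution of $\cD^{-\nu}w\,g$ to the Duhamel integral is exactly $\int_{0}^{t}\cK(t,s)g(s)\,ds$, controlled by $\|g\|_{L^{2}_{t}L^{2}_{x}}$; the contribution of $\cD^{-\nu}h_{1}$ is handled by Minkowski's inequality in $s$ together with the homogeneous estimate started at time $s$ (legitimate uniformly in $s$, since the hypotheses $|\beta|<1$, $\|a\|_{L^{\infty}_{t}}\ll1$ are time-translation invariant), yielding $\int_{0}^{\infty}\|\cD^{-\nu}h_{1}(s)\|_{H^{\nu/2}_{x}}\,ds=\|h_{1}\|_{L^{1}_{t}H^{-\nu/2}_{x}}$.

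To establish the Schur bound I would split at $|t-s|=1$. For $|t-s|\le1$ one simply bounds $\|\cK(t,s)\|_{\mathcal{L}(L^{2})}\le\|w(t)\|_{L^{\infty}_{x}}\,\|\cD^{-\nu}\|_{\mathcal{L}(L^{2})}\,\|w(s)\|_{L^{\infty}_{x}}\lesssim1$ and integrates trivially. For $|t-s|>1$ I would Littlewood--Paley decompose $\cD^{-\nu}e^{i(t-s)\cD}=\sum_{j\ge0}n_{j}(\cD)e^{i(t-s)\cD}$ with $|n_{j}|\sim2^{-j\nu}$ on the $j$-th dyadic shell, so that, after the almost-orthogonal reduction as in \eqref{eq:ImjQbeta} (which absorbs that $w$ does not commute with $\varDelta_{j}$, via the $Q_{j}$'s), the $j$-th piece of $\cK(t,s)$ has integral kernel $w(t,x)\,2^{-j\nu}K_{j}(t-s,\,x-y+b(t,s))\,w(s,y)$ with $K_{j}$ as in \eqref{eq:Kjbeta}. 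One then estimates this piece in $\mathcal{L}(L^{2})$ by Schur's test in $x$ and $y$, using the pointwise bounds on $K^{j}$ recalled above together with two geometric facts valid on $\supp\big(w(t,\cdot)\otimes w(s,\cdot)\big)$ when $|t-s|>1$: since $|\beta|<1$ and $|b(t,s)|\le\|a\|_{L^{\infty}_{t}}|t-s|$, one has $|\beta(t-s)+b(t,s)|\le(1-c)|t-s|$ for a fixed $c>0$, so that either $\big||x-y+b(t,s)|-|t-s|\big|\gtrsim|t-s|$ after paying a fixed power of the two weights, in which case the off-cone bound on $K^{j}$ gives rapid decay in both $2^{j}$ and $|t-s|$, or else one of $\langle x-\beta t\rangle,\langle y-\beta s\rangle$ is $\gtrsim|t-s|$, which yields an extra factor $|t-s|^{-\sigma}$. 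Summing over $j$ and integrating over $|t-s|>1$ then closes the Schur bound.

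The heart of the matter is this last summation, and I expect it to be the main obstacle. The dangerous term comes from the near-cone regime, where after rescaling one has only $|K_{j}|\lesssim 2^{2j}(t-s)^{-1}\langle 2^{j}(|t-s|-|x-y+b(t,s)|)\rangle^{-1/2}$, costing an effective power up to $2^{5j/2}$; this must be defeated by combining the $2^{-j\nu}$ smoothing with the weight decay gained in this regime, and balancing the two is precisely what forces the threshold $\sigma\ge\frac12+\frac2\nu$. Carrying this through while simultaneously tracking the moving center $\beta t$ and the time-dependent drift $b(t,s)$ — which is where the smallness $\|a\|_{L^{\infty}_{t}}\ll1$ (used through $\delta\ll1-|\beta|$) is needed — is the only substantive difficulty; structurally it is the analogue, with a nonzero Lorentz velocity and a time-dependent shift, of the dispersive estimate in Nakanishi--Schlag \cite{NSch}, and the pointwise decay of $K^{j}$ it relies on is the same one used in the proof of Lemma~\ref{lem:interactionbeta} above.
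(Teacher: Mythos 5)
Your skeleton is sound and close in spirit to the paper's: the paper also obtains the homogeneous bound from an inhomogeneous/bilinear one by $TT^{*}$, your treatment of the sum space ($L^{1}_{t}$ piece by Minkowski plus the homogeneous estimate, weighted piece by the kernel bound) is fine, and the geometric input you list ($|\beta(t-s)+b(t,s)|\le(1-c)|t-s|$, so near the cone one of the two weights must pay a power of $|t-s|$) is exactly what drives Lemma~\ref{lem:interactionbeta}. The gap is in the step you yourself defer. The Schur bound $\sup_{t}\int_{0}^{\infty}\|\cK(t,s)\|_{\mathcal{L}(L^{2})}\,ds\lesssim1$ with a \emph{single} splitting at $|t-s|=1$, followed by summing the operator norms of the Littlewood--Paley pieces, does not close at the stated threshold. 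For the $j$-th piece the only bounds the kernel estimates provide are the trivial one, $\|\cK_{j}(t,s)\|_{\mathcal{L}(L^{2})}\lesssim2^{-j\nu}$, and the dispersive/weight bound $\lesssim2^{-j\nu}2^{2j}|t-s|^{\frac12-\sigma}$ (this is what is implicit, pointwise in $(t,s)$, in the proof of Lemma~\ref{lem:interactionbeta}); the latter beats the former only for $|t-s|\gtrsim2^{2j/(\sigma-1/2)}$, which equals $2^{\nu j}$ when $\sigma=\frac12+\frac2\nu$. Hence
\[
\int_{1}^{\infty}\|\cK_{j}(t,t-u)\|_{\mathcal{L}(L^{2})}\,du\ \lesssim\ 2^{-j\nu}\,2^{\frac{2j}{\sigma-1/2}}\ \approx\ 1
\]
at the endpoint: every dyadic frequency contributes a fixed amount, and the sum over $j$ diverges (it converges only for $\sigma$ strictly above the threshold). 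Invoking the almost-orthogonal reduction \eqref{eq:ImjQbeta} does not rescue this: once you have replaced each $\cK_{j}(t,s)$ by its full operator norm, orthogonality between different $j$'s can no longer be exploited, so your summation over $j$ really is a triangle inequality.

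This is precisely what the paper's proof is organized to avoid, in two ways your proposal does not reproduce. First, the time-splitting is frequency dependent: the $j$-th piece is cut at $M_{j}=2^{\nu j}$, the short times estimated by the trivial bound \eqref{eq:trivialbeta} (cost $M_{j}=2^{\nu j}$) and the long times by Lemma~\ref{lem:interactionbeta} (cost $2^{2j}M_{j}^{\frac32-\sigma}\le2^{\nu j}$); this is exactly where $\sigma\ge\frac12+\frac2\nu$ enters. Second, and crucially, the estimate is kept bilinear and the per-frequency loss $2^{\nu j}$ is transferred onto frequency-localized data via $I_{j}^{0}(f,g)=I_{j}^{0}(Q_{j}f,Q_{j}g)$, yielding $\lesssim\|f\|_{L^{2}_{t}H^{\nu}_{x}}\|g\|_{L^{2}_{t,x}}$, after which the $\cD^{-\nu}$ produced by $TT^{*}$ cancels the loss frequency by frequency instead of requiring a summable series of operator norms. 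To salvage your formulation you would need either a per-frequency Schur bound $\sup_{t}\int\|\cK_{j}(t,s)\|\,ds\lesssim2^{-\epsilon j}$ (not available from the stated kernel bounds at the endpoint) or uniform per-frequency bounds combined with genuine almost-orthogonality in $j$, which is a different argument from the one you wrote. Finally, the near-cone estimate producing $2^{2j}|t-s|^{\frac12-\sigma}$ -- with the drift $b(t,s)$ and the restriction to the region where a weight is large -- is the quantitative heart of the lemma (it is Lemma~\ref{lem:interactionbeta}); your proposal correctly identifies it as the crux but does not carry it out.
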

	
	\begin{proof}
		We notice that when $s\sim t$, the interaction is strong. In this
		region, we will apply the trivial $L_{x}^{2}$ conservation:
		\begin{equation}
		\left|I_{j}^{M}\left(f,g\right)-I_{j}^{0}\left(f,g\right)\right|\lesssim M\left\Vert f\right\Vert _{L_{t,x}^{2}}\left\Vert g\right\Vert _{L_{t,x}^{2}}.\label{eq:trivialbeta}
		\end{equation}
		For any $1>\nu>0$, we choose $M=2^{\nu j}$ and take $\sigma$ as
		in the condition of the lemma. Then it follows 
		\[
		2^{2j}M^{\frac{3}{2}-\sigma}\leq M=2^{j\nu}.
		\]
		By \eqref{eq:interactionbeta} from Lemma \ref{lem:interactionbeta} and \eqref{eq:trivialbeta},
		we have
		\begin{equation}
		\left|I_{j}^{0}\left(f,g\right)\right|\lesssim2^{\nu j}\left\Vert f\right\Vert _{L_{t,x}^{2}}\left\Vert g\right\Vert _{L_{t,x}^{2}}.\label{eq:own}
		\end{equation}
		Applying the Littlewood-Paley decomposition and the almost orthogonality,
		it follows
		\begin{align*}
			\left|\int\int_{0}^{t}\left\langle e^{i\left(t-s\right)\mathcal{D}}\mathrm{U}_{A}\left(t,s\right)w\left(s\right)f\left(s\right),w\left(t\right)g\left(t\right)\right\rangle \,dsdt\right| \leq\sum_{j=0}^{\infty}\left|I_{j}^{0}\left(f,g\right)\right|\\
			\lesssim\sum_{j=0}^{\infty}\sum_{\left|j-k\right|+\left|j-\ell\right|\leq2}2^{\nu j}\left\Vert \varDelta_{k}f\right\Vert _{L_{t,x}^{2}}\left\Vert \varDelta_{\ell}g\right\Vert _{L_{t,x}^{2}}+2^{\left(\nu-1\right)j}\left\Vert f\right\Vert _{L_{t,x}^{2}}\left\Vert g\right\Vert _{L_{t,x}^{2}}
			\lesssim\left\Vert f\right\Vert _{L_{t,}^{2}H_{x}^{\nu}}\left\Vert g\right\Vert _{L_{t,x}^{2}}
		\end{align*}
		which implies via duality,
		\begin{align*}
			\left\Vert \left\langle \cdot-\beta t\right\rangle ^{-\sigma}\int_{0}^{t}e^{i\left(t-s\right)\mathcal{D}}\mathrm{U}_{A}\left(t,s\right)f\left(s\right)\,ds\right\Vert _{L_{t}^{2}L_{x}^{2}} & \lesssim\left\Vert \mathcal{D}^{\nu}\left(\left\langle \cdot-\beta t\right\rangle ^{\sigma}\right)f\right\Vert _{L_{t,x}^{2}}\\
			& \lesssim\left\Vert \mathcal{D}^{\nu}f\right\Vert _{L_{t}^{2}L_{x}^{2}\left(\left\langle \cdot-\beta t\right\rangle ^{\sigma}\right).}
		\end{align*}
		Similar estimates hold for $t<s<\infty$. 
		
		Therefore one can conclude that
		\begin{equation}
		\left\Vert \left\langle \cdot-\beta t\right\rangle ^{-\sigma}\int e^{i\left(t-s\right)\mathcal{D}}\mathrm{U}_{A}\left(t,s\right)f\left(s\right)\,ds\right\Vert _{L_{t}^{2}L_{x}^{2}}\lesssim\left\Vert \mathcal{D}^{\nu}f\right\Vert _{L_{t}^{2}L_{x}^{2}\left(\left\langle \cdot-\beta t\right\rangle ^{\sigma}\right).}\label{eq:homobeta}
		\end{equation}
		It remains to prove the homogeneous estimate. Define
		\[
		\mathcal{K}\phi=\left\langle x-\beta t\right\rangle ^{-\sigma}e^{it\mathcal{D}}\mathrm{U}_{A}\left(t,0\right)\mathcal{D}^{-\frac{\nu}{2}}\phi
		\]
		then
		\[
		\mathcal{K}^{*}f=\int\mathcal{D}^{-\frac{\nu}{2}}e^{-is\mathcal{D}}\mathrm{U}_{A}\left(0,s\right)\left\langle \cdot-\beta s\right\rangle ^{-\sigma}f\left(s\right)\,ds.
		\]
		Consider
		\[
		\mathcal{K}\mathcal{K}^{*}f=\left\langle x-\beta t\right\rangle ^{-\sigma}\int e^{i\left(t-s\right)\mathcal{D}}\mathcal{D}^{-\nu}\mathrm{U}_{A}\left(t,s\right)\left\langle \cdot-\beta s\right\rangle ^{-\sigma}f\left(s\right)\,ds.
		\]
		By our analysis above \eqref{eq:homobeta}, it follows
		\[
		\left\Vert \mathcal{K}\mathcal{K}^{*}f\right\Vert _{L_{t,x}^{2}}\lesssim\left\Vert f\right\Vert _{L_{t}^{2}L_{x}^{2}.},
		\]
		whence
		\[
		\left\Vert \mathcal{K}\psi\right\Vert _{L_{x,t}^{2}}\lesssim\left\Vert \psi\right\Vert _{L_{x}^{2}.}
		\]
		In particular,
		\[
		\left\Vert \left\langle \cdot-\beta t\right\rangle ^{-\sigma}e^{it\mathcal{D}}\mathrm{U}_{A}\left(t,0\right)\phi\right\Vert _{L_{t}^{2}L_{x}^{2}}\lesssim\left\Vert \phi\right\Vert _{H_{x}^{\frac{\nu}{2}}}.
		\]
		The desired results are proved.
	\end{proof}
	Finally, we use the dispersive estimate above to analyze the difference between
	$T$ and $T_{0}$.
	\begin{lem}
		\label{lem:diffT}For $\sigma>14$, one has
		\[
		\left\Vert \left\langle \cdot-\beta t\right\rangle ^{-\sigma}\left(T_{0}-T\right)\hm{g}\right\Vert _{L_{t}^{2}\mathcal{H}}\lesssim\left\Vert a\right\Vert _{L_{t}^{\infty}}^{\frac{1}{4}}\left\Vert \left\langle \cdot-\beta t\right\rangle ^{-\sigma}\hm{g}\right\Vert _{L_{t}^{2}\mathcal{H}}.
		\]
		Hence if $\left\Vert a\right\Vert _{L_{t}^{\infty}}$ is small enough,
		then one can use Neumann series to conclude $\left(1-T\right)$ is invertible on $L_{t}^{2}\mathcal{H}\left(\left\langle \cdot-\beta t\right\rangle ^{-\sigma}\right)$
		as well as $L_{t}^{2}\mathcal{D}^{\nu}\mathcal{H}\left(\left\langle \cdot-\beta t\right\rangle ^{-\sigma}\right)$
		for $\nu>0$ small enough.
	\end{lem}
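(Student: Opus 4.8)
The plan is to exploit the Duhamel-type identity underlying $T_0-T$. From the definitions \eqref{eq:T0} and \eqref{eq:T} (cf.\ \eqref{eq:T0-T}),
\[
(T_0-T)\hm{g}=\int_0^t e^{(t-s)\mathfrak{L}_0}\bigl(1-\mathrm{U}_A(t,s)\bigr)\mathrm{V}_\beta(s)\hm{g}(s)\,ds ,
\]
and the whole point is that $1-\mathrm{U}_A(t,s)$ is a \emph{small} spatial translation $1-\tau_{b(t,s)}$, with $b(t,s)=\int_s^t a(\tau)\,d\tau$ and $|b(t,s)|\le\|a\|_{L_t^\infty}|t-s|$. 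First I would record that $\mathrm{V}_\beta(s)\hm{g}(s)$ is smooth and spatially concentrated near $\beta s$ with exponential decay --- it is built from the multiplication operator $\mathcal{K}_\beta$ and the finite-rank bound-state terms $\mathcal{L}_\beta(s)P_d(s)$, $\kappa P_d(s)$ of \eqref{eq:pertlinearPotential} --- so that $\langle\cdot-\beta s\rangle^{\sigma'}\mathcal{D}^{\nu}\mathrm{V}_\beta(s)\hm{g}(s)$ is dominated in $\mathcal{H}$ by $\langle\cdot-\beta s\rangle^{-\sigma}\hm{g}(s)$ for any $\sigma'$. Thus it suffices to bound the weighted $L_t^2\mathcal{H}$ norm of $\int_0^t e^{(t-s)\mathfrak{L}_0}(1-\tau_{b(t,s)})\hm{f}(s)\,ds$ for an arbitrary $\hm{f}$ concentrated near $\beta s$. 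Writing
\[
e^{(t-s)\mathfrak{L}_0}=\begin{pmatrix}\cos((t-s)\mathcal{D}) & \mathcal{D}^{-1}\sin((t-s)\mathcal{D})\\ -\mathcal{D}\sin((t-s)\mathcal{D}) & \cos((t-s)\mathcal{D})\end{pmatrix}
\]
and using that $\tau_{b(t,s)}$ commutes with every Fourier multiplier, this reduces, component by component, to estimating $\bigl\|\langle x-\beta t\rangle^{-\sigma}\int_0^t e^{i(t-s)\mathcal{D}}(1-\tau_{b(t,s)})f(s)\,ds\bigr\|_{L_t^2L_x^2}$ and its $\mathcal{D}^{\nu}$-weighted analog, with $f$ concentrated near $\beta s$.

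The core is to split the $s$-integral at $|t-s|=M$ for a parameter $M\ge1$. On the long-time region $|t-s|>M$ one gets no smallness from $1-\tau_{b(t,s)}$, but running the Littlewood--Paley / almost-orthogonality machinery of Lemma \ref{lem:shiftslant} --- writing the bilinear form via \eqref{eq:ImjQbeta} and invoking the truncated interaction bound \eqref{eq:interactionbeta} of Lemma \ref{lem:interactionbeta} with its gain $2^{2j}M^{3/2-\sigma}$ --- produces a net gain $M^{-\eta}$ for some $\eta=\eta(\sigma,\nu)>0$, after the $2^{2j}$ factor and the Besov losses are absorbed into the $\mathcal{D}^{\nu}$ smoothing budget; this is exactly where the quantitative threshold $\sigma>14$ enters, to guarantee $\eta>0$. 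On the short-time region $|t-s|\le M$ one has $|b(t,s)|\le\|a\|_{L_t^\infty}M$, and using $\|(1-\tau_h)\psi\|_{H^{-\nu/2}}\lesssim|h|^{\nu/2}\|\psi\|_{H^{\nu/2}}$ gains a factor $(\|a\|_{L_t^\infty}M)^{\nu/2}$ while the remaining truncated bilinear form is handled by the crude $L^2$-conservation bound \eqref{eq:trivialbeta}, losing only a fixed power of $M$. Choosing $M=\|a\|_{L_t^\infty}^{-\theta}$ for a suitable $\theta\in(0,1)$ balances the long-time contribution $\sim M^{-\eta}=\|a\|_{L_t^\infty}^{\theta\eta}$ against the short-time contribution $\sim(\|a\|_{L_t^\infty}M)^{\nu/2}M^{O(1)}$, and both end up $\lesssim\|a\|_{L_t^\infty}^{1/4}\|\langle\cdot-\beta t\rangle^{-\sigma}\hm{g}\|_{L_t^2\mathcal{H}}$; since $\|a\|_{L_t^\infty}=\|c'\|_{L_t^\infty}\lesssim\delta\ll1$ by \eqref{eq:onePmaincond} and \eqref{eq:appTr2}, this is the claimed bound. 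The $\mathcal{D}^{\nu}$-weighted estimate follows by the same scheme, commuting $\mathcal{D}^{\nu}$ through $e^{i(t-s)\mathcal{D}}$ and $\tau_{b(t,s)}$ and absorbing the lower-order commutators with the weight $\langle\cdot-\beta t\rangle^{-\sigma}$ and with $\mathrm{V}_\beta$.

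For the invertibility of $1-T$, I would then write $1-T=(1-T_0)\bigl(1-(1+T_1)(T-T_0)\bigr)$, where $(1-T_0)^{-1}=1+T_1$ is bounded on $L_t^2\mathcal{H}(\langle\cdot-\beta t\rangle^{-\sigma})$ --- and on $L_t^2\mathcal{D}^{\nu}\mathcal{H}(\langle\cdot-\beta t\rangle^{-\sigma})$ by the $\mathcal{D}^{\nu}$-version of Lemma \ref{lem:T0T1}, which rests on the corresponding $\mathcal{D}^{\nu}$-mapping of the resolvents --- by Lemma \ref{lem:T0T1}; once $\|(1+T_1)(T-T_0)\|<1$, which holds for $\delta$ small by the estimate just proved, the Neumann series for the second factor converges and $1-T$ is invertible on both spaces. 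The main obstacle is precisely the balancing step: converting the bilinear/interaction inequalities of Lemmas \ref{lem:interactionbeta} and \ref{lem:shiftslant} into an operator bound on $T_0-T$ while carefully tracking the $\mathcal{D}^{\nu}$ smoothing budget and the powers of $M$, so that the long-time gain $M^{-\eta}$ and the short-time gain $(\|a\|_{L_t^\infty}M)^{\nu/2}$ genuinely combine --- under the hypothesis $\sigma>14$ --- into a strictly positive power of $\|a\|_{L_t^\infty}$.
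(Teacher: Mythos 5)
Your strategy is, in outline, the same as the paper's: reduce $(T_0-T)\bs g$ to the scalar half--Klein--Gordon flow applied to a localized term $\mathrm{V}_\beta(s)\bs g(s)$ that gains one derivative, split the $s$-integral into a short-time region where $1-\mathrm{U}_A(t,s)$ is small because $|b(t,s)|\le\|a\|_{L_t^\infty}|t-s|$, handle the long-time region by the weighted interaction estimate of Lemma \ref{lem:interactionbeta}, optimize the splitting scale against $\|a\|_{L_t^\infty}$, and finally invert $1-T$ through $(1-T_0)^{-1}=1+T_1$ (Lemma \ref{lem:T0T1}) and a Neumann series. That part matches the paper.

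There is, however, a genuine gap in the long-time part: with a single, frequency-independent cutoff $M$, the only bound you invoke at frequency $2^j$ is \eqref{eq:interactionbeta}, i.e.\ $2^{2j}M^{3/2-\sigma}$, and the smoothing budget you can spend on it is one derivative from $\mathrm{V}_\beta$ plus at most the small $\nu$; the per-frequency constant is therefore still of size $2^{(1-\nu)j}M^{3/2-\sigma}$, which grows in $j$ for every fixed $M$. There is no ``net gain $M^{-\eta}$ uniform in $j$,'' and the Littlewood--Paley sum diverges no matter how large $\sigma$ is: increasing $\sigma$ only improves the power of $M$, not the power of $2^j$, so the threshold $\sigma>14$ cannot by itself close this step. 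The paper's fix is exactly at this point: the splitting time is taken frequency-dependent, $M_j=2^{j/6}\|a\|_{L_t^\infty}^{-1/6}$. Then the short-time piece, estimated by interpolating the full one-derivative translation bound $|t-s|\,\|a\|_{L_t^\infty}$ against the $2^{-j}$ frequency gain and integrating over an interval of length $M_j$, is $\lesssim M_j^{3/2}2^{-j/2}\|a\|_{L_t^\infty}^{1/2}=2^{-j/4}\|a\|_{L_t^\infty}^{1/4}$, while the long-time piece is $\lesssim 2^{2j}M_j^{3/2-\sigma}$; both are summable in $j$ precisely when $\sigma>27/2$ (hence under $\sigma>14$) and both carry the factor $\|a\|_{L_t^\infty}^{1/4}$. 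If you insist on a frequency-independent $M$, you would instead need the refined two-sided bound $\min\{2^{2j}M^{3/2-\sigma},2^{\kappa j}M^{-\eta}\}$ with $\kappa<1$ — this is the content of Lemma \ref{lem:interactionLP} and is what Lemma \ref{lem:truncated} uses in the multi-potential setting — but that bound is not available from Lemma \ref{lem:interactionbeta} alone. A secondary, more minor issue: your short-time gain $(\|a\|_{L_t^\infty}M)^{\nu/2}$ combined with $M=\|a\|_{L_t^\infty}^{-\theta}$ forces $\theta<\nu/(2+\nu)$ and produces only a small positive power of $\|a\|_{L_t^\infty}$, not the stated $1/4$; this would still suffice for the Neumann-series conclusion, but it does not give the displayed estimate, whereas using the full one-derivative translation bound (as above) does.
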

	
	\begin{proof}
		From the definitions of $T$ and $T_{0}$, \eqref{eq:T} and $\eqref{eq:T0}$,
		we have
		\[
		\left(T-T_{0}\right)\hm{g}=\int_{0}^{t}e^{\left(t-s\right)\mathfrak{L}_{0}}\left(\mathrm{U}_{A}\left(t,s\right)-1\right)\mathrm{V}_{\beta}\left(s\right)\hm{g}\left(s\right)\,ds.
		\]
		Note that from the defintinion of $\mathrm{V}_{\beta}\left(s\right)$,
		see \eqref{eq:pertlinearPotential}, the $\mathcal{P}_{d}(s)$ is smooth
		due to the smoothness of eigenfunctions and
		\[
		\mathcal{K_{\beta}}\left(x-\beta t\right)\hm{g}=\left(\begin{array}{c}
		0\\
		V_{\beta}\left(x-\beta t\right)g_{1}
		\end{array}\right)
		\]
		so measuring in $\mathcal{H}$, the inhomogeneous term
		$
		\mathrm{V}_{\beta}\left(s\right)\hm{g}\left(s\right)
		$
		gains one order derivatives.
		
		Explicitly, we know that
		\[
		e^{\tau\mathfrak{L}_{0}}=\left(\begin{array}{cc}
		\cos(\tau\sqrt{-\Delta+1}) & \frac{\sin(\tau\sqrt{-\Delta+1})}{(\sqrt{-\Delta+1})}\\
		-\sqrt{-\Delta+1}\sin(\tau\sqrt{-\Delta+1}) & \cos(\tau\sqrt{-\Delta+1})
		\end{array}\right).
		\]
		Then again using the norm of $\mathcal{H}$ to measure the solution,
		we note that it suffices to consider a slightly more general formulation
		in terms of the half Klein-Gordon evolution $e^{i\tau\mathcal{D}}$
		and measure the solution in the $L^{2}$ norm as the follow
		\[
		\left(\tilde{T}-\tilde{T}_{0}\right)g=\int_{0}^{t}e^{\left(t-s\right)i\mathcal{D}}\left(\mathrm{U}_{A}\left(t,s\right)-1\right)\tilde{\mathrm{V}}_{\beta}\left(s\right)g\left(s\right)\,ds
		\]
		such that $\tilde{\mathrm{V}}_{\beta}\left(s\right)$ localized around
		$x\sim\beta s$ and gains one derivative with $\left\Vert \left\langle \cdot-\beta t\right\rangle ^{-\sigma}g\right\Vert _{L_{t}^{2}L_{x}^{2}}<\infty$.
		It is sufficient to  show the following estimate
		\[
		\left\Vert \left\langle \cdot-\beta t\right\rangle ^{-\sigma}\left(\tilde{T}-\tilde{T}_{0}\right)g\right\Vert _{L_{t}^{2}L_{x}^{2}}\lesssim\left\Vert a\right\Vert _{L_{t}^{\infty}}^{\frac{1}{4}}\left\Vert \left\langle \cdot-\beta t\right\rangle ^{-\sigma}g\right\Vert _{L_{t}^{2}L_{x}^{2}}.
		\]
		We decompose the difference of $\tilde{T}-\tilde{T}_{0}$ into two
		pieces
		\[
		\left(\tilde{T}-\tilde{T}_{0}\right)g=\tilde{T}_{S}g+\tilde{T}_{L}g
		\]
		where
		\[
		\tilde{T}_{S}:=\sum_{j=0}^{\infty}T_{S}^{j}=\sum_{j=0}^{\infty}\int_{\max\left(t-M_{j},0\right)}^{t}e^{\left(t-s\right)i\mathcal{D}}\left(1-\mathrm{U}_{A}\left(t,s\right)\right)\varDelta_{j}\tilde{\mathrm{V}}_{\beta}\left(s\right)g\left(s\right)\,ds
		\]
		with $M_{j}\gg1$ will be chosen later and $\varDelta_{j}$ is the
		Littewood-Paley decomposition. $T_{L}^{j}$ is defined  by a similar
		way. 
		
		First of all, notice that
		\begin{align}
			\left\Vert e^{\left(t-s\right)i\mathcal{D}}\left(1-\mathrm{U}_{A}\left(t,s\right)\right)\varDelta_{j}\tilde{\mathrm{V}}_{\beta}g\right\Vert _{L^{2}} & \lesssim\left\Vert \left(1-U\left(t,s\right)\right)\tilde{\mathrm{V}}_{\beta}g\right\Vert _{L^{2}}\nonumber \\
			& \lesssim\left|b\left(t,s\right)\right|\left\Vert \nabla\tilde{\mathrm{V}}_{\beta}g\right\Vert _{L^{2}}\nonumber \\
			& \lesssim\left|t-s\right|\left\Vert a\right\Vert _{L_{t}^{\infty}}\left\Vert \left\langle \cdot-\beta s\right\rangle ^{-\sigma}g\right\Vert _{L^{2}}\label{eq:firstdiffT}
		\end{align}
		where in the last line we used the fact that $\tilde{\mathrm{V}}_{\beta}$
		gains one derivative.
		
		Secondly, we note that
		\begin{align}
			\left\Vert e^{\left(t-s\right)i\mathcal{D}}\left(1-\mathrm{U}_{A}\left(t,s\right)\right)\varDelta_{j}\tilde{\mathrm{V}}_{\beta}g\right\Vert _{L^{2}} & \leq2\left\Vert \left|\nabla\right|^{-1}\varDelta_{j}\nabla\tilde{\mathrm{V}}_{\beta}g\right\Vert \nonumber \\
			& \lesssim2^{-j}\left\Vert \left\langle \cdot-\beta s\right\rangle ^{-\sigma}g\right\Vert _{L^{2}.}.\label{eq:seconddiffT}
		\end{align}
		Interpolation between \eqref{eq:firstdiffT} and \eqref{eq:seconddiffT}
		will give us
		\[
		\left\Vert e^{\left(t-s\right)i\mathcal{D}}\left(1-\mathrm{U}_{A}\left(t,s\right)\right)\varDelta_{j}\tilde{\mathrm{V}}_{\beta}g\right\Vert _{L^{2}}\lesssim2^{-j/2}\left|t-s\right|^{\frac{1}{2}}\left\Vert a\right\Vert _{L_{t}^{\infty}}^{\frac{1}{2}}\left\Vert \left\langle \cdot-\beta s\right\rangle ^{-\sigma}g\right\Vert _{L^{2}.}
		\]
		Integrating in $s$, we obtain
		\[
		\left\Vert \tilde{T}_{S}^{j}g\right\Vert _{L_{t}^{2}L_{x}^{2}}\lesssim M_{j}^{\frac{3}{2}}2^{-j/2}\left\Vert a\right\Vert _{L_{t}^{\infty}}^{\frac{1}{2}}\left\Vert \left\langle \cdot-\beta s\right\rangle ^{-\sigma}g\right\Vert _{L_{x,t}^{2}.}.
		\]
		For $\tilde{T}_{L}^{j}$, we use the refined disperisve estimate,  Lemma \ref{lem:shiftslant},
		the  derivative gained from $\tilde{\mathrm{V}}_{\beta}$ and its decay, we
		have
		\[
		\left\Vert \left\langle \cdot-\beta t\right\rangle ^{-\sigma}\tilde{T}_{L}^{j}g\right\Vert _{L_{t}^{2}L_{x}^{2}}\lesssim M_{j}^{\frac{3}{2}-\sigma}2^{2j}\left\Vert \left\langle \cdot-\beta s\right\rangle ^{-\sigma}g\right\Vert _{L_{x,t}^{2}.}.
		\]
		Choosing $M_{j}=2^{\frac{j}{6}}\left\Vert a\right\Vert _{L_{t}^{\infty}}^{-\frac{1}{6}}$,  with $\sigma>14$, summing in $j\geq0$, one concludes
		\[
		\left\Vert \left\langle \cdot-\beta t\right\rangle ^{-\sigma}\left(\tilde{T}_{S}g+\tilde{T}_{L}g\right)\right\Vert _{L_{t}^{2}L_{x}^{2}}\lesssim\left\Vert a\right\Vert _{L_{t}^{\infty}}^{\frac{1}{4}}\left\Vert \left\langle \cdot-\beta t\right\rangle ^{-\sigma}g\right\Vert _{L_{t}^{2}L_{x}^{2}}.
		\]
		We are done.
	\end{proof}
	\subsubsection{Strichartz estimate for the auxiliary equation}
	
	By Lemma \ref{lem:T0T1}, we know $\left(1-T_{0}\right)$ is invertible
	in the local energy norm. From Lemma \ref{lem:diffT}, $\left(T-T_{0}\right)$
	is small in the local energy norm provided $\left\Vert a\right\Vert _{L_{t}^{\infty}}$
	is small. Using the Neumann series, we can invert $\left(1-T\right)$.
	Now we show Strichartz estimates for the solution $z$ to the
	auxiliary equation in terms of \eqref{eq:zduhamel}. Recalling \eqref{eq:1-T},
	\[
	\left(1-T\right)z=z_{0}+\int_{0}^{t}e^{\left(t-s\right)\mathfrak{L}_{0}}\mathrm{U}_{A}\left(t,s\right)\tilde{F}\left(s\right)\,ds
	\]
	with $z_{0}=e^{t\mathfrak{L}_{0}}\mathrm{U}_{A}\left(t,0\right)z\left(0\right)$,
	we invert $\left(1-T\right)$ in the local energy norm and obtain
	\begin{align*}
		\left\Vert \left\langle \cdot-\beta t\right\rangle ^{-\sigma}\mathcal{D}^{-\frac{\nu}{2}}z\right\Vert _{L_{t}^{2}\mathcal{H}} & \lesssim\left\Vert \left\langle \cdot-\beta t\right\rangle ^{-\sigma}\mathcal{D}^{-\frac{\nu}{2}}z_{0}\right\Vert _{L_{t}^{2}\mathcal{H}} +\left\Vert \mathcal{D}^{\frac{\nu}{2}}\tilde{F}\right\Vert _{L_{t}^{2}\mathcal{H}\left\langle \cdot-\beta t\right\rangle ^{\sigma}+L_{t}^{1}\mathcal{H}^{1-\frac{\nu}{2}}}\\
		& \lesssim\left\Vert z_{0}\right\Vert _{\mathcal{H}}+\left\Vert a\right\Vert _{L_{t}^{\infty}}\left\Vert \left\langle \cdot-\beta t\right\rangle ^{-\sigma}\mathcal{D}^{-\frac{\nu}{2}}z\right\Vert _{L_{t}^{2}\mathcal{H}}\\
		& +\left\Vert F\right\Vert _{L_{t}^{2}\mathcal{D}^{-\frac{\nu}{2}}\mathcal{H}\left\langle \cdot-\beta t\right\rangle ^{\sigma}+L_{t}^{1}\mathcal{H}}
	\end{align*}
	where on the RHS above, we used Lemma \ref{lem:shiftslant}.
	
	Since $\left\Vert a\right\Vert _{L_{t}^{\infty}}$ is small, one has
	\begin{equation}
	\left\Vert \left\langle \cdot-\beta t\right\rangle ^{-\sigma}\mathcal{D}^{-\frac{\nu}{2}}z\right\Vert _{L_{t}^{2}\mathcal{H}}\lesssim\left\Vert z\left(0\right)\right\Vert _{\mathcal{H}}+\left\Vert F\right\Vert _{L_{t}^{2}\mathcal{D}^{-\frac{\nu}{2}}\mathcal{H}\left\langle \cdot-\beta t\right\rangle ^{\sigma}+L_{t}^{1}\mathcal{H}}.\label{eq:localenergyZbeta}
	\end{equation}
	From the local energy decay to Strichartz estimates now is straightforward.
	We use the expansion for $z$,
	\begin{equation}
	z=z_{0}+\int_{0}^{t}e^{\left(t-s\right)\mathfrak{L}_{0}}\mathrm{U}_{A}\left(t,s\right)\left\{ \mathrm{V}_{\beta}\left(s\right)z\left(s\right)+\tilde{F}\left(s\right)\right\} \,ds\label{eq:expandZ1}
	\end{equation}
	where
	\[
	z_{0}=e^{t\mathfrak{L}_{0}}\mathrm{U}_{A}\left(t,0\right)z\left(0\right).
	\]

	Applying Strichartz norms, see Theorem \ref{thm:StriKGfree}, on both sides
	of \eqref{eq:expandZ1}, one concludes that
	\begin{align*}
		\left\Vert z\right\Vert _{S_{\mathcal{H}}} & \lesssim\left\Vert z_{0}\right\Vert _{S_{\mathcal{H}}}+\left\Vert \int_{0}^{t}e^{\left(t-s\right)\mathfrak{L}_{0}}\mathrm{U}_{A}\left(t,s\right)\left\{ \mathrm{V}_{\beta}\left(s\right)z\left(z\right)+\tilde{F}\left(s\right)\right\} \,ds\right\Vert _{S_{\mathcal{H}}}\\
		& \lesssim\left\Vert z\left(0\right)\right\Vert _{\mathcal{H}}+\left\Vert \left\langle \cdot-\beta t\right\rangle ^{-\sigma}\mathcal{D}^{-\frac{\nu}{2}}z\right\Vert _{L_{t}^{2}\mathcal{H}}+\left\Vert F\right\Vert _{S_{\mathcal{H}}^{*}}
	\end{align*}
	where we used the notation from \eqref{eq:X_H} and
	\begin{equation}\label{eq:strinorm}
	S:=L_{t}^{\infty}L_{x}^{2}\bigcap L_{t}^{2}B_{6,2}^{-5/6}, \,	\mathcal{B}:=B_{6,2}^{-5/6}.
	\end{equation}
	Therefore, we obtain Strichartz estimate for the auxiliary equation
	\[
	\left\Vert z\right\Vert _{S_{\mathcal{H}}}\lesssim\left\Vert z\left(0\right)\right\Vert _{\mathcal{H}}+\left\Vert F\right\Vert _{L_{t}^{2}\left(\mathcal{H}^{1-\frac{\nu}{2}}\left\langle \cdot-\beta t\right\rangle ^{\sigma}\bigcap\mathcal{B}_{\mathcal{H}}^{*}\right)+L_{t}^{1}\mathcal{H}}.
	\]
	where we  again used the notation from \eqref{eq:X_H}.
	Translating the result above to the original problem after applying the projection to $z$, we conclude the local energy decay and Strichartz estimates.
	\begin{prop}
		\label{prop:Stri1}Let $\nu>0$ be small and $\sigma$ be large. There
		exists $0<\delta\ll1$ such that if $\left\Vert a\right\Vert _{L^{\infty}}\leq\delta$
		then solution to the following equation
		\begin{equation}
		\hm{\gamma}_{t}=\mathcal{L}_{\beta}(t)\hm{\gamma}+P_{c}\left(t\right)\left[a\left(t\right)\nabla\hm{\gamma}+\hm{F}\right]\label{eq:-1-1-3}
		\end{equation}
		with $\hm{\gamma}\left(0\right)=P_{c}\left(0\right)\hm{\gamma}\left(0\right)$
		satisfies
		\[
		\left\Vert \hm{\gamma}\right\Vert _{S_{\mathcal{H}}\bigcap L_{t}^{2}\mathcal{D}^{\frac{\nu}{2}}\mathcal{H}\left\langle \cdot-\beta t\right\rangle ^{-\sigma}}\lesssim\left\Vert \hm{\gamma}\left(0\right)\right\Vert _{\mathcal{H}}+\left\Vert \hm{F}\right\Vert _{L_{t}^{2}\left(\mathcal{H}^{1-\frac{\nu}{2}}\left\langle \cdot-\beta t\right\rangle ^{\sigma}\bigcap\mathcal{B}_{\mathcal{H}}^{*}\right)+L_{t}^{1}\mathcal{H}}.
		\]
	\end{prop}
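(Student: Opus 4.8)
The plan is to collect the estimates established above into a single argument, in the spirit of Nakanishi--Schlag \cite{NSch}. First I would pass from \eqref{eq:-1-1-3} to the fattened auxiliary equation \eqref{eq:pertlinearMmo}: adding the stabilizing term $-\kappa P_{d}(t)z$ and using that $P_{c}(t)z$ solves \eqref{eq:-1-1-3}, it suffices to estimate $z$, which after the rearrangement \eqref{eq:-8} solves
\[
z_{t}=\big[\mathfrak{L}_{0}+\mathrm{V}_{\beta}(x-\beta t)+A(t)\big]z+\tilde{F},\qquad \tilde{F}=-A(t)P_{d}(t)z+\hm{F},
\]
with $\mathrm{V}_{\beta}$ as in \eqref{eq:pertlinearPotential}, a smooth, exponentially localized matrix potential that gains one derivative. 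Since $P_{d}(t)z(t)=e^{-\kappa t}P_{d}(0)z(0)$ is explicit, smooth, localized and exponentially decaying, $\tilde{F}$ is an effectively known inhomogeneity whose extra piece $-A(t)P_{d}(t)z$ has size $\lesssim\|a\|_{L^{\infty}_{t}}\|z(0)\|_{\mathcal{H}}$ in every norm used below. The Duhamel identity \eqref{eq:1-T} then reads $(1-T)z=z_{0}+\int_{0}^{t}e^{(t-s)\mathfrak{L}_{0}}\mathrm{U}_{A}(t,s)\tilde{F}(s)\,ds$, with $z_{0}$ as in \eqref{eq:z0} and $T$ as in \eqref{eq:T}.

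Second, I would invert $(1-T)$ on the weighted spaces $L^{2}_{t}\mathcal{H}\langle\cdot-\beta t\rangle^{-\sigma}$ and $L^{2}_{t}\mathcal{D}^{\nu/2}\mathcal{H}\langle\cdot-\beta t\rangle^{-\sigma}$. By Lemma \ref{lem:T0T1} the operator $(1-T_{0})$ is boundedly invertible with explicit inverse $1+T_{1}$; by Lemma \ref{lem:diffT} the difference $T-T_{0}$ has norm $\lesssim\|a\|_{L^{\infty}_{t}}^{1/4}$ on these spaces, so for $\delta$ small the Neumann series $(1-T)^{-1}=(1+T_{1})\sum_{n\ge 0}\big((T-T_{0})(1+T_{1})\big)^{n}$ converges. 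Applying $(1-T)^{-1}$ to the Duhamel identity, estimating $z_{0}=e^{t\mathfrak{L}_{0}}\mathrm{U}_{A}(t,0)z(0)$ by the homogeneous half of Lemma \ref{lem:shiftslant} and the inhomogeneous integral by its inhomogeneous half (after replacing the $\mathcal{H}$-norm by the half-Klein--Gordon formulation exactly as in the proof of Lemma \ref{lem:diffT}), and inserting the bound on $\tilde{F}$ above, yields the local energy decay
\[
\|\langle\cdot-\beta t\rangle^{-\sigma}\mathcal{D}^{-\nu/2}z\|_{L^{2}_{t}\mathcal{H}}\lesssim\|z(0)\|_{\mathcal{H}}+\|\hm{F}\|_{L^{2}_{t}\mathcal{D}^{-\nu/2}\mathcal{H}\langle\cdot-\beta t\rangle^{\sigma}+L^{1}_{t}\mathcal{H}},
\]
which is \eqref{eq:localenergyZbeta}.

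Third, I would upgrade this to Strichartz bounds. Reinserting \eqref{eq:localenergyZbeta} into the Duhamel expansion \eqref{eq:expandZ1} and applying the free Strichartz estimate of Theorem \ref{thm:StriKGfree} term by term: the free evolution of $z_{0}$ is controlled by $\|z(0)\|_{\mathcal{H}}$ since $\mathrm{U}_{A}(t,0)$ is a translation, hence unitary on $\mathcal{H}$; the term $\int_{0}^{t}e^{(t-s)\mathfrak{L}_{0}}\mathrm{U}_{A}(t,s)\mathrm{V}_{\beta}(s)z(s)\,ds$ is placed in the dual Strichartz norm, and since $\mathrm{V}_{\beta}$ is localized near $x\sim\beta s$ and smoothing, the relevant norm of $\mathrm{V}_{\beta}z$ is dominated by the local energy norm of $z$ just bounded; the contributions of $\tilde{F}$ and $\hm{F}$ land in $\mathcal{B}_{\mathcal{H}}^{*}$, $L^{1}_{t}\mathcal{H}$ and $L^{2}_{t}\mathcal{H}^{1-\nu/2}\langle\cdot-\beta t\rangle^{\sigma}$ as in the preceding discussion. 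This produces $\|z\|_{S_{\mathcal{H}}}\lesssim\|z(0)\|_{\mathcal{H}}+\|\hm{F}\|_{L^{2}_{t}(\mathcal{H}^{1-\nu/2}\langle\cdot-\beta t\rangle^{\sigma}\cap\mathcal{B}_{\mathcal{H}}^{*})+L^{1}_{t}\mathcal{H}}$. Finally I would translate back: $\hm{\gamma}(t)=P_{c}(t)z(t)$, $\hm{\gamma}(0)=P_{c}(0)z(0)$, with $P_{c}(t)$ uniformly bounded in $t$, and the discarded part $P_{d}(t)z(t)=e^{-\kappa t}P_{d}(0)z(0)$ contributes nothing to the local energy or Strichartz norm; this gives the asserted bound with $S_{\mathcal{H}}\cap L^{2}_{t}\mathcal{D}^{\nu/2}\mathcal{H}\langle\cdot-\beta t\rangle^{-\sigma}$ on the left.

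The genuine obstacle has already been dealt with in Lemma \ref{lem:diffT}: the whole scheme hinges on the smallness of $T-T_{0}$, which is precisely where the delicate interaction estimate of Lemma \ref{lem:interactionbeta} and the refined dispersive estimate of Lemma \ref{lem:shiftslant} are needed. Within the present proof the only delicate bookkeeping is to check that the weighted norms produced by Lemma \ref{lem:shiftslant} for $z_{0}$ and for the inhomogeneity match exactly the spaces in which $(1-T)^{-1}$ is bounded, and --- if one prefers not to use the explicit formula for $P_{d}(t)z$ --- to run the absorption of the $-A(t)P_{d}(t)z$ contribution as an a priori estimate on finite intervals $[0,T]$, closing it with $\|a\|_{L^{\infty}}\ll 1$, and then letting $T\to\infty$.
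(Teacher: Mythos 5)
Your proposal follows essentially the same route as the paper: pass to the auxiliary equation \eqref{eq:pertlinearMmo}, invert $(1-T)$ by combining the explicit inverse $(1+T_{1})$ of $(1-T_{0})$ from Lemma \ref{lem:T0T1} with the smallness of $T-T_{0}$ from Lemma \ref{lem:diffT} via a Neumann series, deduce the weighted local energy bound \eqref{eq:localenergyZbeta} using Lemma \ref{lem:shiftslant}, then feed this into the Duhamel expansion \eqref{eq:expandZ1} together with the free Strichartz estimate of Theorem \ref{thm:StriKGfree}, and finally project back with $P_{c}(t)$. One correction: your claim that $P_{d}(t)z(t)=e^{-\kappa t}P_{d}(0)z(0)$, and hence that $-A(t)P_{d}(t)z$ is an explicitly known inhomogeneity of size $\lesssim\|a\|_{L^{\infty}_{t}}\|z(0)\|_{\mathcal{H}}$, holds only for the principal-part flow \eqref{eq:zeqsim}; in \eqref{eq:pertlinearMmo} the source $A(t)P_{c}(t)z+F$ is not projected off the discrete modes, so $P_{d}(t)z$ is genuinely coupled to $z$ and does not vanish even though $P_{d}(0)z(0)=0$. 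This is harmless: since $P_{d}(t)$ is a finite-rank projection onto exponentially localized (translated) modes, one has
\[
\big\Vert \mathcal{D}^{\frac{\nu}{2}}\big(A(t)P_{d}(t)z\big)\big\Vert_{L^{2}_{t}\mathcal{H}\left\langle\cdot-\beta t\right\rangle^{\sigma}}\lesssim\|a\|_{L^{\infty}_{t}}\big\Vert\left\langle\cdot-\beta t\right\rangle^{-\sigma}\mathcal{D}^{-\frac{\nu}{2}}z\big\Vert_{L^{2}_{t}\mathcal{H}},
\]
and this term is absorbed into the left-hand side for $\delta$ small --- which is precisely your fallback option at the end and is the paper's actual treatment. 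With that substitution (and with the final translation back done simply via the uniform boundedness of $P_{c}(t)$, without invoking any formula for $P_{d}(t)z$), your argument is correct.
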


	The boundedness of Stricharz norms and the weighted norm, in particular,
	implies scattering in the shifted coordinate. We will discuss this later
	on in more general settings.
	
	\subsection{Strichartz estimates in the general setting}
	
	In this subsection, we conclude Strichartz estimates and local energy
	decay for \eqref{eq:onePmaineq} under the conditions \eqref{eq:onePmainzero} and \eqref{eq:onePmaincond}.
	Without loss of generality, suppose $y\left(0\right)=0$. Let $\mathrm{y}'\left(t\right)=\beta\left(t\right)$
	and $\beta_{0}=\beta\left(0\right).$
	
	Writing $\hm{w}=\left(\begin{array}{c}
	w\\
	w_{t}
	\end{array}\right)$, one has
	\begin{equation}
	\hm{w}_{t}=\mathcal{L}\left(t,\beta\left(t\right),y\left(t\right)\right)\hm{w}+\hm{F}\label{eq:generallinearv}
	\end{equation}
	where
	\[
	\mathcal{L}\left(t,\beta\left(t\right),y\left(t\right)\right)=\left(\begin{array}{cc}
	0 & 1\\
	\Delta w-w-V_{\beta\left(t\right)}\left(x-y\left(t\right)\right) & 0
	\end{array}\right).
	\]
	Denoting $\hm{v}\left(t\right)=\pi_{cs}\left(t\right)\hm{w}$ where $\pi_{cs}$ is from Proposition \ref{prop:expdicho},
	we have
	\begin{equation}
	\hm{v}_{t}=\mathcal{L}\left(t,\beta\left(t\right),y\left(t\right)\right)\hm{v}+\pi_{cs}\left(t\right)\hm{F}\label{eq:generallinearcom}
	\end{equation}
	since the projection $\pi_{cs}\left(t\right)$ commutes with the linear
	flow. By construction, in this centre-stable space, $\left\Vert \hm{v}\left(t\right)\right\Vert _{\mathcal{H}}$
	is bounded or grows subexponentially for $t\geq0$.
	\begin{thm}
		\label{thm:StriGenerlLinear}Using the notations above and norms from \eqref{eq:strinorm},  one has Strichartz
		estimates
		\begin{equation}
		\left\Vert \hm{v}\right\Vert _{S_{\mathcal{H}}\bigcap L_{t}^{2}\mathcal{D}^{\frac{\nu}{2}}\mathcal{H}\left\langle \cdot-y(t)\right\rangle ^{-\sigma}}\lesssim\left\Vert \hm{v}\left(0\right)\right\Vert _{\mathcal{H}}+\left\Vert \hm{F}\right\Vert _{L_{t}^{2}\left(\mathcal{H}^{1-\frac{\nu}{2}}\left\langle \cdot-y(t)\right\rangle ^{\sigma}\bigcap\mathcal{B}_{\mathcal{H}}^{*}\right)+L_{t}^{1}\mathcal{H}}.\label{eq:Strilineargeneral}
		\end{equation}
		Moreover, indeed $\hm{v}$ scatters to a free wave. There exists $\hm{\phi}_{+}\in\mathcal{H}$
		such that
		\[
		\left\Vert \hm{v}\left(t\right)-e^{\mathfrak{L}_{0}t}\hm{\phi}_{+}\right\Vert _{\mathcal{H}}\rightarrow\infty,\,\,t\rightarrow\infty.
		\]
	\end{thm}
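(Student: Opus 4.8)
I would combine the reductions of Sections~\ref{subsec:Reduction}--\ref{subsec:Plineartraj} with Proposition~\ref{prop:Stri1} and the exponential dichotomy of Proposition~\ref{prop:expdicho}. The scheme is: reduce to a potential moving along a perturbed straight line; split $\hm v=\pi_{cs}(t)\hm w$ into the part lying in the continuous spectrum of the reduced operator plus the discrete (stable/unstable) modes; estimate the continuous part by Proposition~\ref{prop:Stri1}; and recover the discrete coefficients by scalar ODEs, integrated forward for the stable modes and backward from $t=\infty$ for the unstable ones, the latter being legitimate because the dichotomy forces $\|\hm v(t)\|_{\mathcal H}$ to grow at most subexponentially on the centre-stable space. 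Concretely, following Section~\ref{subsec:Reduction} I set $\mathrm y'(t)=\beta(t)$, $\mathrm y(0)=y(0)$, $\beta_0=\beta(0)$, so $\mathrm y(t)=\beta_0 t+c(t)$ with $\|c'\|_{L^\infty_t}\lesssim\delta$; by \eqref{eq:movingred}, $\hm w$ solves $\partial_t\hm w=\mathcal L_{\beta_0}(c(t))\hm w+\hm F_1\hm w+\hm F$ with $\hm F_1\hm w$ localized near $x-\mathrm y(t)$ and $O(\delta)$ in $L^\infty_t$, hence absorbable in all norms of \eqref{eq:Strilineargeneral}; after the change of variables $y=x+c(t)$ this is the framework of Section~\ref{subsec:Plineartraj}. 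Since the bounded projection $\pi_{cs}(t)$ commutes with the flow, $\hm v$ solves the corresponding projected equation with forcing $\pi_{cs}(t)(\hm F_1\hm w+\hm F)$, and Proposition~\ref{prop:expdicho} (with $J=1$, the separation hypothesis being vacuous) gives the announced subexponential bound on $\|\hm v\|_{\mathcal H}$.

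\textbf{Continuous part.} Using the projections \eqref{eq:Pdbeta}--\eqref{eq:Pcbeta} of $\mathcal L_{\beta_0}(c(t))$, write $\hm v=\mathcal P_{c,\beta_0}(c(t))\hm v+\mathcal P_{0,\beta_0}(c(t))\hm v+\sum_k(\lambda_k^-\mathcal Y_k^-+\lambda_k^+\mathcal Y_k^+)$. By Section~\ref{subsubsec:reductionzero}, cf.~\eqref{eq:wwzero}, since $\pi_0(t)\hm w=0$ the zero-mode term is $O(\delta)$ and localized in all relevant norms, so it is absorbable and produces no growth. The continuous component $\hm\gamma:=\mathcal P_{c,\beta_0}(c(t))\hm v$ solves an equation of the form \eqref{eq:-1-1-3}, where the right-hand side collects $\pi_{cs}(t)\hm F$, the absorbable terms above, and the couplings $\dot{\mathcal P}_{c,\beta_0}(c(t))\hm v$; the latter are localized near $y(t)$, so their contribution to the right-hand side of \eqref{eq:Strilineargeneral} is $\lesssim\sum_k\|\lambda_k^\pm\|_{L^2_t}$. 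Proposition~\ref{prop:Stri1} then yields
\[
\|\hm\gamma\|_{S_{\mathcal H}\cap L^2_t\mathcal D^{\nu/2}\mathcal H\langle\cdot-y(t)\rangle^{-\sigma}}\lesssim\|\hm v(0)\|_{\mathcal H}+\big(\text{r.h.s.\ of }\eqref{eq:Strilineargeneral}\big)+\delta\|\hm v\|_{L^2_t\mathcal D^{\nu/2}\mathcal H\langle\cdot-y(t)\rangle^{-\sigma}}+\sum_k\|\lambda_k^\pm\|_{L^2_t}.
\]

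\textbf{Discrete modes and the main obstacle.} Each $\lambda_k^\pm$ satisfies $\dot\lambda_k^\pm=\pm\kappa_k\lambda_k^\pm+g_k^\pm$ with $\kappa_k>0$, where $g_k^\pm$ is the pairing of the full forcing against an exponentially localized adjoint eigenfunction; hence $\|g_k^\pm\|_{L^1_t+L^2_t}$ is controlled by $\|\hm v(0)\|_{\mathcal H}$, the $\hm F$-norm of \eqref{eq:Strilineargeneral}, $\|\hm\gamma\|_{L^2_t\mathcal D^{\nu/2}\mathcal H\langle\cdot-y(t)\rangle^{-\sigma}}$ and $\delta\sum_j\|\lambda_j^\pm\|_{L^2_t}$. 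For the stable modes, integrating forward from $t=0$ and using the factor $e^{-\kappa_k(t-s)}$ gives $\|\lambda_k^-\|_{L^\infty_t\cap L^2_t}\lesssim|\lambda_k^-(0)|+\|g_k^-\|_{L^1_t+L^2_t}$. The unstable modes must be represented by the backward integral $\lambda_k^+(t)=-\int_t^\infty e^{-\kappa_k(s-t)}g_k^+(s)\,ds$, the only solution compatible with the subexponential growth of $\hm v$, giving $\|\lambda_k^+\|_{L^2_t}\lesssim\|g_k^+\|_{L^1_t+L^2_t}$ and $\lambda_k^+(t)\to0$. This last point is the main difficulty, since the backward integral needs $\hm v$ on all of $[t,\infty)$ whereas a bootstrap only provides it on $[0,T]$; I would handle it as in the introduction's overview, splitting $\int_t^\infty=\int_t^T+\int_T^\infty$, estimating the first part from the bootstrap assumptions on $[0,T]$ and the second using Proposition~\ref{prop:expdicho}, which bounds $\|\hm v(s)\|_{\mathcal H}$ for $s\ge T$ by $\|\hm v(T)\|_{\mathcal H}e^{\epsilon(s-T)}$ with $\epsilon<\min_k\kappa_k$, so that the weights $e^{-\kappa_k(s-T)}$ and the exponential localization of the adjoint eigenfunctions close the estimate; letting $T\to\infty$ then removes the bootstrap.

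\textbf{Closing and scattering.} Inserting the mode bounds into the displayed inequality and choosing $\delta$ small to absorb the $\delta\|\hm v\|$ term yields \eqref{eq:Strilineargeneral}, once one undoes the change of variables (which only replaces $\langle\cdot-\beta_0t+c(t)\rangle$ by the equivalent weight $\langle\cdot-y(t)\rangle$). For scattering, write the Duhamel formula for $\hm\gamma$ against the free propagator $e^{t\mathfrak L_0}$ as in \eqref{eq:zduhamel}; the inhomogeneity (the localized potential term $\mathcal K_{\beta_0}(x-\beta_0t)\hm\gamma$, the discrete couplings, and $\pi_{cs}(t)\hm F$) lies in $L^1_t\mathcal H$ by the local energy bound just proved, so $e^{-t\mathfrak L_0}\hm\gamma(t)$ converges in $\mathcal H$ to some $\hm\phi_+$. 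Since $\lambda_k^-\mathcal Y_k^-$ and $\lambda_k^+\mathcal Y_k^+$ tend to $0$ in $\mathcal H$ and the zero-mode part is negligible, one gets $\|\hm v(t)-e^{\mathfrak L_0t}\hm\phi_+\|_{\mathcal H}\to0$, i.e.\ the scattering statement of Theorem~\ref{thm:StriGenerlLinear}.
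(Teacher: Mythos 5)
Your proposal follows essentially the same route as the paper's proof: the reduction of Subsection \ref{subsec:Reduction} to a potential on a perturbed linear trajectory with an absorbable $O(\delta)$ localized error, the removal of zero modes via \S\ref{subsubsec:reductionzero}, the splitting of the solution into a continuous part handled by Proposition \ref{prop:Stri1} plus stable/unstable coefficients, the representation of the unstable coefficients by backward integration from $t=\infty$ (legitimated by the subexponential growth on the centre-stable space from Proposition \ref{prop:expdicho}), and the final summation with absorption of the $\delta$-small couplings. One structural remark: the $\int_t^T+\int_T^\infty$ bootstrap splitting you invoke for the unstable modes is borrowed from the multi-potential analysis of Section \ref{sec:multi}; in the single-potential proof the paper does not bootstrap at all, but simply applies Young's inequality on $[0,\infty)$ to the backward integral and closes by absorbing the $\delta$-weighted local-energy norm, which is lighter and suffices here since the forcing in the mode ODEs is paired against exponentially localized eigenfunctions. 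Your version is not wrong, only heavier than necessary (and it requires an analogue of \eqref{eq:energyCT} to convert the bootstrap constants into a bound on $\|\bs{v}(T)\|_{\cH}$).

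The one point where your justification as written would fail is the scattering step: you claim the inhomogeneity $\mathcal{K}_{\beta_0}(\cdot-\beta_0 t)\bs{\gamma}$ (and the discrete couplings) lies in $L_{t}^{1}\cH$ ``by the local energy bound just proved.'' The local energy bound only gives $\left\Vert \left\langle \cdot-\beta_0 t\right\rangle ^{-\sigma}\cD^{-\nu/2}\bs{\gamma}\right\Vert _{L_{t}^{2}\cH}<\infty$, hence $\left\Vert \mathcal{K}_{\beta_0}\bs{\gamma}(t)\right\Vert _{\cH}\in L_{t}^{2}$, not $L_{t}^{1}$, so absolute integrability of the Duhamel tail is not available. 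The correct conclusion, which the paper uses, is to estimate $\int_{t}^{\infty}e^{-s\mathfrak{L}_{0}}\{\cdots\}\,ds$ in $\cH$ by the dual of the homogeneous local-decay estimate \eqref{eq:shiftfreehom} of Lemma \ref{lem:shiftslant}, which converts the weighted $L_{t}^{2}$ control of the localized forcing (plus the dual Strichartz control of $\bs{F}$) directly into convergence of $e^{-t\mathfrak{L}_{0}}\bs{v}(t)$ in $\cH$. With that substitution your argument matches the paper's; the rest is sound.
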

	
	\begin{proof}
		Performing the reduction in Subsection \ref{subsec:Reduction}, from
		the general equation \eqref{eq:generallinearcom}, it is sufficient
		to consider
		\begin{align}
			\hm{u}_{t} & =\mathcal{L}_{\beta_{0}}\left(t\right)\hm{u}+a\left(t\right)\nabla\hm{u}+b\left(x-\beta_{0}t\right)\hm{u}+\pi_{cs}\left(t\right)\hm{F}\label{eq:zgeneral}
		\end{align}
		where $\mathcal{L}_{\beta_{0}}\left(t\right)$ uses the notation \eqref{eq:cLbeta} with
		$\beta$ replaced by $\beta_{0}$, 
		\[
		a\left(t\right)=\beta\left(t\right)-\beta_{0}+y'\left(t\right)-\beta\left(t\right)
		\]
		and
		\[
		\left|b\left(x-\beta_{0}t\right)\right|\leq\left\Vert \beta'\left(t\right)\right\Vert _{L^{1}}\left|\partial_{\beta}V_{\tilde{\beta}}\left(x-\beta_{0}t\right)\right|.
		\]
		Since $\left\Vert \hm{v}\left(t\right)\right\Vert _{\mathcal{H}}$
		is bounded or grows subexponentially for $t\geq0$, whence $\left\Vert \hm{u}\left(t\right)\right\Vert _{\mathcal{H}}$
		is of subexponential growth. 
		
		As computed in \S \ref{subsubsec:reductionzero},  given the zero-mode condition \eqref{eq:onePmainzero}, we can ignore the influence of zero modes with respect to $\mathcal{L}_{\beta_0}(t)$ in the decomposition of $\hm{u}$.  So we can write
		\[
		\hm{u}\left(t\right)=\sum_{k=1}^{K}\left(\lambda_{k,+}\left(t\right)\mathcal{Y}_{k,\beta_0}^{+}\left(t\right)+\lambda_{k,-}\left(t\right)\mathcal{Y}_{k,\beta_0}^{-}\left(t\right)\right)+\hm{\gamma}
		\]
		where the decomposition above is defined with respect to the linear
		operator $\mathcal{L}_{\beta_{0}}\left(t\right)$ with the potential
		moving with a fixed velocity $\beta_{0}$.  
		
		Applying the projection with respect to $\mathcal{L}_{\beta_{0}}\left(t\right)$
		and denoting $\hm{\gamma}=P_{c}\left(t\right)\hm{u}$, the equation
		\eqref{eq:zgeneral} is reduced to
		\begin{equation}
		\hm{\gamma}_{t}=\mathcal{L}_{\beta_{0}}\left(t\right)\hm{\gamma}+P_{c}\left(t\right)\left[a\left(t\right)\nabla\hm{u}+\tilde{F}\right]\label{eq:gammageneral}
		\end{equation}
		where
		\[
		\tilde{F}=\pi_{cs}\left(t\right)\hm{F}+b\left(x-\beta_{0}t\right)\hm{u}
		\]
		with $\hm{\gamma}\left(0\right)=P_{c}\left(0\right)\hm{u}\left(0\right)$.
		
		For the stable/unstable modes, we have
		\[
		\left(\partial_{t}\mp\frac{\nu_{k}}{\gamma}\right)\lambda_{k,\pm}=\omega\left(a\left(t\right)\nabla\hm{u}+\tilde{F},\mathcal{Y}_{k,\beta_0}^{\mp}\left(t\right)\right)
		\]
		where $\gamma_{0}=\frac{1}{\sqrt{1-\left|\beta_{0}\right|^{2}}}$
		and where $\mathcal{Y}_{k,\beta_0}^{\pm}$ is found with respect to $\mathcal{L}_{\beta_{0}}\left(t\right)$.
		Using the fact that $\left\Vert \hm{u}\right\Vert _{\mathcal{H}}$
		is bounded or grows subexponentially for $t\geq0$, we obtain the
		following system for $\lambda_{k,\pm}$:
		\[
		\lambda_{k,+}\left(t\right)=-\int_{t}^{\infty}e^{\frac{\nu_{k}}{\gamma}\left(t-s\right)}\omega\left(a\left(t\right)\nabla\hm{u}+\tilde{F},\mathcal{Y}_{k,\beta_0}^{-}\left(s\right)\right)\,ds
		\]
		\[
		\lambda_{k,-}\left(t\right)=e^{-\frac{\nu_{k}}{\gamma}t}\lambda_{k,-}\left(0\right)-\int_{0}^{t}e^{\frac{\nu_{k}}{\gamma}\left(s-t\right)}\omega\left(a\left(t\right)\nabla\hm{u}+\tilde{F},\mathcal{Y}_{k,\beta_0}^{+}\left(s\right)\right)\,ds.
		\]
		To estimate the $L_{t}^{2}\bigcap L_{t}^{\infty}$ norm of $\lambda_{k,\pm}$,
		we apply Young's inequality. For the centre-direction part given
		by the equation \eqref{eq:gammageneral}, we apply Proposition \ref{prop:Stri1}
		and conclude that
		\[
		\left\Vert \hm{\gamma}\right\Vert _{S_{\mathcal{H}}\bigcap L_{t}^{2}\mathcal{D}^{\frac{\nu}{2}}\mathcal{H}\left\langle \cdot-\beta_{0}t\right\rangle ^{-\sigma}}\lesssim\left\Vert \hm{\gamma}\left(0\right)\right\Vert _{\mathcal{H}}+\left\Vert \tilde{F}\right\Vert _{L_{t}^{2}\left(\mathcal{H}^{1-\frac{\nu}{2}}\left\langle \cdot-\beta_{0}t\right\rangle ^{\sigma}\bigcap\mathcal{B}_{\mathcal{H}}^{*}\right)+L_{t}^{1}\mathcal{H}}
		\]
		\begin{align*}
			\sum_{k=1}^{K}\left(\left\Vert \lambda_{k,+}\left(t\right)\right\Vert _{L_{t}^{2}\bigcap L_{t}^{\infty}}+\left\Vert \lambda_{k,-}\left(t\right)\right\Vert _{L_{t}^{2}\bigcap L_{t}^{\infty}}\right) & \lesssim\left\Vert \hm{u}\left(0\right)\right\Vert _{\mathcal{H}}+\delta\left\Vert \hm{\gamma}\right\Vert _{S_{\mathcal{H}}\bigcap L_{t}^{2}\mathcal{D}^{\frac{\nu}{2}}\mathcal{H}\left\langle \cdot-\beta_{0}t\right\rangle ^{-\sigma}}\\
			& +\left\Vert \pi_{cs}\left(t\right)\hm{F}\right\Vert _{L_{t}^{2}\left(\mathcal{H}^{1-\frac{\nu}{2}}\left\langle \cdot-\beta_{0}t\right\rangle ^{\sigma}\bigcap\mathcal{B}_{\mathcal{H}}^{*}\right)+L_{t}^{1}\mathcal{H}.}
		\end{align*}
		Therefore, summing two estimates above, we obtain
		\[
		\left\Vert \hm{u}\right\Vert _{S_{\mathcal{H}}\bigcap L_{t}^{2}\mathcal{D}^{\frac{\nu}{2}}\mathcal{H}\left\langle \cdot-\beta_{0}t\right\rangle ^{-\sigma}}\lesssim\left\Vert \hm{u}\left(0\right)\right\Vert _{\mathcal{H}}+\left\Vert \pi_{cs}\left(t\right)\hm{F}\right\Vert _{L_{t}^{2}\left(\mathcal{H}^{1-\frac{\nu}{2}}\left\langle \cdot-\beta_{0}t\right\rangle ^{\sigma}\bigcap\mathcal{B}_{\mathcal{H}}^{*}\right)+L_{t}^{1}\mathcal{H}}.
		\]
		Going back to the original coordinate, we get
		\[
		\left\Vert \hm{v}\right\Vert _{S_{\mathcal{H}}\bigcap L_{t}^{2}\mathcal{D}^{\frac{\nu}{2}}\mathcal{H}\left\langle \cdot-y(t)\right\rangle ^{-\sigma}}\lesssim\left\Vert \hm{v}\left(0\right)\right\Vert _{\mathcal{H}}+\left\Vert \pi_{cs}\left(t\right)\hm{F}\right\Vert _{L_{t}^{2}\left(\mathcal{H}^{1-\frac{\nu}{2}}\left\langle \cdot-y(t)\right\rangle ^{\sigma}\bigcap\mathcal{B}_{\mathcal{H}}^{*}\right)+L_{t}^{1}\mathcal{H}}
		\]
		which is equivalent to \eqref{eq:Strilineargeneral}.
		
		Finally we show $\hm{v}$ scatters to the free wave. The argument is more
		or less standard, see Chen \cite{C1,C2} for the wave setting. We write the solution as
		\[
		\hm{v}(t)=e^{\mathfrak{L}_{0}t}\hm{v}\left(0\right)+\int_{0}^{t}e^{\left(t-s\right)\mathfrak{L}_{0}}\left\{ \mathrm{V}_{\beta\left(s\right)}\left(\cdot-y\left(s\right)\right)\hm{v}\left(z\right)+\pi_{cs}\left(t\right)\hm{F}\right\} \,ds.
		\]
		By the standard argument, to establish the  scattering, we need to show
		\[
		\int_{0}^{\infty}e^{-s\mathfrak{L}_{0}}\left\{ \mathrm{V}_{\beta\left(s\right)}\left(\cdot-y\left(s\right)\right)\hm{v}\left(z\right)+\pi_{cs}\left(t\right)\hm{F}\right\} \in\mathcal{H}.
		\]
		Applying the dual version of \eqref{eq:shiftfreehom} in Lemma \ref{lem:shiftslant},
		we obtain that
		\begin{align*}
			\left\Vert \int_{0}^{\infty}e^{-s\mathfrak{L}_{0}}\left\{ \mathrm{V}_{\beta\left(s\right)}\left(\cdot-y\left(s\right)\right)\hm{v}\left(z\right)+\pi_{s}\left(t\right)\hm{F}\right\} ds\right\Vert _{\mathcal{H}} & \lesssim\left\Vert \hm{F}\right\Vert _{L_{t}^{2}\left(\mathcal{H}^{1-\frac{\nu}{2}}\left\langle \cdot-y(t)\right\rangle ^{\sigma}\bigcap\mathcal{B}_{\mathcal{H}}^{*}\right)+L_{t}^{1}\mathcal{H}}\\
			& +\left\Vert \hm{v}\right\Vert _{S_{\mathcal{H}}\bigcap L_{t}^{2}\mathcal{D}^{\frac{\nu}{2}}\mathcal{H}\left\langle \cdot-y(t)\right\rangle ^{-\sigma}}\\
			& \lesssim\left\Vert \hm{v}\left(0\right)\right\Vert _{\mathcal{H}}+\left\Vert \hm{F}\right\Vert _{L_{t}^{2}\left(\mathcal{H}^{1-\frac{\nu}{2}}\left\langle \cdot-y(t)\right\rangle ^{\sigma}\bigcap\mathcal{B}_{\mathcal{H}}^{*}\right)+L_{t}^{1}\mathcal{H}}.
		\end{align*}
		Define
		\[
		\hm{\phi}_{+}=\hm{v}\left(0\right)+\int_{0}^{\infty}e^{-s\mathfrak{L}_{0}}\left\{ \mathrm{V}_{\beta\left(s\right)}\left(\cdot-y\left(s\right)\right)\hm{v}\left(z\right)+\pi_{cs}\left(t\right)\hm{F}\right\} 
		\]
		then by construction,
		\[
		\left\Vert \hm{v}\left(t\right)-e^{\mathfrak{L}_{0}t}\hm{\phi}_{+}\right\Vert _{\mathcal{H}}\rightarrow\infty,\,\,t\rightarrow\infty
		\]
		as desired.
	\end{proof}

	\section{Analysis of the multi-potential problem}\label{sec:multi}
	
	In this section, we analyze and establish Strichartz estimates for \eqref{eq:maineq} under the assumptions \eqref{eq:maincond} and \eqref{eq:maincondzero}.
	
	
	Before going to the detailed analysis, we first do some simplification
	and reduction.
	\begin{itemize}
		\item Setting $\beta_{j}\left(0\right)=\beta_{j}$ by the same reduction
		argument in Subsection \ref{subsec:Reduction}, we will consider Strichartz
		estimates for the following problem
		\begin{equation}
		w_{tt}-\Delta w+w+\sum_{j=1}^{J}(V_{j})_{\beta_{j}}\left(\cdot-\beta_{j}t+c_{j}\left(t\right)\right)w=F\label{eq:simplifiedeq}
		\end{equation}
		with $\left|c'_{j}\left(t\right)\right|\ll1$ after projecting on
		the centre-stable space. 
		\item In additionally, we assume that all potentials are compactly supported.
		It is easy to check one can pass to the limit to obtain the same results
		for fast-decaying potentials.
		\item Due the orthogonality conditions on zero modes, as computed in \S \ref{subsubsec:reductionzero}, we can further ignore the contribution from  $\mathcal{P}_{0,\beta_{j}}(c_j(t))\hm{w}$.
	\end{itemize}
	\subsection{Basic setting} Consider the simplified model \eqref{eq:simplifiedeq}. 
	Using the Hamiltonian formalism, we write $\hm{w}=\left(\begin{array}{c}
	w\\
	w_{t}
	\end{array}\right)$ and
	\begin{equation}
	\hm{w}_{t}=\mathcal{L}\left(t\right)\hm{w}+\hm{F}\label{eq:complexgamma}
	\end{equation}
	where
	\[
	\mathcal{L}\left(t\right):=\left(\begin{array}{cc}
	0 & 1\\
	\Delta-1-\sum_{j=1}^{J}(V_{j})_{\beta_{j}}\left(\cdot-\beta_{j}t+c_{j}\left(t\right)\right) & 0
	\end{array}\right).
	\]
	We set for $j=1,\ldots J$
	\begin{equation}
	\mathcal{L}_{j}=\left(\begin{array}{cc}
	0 & 1\\
	\Delta-1-V_{j} & 0
	\end{array}\right)\label{eq:Lij}
	\end{equation}
\begin{equation}\label{eq:Vmatrix}
	\hm{{\rm V}}_{j}=\left(\begin{array}{cc}
	0 & 0\\
	-\left(V_{j}\right)_{\beta_{j}} & 0
	\end{array}\right)
\end{equation}
	and
	\begin{equation}
	\mathcal{L}_{j}\left(t\right):=\left(\begin{array}{cc}
	0 & 1\\
	\Delta-1-\left(V_{j}\right)_{\beta_{j}}\left(x-\beta_{j}t\right) & 0
	\end{array}\right).\label{eq:L2t}
	\end{equation}
	Denote $\hm{v}\left(t\right)=\pi_{cs}(t)\hm{w}$ where $\pi_{cs}(t)$
	is the projection onto the  centre-stable space, see Proposition \ref{prop:expdicho}. Then we need to establish
	Strichartz estimates for
	\begin{equation}
	\hm{v}_{t}=\mathcal{L}\left(t\right)\hm{v}+\pi_{cs}\left(t\right)\hm{F}.\label{eq:complexv}
	\end{equation}
	By construction, in this stable space, $\left\Vert \hm{v}\left(t\right)\right\Vert _{\mathcal{H}}$
	is bounded or grows subexponentially for $t\geq0$. In other words, we have
\begin{cor}
    For any $\epsilon>0$, there exists a constant $C_\epsilon$ such that for $t\geq0$ the solution to \eqref{eq:complexv}, $\hm{v}(t)=\pi_{cs}\hm{v}(t)$,  satisfies
    \begin{equation}\label{eq:centergrowth}
       \left\Vert \hm{v}\left(t\right)\right\Vert _{\mathcal{H}}\leq C_\epsilon e^{\epsilon t} \left\Vert \hm{v}\left(0\right)\right\Vert _{\mathcal{H}}.
    \end{equation}
\end{cor}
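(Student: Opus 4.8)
The plan is to deduce \eqref{eq:centergrowth} directly from the exponential dichotomy of Proposition~\ref{prop:expdicho}, of which it is essentially the uniform quantitative form restricted to the centre-stable flow. Regarding the bound as a statement about $\mathcal T(t,0)$ acting on data in $X_{\text{cs}}(0)$ (the inhomogeneous term of \eqref{eq:complexv} does not enter the right-hand side of \eqref{eq:centergrowth}), I would split $\hm v(t)\in X_{\text{cs}}(t)$ along $X_{\text{cs}}(t)=X_{\text{s}}(t)\oplus X_{\text{c}}(t)$, using that $\pi_{\text{s}}(t)$ and $\pi_{\text{c}}(t)$ commute with the flow and are uniformly bounded on $\mathcal H$, so that
\[
\hm v(t)=\mathcal T(t,0)\pi_{\text{s}}(0)\hm v(0)+\mathcal T(t,0)\pi_{\text{c}}(0)\hm v(0).
\]

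For the first summand, item~(1) of Proposition~\ref{prop:expdicho} furnishes a strictly negative exponential rate, so $\|\mathcal T(t,0)\pi_{\text{s}}(0)\hm v(0)\|_{\mathcal H}\le C\|\hm v(0)\|_{\mathcal H}$ for all $t\ge0$, which is far stronger than needed. For the second summand I would invoke the zero Lyapunov-exponent statement of item~(2), upgraded to the uniform bound $\|\mathcal T(t,s)\pi_{\text{c}}(s)\|_{\mathscr L(\mathcal H)}\le C_\epsilon e^{\epsilon(t-s)}$ for every $\epsilon>0$, which is part of the construction of the dichotomy in Chen--Jendrej~\cite{CJ}; together with the uniform bound on $\|\pi_{\text{c}}(0)\|_{\mathscr L(\mathcal H)}$ this gives $\|\mathcal T(t,0)\pi_{\text{c}}(0)\hm v(0)\|_{\mathcal H}\le C_\epsilon e^{\epsilon t}\|\hm v(0)\|_{\mathcal H}$. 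Adding the two estimates yields \eqref{eq:centergrowth}.

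The step I expect to be the main obstacle is precisely the passage from the \emph{pointwise}, data-by-data Lyapunov-exponent assertion of Proposition~\ref{prop:expdicho}(2) to a bound that is uniform both in $t\ge0$ and in the initial data; this is the (standard but not automatic) quantitative content of a pseudo-exponential dichotomy, and is where one must lean on the estimates of \cite{CJ}. If one prefers a self-contained argument in the present setting, I would instead exploit the orthogonality condition \eqref{eq:maincondzero}: since it removes any component along the iterated null modes $\cY^1_{j,m}$, the centre space reduces — modulo the continuous-spectrum part, where the local energy decay of Theorem~\ref{thm:localenergy} already gives boundedness — to the span of the genuine null modes $\cY^0_{j,m}(t)$, whose coefficients satisfy explicit transport-type ODEs with coefficients controlled by $\|\beta_j'\|_{L^1_t}+\|y_j'-\beta_j\|_{L^1_t}\lesssim\delta$; a Gr\"onwall estimate then bounds $\|\hm v(t)\|_{\mathcal H}$ uniformly, which implies \eqref{eq:centergrowth} a fortiori.
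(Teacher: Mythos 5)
Your main argument coincides with the paper's: the paper gives no proof beyond the remark that, by the construction of the dichotomy in Chen--Jendrej \cite{CJ}, the flow restricted to the centre-stable space is bounded or grows subexponentially, which is precisely the quantitative bound $\|\mathcal T(t,s)\pi_{\text{c}}(s)\|_{\mathscr L(\mathcal H)}\le C_\epsilon e^{\epsilon(t-s)}$ (together with uniform control of the stable component) that you invoke after splitting along $X_{\text{s}}(t)\oplus X_{\text{c}}(t)$ with the commuting, uniformly bounded projections. Your caveat that the pointwise Lyapunov-exponent statements of Proposition \ref{prop:expdicho} must be upgraded to an estimate uniform in $t$ and in the data via the construction in \cite{CJ} is accurate, and that is exactly where the paper, too, places the burden.
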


	We need to understand the local decay  near $\left(V_{j}\right)_{\beta_{j}}\left(x-\beta_{j}t+c_{j}\left(t\right)\right).$
	More precisely, we need to show
	\[
	\int\int\left\langle x-\beta_{j}t+c_{j}\left(t\right)\right\rangle ^{-\sigma}\left(|\mathcal{D}^{1-\frac{\nu}{2}} v_{1}|^{2}\left(t,x\right)+|\mathcal{D}^{-\frac{\nu}{2}}v_{2}|^{2}\left(t,x\right)\right)\,dxdt<\infty,\,j=1\ldots J.
	\]
	To establish them, we need the following ingredients: 
	\begin{enumerate}
		\item We decompose the space into $J+1$ channels. There are $J$ channels
		associated with those $J$ potentials. In these channels, the evolution
		is mainly influenced by the associated potential. In the remaining
		channel away from centers of potentials, the evolution is dominated
		by the free evolution.
		\item We need localized analysis of $T$, $T_{1}$ and $T_{0}$ from \eqref{eq:T},
		\eqref{eq:T0} and \eqref{eq:T1} for different channels.
		\item We need some truncated version of the inhomogeneous estimates like
		Chen \cite{C1} in order to apply the channel decomposition argument, bootstrapping and understand the interaction among potentials. Here we need  very general interaction estimates.
	\end{enumerate}
	
 	\subsection{Decomposition into channels}
	
	Let $\chi\left(x,r\right)$ be an non-negative smooth function such
	that $\chi\equiv1$ for $\left|x\right|\leq r$ and $\chi\equiv0$
	for $\left|x\right|\geq2r$. Then we use the following smooth functions
	to decompose the space $\mathbb{R}^{3}$ into $J+1$ regions for $0<\epsilon\ll1$,
	define for $j=1,\ldots J$
	\[
	\chi_{j}\left(t\right)=\chi\left(x-\beta_{j}t+c_{j}\left(t\right),\epsilon t\right)
	\]
	and
	\[
	\chi_{0}=1-\sum_{j=1}^{J}\chi_{j}\left(t\right).
	\]
	Clearly we can write
	\[
	\hm{v}=\sum_{j=0}^{J}\chi_{j}\hm{v}=:\sum_{j=0}^{J}\hm{v}_{j}.
	\]
	Trivially, the behaviors of $\hm{v}_{j}$ and $\hm{v}_{0}$ will be
	dominated by $\left(V_{j}\right)_{\beta_{j}}\left(x-\beta_{j}t+c_{j}\left(t\right)\right)$
	and the free evolution respectively.
	
	For any given finite time, one can always use Gronwall's inequality to control
	everything. So we start our analysis from $t=B$ which is finite but
	large enough such that $\chi_{j}\equiv1$ in the support $V_{j}\left(t\right)$
	for $j=1,\ldots J$, i.e., $\chi_j(t)V_j(t)=V_j(t)$ for $t\geq B$.
	
	For a fixed but large $T$, we use the bootstrap assumptions for $j=1,\ldots J$
	\begin{equation}
	\left(\int_{0}^{T}\int\left\langle x-\beta_{j}t+c_{j}\left(t\right)\right\rangle ^{-2\sigma}\left(|\mathcal{D}^{1-\frac{\nu}{2}} v_{1}|^{2}\left(t,x\right)+|\mathcal{D}^{-\frac{\nu}{2}}v_{2}|^{2}\left(t,x\right)\right)\,dxdt\right)^{\frac{1}{2}}<C_{j,T}.\label{eq:assumj}
	\end{equation}
The assumptions above in particular implies
 \begin{equation}\label{eq:energyCT}
      \sup_{t\in[0,T]}\left\Vert \hm{v}\left(t\right)\right\Vert _{\mathcal{H}}\leq C_0 \sum_{j=1}^J C_{j,T}
 \end{equation}for some constant $C_0$ only depending on prescribed constants. This implication is an special consequence Strichartz estimates. Since later on, we will use weighted estimates to derive Strichartz estimates in general settings, we omit the proof of \eqref{eq:energyCT} above.
 
	We will also use that notation
	\[
	L_{t}^{p}L_{x}^{q}:=L^{p}\left([0,T];L_{x}^{q}\right).
	\]
\subsection{Analysis of stable/unstable modes}
Before we analyze each channels, we first analyze the behavior of stable/unstable modes in the centre-stable direction. Consider the solution $\hm{v}(t)$ to  equation \eqref{eq:complexv}.	To analyze the bounds for the discrete modes, taking the inner product
	\[
	\lambda_{j,k,\pm}\left(t\right)=\omega\left(\hm{v},\mathcal{Y}_{j,k,\beta_j}^{\mp}\left(\cdot-\beta_jt+c_j(t)\right)\right)
	\]
	we have
\begin{align}
	\left(\partial_{t}\mp\frac{\nu_{j.k}}{\gamma_{j}}\right)\lambda_{j,k,\pm}\left(t\right)&=\omega\left(a_{j}\left(t\right)\nabla\hm{v},\mathcal{Y}_{j,k,\beta_j}^{\mp}\left(\cdot-\beta_jt+c_j(t)\right)\right)\\&+\omega\left(\pi_{cs}(t)\hm{F},\mathcal{Y}_{j,k,\beta_j}^{\mp}\left(\cdot-\beta_jt+c_j(t)\right)\right)\\&+\sum_{i\neq j}\omega\left(\hm{{\rm V}}_{i}\big(\cdot-\beta_it+c_i(t)\big)\hm{v},\mathcal{Y}_{j,k,\beta_j}^{\mp}\left(\cdot-\beta_jt+c_j(t)\right)\right)
\end{align}
	where $a_j(t)=c_j(t)'$.   We first focus on the behavior of unstable modes, $\lambda_{j,k,+}(t)$.
	Using the fact that $\left\Vert \hm{v}\right\Vert _{\mathcal{H}}$
	is bounded or grows subexponentially for $t\geq0$, we obtain the
	following system
	\begin{align}
	\lambda_{j,k,+}\left(t\right)&=-\int_{t}^{\infty}e^{\frac{\nu_{j.k}}{\gamma_{j}}\left(t-s\right)}\omega\left(a_{j}\left(s\right)\nabla\hm{v}+\sum_{i\neq j}\hm{{\rm V}}_{i}\big(\cdot-\beta_is+c_i(s)\big)\hm{v},\mathcal{Y}_{j,k,\beta_j}^{-}\left(\cdot-\beta_js+c_j(s)\right)\right)\,ds\\
	&-\int_{t}^{\infty}e^{\frac{\nu_{j.k}}{\gamma_{j}}\left(t-s\right)}\omega\left(\pi_{cs}(s)\hm{F},\mathcal{Y}_{j,k,\beta_j}^{-}\left(\cdot-\beta_js+c_j(s)\right)\right)\,ds.
\end{align}
Due to the separation of trajectories, without loss of generality, we can assume that there exist $\varpi$ small and  $L$ large such that
\begin{equation}
    |\beta_jt+ c_j(t)-(\beta_it+c_i(t)|\geq \varpi t+L,\,j\neq i
\end{equation}
 and we can always achieve this by restricting onto large time independent of $\bs v$. Since $\hm{v}(t)=\pi_{cs}\hm{v}(t)$, we take $0<\epsilon\ll \varpi$ such that \eqref{eq:centergrowth} holds
 \begin{equation}
      \left\Vert \hm{v}\left(t\right)\right\Vert _{\mathcal{H}}\leq C_\epsilon e^{\epsilon t}\left( \left\Vert \hm{v}\left(0\right)\right\Vert _{\mathcal{H}}+\left\Vert F\right\Vert _{L_{t}^{2}\mathscr{W}_{\mathcal{H}}+L_{t}^{1}\mathcal{H}}\right)
 \end{equation}
 	where
	\[
	\mathscr{W}=\bigcap_{j=1}^{J}\mathcal{D}^{-\frac{\nu}{2}}L_{x}^{2}\left\langle \cdot-\beta_{j}t+c_{j}\left(t\right)\right\rangle ^{\sigma}\bigcap B_{6/5,2}^{-5/6}.
	\]
Note that by the decay of eigenfunctions
	given by Agmon's estimate and the decay of the potential,t hen one has
\begin{align}
    \left|\omega\left(\sum_{i\neq j}\hm{{\rm V}}_{i}\big(\cdot-\beta_is+c_i(s)\big)\hm{v},\mathcal{Y}_{j,k,\beta_j}^{-}\left(\cdot-\beta_js+c_j(s)\right)\right)\right|\\\lesssim C_\epsilon e^{(-\frac{\varpi}{2}+\epsilon)s} \left( \left\Vert \hm{v}\left(0\right)\right\Vert _{\mathcal{H}}+\left\Vert F\right\Vert _{L_{t}^{2}\mathscr{W}_{\mathcal{H}}+L_{t}^{1}\mathcal{H}}\right).
\end{align}
Therefore, we know
{\small \begin{align}
   \left| \int_{t}^{\infty}e^{\frac{\nu_{j.k}}{\gamma_{j}}\left(t-s\right)}\omega\left(\sum_{i\neq j}\hm{{\rm V}}_{i}\big(\cdot-\beta_is+c_i(s)\big)\hm{v},\mathcal{Y}_{j,k,\beta_j}^{-}\left(\cdot-\beta_js+c_j(s)\right)\right)\,ds \right|\\\lesssim C_\epsilon e^{(-\frac{\varpi}{2}+\epsilon)t}  \left( \left\Vert \hm{v}\left(0\right)\right\Vert _{\mathcal{H}}+\left\Vert F\right\Vert _{L_{t}^{2}\mathscr{W}_{\mathcal{H}}+L_{t}^{1}\mathcal{H}}\right).
\end{align}}

Then we split the integral due to the nonlinear behavior of trajectories into two pieces:
\begin{align}
     -\int_{t}^{\infty}e^{\frac{\nu_{j.k}}{\gamma_{j}}\left(t-s\right)}\omega\left(a_{j}\left(s\right)\nabla\hm{v},\mathcal{Y}_{j,k,\beta_j}^{-}\left(\cdot-\beta_js+c_j(s)\right)\right)\,ds \\=   -\int_{t}^{T}e^{\frac{\nu_{j.k}}{\gamma_{j}}\left(t-s\right)}\omega\left(a_{j}\left(s\right)\nabla\hm{v},\mathcal{Y}_{j,k,\beta_j}^{-}\left(\cdot-\beta_js+c_j(s)\right)\right)\,ds\label{eq:leqT}\\
       -\int_{T}^{\infty}e^{\frac{\nu_{j.k}}{\gamma_{j}}\left(t-s\right)}\omega\left(a_{j}\left(s\right)\nabla\hm{v},\mathcal{Y}_{j,k,\beta_j}^{-}\left(\cdot-\beta_js+c_j(s)\right)\right)\,ds\label{eq:geqT}
\end{align}
For the analysis of \eqref{eq:leqT}, it will be similar to the analysis of stable modes and bootstrap assumptions can be directly used. 

The analysis of \eqref{eq:geqT} could not use the bootstrap assumptions directly since it requires information beyond $T$.To estimate this piece, we use the fact $\hm{v}$ is in the centre-stable direction and can only grow mildly. From \eqref{eq:energyCT}, one has
for $s\geq T$
\begin{equation}
      \left\Vert \hm{v}\left(s\right)\right\Vert _{\mathcal{H}}\leq C_\epsilon \big( C_0 \sum_{j=1}^J C_{j,T}+\left\Vert F\right\Vert _{L_{t}^{2}\mathscr{W}_{\mathcal{H}}+L_{t}^{1}\mathcal{H}}\big)e^{\epsilon (s-T)},
 \end{equation}
which implies
 \begin{align}
     \left| 
    -\int_{T}^{\infty}e^{\frac{\nu_{j.k}}{\gamma_{j}}\left(t-s\right)}\omega\left(a_{j}\left(s\right)\nabla\hm{v},\mathcal{Y}_{j,k,\beta_j}^{-}\left(\cdot-\beta_js+c_j(s)\right)\right)\,ds
\right|\\\leq \delta C_\epsilon \big( C_0 \sum_{j=1}^J C_{j,T}+\left\Vert F\right\Vert _{L_{t}^{2}\mathscr{W}_{\mathcal{H}}+L_{t}^{1}\mathcal{H}}\big) e^{\frac{\nu_{j.k}}{\gamma_{j}}\left(t-T\right)}.
 \end{align}
Therefore, upon $L^2$ integration in $t$, we conclude
\begin{align}
  \left\Vert   
    -\int_{T}^{\infty}e^{\frac{\nu_{j.k}}{\gamma_{j}}\left(t-s\right)}\omega\left(a_{j}\left(s\right)\nabla\hm{v},\mathcal{Y}_{j,k,\beta_j}^{-}\left(\cdot-\beta_js+c_j(s)\right)\right)\,ds\right\Vert_{L^2[0,T]}\\ \lesssim \delta C_\epsilon \big( C_0 \sum_{j=1}^J C_{j,T}+\left\Vert F\right\Vert _{L_{t}^{2}\mathscr{W}_{\mathcal{H}}+L_{t}^{1}\mathcal{H}}\big)
    \end{align}
and taking $\delta$ small, the estimate for \eqref{eq:geqT} recovers the bootstrap assumption.

The analysis of \eqref{eq:leqT} is similar to the analysis of the stable modes. We first consider the stable modes which can be written down explictly as
	\begin{align}
	\lambda_{j,k,-}\left(t\right)&=e^{-\frac{\nu_{j.k}}{\gamma_{j}}t}\lambda_{j,k,-}\left(0\right)\\&+\int_{0}^{t}e^{\frac{\nu_{j.k}}{\gamma_{j}}\left(s-t\right)}\omega\left(a_{j}\left(s\right)\nabla\hm{v}+\sum_{i\neq j}\hm{{\rm V}}_{i}\big(\cdot-\beta_is+c_i(s)\big)\hm{v},\mathcal{Y}_{j,k,\beta_j}^{-}\left(\cdot-\beta_js+c_j(s)\right)\right)\,ds\\
	&+\int_{0}^{t}e^{\frac{\nu_{j.k}}{\gamma_{j}}\left(s-t\right)}\omega\left(\pi_{cs}(s)\hm{F},\mathcal{Y}_{j,k,\beta_j}^{-}\left(\cdot-\beta_js+c_j(s)\right)\right)\,ds.
\end{align}
	To estimate $L_{t}^{2}\bigcap L_{t}^{\infty}$ norm of $\lambda_{j,k,-}$ and \eqref{eq:leqT},
	we apply Young's inequality. Again using  the decay of eigenfunctions
	given by Agmon's estimate and the decay of the potential, for $s\leq T$. we always
	have that
	\[
	 \left|\omega\left(\sum_{i\neq j}\hm{{\rm V}}_{i}\big(\cdot-\beta_is+c_i(s)\big)\hm{v},\mathcal{Y}_{j,k,\beta_j}^{-}\left(\cdot-\beta_js+c_j(s)\right)\right)\right|\\\lesssim e^{-\frac{\varpi}{2}s-\alpha L} \sum_i C_{i,T}.
	\]
for some $\alpha>0$ from the decay of eigenfunctions and potentials. Then we note that by bootstrap assumptions, again  for $s\leq T$,
\begin{equation}
    \left\Vert\omega\left(a_{j}\left(s\right)\nabla\hm{v},\mathcal{Y}_{j,k,\beta_j}^{\pm}\left(\cdot-\beta_js+c_j(s)\right)\right)\right\vert_{L^\infty_t\bigcap L^2_t}\lesssim \delta C_{j,T}.
\end{equation}
Therefore, putting all computations above together, for discrete modes, one has
	{\small\begin{align}
			\sum_{j=1}^J\sum_{k=1}^{K_{j}}\left(\left\Vert \lambda_{j,k,+}\left(t\right)\right\Vert _{L_{t}^{2}\bigcap L_{t}^{\infty}}+\left\Vert \lambda_{j,k,-}\left(t\right)\right\Vert _{L_{t}^{2}\bigcap L_{t}^{\infty}}\right) & \lesssim \sum_i \left(e^{-\alpha L}+\delta\right)C_{i,T}+\left\Vert \hm{v}\left(0\right)\right\Vert _{\mathcal{H}}\label{eq:boudbound}\\
			&+\left\Vert F\right\Vert _{L_{t}^{2}\mathscr{W}_{\mathcal{H}}+L_{t}^{1}\mathcal{H}}.\nonumber 
	\end{align}}
Therefore, taking $L$ large and $\delta$ small, estimates for discrete modes recover bootstrap assumption.
	\begin{rem}\label{rem:proplinear}
		Given the conditions of  Proposition	\ref{pro:linear}, the analysis of discrete modes are trivial. One can conclude the desired estimates above directly from the analysis of continuous spectrum which we will presented below
	\end{rem}	
\begin{rem}
     Consider the homogeneous equation
	\begin{equation}
	\hm{v}_{t}=\mathcal{L}\left(t\right)\hm{v}\label{eq:vhomorem}
	\end{equation}
and assume that $\hm{v}(t)=\pi_{cs}\hm{v}(t)$. 
From the argument above if we indeed have linear trajectories, i.e., $c_j'(t)=a_j(t)\equiv0$, the unstable modes of  flows in the centre-stable direction decay exponentially.
\end{rem}	
	\subsection{Analysis of the $j$th channel}\label{subsec:analysisJchannel}
	
	The analysis will be similar to Subsection \ref{subsec:Plineartraj}.
	Denote $\tilde{\hm{v}}\left(t,y\right)=\hm{v}\left(t,y-c_{j}\left(t\right)\right)$
	then one has
	\begin{equation}\label{eq:jthequation}
	\tilde{\hm{v}}_{t}=\mathcal{L}_{j}\left(t\right)\tilde{\hm{v}}+\left[c_{j}'\left(t\right)\cdot\nabla\tilde{\hm{v}}+\hm{F}_{j}\right]
	\end{equation}
	where $\mathcal{L}_{j}\left(t\right)=\mathfrak{L}_{0}+\hm{{\rm V}}_{j}\left(x-\beta_{j}t\right)$, see \eqref{eq:freematrix} for the definition of $\mathfrak{L}_0$,
	and
	\begin{equation}
	\hm{F}_{j}=\sum_{\rho\neq j}\hm{{\rm V}}_{\rho}\left(x-\beta_{\rho}t+c_{\rho}\left(t\right)-c_{j}\left(t\right)\right)\tilde{\hm{v}}\left(t\right)+\hm{F}\left(t\right).\label{eq:Fj}
	\end{equation}

	\subsubsection{Analysis of the continuous spectrum}
	
	Denoting
	\[
	A_{j}\left(t\right)=:a_{j}\left(t\right)\nabla=c'_{j}\left(t\right)\nabla,
	\]
	and projecting onto the continuous spectrum with respect to $\mathcal{L}_{j}(t)$,
	the original equation can be written as
	\[
	\hm{\varphi}_{t}=\mathcal{L}_{j}\left(t\right)\hm{\varphi}+P_{j,c}\left(t\right)\left[A_{j}\hm{\varphi}+\mathrm{\mathfrak{F}}_{j}\right]
	\]
	with $\hm{\varphi}\left(B\right)=P_{j,c}\left(B\right)\hm{v}\left(B,x-c_{j}\left(B\right)\right)$
	and $\mathfrak{F}_{j}=A_{j}P_{j,d}\left(t\right)\tilde{\hm{v}}+\hm{F}_{j}.$
	
	As in Subsection \ref{subsec:Plineartraj}, see equation \eqref{eq:pertlinearMmo},
	we modify the equation as\begin{equation}
	z_{t}=\left[\mathfrak{L}_{0}+\mathrm{V}_{j}\left(t\right)+A_{j}\left(t\right)\right]z+\tilde{F}\label{eq:modifiedz}
	\end{equation}
	where
	\begin{equation}
	\mathrm{V}_{j}\left(t\right)=-\mathcal{L}_{j}\left(t\right)P_{j,d}\left(t\right)-\kappa P_{j,d}\left(t\right)+\mathcal{K}_{j}\left(x-\beta_{j}t\right)\label{eq:Vjt}
	\end{equation}
	($\kappa>\frac{\nu_{j,K_{j}}}{\sqrt{1-\beta_{j}^{2}}}$ is sufficient
	for our setting),   $\mathcal{K}_j\left(x-\beta_{j} t\right)=\left(\begin{array}{c}
	0\\
	(V_j){\beta_j}\left(x-\beta_j t\right)
	\end{array}\right)$ and
	\begin{equation}
	\tilde{F}\left(t\right)=-A_{j}\left(t\right)P_{j,d}\left(t\right)z\left(t\right)+\mathfrak{F}_{j}.\label{eq:frakF}
	\end{equation}
	Let $\mathrm{U}_{j}\left(t,s\right)$ be the evolution operator defined
	by the equation $u_{t}=A_{j}\left(t\right)u$. By Duhamel formula,
	the solution to \eqref{eq:modifiedz} can be written as
	\begin{equation}
	z=z_{0}+\int_{0}^{t}e^{\left(t-s\right)\mathfrak{L}_{0}}\mathrm{U}_{j}\left(t,s\right)\left\{ \mathrm{V}_{j}\left(s\right)z\left(s\right)+\tilde{F}\left(s\right)\right\} \,ds.\label{eq:-11-1-1}
	\end{equation}
	Restricting onto the support of $\chi_{j}$, we analyze
	\[
	\chi_{j}z\left(t\right)=\chi_{j}z_{0}\left(t\right)+\chi_{j}\int_{B}^{t}e^{\left(t-s\right)\mathfrak{L}_{0}}\mathrm{U}_{j}\left(t,s\right)\left\{ \mathrm{V}_{j}z\left(s\right)+\tilde{F}\left(s\right)\right\} \,ds
	\]
	where
	\begin{equation}
	z_{0}\left(t\right)=e^{\left(t-B\right)\mathfrak{L}_{0}}\mathrm{U}_{j}\left(t,B\right)z\left(B\right).\label{eq:-12-1-1}
	\end{equation}
	To analyze the local decay of $\chi_{j}z$, we proceed as Subsection
	\ref{subsec:Plineartraj}. As before, we define
	\begin{equation}
	T^{B,j}\hm{g}=\chi_{j}\int_{B}^{t}e^{\left(t-s\right)\mathfrak{L}_{0}}\mathrm{U}_{j}\left(t,s\right)\mathrm{V}_{j}\left(s\right)\hm{g}\left(s\right)\,ds\label{eq:Tbj}
	\end{equation}
	\begin{equation}
	T_{0}^{B,j}\hm{g}=\chi_{j}\int_{B}^{t}e^{i\left(t-s\right)\mathfrak{L}_{0}}\mathrm{V}_{j}\left(s\right)\hm{g}\left(s\right)\,ds\label{eq:Tbj0}
	\end{equation}
	\begin{equation}
	T_{1}^{B,j}\hm{g}=\chi_{j}\int_{B}^{t}\mathrm{U}_{\beta}\left(t,s\right)\mathrm{V}_{j}\left(s\right)\hm{g}\left(s\right)\,ds\label{eq:Tbj1}
	\end{equation}
	where $\mathrm{U}_{j}\left(t,s\right)$ denote the evolution of
	\begin{equation}
	z_{t}=\left[\mathfrak{L}_0+\mathrm{V}_{j}\left(t\right)\right]z.\label{eq:-16-1-1}
	\end{equation}
	We observe that
	\begin{equation}
	\left(1-T^{B,j}\right)\chi_{j}z=\chi_{j}z_{0}+\chi_{j}\int_{B}^{t}e^{\left(t-s\right)\mathfrak{L}_{0}}\mathrm{U}_{j}\left(t,s\right)\tilde{F}\left(s\right)\,ds\label{eq:cha2T}
	\end{equation}
	since $\chi_{j}\mathrm{V}_{j}=\mathrm{V}_{j}$ from the initial reduction. 
	\begin{rem}
		Again, technically, in  $\mathrm{V}_{j}=-\mathcal{L}_{j}\left(t\right)P_{j,d}\left(t\right)-\kappa P_{j,d}\left(t\right)+\mathcal{K}_{j}\left(x-\beta_{j}t\right)$,
		the $P_{j,d}\left(t\right)$ part is not compactly supported. But one
		can cut off eigenfunctions and the error term decays exponentially.
		Then we can put the error term into $\tilde{F}$. 
	\end{rem}

	Moreover, as \eqref{eq:T0-T}, we also have
	\begin{equation}
	\left(T_{0}^{B,j}-T^{B,j}\right)\hm{g}=\int_{B}^{t}e^{\left(t-s\right)\mathfrak{L}_{0}}\left(1-\mathrm{U}_{j}\left(t,s\right)\right)\hm{g}\left(s\right)\,ds.\label{eq:TB0TB}
	\end{equation}
	Then we can perform the same analysis for $T^{B,j}$, $T_{0}^{B,j}$
	and $T_{1}^{B,j}$ as Subsection \ref{subsec:Plineartraj}.
	
	Adapting Lemma \ref{lem:T0T1} to the current setting, we have:
	\begin{lem}
		\label{lem:localTj}For $\sigma>1$, in the space $L_{t}^{2}\mathcal{H}\left\langle \cdot-\beta_{j}t\right\rangle ^{-\sigma}$,
		the following identities hold
		\[
		\left(1-T_{0}^{B,j}\right)\left(1+T_{1}^{B,j}\right)=\left(1+T_{1}^{B,j}\right)\left(1-T_{0}^{B,j}\right)=1.
		\]
	\end{lem}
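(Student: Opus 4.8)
The plan is to prove Lemma~\ref{lem:localTj} exactly along the lines of Lemma~\ref{lem:T0T1}, the only genuinely new ingredient being the bookkeeping of the spatial cut-off $\chi_j$ and of the truncation at $t=B$. Throughout, $\mathrm{U}_j(t,s)$ denotes the evolution of $z_t=[\mathfrak{L}_0+\mathrm{V}_j(t)]z$, as in the clarification following \eqref{eq:Tbj1}. Expanding the two claimed identities, it suffices to establish the single pair of operator relations
\begin{equation*}
T_0^{B,j}T_1^{B,j}=T_1^{B,j}-T_0^{B,j}=T_1^{B,j}T_0^{B,j}\qquad\text{on }L_t^2\mathcal{H}\langle\cdot-\beta_j t\rangle^{-\sigma},
\end{equation*}
since the left equality yields $(1-T_0^{B,j})(1+T_1^{B,j})=1+T_1^{B,j}-T_0^{B,j}-T_0^{B,j}T_1^{B,j}=1$ and the right equality yields $(1+T_1^{B,j})(1-T_0^{B,j})=1-T_0^{B,j}+T_1^{B,j}-T_1^{B,j}T_0^{B,j}=1$.

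The first step is the elementary but essential observation that the cut-off commutes with the potential: for $t\ge B$ one has $\mathrm{V}_j(t)\chi_j(t)=\mathrm{V}_j(t)$. For the honest potential piece $\mathcal{K}_j(x-\beta_j t)$ in \eqref{eq:Vjt} this is automatic once $B$ is chosen large, because $(V_j)_{\beta_j}$ has fixed compact support, $\epsilon t$ eventually dominates its radius, and $|c_j(t)|$ stays bounded; for the non-compactly supported piece $-\mathcal{L}_j(t)P_{j,d}(t)-\kappa P_{j,d}(t)$ we invoke the truncation of eigenfunctions noted in the Remark after \eqref{eq:cha2T}, absorbing the exponentially small tails into $\tilde F$, so that $\mathrm{V}_j$ may be taken supported in $\{\chi_j=1\}$ for $t\ge B$.

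For the left equality, I would insert the Duhamel expansion
\begin{equation*}
\mathrm{U}_j(t,t_1)\hm{\phi}=e^{(t-t_1)\mathfrak{L}_0}\hm{\phi}+\int_{t_1}^t e^{(t-t_0)\mathfrak{L}_0}\mathrm{V}_j(t_0)\mathrm{U}_j(t_0,t_1)\hm{\phi}\,dt_0
\end{equation*}
with $\hm{\phi}=\mathrm{V}_j(t_1)\hm{g}(t_1)$, integrate over $t_1\in(B,t)$ and multiply by $\chi_j(t)$: the left side produces $T_1^{B,j}\hm{g}$, the first term on the right produces $T_0^{B,j}\hm{g}$, and the double integral, after re-inserting $\chi_j(t_0)$ in front of $\mathrm{V}_j(t_0)$ via $\mathrm{V}_j(t_0)\chi_j(t_0)=\mathrm{V}_j(t_0)$ and interchanging the $t_0,t_1$ integrations over the simplex $B<t_1<t_0<t$, is exactly $T_0^{B,j}T_1^{B,j}\hm{g}$; hence $T_0^{B,j}T_1^{B,j}=T_1^{B,j}-T_0^{B,j}$. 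For the right equality I would use the dual Duhamel relation
\begin{equation*}
\mathrm{U}_j(t,t_1)\hm{\phi}=e^{(t-t_1)\mathfrak{L}_0}\hm{\phi}+\int_{t_1}^t \mathrm{U}_j(t,t_0)\mathrm{V}_j(t_0)e^{(t_0-t_1)\mathfrak{L}_0}\hm{\phi}\,dt_0,
\end{equation*}
obtained by integrating $\tfrac{d}{ds}\big(\mathrm{U}_j(t,s)e^{(s-t_1)\mathfrak{L}_0}\hm{\phi}\big)=-\mathrm{U}_j(t,s)\mathrm{V}_j(s)e^{(s-t_1)\mathfrak{L}_0}\hm{\phi}$ from $t_1$ to $t$; the same manipulation gives $T_1^{B,j}T_0^{B,j}=T_1^{B,j}-T_0^{B,j}$.

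The main point requiring care — and the step I expect to be the real obstacle — is making all of this legitimate in the weighted space, i.e. knowing a priori that $T_0^{B,j}$ and $T_1^{B,j}$ are bounded operators on $L_t^2\mathcal{H}\langle\cdot-\beta_j t\rangle^{-\sigma}$, so that the Fubini interchanges and the absolutely convergent rearrangements above are justified. For $T_0^{B,j}$ this is the local energy decay / Kato smoothing estimate for $e^{t\mathfrak{L}_0}$ localized by $\mathrm{V}_j$, i.e. Theorem~\ref{thm:localenergy} together with Lemma~\ref{lem:localdecaytim} (noting that the compactly supported centre $\beta_j t-c_j(t)$ differs from $\beta_j t$ by a bounded quantity, which does not affect weighted $L^2$ estimates). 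For $T_1^{B,j}$ one splits $\mathrm{V}_j$ through $P_{j,c}(t)$ and $P_{j,d}(t)$ exactly as in the proof of Lemma~\ref{lem:T0T1}: the continuous part is handled by Lemma~\ref{lem:localdecaytim}, while on the discrete part one uses that $\kappa>\nu_{j,K_j}/\sqrt{1-\beta_j^2}$ is large enough for the damping $e^{-\kappa(t-s)}$ to beat the exponential growth of the finitely many unstable modes supported, after truncation, on a fixed-radius set. Granting these boundedness facts, the algebraic identities above hold verbatim, which proves the lemma.
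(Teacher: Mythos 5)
Your proposal is correct and follows essentially the same route as the paper's proof: the key fact $\chi_j\mathrm{V}_j=\mathrm{V}_j$, the Duhamel expansion of $\mathrm{U}_j(t,t_1)$ to collapse $T_0^{B,j}T_1^{B,j}$ into $T_1^{B,j}-T_0^{B,j}$, the reversed (dual) Duhamel formula for $T_1^{B,j}T_0^{B,j}$, and boundedness of both operators on the weighted space handled exactly as in Lemma \ref{lem:T0T1}. Your extra care in justifying $\mathrm{V}_j\chi_j=\mathrm{V}_j$ (compact support plus truncation of eigenfunctions into $\tilde F$) and in writing out the dual Duhamel identity only makes explicit what the paper leaves implicit.
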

	
	\begin{proof}
		The proof is the same as the regular version Lemma \ref{lem:T0T1}. Using
		the fact $\chi_{j}\mathrm{V}_{j}=\mathrm{V}_{j}$, one has
		\begin{align}
			T_{0}^{B,j}T_{1}^{B,j}f & =\chi_{j}\int\int_{0<t_{1}<t_{0}<t}e^{\left(t-t_{0}\right)\mathfrak{L}_{0}}\mathrm{V}_{j}\left(t_{0}\right)\chi_{j}\left(t_{0}\right)\mathrm{U}_{j}\left(t_{0},t_{1}\right)\mathrm{V}_{\beta}\left(t_{1}\right)f\left(t_{1}\right)\,dt_{1}dt_{0}\nonumber \\
			& =\int_{0}^{t}\left[\mathrm{U}_{j}\left(t,t_{1}\right)-e^{\left(t-t_{1}\right)\mathfrak{L}_{0}}\right]\mathrm{V}_{j}\left(t_{1}\right)f\left(t_{1}\right)\,dt_{1}=T_{1}^{B,j}f-T_{0}^{B,j}f\label{eq:T0T1B}
		\end{align}
		where in the last step, we used
		\begin{equation}
		\mathrm{U}_{j}\left(t,t_{1}\right)\phi=e^{\left(t-t_{1}\right)\mathfrak{L}_{0}}\phi+\int_{t_{1}}^{t}e^{\left(t-t_0\right)\mathfrak{L}_{0}}\mathrm{V}_{j}\left(t_0\right)\mathrm{U}_{j}\left(t_0,t_{1}\right)\phi\,dt_0.\label{eq:expUbeta-1}
		\end{equation}
		Reversing the Duhamel formula, we can also obtain
		\begin{equation}
		T_{1}^{B,j}T_{0}^{B,j}=-T_{0}^{B,j}+T_{1}^{B,j}\label{eq:-23-1}
		\end{equation}
		in the same manner. Therefore,
		\begin{equation}
		\left(1-T_{0}^{B,j}\right)\left(1+T_{1}^{B,j}\right)=\left(1+T_{1}^{B,j}\right)\left(1-T_{0}^{B,j}\right)=1.\label{eq:-24-1}
		\end{equation}
		The boundedness of these maps follows from the same argument as Lemma
		\ref{lem:T0T1} .
	\end{proof}
	Using Lemma \ref{lem:diffT}, for $\sigma>14$,
	\[
	\left\Vert \left\langle \cdot-\beta_{j}t\right\rangle ^{-\sigma}\left(T_{0}^{B,j}-T^{B,j}\right)\hm{g}\right\Vert _{L_{t}^{2}\mathcal{H}}\lesssim\left\Vert a_{j}\right\Vert _{L_{t}^{\infty}}^{\frac{1}{4}}\left\Vert \left\langle \cdot-\beta_{j}t\right\rangle ^{-\sigma}\hm{g}\right\Vert _{L_{t}^{2}\mathcal{H}}.
	\]
	Therefore, combining estimate above with Lemma \ref{lem:localTj},
	one can invert $\left(1-T^{B,j}\right)$ in \eqref{eq:cha2T} and estimate
	\begin{align*}
		\left\Vert \left\langle \cdot-\beta_{j}t\right\rangle ^{-\sigma}\mathcal{D}^{-\frac{\nu}{2}}\chi_{j}z\right\Vert _{L_{t}^{2}\mathcal{H}} & \lesssim\left\Vert \left\langle \cdot-\beta_{j}t\right\rangle ^{-\sigma}\mathcal{D}^{-\frac{\nu}{2}}\chi_{j}z_{0}\right\Vert _{L_{t}^{2}\mathcal{H}}\\
		& +\left\Vert \left\langle \cdot-\beta_{j}t\right\rangle ^{-\sigma}\mathcal{D}^{-\nu/2}\chi_{j}\int_{B}^{t}e^{\left(t-s\right)\mathfrak{L}_{0}}\mathrm{U}_{j}\left(t,s\right)\left(\tilde{F}\left(s\right)\right)\,ds\right\Vert _{L_{t}^{2}\mathcal{H}}.
	\end{align*}
	Clearly, by the homogeneous estimate \eqref{eq:shiftfreehom}, we have
	\begin{equation}
	\left\Vert \left\langle \cdot-\beta_{j}t\right\rangle ^{-\sigma}\mathcal{D}^{-\frac{\nu}{2}}\chi_{j}z_{0}\right\Vert _{L_{t}^{2}\mathcal{H}}\lesssim\left\Vert z\left(B\right)\right\Vert _{\mathcal{H}}\lesssim_{B}\left\Vert z\left(0\right)\right\Vert _{\mathcal{H}}.\label{eq:homoz0}
	\end{equation}
	For the inhomogeneous term, by construction \eqref{eq:Fj} and \eqref{eq:frakF},
	one has
	\begin{align}
		\left\Vert \left\langle \cdot-\beta_{j}t\right\rangle ^{-\sigma}\mathcal{D}^{-\nu/2}\chi_{j}\int_{B}^{t}e^{\left(t-s\right)\mathfrak{L}_{0}}\mathrm{U}_{j}\left(t,s\right)\left(\tilde{F}\left(s\right)\right)\,ds\right\Vert _{L_{t}^{2}\mathcal{H}}\label{eq:inhomoz1}\\
		\lesssim\left\Vert \left\langle \cdot-\beta_{j}t\right\rangle ^{-\sigma}\mathcal{D}^{-\nu/2}\chi_{j}\int_{B}^{t}e^{\left(t-s\right)\mathfrak{L}_{0}}\mathrm{U}_{j}\left(t,s\right)\left(-A_{j}P_{j,d}\left(t\right)z\left(s\right)\right)\,ds\right\Vert _{L_{t}^{2}\mathcal{H}}\label{eq:inhomoz2}\\
		+\left\Vert \left\langle \cdot-\beta_{j}t\right\rangle ^{-\sigma}\mathcal{D}^{-\nu/2}\chi_{j}\int_{B}^{t}e^{\left(t-s\right)\mathfrak{L}_{0}}\mathrm{U}_{j}\left(t,s\right)\left(\hm{F}\left(s\right)\right)\,ds\right\Vert _{L_{t}^{2}\mathcal{H}}\label{eq:inhomoz4}\\
		+\left\Vert \left\langle \cdot-\beta_{j}t\right\rangle ^{-\sigma}\mathcal{D}^{-\nu/2}\chi_{j}\int_{B}^{t}e^{\left(t-s\right)\mathfrak{L}_{0}}\mathrm{U}_{j}\left(t,s\right)\left(\mathfrak{K}_{j}\left(s\right)z\left(s\right)\right)\,ds\right\Vert _{L_{t}^{2}\mathcal{H}}\label{eq:inhomoz5}
	\end{align}
	where
	\[
	\mathfrak{K}_{j}\left(s\right):=\sum_{\rho\neq j}\mathcal{K}_{\rho}\left(x-\beta_{\rho}t+c_{\rho}\left(t\right)-c_{j}\left(t\right)\right).
	\]
	Then one can apply the inhomogenous estimate from Lemma \ref{lem:shiftslant}
	to bound
	\begin{align}
		\left\Vert \left\langle \cdot-\beta_{j}t\right\rangle ^{-\sigma}\mathcal{D}^{-\nu/2}\chi_{j}\int_{B}^{t}e^{\left(t-s\right)\mathfrak{L}_{0}}\mathrm{U}_{j}\left(t,s\right)\left(-A_{j}\left(t\right)P_{j,d}\left(t\right)z\left(t\right)\right)\,ds\right\Vert _{L_{t}^{2}\mathcal{H}}\nonumber \\
		\lesssim\left\Vert a_{j}\right\Vert _{L_{t}^{\infty}}\left\Vert \left\langle \cdot-\beta_{j}t\right\rangle ^{-\sigma}\mathcal{D}^{-\frac{\nu}{2}}z\right\Vert _{L_{t}^{2}\mathcal{H}}\label{eq:inhomoz3-1}
	\end{align}
	which can be absorbed to the left-hand side. 
	
	Secondly, again by the inhomogenous estimate from  Lemma \ref{lem:shiftslant},
	\begin{align}
		\left\Vert \left\langle \cdot-\beta_{j}t\right\rangle ^{-\sigma}\mathcal{D}^{-\nu/2}\chi_{j}\int_{B}^{t}e^{\left(t-s\right)\mathfrak{L}_{0}}\mathrm{U}_{j}\left(t,s\right)\left(F\left(s\right)\right)\,ds\right\Vert _{L_{t}^{2}\mathcal{H}}\label{eq:inhomoz4-1}\\
		\ \ \ \ \ \ \ \ \ \ \ \ \ \ \ \ \ \ \ \ \lesssim\left\Vert F\right\Vert _{L_{t}^{2}\mathcal{D}^{-\frac{\nu}{2}}\mathcal{H}\left\langle \cdot-\beta_{j}t\right\rangle ^{\sigma}+L_{t}^{1}\mathcal{H}}.\nonumber 
	\end{align}
	To analyze the term given by other potentials
	\[
	\left\Vert \left\langle \cdot-\beta_{j}t\right\rangle ^{-\sigma}\mathcal{D}^{-\nu/2}\chi_{j}\int_{B}^{t}e^{\left(t-s\right)\mathfrak{L}_{0}}\mathrm{U}_{j}\left(t,s\right)\left(\mathfrak{K}_{j}\left(s\right)z\left(s\right)\right)\,ds\right\Vert _{L_{t}^{2}\mathcal{H}},
	\]
	we notice that $\mathcal{K}_{j}$ is compactly supported around centers
	of other potentials, by the finite speed of propagation,
	\begin{align*}
		\chi_{j}\int_{B}^{t}e^{\left(t-s\right)\mathfrak{L}_{0}}\mathrm{U}_{j}\left(t,s\right)\left(\mathfrak{K}_{j}\left(s\right)z\left(s\right)\right)\,ds=	\chi_{j}\int_{B}^{t-M}e^{\left(t-s\right)\mathfrak{L}_{0}}\mathrm{U}_{j}\left(t,s\right)\left(\mathfrak{K}_{j}\left(s\right)z\left(s\right)\right)\,ds
	\end{align*}
	where $M>0$ can be picked to be large. Applying the truncated inhomogeneous
	estimates, Lemma \ref{lem:truncated},
	\begin{align}
		\left\Vert \left\langle \cdot-\beta_{j}t\right\rangle ^{-\sigma}\mathcal{D}^{-\nu/2}\chi_{j}\int_{B}^{t-M}e^{\left(t-s\right)\mathfrak{L}_{0}}\mathrm{U}_{j}\left(t,s\right)\left(\mathfrak{K}_{j}\left(s\right)z\left(s\right)\right)\,ds\right\Vert _{L_{t}^{2}\mathcal{H}}\nonumber \\
		\lesssim\sum_{\rho\neq j}\frac{1}{M^{\eta}}\left(\int_{0}^{T}\left\langle x-\beta_{\rho}t+c_{\rho}\left(t\right)-c_{j}\left(t\right)\right\rangle ^{-\sigma}\left|\mathcal{D}^{-\frac{\nu}{2}}(\mathcal{D}z_{1}+iz_{2})\left(t,x\right)\right|^{2}\,dxdt\right)^{\frac{1}{2}}\label{eq:inhomoz5-1}\\
		\leq\frac{1}{M^{\eta}}\sum_{\rho\neq j}C_{\rho,T}.\nonumber 
	\end{align}
	Overall, putting \eqref{eq:inhomoz1}, \eqref{eq:inhomoz3-1}, \eqref{eq:inhomoz4-1}
	and \eqref{eq:inhomoz5-1} together, we conclude that
	\begin{align}
		\left\Vert \left\langle \cdot-\beta_{j}t\right\rangle ^{-\sigma}\mathcal{D}^{-\frac{\nu}{2}}\chi_{j}z\right\Vert _{L_{t}^{2}\mathcal{H}} & \lesssim_{B}\left\Vert z\left(0\right)\right\Vert _{\mathcal{H}}+\delta C_{j,T}+\frac{1}{M^{\eta}}\sum_{\rho\neq j}C_{\rho,T}\label{eq:concl-1}\\
		& +\left\Vert\hm{F}\right\Vert _{L_{t}^{2}\mathcal{D}^{-\frac{\nu}{2}}\mathcal{H}\left\langle \cdot-\beta t\right\rangle ^{\sigma}+L_{t}^{1}\mathcal{H}}.\nonumber 
	\end{align}
	This finished the analysis of the centre-stable space part.

	\subsubsection{Conclusion for the $j$th channel}

	By the decomposition
	\[
	\chi_{j}\tilde{\hm{v}}=\chi_{j}\sum_{k=1}^{K_{j}}\left(\lambda_{j,k,+}\left(t\right)\mathcal{Y}_{j,k,\beta_j}^{+}(t)+\lambda_{j,k,-}\left(t\right)\mathcal{Y}_{j,k,\beta_j}^{-}(t)\right)+\chi_{j}P_{j,c}\left(t\right)\tilde{\hm{v}},
	\]
	estimates \eqref{eq:concl-1} and \eqref{eq:boudbound}, we obtain that
	\begin{align*}
		\left\Vert \left\langle \cdot-\beta_{j}t\right\rangle ^{-\sigma}\mathcal{D}^{-\frac{\nu}{2}}\chi_{j}\tilde{\hm{v}}\right\Vert _{L_{t}^{2}\mathcal{H}} & \lesssim\delta C_{j,T}+\left(\frac{1}{M^{\eta}}+e^{-\alpha L}+\delta\right)\sum_{\rho\neq j}C_{\rho,T}\\
		& +\left\Vert \tilde{\hm{v}}\left(0\right)\right\Vert _{\mathcal{H}}+\left\Vert\hm{ F}\right\Vert _{L_{t}^{2}\left(\mathcal{D}^{-\frac{\nu}{2}}L_{x}^{2}\left\langle \cdot-\beta_{j}t\right\rangle ^{\sigma}\bigcap B_{6/5,2}^{-5/6}\right)_{\mathcal{H}}+L_{t}^{1}\mathcal{H}}.
	\end{align*}
	Trivially, for some $\varepsilon>0$ and $\rho\neq j$
	\begin{align*}
		\left\Vert \left\langle \cdot-\beta_{\rho}t+c_{\rho}\left(t\right)-c_{j}\left(t\right)\right\rangle ^{-\sigma}\mathcal{D}^{-\frac{\nu}{2}}\chi_{j}\tilde{\hm{v}}\right\Vert _{L_{t}^{2}\mathcal{H}} & \lesssim B^{-\varepsilon}\left(\sum_{\rho=1}^{J}C_{\rho,T}\right)+\left\Vert \tilde{\hm{v}}\left(0\right)\right\Vert _{\mathcal{H}}\\
		& +\left\Vert\hm{F}\right\Vert _{L_{t}^{2}\left(\mathcal{D}^{-\frac{\nu}{2}}L_{x}^{2}\left\langle \cdot-\beta_{j}t\right\rangle ^{\sigma}\bigcap B_{6/5,2}^{-5/6}\right)_{\mathcal{H}}+L_{t}^{1}\mathcal{H}}.
	\end{align*}
	Therefore, in the $j$th channel, we recover the bootstrap condition  \eqref{eq:assumj}.

	\subsection{Analysis of the free channel}
	
	In this channel, the evolution is dominated by the free wave. More
	precisely,
	\[
	\chi_{0}\hm{v}=\chi_{0}e^{\left(t-B\right)\mathfrak{L}_{0}}\hm{v}\left(B\right)+\chi_{0}\int_{B}^{t}e^{\left(t-s\right)\mathfrak{L}_{0}}\left\{ \sum_{j=1}^{J}\hm{{\rm V}}_{j}\left(t\right)\hm{v}\left(s\right)+\hm{F}\left(s\right)\right\} \,ds.
	\]
	Again by the finite speed of propagation, one can find $M$ large,
	such that
	\[
	\chi_{0}\hm{v}=\chi_{0}e^{\left(t-B\right)\mathfrak{L}_{0}}\hm{v}\left(B\right)+\chi_{0}\int_{B}^{t-M}e^{\left(t-s\right)\mathfrak{L}_{0}}\left\{ \sum_{j=1}^{J}\hm{{\rm V}}_{j}\left(t\right)\hm{v}\left(s\right)+\hm{F}\left(s\right)\right\} \,ds.
	\]
	Applying Lemma \ref{lem:shiftslant} and Lemma \ref{lem:truncated}, one has
	\begin{align*}
		\sum_{j=1}^{J}\left\Vert \left\langle \cdot-\beta_{j}t+c_{j}\left(t\right)\right\rangle ^{-\sigma}\mathcal{D}^{-\nu/2}\chi_{0}\hm{v}\right\Vert _{L_{t}^{2}\mathcal{H}}\lesssim\left\Vert \hm{v}\left(B\right)\right\Vert _{L^{2}}\\
		+\frac{1}{M^{\eta}}\sum_{j=1}^{J}\left(\int_{0}^{T}\left\langle x-\beta_{j}t+c_{j}\left(t\right)\right\rangle ^{-\sigma}\left|\mathcal{D}^{-\frac{\nu}{2}}\left(\mathcal{D}v_{1}+iv_{2}\right)\right|^{2}\left(t,x\right)\,dxdt\right)^{\frac{1}{2}}\\
		<\left\Vert \hm{v}\left(B\right)\right\Vert _{\mathcal{H}}+\frac{1}{M^{\eta}}\sum_{j=1}^{J}C_{j,T}+\left\Vert F\right\Vert _{L_{t}^{2}\mathscr{W}_{\mathcal{H}}+L_{t}^{1}\mathcal{H}}.
	\end{align*}
	where
	\[
	\mathscr{W}=\bigcap_{j=1}^{J}\mathcal{D}^{-\frac{\nu}{2}}L_{x}^{2}\left\langle \cdot-\beta_{j}t+c_{j}\left(t\right)\right\rangle ^{\sigma}\bigcap B_{6/5,2}^{-5/6}
	\]
	Therefore, in this free channel, we recover again the bootstrap conditions.
	
	From our analysis above, we conclude the following proposition.
	\begin{prop}
		\label{prop:bootstrap}Consider the equation \eqref{eq:complexgamma}. Denote $\hm{v}\left(t\right)=\pi_{cs}(t)\hm{w}$ where $\pi_{cs}(t)$
		is the projection onto the  centre-stable space, see Proposition \ref{prop:expdicho}.  Given the bootstrap conditions \eqref{eq:assumj},
		one can obtain
		{		\footnotesize		\begin{align}
				\sum_{j=1}^{J}\left\Vert \left\langle \cdot-\beta_{j}t+c_{j}\left(t\right)\right\rangle ^{-\sigma}\mathcal{D}^{-\nu/2}\hm{v}\right\Vert _{L_{t}^{2}\mathcal{H}}
				<\left\Vert \hm{v}\left(B\right)\right\Vert _{\mathcal{H}}+\left(\frac{1}{M^{\eta}}+e^{-\alpha B}+\delta\right)\left(\sum_{j=1}^{J}C_{j,T}\right)\nonumber 
				+\left\Vert \hm{F}\right\Vert _{L_{t}^{2}\mathscr{W}_{\mathcal{H}}+L_{t}^{1}\mathcal{H}}
		\end{align}}
		where
		\[
		\mathscr{W}:=\bigcap_{j=1}^{J}\mathcal{D}^{-\frac{\nu}{2}}L_{x}^{2}\left\langle \cdot-\beta_{j}t+c_{j}\left(t\right)\right\rangle ^{\sigma}\bigcap B_{6/5,2}^{-5/6}
		\]
		Therefore, one has
		\begin{align*}
			\sum_{j=1}^{J}C_{j,T} & \leq\left\Vert \hm{v}\left(B\right)\right\Vert _{\mathcal{H}}+\left(\frac{1}{M^{\eta}}+e^{-\alpha B}+\delta\right)\left(\sum_{j=1}^{J}C_{j,T}\right) +\left\Vert\hm{ F}\right\Vert _{L_{t}^{2}\mathscr{W}_{\mathcal{H}}+L_{t}^{1}\mathcal{H}}.
		\end{align*}
		Hence picking $\delta$ small, $B$ and $M$ large but fixed, we can
		conclude that
		\[
		\sum_{j=1}^{J}C_{j,T}\lesssim\left\Vert \hm{v}\left(B\right)\right\Vert _{\mathcal{H}}+\left\Vert \hm{F}\right\Vert _{L_{t}^{2}\mathscr{W}_{\mathcal{H}}+L_{t}^{1}\mathcal{H}}
		\]
		which implies $\sum_{j=1}^{J}C_{j,T}$ is bounded with constants independent
		of $T$. Therefore, we can pass $T$ to $\infty$ and obtain that
		\begin{align}
			\sum_{j=1}^{J}\left\Vert \left\langle \cdot-\beta_{j}t+c_{j}\left(t\right)\right\rangle ^{-\sigma}\mathcal{D}^{-\nu/2}\hm{v}\right\Vert _{L_{t}^{2}\mathcal{H}} & \lesssim\left\Vert \hm{v}\left(B\right)\right\Vert _{\mathcal{H}}+\left\Vert \hm{F}\right\Vert _{L_{t}^{2}\mathscr{W}_{\mathcal{H}}+L_{t}^{1}\mathcal{H}}.\label{eq:localdecay}
		\end{align}
	\end{prop}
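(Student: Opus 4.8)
The plan is to assemble the channel-by-channel bounds established in \S\ref{subsec:analysisJchannel} and the preceding subsections into one self-improving inequality for the bootstrap constants $C_{j,T}$, then absorb the offending term and let $T\to\infty$. Fix a large $T$ and work under the a priori hypothesis \eqref{eq:assumj}; by the local theory on $[0,B]$ (Gronwall) we already know each $C_{j,T}$ is finite and $\|\bs v(B)\|_{\mathcal H}\lesssim_B\|\bs v(0)\|_{\mathcal H}$. Writing $\bs v=\sum_{i=0}^{J}\chi_i\bs v$ with the channel cut-offs, I would estimate $\sum_{j=1}^{J}\|\langle\cdot-\beta_j t+c_j(t)\rangle^{-\sigma}\mathcal D^{-\nu/2}\bs v\|_{L^2_t\mathcal H}$ by decomposing each weighted norm over $i\in\{0,\dots,J\}$ and noting that for $i\geq1$, $i\neq j$ the support of $\chi_i$ lies at distance $\gtrsim\varpi t$ from the $j$-th trajectory once $\epsilon\ll\varpi$, so those cross-contributions are $\lesssim B^{-\varepsilon}\sum_\rho C_{\rho,T}$ plus data and source; this reduces matters to the own-channel pieces $\chi_j\bs v$ ($j\geq1$) and the free piece $\chi_0\bs v$.

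For the $j$-th own channel I would follow Subsection \ref{subsec:Plineartraj}: after the frame change $\tilde{\bs v}(t,y)=\bs v(t,y-c_j(t))$ and discarding the zero modes as in \S\ref{subsubsec:reductionzero}, split $\tilde{\bs v}$ into the discrete amplitudes $\lambda_{j,k,\pm}$ and $P_{j,c}(t)\tilde{\bs v}$. For $P_{j,c}(t)\tilde{\bs v}$ the identity \eqref{eq:cha2T} expresses $(1-T^{B,j})\chi_j z$ through $\chi_j z_0$ and a Duhamel integral; invert $(1-T^{B,j})$ in the local energy norm by Lemma \ref{lem:localTj} combined with the smallness of $T_0^{B,j}-T^{B,j}$ (Lemma \ref{lem:diffT}), and bound the right-hand side by the homogeneous estimate \eqref{eq:shiftfreehom} for $\chi_j z_0$, by the inhomogeneous estimate of Lemma \ref{lem:shiftslant} for the $-A_jP_{j,d}z$ term and the genuine source $\bs F$, and --- crucially --- by the truncated interaction estimate of Lemma \ref{lem:truncated} for the contribution $\mathfrak K_j z$ of the other potentials, which by finite speed of propagation is supported in $s\in[B,t-M]$ and hence gains a factor $M^{-\eta}\sum_{\rho\neq j}C_{\rho,T}$; this is \eqref{eq:concl-1}. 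For the discrete amplitudes I would use the decoupled ODEs: the stable modes and the $\int_t^T$ part of the unstable-mode Duhamel formula are handled by Young's inequality in terms of the $C_{\rho,T}$ and $\|a_j\|_{L^\infty_t}\lesssim\delta$, the cross-potential pairings $\langle\bs{\mathrm V}_i\tilde{\bs v},\mathcal Y^\mp_{j,k}\rangle$ decay like $e^{-\alpha L}$ by Agmon decay and the trajectory separation $|\beta_j t+c_j-(\beta_i t+c_i)|\geq\varpi t+L$, and the tail $\int_T^\infty$ --- which cannot use the bootstrap bound directly --- is controlled using that $\bs v=\pi_{cs}\bs v$ grows at most like $e^{\epsilon(s-T)}$ (by \eqref{eq:centergrowth}, \eqref{eq:energyCT}), so it contributes only $\delta C_\epsilon(\sum_j C_{j,T}+\|\bs F\|)$; this yields \eqref{eq:boudbound}. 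The free channel $\chi_0\bs v$ is simplest: it solves the free equation with source $\sum_j\bs{\mathrm V}_j\bs v+\bs F$, finite speed of propagation truncates its Duhamel integral to $[B,t-M]$, and Lemma \ref{lem:shiftslant} together with Lemma \ref{lem:truncated} gives the bound with the same $M^{-\eta}$ gain.

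Summing the own channels and the free channel one arrives at
\[
\sum_{j=1}^{J}\big\|\langle\cdot-\beta_j t+c_j(t)\rangle^{-\sigma}\mathcal D^{-\nu/2}\bs v\big\|_{L^2_t\mathcal H}\lesssim\|\bs v(B)\|_{\mathcal H}+\big(M^{-\eta}+e^{-\alpha B}+\delta\big)\sum_{j=1}^{J}C_{j,T}+\|\bs F\|_{L^2_t\mathscr W_{\mathcal H}+L^1_t\mathcal H},
\]
and since by the definition \eqref{eq:assumj} the left-hand side dominates $\sum_j C_{j,T}$, which is finite for each $T$, choosing $\delta$ small and $B,M$ large but fixed so that $M^{-\eta}+e^{-\alpha B}+\delta<\tfrac12$ lets me absorb that term, giving $\sum_j C_{j,T}\lesssim\|\bs v(B)\|_{\mathcal H}+\|\bs F\|_{L^2_t\mathscr W_{\mathcal H}+L^1_t\mathcal H}$ with a constant independent of $T$; letting $T\to\infty$ (monotone convergence in the $L^2_t$ integrals) produces \eqref{eq:localdecay}. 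I expect the real difficulties to be precisely the decoupling of the channels and the closing of the unstable modes: the interaction term $\mathfrak K_j z$ must be made quantitatively smaller than $C_{j,T}$, which is what forces the refined truncated interaction estimate (Lemma \ref{lem:truncated}) together with the extra derivative carried by $\mathrm V_j$, while the unstable amplitudes, solved from $t=+\infty$, depend on the solution past the bootstrap time $T$ and can only be controlled there through the mild exponential growth \eqref{eq:centergrowth} of flows in the centre-stable subspace.
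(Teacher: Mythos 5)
Your proposal follows essentially the same route as the paper: channel decomposition with the $B^{-\varepsilon}$ cross-weight gains, inversion of $(1-T^{B,j})$ via Lemma \ref{lem:localTj} and Lemma \ref{lem:diffT}, the truncated interaction estimate (Lemma \ref{lem:truncated}) with finite speed of propagation for the $M^{-\eta}$ gain, the splitting of the unstable-mode Duhamel integral at $T$ with the subexponential centre-stable growth \eqref{eq:centergrowth}--\eqref{eq:energyCT} for the tail, and the final absorption and passage $T\to\infty$. This matches the paper's argument (the analysis of \S 5 culminating in \eqref{eq:concl-1}, \eqref{eq:boudbound} and the free-channel bound), and I see no gap.
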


	\subsection{Strichartz estimates and scattering}
	
	In this subsection, we conclude Strichartz estimates, local decay
	estimate and scattering  for \eqref{eq:maineq} under the assumptions \eqref{eq:maincond} and \eqref{eq:maincondzero}.
	
	We write $\hm{u}=\left(\begin{array}{c}
	u\\
	u_{t}
	\end{array}\right)$ and
	\[
	\hm{u}_{t}=\mathfrak{L}\left(t\right)\hm{u}+\hm{F}
	\]
	where
	\begin{equation}
	\mathfrak{L}\left(t\right)=\left(\begin{array}{cc}
	0 & 1\\
	\Delta-1+\sum_{j=1}^{J}(V_{j})_{\beta_{j}(t)}(\cdot-y_{j}(t)) & 0
	\end{array}\right).\label{eq:Lt}
	\end{equation}
	We also set
	\begin{equation}
	\mathcal{V}\left(t\right):=\mathfrak{L}(t)-\mathfrak{L}_0.\label{eq:Vt}
	\end{equation}
	Denote $\hm{v}\left(t\right)=\pi_{cs}\left(t\right)\hm{u}$ where
	$\pi_{cs}$ is the projection onto the centre-stable space as in Proposition \ref{prop:expdicho}. Then we need to establish Strichartz estimates for
	\begin{equation}
	\hm{v}_{t}=\mathfrak{L}\left(t\right)\hm{v}+\pi_{cs}\left(t\right)\hm{F}.\label{eq:Hamgeneral}
	\end{equation}
	By construction, in this centre-stable space, $\left\Vert \hm{v}\left(t\right)\right\Vert _{\mathcal{H}}$
	is bounded or grows subexponentially for $t\geq0$.
	\begin{thm}\label{thm:mainthmHam}
		Denote
		\[
		\mathfrak{W}:=L_{t}^{2}\left(\bigcap_{j=1}^{J}\mathcal{D}^{\frac{\nu}{2}}L_{x}^{2}\left\langle \cdot-y_{j}\left(t\right)\right\rangle ^{-\sigma}\right)
		\]
		and
		\[
		S=L_{t}^{\infty}L_{x}^{2}\bigcap L_{t}^{2}B_{6/5,2}^{-5/6}.
		\]
		Using the notations above, let $u$ be the solution to \eqref{eq:Hamgeneral}.
		Then one has Strichartz estimates
		\[
		\left\Vert \hm{v}\right\Vert _{\mathfrak{W}_{\mathcal{H}}\bigcap S_{\mathcal{H}}}\lesssim\left\Vert \hm{v}\left(0\right)\right\Vert _{\mathcal{H}}+\left\Vert \hm{F}\right\Vert _{\left(\mathfrak{W}_{\mathcal{H}}\bigcap S_{\mathcal{H}}\right)^{*}}.
		\]
		Moreover, indeed $\hm{v}\left(t\right)$ scatters to a free wave.
		There exists $\hm{\phi}_{+}\in\mathcal{H}$ such that
		\[
		\left\Vert \hm{v}\left(t\right)-e^{\mathfrak{L}_{0}t}\hm{\phi}_{+}\right\Vert _{\mathcal{H}}\rightarrow\infty,\,\,t\rightarrow\infty.
		\]
	\end{thm}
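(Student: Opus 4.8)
The plan is to deduce Theorem \ref{thm:mainthmHam} by combining the local energy decay of Proposition \ref{prop:bootstrap} with the free Strichartz estimate of Theorem \ref{thm:StriKGfree} and the dual homogeneous bound of Lemma \ref{lem:shiftslant}, following the scheme already carried out for a single potential in Theorem \ref{thm:StriGenerlLinear}. First I would reduce \eqref{eq:Hamgeneral} to the simplified model \eqref{eq:simplifiedeq}, in which each potential $(V_j)_{\beta_j}$ moves along a perturbation $\beta_j t - c_j(t)$ of a linear trajectory with a fixed Lorentz factor and $|c_j'(t)|\lesssim\delta$: this is the computation of Subsection \ref{subsec:Reduction}, whose error terms are spatially localized and $O(\delta)$, hence absorbable into the left-hand side. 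Next, by \S\ref{subsubsec:reductionzero}, the orthogonality condition \eqref{eq:maincondzero} lets me discard the contribution of the zero modes $\mathcal{P}_{0,\beta_j}(c_j(t))\hm{v}$ in all norms of interest, since $\mathcal{P}_{0,\beta_j}(c_j(t))-\pi_0(t)$ is $O(\delta)$ in local-energy-type norms. It then suffices to treat $\hm{v}=\pi_{cs}(t)\hm{w}$ solving \eqref{eq:complexv}, for which $\|\hm{v}(t)\|_{\mathcal H}$ grows at most subexponentially by \eqref{eq:centergrowth}.

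For the local decay, Proposition \ref{prop:bootstrap} already supplies, after the bounded shift $\beta_j t - c_j(t)\mapsto y_j(t)$, the bound $\|\hm{v}\|_{\mathfrak W_{\mathcal H}}\lesssim\|\hm{v}(0)\|_{\mathcal H}+\|\hm{F}\|_{(\mathfrak W_{\mathcal H}\cap S_{\mathcal H})^{*}}$. To upgrade this to the full Strichartz bound I would write the Duhamel formula
\[
\hm{v}(t)=e^{t\mathfrak{L}_{0}}\hm{v}(0)+\int_0^t e^{(t-s)\mathfrak{L}_{0}}\{\mathcal{V}(s)\hm{v}(s)+\pi_{cs}(s)\hm{F}(s)\}\,ds,
\]
and observe that $\mathcal{V}(s)\hm{v}(s)=\sum_j \hm{{\rm V}}_j(\cdot-\beta_j s+c_j(s))\hm{v}(s)$ is localized near the trajectories and gains one derivative (only the second component survives, carrying $(V_j)_{\beta_j}v_1$), so its $(\mathfrak W_{\mathcal H}\cap S_{\mathcal H})^{*}$ norm is controlled by $\|\hm{v}\|_{\mathfrak W_{\mathcal H}}$. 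Applying Theorem \ref{thm:StriKGfree} to the homogeneous and inhomogeneous pieces then yields $\|\hm{v}\|_{S_{\mathcal H}}\lesssim\|\hm{v}(0)\|_{\mathcal H}+\|\hm{v}\|_{\mathfrak W_{\mathcal H}}+\|\hm{F}\|_{(\mathfrak W_{\mathcal H}\cap S_{\mathcal H})^{*}}$, which together with the local decay gives $\|\hm{v}\|_{\mathfrak W_{\mathcal H}\cap S_{\mathcal H}}\lesssim\|\hm{v}(0)\|_{\mathcal H}+\|\hm{F}\|_{(\mathfrak W_{\mathcal H}\cap S_{\mathcal H})^{*}}$. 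The corresponding statement of Theorem \ref{thm:mainthmlinear} then follows via the dictionary between $(u,u_t)$ and the complexified variable.

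For scattering I would set $\hm{\phi}_+:=\hm{v}(0)+\int_0^\infty e^{-s\mathfrak{L}_{0}}\{\mathcal{V}(s)\hm{v}(s)+\pi_{cs}(s)\hm{F}(s)\}\,ds$ and check that the integral converges in $\mathcal H$ by applying, channel by channel, the dual of the homogeneous estimate of Lemma \ref{lem:shiftslant} (the potential term is localized and gains a derivative, the forcing term is measured in the dual norm), obtaining a bound by $\|\hm{v}\|_{\mathfrak W_{\mathcal H}\cap S_{\mathcal H}}+\|\hm{F}\|_{(\mathfrak W_{\mathcal H}\cap S_{\mathcal H})^{*}}$, itself controlled by the data. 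Then $\|\hm{v}(t)-e^{t\mathfrak{L}_{0}}\hm{\phi}_+\|_{\mathcal H}=\|\int_t^\infty e^{(t-s)\mathfrak{L}_{0}}\{\cdots\}\,ds\|_{\mathcal H}\to 0$ as $t\to\infty$ (the displayed ``$\to\infty$'' in the statement being a typo for $\to0$).

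The genuinely hard input is concentrated in Proposition \ref{prop:bootstrap}, on which everything else rests: one needs simultaneously the channel-localized invertibility of $1-T^{B,j}$ in the weighted space, which depends on the refined dispersive and interaction estimates Lemma \ref{lem:interactionbeta}--Lemma \ref{lem:shiftslant} adapted to a large velocity; the truncated interaction estimate Lemma \ref{lem:truncated}, making the coupling between distinct potentials gain a factor $M^{-\eta}$; and, above all, the treatment of the unstable modes $\lambda_{j,k,+}$, whose defining integral runs to $t=\infty$ --- the piece \eqref{eq:geqT} beyond the bootstrap time $T$ cannot use the bootstrap hypothesis and must instead be controlled through the subexponential energy growth \eqref{eq:centergrowth} furnished by the exponential dichotomy of Proposition \ref{prop:expdicho}, with $\delta$ taken small enough to close the estimate. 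Granting these ingredients, the assembly above is essentially routine.
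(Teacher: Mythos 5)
Your proposal is correct and follows essentially the same route as the paper: the trajectory and zero-mode reductions together with Proposition \ref{prop:bootstrap} give the weighted local-energy bound (with the small localized error absorbed), the Duhamel expansion combined with the free Strichartz estimate upgrades it to the full $\mathfrak{W}_{\mathcal{H}}\bigcap S_{\mathcal{H}}$ bound, and scattering follows from the dual homogeneous estimate of Lemma \ref{lem:shiftslant} applied to the same Cook-type integral defining the asymptotic profile. Your reading of the displayed limit as a typo for ``$\rightarrow 0$'' is also consistent with the paper's intent.
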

	
	\begin{proof}
		Performing the reduction in Subsection \ref{subsec:Reduction}, it
		is sufficient to consider
		\begin{align*}
			\hm{v}_{t} & =\mathcal{L}(t)\hm{v}+e\left(t,x\right)\hm{v}+\hm{F}
		\end{align*}
		where as \eqref{eq:complexgamma}
		\[
		\mathcal{L}(t)=\left(\begin{array}{cc}
		0 & 1\\
		\Delta-1+\sum_{j=1}^{J}(V_{j})_{\beta_{j}}\left(\cdot-t\beta_{j}+c_{j}\left(t\right)\right) & 0
		\end{array}\right)
		\]
		and $e\left(t,x\right)=\sum_{j=1}^{J}\left(\begin{array}{c}
		0\\
		e_{j}\left(t,x\right)
		\end{array}\right)$ such that $e_{j}$ is smooth and $\left|e_{j}\left(t,x\right)\right|\lesssim\delta e^{-\alpha\left|x-\beta_{j}t\right|}$.
		
		By Proposition \ref{prop:bootstrap}, we obtain that
		\begin{align}
			\sum_{j=1}^{J}\left(\int_{0}^{\infty}\left\langle \cdot-y_{j}\left(t\right)\right\rangle ^{-2\sigma}\left|\mathcal{D}^{-\frac{\nu}{2}}\hm{v}\left(t,x\right)\right|^{2}\,dxdt\right)^{\frac{1}{2}}
			\lesssim\left\Vert \hm{v}\left(0\right)\right\Vert _{\mathcal{H}}+\left\Vert e\left(t,x\right)\hm{v}+\hm{F}\right\Vert _{\left(\mathfrak{W}\bigcap S\right)^{*}_{\mathcal{H}}}
		\end{align}
		Note that
		\[
		\left\Vert e\left(t,x\right)\hm{v}\right\Vert _{\left(\mathfrak{W}\bigcap S\right)^{*}}\leq\delta\sum_{j=1}^{J}\left(\int_{0}^{\infty}\left\langle x-y_{j}\left(t\right)\right\rangle ^{-2\sigma}\left|\mathcal{D}^{-\frac{\nu}{2}}\hm{v}\left(t,x\right)\right|^{2}\,dxdt\right)^{\frac{1}{2}}.
		\]
		Therefore, we conclude that
		\begin{align}
			\sum_{j=1}^{J}\left(\int_{0}^{\infty}\left\langle x-y_{j}\left(t\right)\right\rangle ^{-2\sigma}\left|\mathcal{D}^{-\frac{\nu}{2}}\hm{v}\left(t,x\right)\right|^{2}\,dxdt\right)^{\frac{1}{2}}
			\lesssim\left\Vert \hm{v}\left(0\right)\right\Vert _{\mathcal{H}}+\left\Vert \hm{F}\right\Vert _{\left(\mathfrak{W}\bigcap S\right)^{*}_{\mathcal{H}}}
		\end{align}
		In other words,
		\[
		\left\Vert \hm{v}\right\Vert _{\mathfrak{W}_{\mathcal{H}}}\lesssim\left\Vert \hm{v}\left(0\right)\right\Vert _{\mathcal{H}}+\left\Vert \hm{F}\right\Vert _{\left(\mathfrak{W}\bigcap S\right)^{*}_{\mathcal{H}}}.
		\]
		Then we use the Duhamel formula to expand $\hm{v}$,
		\[
		\hm{v}=e^{t\mathfrak{L}_{0}}\hm{v}\left(0\right)+\int_{0}^{t}e^{\left(t-s\right)\mathfrak{L}_{0}}\left\{ \mathcal{V}\left(s\right)\hm{v}+\hm{F}\left(s\right)\right\} \,ds.
		\]
		Applying Strichartz norms on both sides of the expansion above, we get
		\begin{align*}
			\left\Vert \hm{v}\right\Vert _{S_{\mathcal{H}}} & \lesssim\left\Vert e^{t\mathfrak{L}_{0}}\hm{v}\left(0\right)\right\Vert _{S_{\mathcal{H}}}+\left\Vert \int_{0}^{t}e^{\left(t-s\right)\mathfrak{L}_{0}}\left\{ \mathcal{V}\left(s\right)\hm{v}+\hm{F}\left(s\right)\right\} \,ds\right\Vert _{S_{\mathcal{H}}}\\
			& \lesssim\left\Vert \hm{v}\left(0\right)\right\Vert _{\mathcal{H}}+\left\Vert \hm{v}\right\Vert _{\mathfrak{W}_{\mathcal{H}}}+\left\Vert \hm{F}\right\Vert _{S^{*}_{\mathcal{H}}}\lesssim\left\Vert \hm{v}\left(0\right)\right\Vert _{\mathcal{H}}+\left\Vert \hm{F}\right\Vert _{\left(\mathfrak{W}\bigcap S\right)^{*}_{\mathcal{H}}}.
		\end{align*}
		Hence
		\[
		\left\Vert \hm{v}\right\Vert _{\mathfrak{W}_{\mathcal{H}}\bigcap S_{\mathcal{H}}}\lesssim\left\Vert \hm{v}\left(0\right)\right\Vert _{\mathcal{H}}+\left\Vert \hm{F}\right\Vert _{\left(\mathfrak{W}\bigcap S\right)^{*}_{\mathcal{H}}}.
		\]
		Next we show that $\hm{v}$ scatters to the free wave. By the standard
		argument, we need to show
		\[
		\int_{0}^{\infty}e^{-s\mathfrak{L}_{0}}\left\{ \mathcal{V}\left(s\right)\hm{v}+\hm{F}\left(s\right)\right\} \,ds\in\mathcal{H}.
		\]
		Applying the dual version of \eqref{eq:shiftfreehom} in Lemma \ref{lem:shiftslant},
		we obtain that
		\begin{align*}
			\left\Vert \int_{0}^{\infty}e^{-s\mathfrak{L}_{0}}\left\{ \mathcal{V}\left(s\right)\hm{v}+\hm{F}\left(s\right)\right\} \right\Vert _{\mathcal{H}} & \lesssim\left\Vert \hm{F}\right\Vert _{\left(\mathfrak{W}\bigcap S\right)^{*}_{\mathcal{H}}}+\left\Vert u\right\Vert _{\mathfrak{W}_{\mathcal{H}}}\\
			& \lesssim\left\Vert \hm{v}\left(0\right)\right\Vert _{\mathcal{H}}+\left\Vert \hm{F}\right\Vert _{\left(\mathfrak{W}\bigcap S\right)^{*}_{\mathcal{H}}}.
		\end{align*}
		Define
		\[
		\hm{\phi}_{+}=\hm{v}\left(0\right)+\int_{0}^{\infty}e^{-s\mathfrak{L}_{0}}\left\{ \mathcal{V}\left(s\right)\hm{v}+\hm{F}\left(s\right)\right\} \,ds
		\]
		then by construction,
		\[
		\left\Vert \hm{v}\left(t\right)-e^{\mathfrak{L}_{0}t}\hm{\phi}_{+}\right\Vert _{\mathcal{H}}\rightarrow\infty,\,\,t\rightarrow\infty
		\]
		as desired.
	\end{proof}
	
	\subsection{Truncated estimates and interaction\label{ssubec:TruncateInter}}
	
	To finish the analysis of the multi-potential problem, in this subsection,
	we present the full details of the the interaction estimate and truncated
	estimate used above.  As discussed in the single-potential setting, it suffices to consider the half-Klein-Gordon evolution.

	From the Duhamel expansion in each channel, we have to deal with the
	interaction terms of the form
	\begin{equation}
	\left\langle x-\beta_{1}t-c_{1}\left(t\right)\right\rangle ^{-\sigma}\int_{0}^{t-M}e^{\left(t-s\right)i\mathcal{D}}\mathrm{U}_{A}\left(t,s\right)\left\langle \cdot-\beta_{2}s-c_{2}\left(s\right)\right\rangle ^{-\alpha}\mathcal{D}^{-1}f\left(s\right)\,ds\label{eq:interaction}
	\end{equation}
	where $\mathrm{U}_{A}\left(t,s\right)$ the evolution operator defined
	by the equation $u_{t}=A\left(t\right)u=a(t)\nabla u$.

	We need to estimate the $L^2_t \mathcal{D}^{\frac{\nu}{2}}L_{x}^{2}$ norm
	with $\nu>0$ small and the $L_{t}^{2}L_{x}^{2}$ norm of \eqref{eq:interaction}.
	Denote the weights as
	\[
	w_{j}\left(x,t\right)=\left\langle x-\beta_{j}t-c{}_{j}\left(t\right)\right\rangle ^{-\sigma},\,j=1,2.
	\]
	Performing the Littlewood-Paley decomposition, we consider the following
	term
	\[
	I_{j}^{M}\left(f,g\right)=\int_{0}^{\infty}\int_{0}^{\max\left\{ t-M,0\right\} }\left\langle e^{\left(t-s\right)i\mathcal{D}}\mathrm{U}_{A}\left(t,s\right)\varDelta_{j}w_{2}\left(s\right)f\left(s\right),w_{1}\left(t\right)g\left(t\right)\right\rangle \,dsdt,
	\]
	for $j\geq0$, $M\gg1$, and $f,\,g\in L_{t,x}^{2}$. $1=\sum_{j=0}^{\infty}\varDelta_{j}$
	is the Littlewood-Paley decomposition defined by $\varDelta_{j}u=\mathcal{F}^{-1}\varphi\left(2^{-j}\xi\right)\hat{u}\left(\xi\right)$
	for all $j\geq1$ with some radial non-negative $\varphi\in C_{0}^{\infty}$
	supported on $\frac{1}{2}<\left|\xi\right|<2$.
	
	To sum $I_{j}$ over $j\geq0$ we use the almost orthogonality as
	following
	\[
	I_{j}^{M}\left(f,g\right)=I_{j}^{M}\left(Q_{j}f,Q_{j}g\right),
	\]
	\[
	Q_{j}f:=\sum_{\left|k-j\right|\leq1}\varDelta_{k}f+w^{-1}\left[\varDelta_{k},w\right]f,
	\]
	where the commutator term is small in the sense that
	\[
	\left\Vert w^{-1}\left[\varDelta_{k},w\right]f\right\Vert _{L^{2}}\lesssim2^{-k}\left\Vert f\right\Vert _{L^{2}}.
	\]
	In order to obtain the refined interaction estimate, we further decompose
	$w_{1}$ and $w_{2}$ dyadically:
	\[
	w_{j}^{\ell}\left(x,t\right)=\varphi\left(\frac{x-\beta_{j}t-c_{j}\left(t\right)}{2^{\ell}}\right)w_{j}\left(x.t\right),\,\ell\in\mathbb{N}.
	\]
	Notice that due to the decay of $w_{j}$, all the $L^{p}$ norms of
	$w_{j}^{\ell}$ decays like $2^{-\beta\ell}$ for $\beta>0$ which
	can be as large as desired provided $\alpha$ is large enough. Due
	to this decay rate, we are free to decompose those weights into pieces,
	and then sum up them. In the remaining part of this analysis, we take
	$\ell=0$ in both $w_{1}$ and $w_{2}$. For simplicity, to abuse
	the notations, we assume that $w_{1}$ and $w_{2}$ are compactly
	supported.
	\begin{lem}
		\label{lem:interactionLP}Let $\sigma>\frac{3}{2}$ and $M>1$ large.
		Then for any $j\geq0$, we have
		\[
		\left|I_{j}^{M}\left(f,g\right)\right|\lesssim\min\left\{ 2^{2j}M^{\frac{3}{2}-\sigma},2^{\kappa j}M^{-\eta}\right\} \left\Vert f\right\Vert _{L_{t}^{2}L_{x}^{2}}\left\Vert g\right\Vert _{L_{t}^{2}L_{x}^{2}},
		\]
		for some $0\leq\kappa<1$ and $\eta>0$ provided $\left\Vert a\right\Vert _{L_{t}^{\infty}}+\left\Vert c_{1}'\right\Vert _{L_{t}^{\infty}}+\left\Vert c_{2}'\right\Vert _{L_{t}^{\infty}}\ll1$.
	\end{lem}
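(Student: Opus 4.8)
The plan is to run the argument of Lemma~\ref{lem:interactionbeta} while keeping track of the two (possibly distinct) trajectory centres, and then to complement the resulting estimate with a second, high--frequency bound obtained from the dyadic dispersive estimate for the half--Klein--Gordon evolution; the minimum of the two is the asserted bound. \textbf{Reductions.} As in the discussion preceding the lemma, the rapid decay of $\langle\cdot\rangle^{-\sigma}$ and the dyadic decomposition $w_j=\sum_\ell w_j^\ell$, each $L^p$--norm of $w_j^\ell$ gaining a factor $2^{-\beta\ell}$ with $\beta>0$ as large as one likes, reduce matters to $w_1,w_2$ compactly supported, say in unit balls around $\beta_1 t+c_1(t)$ and $\beta_2 s+c_2(s)$ respectively. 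The smallness hypothesis $\|a\|_{L^\infty_t}+\|c_1'\|_{L^\infty_t}+\|c_2'\|_{L^\infty_t}\ll1$ guarantees $|b(t,s)|=|\int_s^t a|\ll t-s$ and $|c_i(t)-c_i(s)|\ll t-s$, so the shift $\mathrm U_A(t,s)$ and the drift of the centres are lower--order perturbations of the light cone $|x-y|\sim t-s$; this is all that is used.

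\textbf{Step 1: the bound $2^{2j}M^{3/2-\sigma}$.} This is the exact analogue of \eqref{eq:interactionbeta}. After the change of variables in the proof of Lemma~\ref{lem:interactionbeta} one splits the $(s,t)$--region according to whether the Klein--Gordon kernel is off cone or near cone. Off cone one uses the rapidly decaying bound for $K^j$ and gains arbitrary powers of $M^{-1}$ and $2^{-j}$. Near cone the kernel obeys the $u^{-1}\langle u-|x-y+b|\rangle^{-1/2}$ bound with $u=t-s$; the two support constraints, together with $|b(t,s)|\ll u$, confine the integration to a set on which $|b(t,s)|\lesssim u\lesssim |x-\beta_1t-c_1(t)|+|y-\beta_2s-c_2(s)|+O(1)$, exactly as in \eqref{eq:secondboundJ}, and when $\beta_1\ne\beta_2$ the transversality of the two lines only shrinks this set further. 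Carrying out the remaining integrals as there gives $|I_j^M(f,g)|\lesssim 2^{2j}M^{3/2-\sigma}\|f\|_{L^2_tL^2_x}\|g\|_{L^2_tL^2_x}$ for $\sigma>3/2$.

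\textbf{Step 2: the bound $2^{\kappa j}M^{-\eta}$.} Here I would not change variables, but estimate $I_j^M$ directly via the dyadic dispersive estimate $\|e^{iu\mathcal D}\Delta_j\|_{L^p\to L^{p'}}\lesssim 2^{2j(1-2/p')}|u|^{-(1-2/p')}$, valid for $|u|\gtrsim 2^{-j}$ and conjugate exponents $2\le p'\le\infty$, obtained by interpolating the wave--type $L^1\to L^\infty$ bound against the trivial $L^2$ bound; choosing $p'$ slightly above $2$ produces a factor $2^{2\theta j}u^{-\theta}$ with $\theta=1-2/p'$ small. Pairing and applying Hölder in $x$ (absorbing $w_1,w_2$ into $L^r$--norms, $\sigma r>3$) gives
\[
|I_j^M(f,g)|\lesssim 2^{2\theta j}\int\!\!\int_{\,s<t-M}(t-s)^{-\theta}\,\|f(s)\|_{L^2_x}\|g(t)\|_{L^2_x}\,ds\,dt .
\]
To make this double integral converge and extract a negative power of $M$ one uses that, on the supports of $w_1(t)w_2(s)$ and after the cutoff $|x-y+b(t,s)|\lesssim t-s$ forced by finite speed of propagation, $u=t-s$ is constrained to a corridor $c_0 t\lesssim u\lesssim C_0 t$: this comes from the transversality of the centres when $\beta_1\ne\beta_2$, while when $\beta_1=\beta_2$ the corridor is instead produced by the $M$--truncation, together with the already known boundedness of the untruncated interaction operator (coming from the local energy decay of Section~\ref{sec:localenergy}), so that only a window $|t-s|\sim M$ contributes, which we balance against Step~1. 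Optimising over $\theta$ and over auxiliary truncation scales exactly as in the proof of Lemma~\ref{lem:diffT} then yields $|I_j^M(f,g)|\lesssim 2^{\kappa j}M^{-\eta}\|f\|_{L^2_tL^2_x}\|g\|_{L^2_tL^2_x}$ for some $0\le\kappa<1$ and $\eta>0$; the condition $\kappa<1$ is precisely what makes the $j$--series geometrically summable once it is paired against the $\mathcal D^{-1}$, i.e.\ the factor $2^{-j}$, in \eqref{eq:interaction}.

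\textbf{Main difficulty.} The delicate point is Step~2: extracting a genuine negative power $M^{-\eta}$ while keeping the frequency weight strictly below $2^{j}$. Both gains rest on confining $(s,t)$ to the light--ray corridor; when the two velocities coincide this confinement is weak, and one must then play the $M$--truncation off against the uniform boundedness of the untruncated interaction and against the Step~1 estimate — this bookkeeping, rather than any single inequality, is the technically heaviest part.
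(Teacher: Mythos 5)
There is a genuine gap, and it sits exactly where the two-potential problem differs from Lemma \ref{lem:interactionbeta}. In Step 1 you claim that near the cone the supports force $u=t-s\lesssim |x-\beta_1t-c_1(t)|+|y-\beta_2s-c_2(s)|+O(1)$, so that one of the weights decays like $u^{-\sigma}$ and \eqref{eq:secondboundJ} goes through verbatim. This inequality is false: near the cone one only has
\[
u\sim|x-y+b|\le |x-\beta_1t-c_1(t)|+|y-\beta_2s-c_2(s)|+\big|\beta_1t+c_1(t)-\beta_2s-c_2(s)+b(t,s)\big|,
\]
and the last term, the separation of the two moving centres, is not $O(1)$ -- it can itself be of size $u$. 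The dangerous configuration is precisely when $x$ is within $O(1)$ of $\beta_1t+c_1(t)$, $y$ is within $O(1)$ of $\beta_2s+c_2(s)$, and the two centres are separated by about $t-s$ along the light cone: there neither weight gives any decay in $u$ and the kernel \eqref{eq:Kjdecay} only gives $u^{-1}$. In the single-potential lemma this region does not occur because both weights ride the same line; here it does (for $\beta_1\ne\beta_2$ as well), and "transversality shrinks the set" is not a proof. The paper's treatment of exactly this region is the new ingredient: the function $\mathfrak d(s)=t-s-|\beta_1t+c_1(t)-\beta_2s-c_2(s)+b(t,s)|$ has $s$-derivative bounded between fixed positive constants (this is where $|\beta_2|<1$ and the smallness of $a,c_i'$ enter), so for each $t$ the resonant set of $s$ has measure $\sim M_j$ \eqref{eq:sizestronginter}; on it one uses only $L^2$ conservation, off it the kernel bounds, and optimising $M_j=2^{2j/3}$ gives \eqref{eq:result1}. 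Without this measure estimate your Step 1 bound is not established on the whole region, and you have no substitute for the $2^{2j/3}$-type high-frequency bound.

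Step 2 also does not close as written. After Hölder in $x$ your display leaves the kernel $(t-s)^{-\theta}$ with $\theta=1-2/p'$ small, and the corridor $c_0t\lesssim u\lesssim C_0t$ (which anyway requires $\beta_1\ne\beta_2$) does not make $\int_{\{t-s\sim t,\ t-s\ge M\}}(t-s)^{-\theta}\,ds$ finite for $\theta<1$, let alone produce $M^{-\eta}$: for a Schur-type bound you need integrability in $u$, i.e.\ $\theta>1$, and that costs a frequency factor at least $2^{5j/2}$ for the Klein--Gordon propagator at frequency $2^j$ -- which is exactly the paper's crude bound \eqref{eq:result2}, useful only for $j\lesssim\log_2M$. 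The genuine $M^{-\eta}$ gain for large $j$ is not extracted from any time decay at all; it is bought by trading the pure frequency bound $2^{2j/3}$ of \eqref{eq:result1} against $M$ on the range $j\gtrsim\log_2M$, and this trade is unavailable to you because you do not have \eqref{eq:result1}. Appealing to Lemma \ref{lem:diffT} does not help: that lemma gains smallness from $\|a\|_{L^\infty}$ through the factor $1-\mathrm U_A$, a mechanism absent here. So the missing idea is the strong-interaction-region measure estimate (monotonicity of $\mathfrak d$), and the missing bookkeeping is the split of the $j$-range between \eqref{eq:result1} and \eqref{eq:result2}; the reductions and the overall two-bound strategy in your proposal are fine, but neither of your two bounds is actually proved.
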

	
	\begin{proof}
		We can rewrite
		\begin{equation}
		I_{j}^{M}\left(f,g\right)=J_{j}^{M}\left(f,g\right)+\overline{J_{j}^{M}\left(g,f\right)}.\label{eq:Imjdecom}
		\end{equation}
		Setting the space-time function $K_{j}$ and $K^{j}$ by
		\[
		K_{j}\left(t,x\right)=2^{3j}K^{j}\left(2^{j}t,2^{j}x\right)=e^{it\mathcal{D}}2^{3j}\hat{\varphi}\left(2^{j}x\right)
		\]
		it suffices to analyze the first piece in \eqref{eq:Imjdecom}
		\begin{equation}
		J_{j}^{M}\left(f,g\right)=\int_{\mathcal{R}\left(M\right)}K_{j}\left(t-s,x-y+b\right)w_{2}\left(s,y\right)f\left(s,y\right)w_{1}\left(t,x\right)\overline{g}\left(t,x\right)\,dsdtdxdy\label{eq:Jmgeneral}
		\end{equation}
		where $b=b\left(t,s\right)=\int_{s}^{t}a\left(\tau\right)\,d\tau$
		and the integration region is given by
		\begin{equation}
		\mathcal{R}\left(M\right)=\left\{ \left(x,y,t,s\right)\left|y-\beta_{2}s+c_{2}\left(s\right)\right|>\left|x-t\beta_{1}+c_{1}\left(t\right)\right|,s>0,t>s+M\right\} .\label{eq:integrationreg}
		\end{equation}
		Writing $K_{j}\left(t,x\right)=2^{3j}K^{j}\left(2^{j}t,2^{j}x\right)$,
		we recall that for $t\gtrsim1$, one has that for $N\in\mathbb{N}$
		arbitrary
		\begin{equation}
		\left|K^{j}\left(t,x\right)\right|\lesssim\begin{cases}
		t^{-1}\left\langle \left|t-\left|x\right|\right|+2^{-2j}t\right\rangle ^{-N} & \left(\left|t-\left|x\right|\right|\nsim2^{-2j}t\right),\\
		t^{-1}\left\langle t-\left|x\right|\right\rangle ^{-\frac{1}{2}} & \left(\left|t-\left|x\right|\right|\sim2^{-2j}t\right).
		\end{cases}\label{eq:Kjdecay}
		\end{equation}
		By the decay estimates above, from \eqref{eq:Jmgeneral}, the kernel
		has a relatively weak decay rate around $t-s\sim\left|x-y+b\right|$.
		From this fact, the size of the supports of weights, combing with
		the finite speed of propagation, for given $M_{j}$ to be determined
		later on, we can define the strong interaction region as
		\[
		\mathfrak{S}\left(M_{j},t\right):=\left\{ 0\leq s\leq t:\,\left|t-s-\left|\beta_{1}t+c_{1}\left(t\right)-\beta_{2}s-c_{2}\left(s\right)+b\left(t,s\right)\right|\right|\leq M_{j}\right\} .
		\]
		We claim that for fixed $t$, the size of the strong interaction region can be estimated as following
		\begin{equation}
		\left|\mathfrak{S}\left(M_{j},t\right)\right|\sim M_{j}\label{eq:sizestronginter}
		\end{equation}
		provided that
		\begin{equation}
		\left|\beta_{1}t+c_{1}\left(t\right)-\beta_{2}s-c_{2}\left(s\right)+b\left(t,s\right)\right|\geq\frac{1}{4}\left|t-s\right|\geq\frac{1}{4}M.\label{eq:lowerboundstronginter}
		\end{equation}
		This follows from  a direct computation to show that the derivative
		of the function
		\[
		\mathfrak{d}\left(s\right)=t-s-\left|\beta_{1}t+c_{1}\left(t\right)-\beta_{2}s-c_{2}\left(s\right)+b\left(t,s\right)\right|
		\]
		has a definite sign and it is absolute value is bounded above by $\frac{3}{2}$
		and below by $\frac{1}{2}$ provided that $M$ is large and $\left|c'_{2}+a\right|$
		is small enough.
		
		When \eqref{eq:lowerboundstronginter} does not hold, noting that
		\begin{equation}
		\left|\beta_{1}t+c_{1}\left(t\right)-\beta_{2}s-c_{2}\left(s\right)+b\left(t,s\right)\right|\leq\frac{1}{4}\left|t-s\right|\label{eq:lowerboundfail}
		\end{equation}
		in the slow decay region: $\left|x-y+b\right|\sim t-s$, we have
		\begin{align*}
			\left|t-s\right| & \sim\left|x-y+b\right|\leq\left|x-\left(\beta_{1}t+c_{1}\left(t\right)\right)\right|\\
			& +\left|y-\left(\beta_{2}s+c_{2}\left(s\right)\right)\right|+\left|\beta_1 t+c_{1}\left(t\right)-\beta_{2}s-c_{2}\left(s\right)+b\left(t,s\right)\right|
		\end{align*}
		which implies
		\begin{equation}
		\left|y-\left(\beta_{2}s+c_{2}\left(s\right)\right)\right|\gtrsim\left|t-s\right|\label{eq:lowerboundy}
		\end{equation}
		in the integration region $\mathcal{R}\left(M\right)$ given by \eqref{eq:integrationreg}.
		
		Denote
		\[
		z=-x+y-b,\,u=t-s,\,h=x+b.
		\]
		Then
		\[
		h-b=x,\,z+h=y.
		\]
		We can rewrite
		\begin{align*}
			J_{j}^{M}\left(f,g\right) & =\int_{\tilde{\mathcal{R}}\left(M\right)}2^{3j}K^{j}\left(2^{j}u,2^{j}z\right)w_{2}\left(s,h+z\right)f\left(s,h+z\right)\\
			& \ \ \ \ \ \ \ \ \ w_{1}\left(s+u,h-b\right)\overline{g}\left(s+u,h-b\right)\,dsdudhdz
		\end{align*}
		where the integration region is given by
		\[
		\tilde{\mathcal{R}}\left(M\right)=\left\{ \left|h+z-\beta_{1}\left(u+s\right)-c_{1}\left(u+s\right)\right|>\left|h-b-\beta_{2}s-c_{2}\left(s\right)\right|,s>0,u>M\right\} .
		\]
		In the region where the kernel decays fast, $u\nsim\left|z\right|$,
		we can bound
		\begin{align}
			J_{j}^{M}\left(f,g\right) & \lesssim\int2^{3j}\left(2^{j}u\right)^{-3-N}\left|w_{2}\left(s,h+z\right)f\left(s,h+z\right)w_{1}\left(s+u,h-b\right)\overline{g}\left(s+u,h-b\right)\right|dsdudhdz\nonumber \\
			& \lesssim2^{-jN}M^{-2-N}\left\Vert f\right\Vert _{L_{t}^{2}L_{x}^{2}}\left\Vert g\right\Vert _{L_{t}^{2}L_{x}^{2}}.\label{eq:strongdecayregion}
		\end{align}
		In the region where the kernel decays slowly, $u\sim\left|z\right|$,
		in the region given by \eqref{eq:lowerboundfail} and \eqref{eq:lowerboundy},
		we have
		\[
		u\lesssim\left|h+z-\beta_{2}s-c_{2}\left(s\right)\right|.
		\]
		Therefore the integral above is bounded by
		\begin{align}
			\int_{u\ge M,u\lesssim\left|h+z-\beta_{2}s-c_{2}\left(s\right)\right|}\frac{2^{3j}}{2^{j}u}\frac{\left|f\left(s,h+z\right)\overline{g}\left(s+u,h-b\right)\right|}{\left\langle h+z-\beta_{2}s-c_{2}\left(s\right)\right\rangle ^{\sigma}}w_{1}\left(s+u,h-b\right)\,dhdzduds\nonumber \\
			\lesssim\int_{u\geq M}2^{j}u^{-1}\left\Vert \left\langle h+z-\beta_{2}s-c_{2}\left(s\right)\right\rangle ^{-\sigma}w_{1}\left(s+u,h-b\right)\right\Vert _{L_{\left\{ h:\left|h+z-\beta_{2}s-c_{2}\left(s\right)\right|\gtrsim u\right\} }^{2}L_{\left|\left|z\right|\sim u\right|}^{2}}\nonumber \\
			\times\left\Vert f\left(s,\cdot\right)\right\Vert _{L_{x}^{2}}\left\Vert g\left(s+u,\cdot\right)\right\Vert _{L_{x}^{2}}\,dsdu\nonumber \\
			\lesssim\int_{u>M}2^{2j}u^{\frac{1}{2}-\sigma}\left\Vert f\left(s,\cdot\right)\right\Vert _{L_{x}^{2}}\left\Vert g\left(s+u,\cdot\right)\right\Vert _{L_{x}^{2}}\,dsdu\nonumber \\
			\lesssim2^{2j}M^{\frac{3}{2}-\sigma}\left\Vert f\right\Vert _{L_{t}^{2}L_{x}^{2}}\left\Vert g\right\Vert _{L_{t}^{2}L_{x}^{2}}.\label{eq:slowbound1}
		\end{align}
		Outside of the strong interaction regime but in the slow decay region
		$u\sim\left|z\right|$, it remains to analyze the region given by
		\[
		\left|t-s-\left|\beta_{1}t+c_{1}\left(t\right)-\beta_{2}s-c_{2}\left(s\right)+b\left(t,s\right)\right|\right|>M_{j}
		\]
		and
		\[
		\left|\beta_{1}t+c_{1}\left(t\right)-\beta_{2}s-c_{2}\left(s\right)+b\left(t,s\right)\right|>\frac{1}{4}\left|t-s\right|.
		\]
		Picking $M_{j}$ larger than the size of the supports of $w_{1}$
		and $w_{2}$, we notice that
		\begin{align*}
			\left|t-s-\left|x+b-y\right|\right| & >M_{j}-\left|x-\beta_{1}t-c_{1}\left(t\right)\right|\\
			& -\left|y-\beta_{2}s-c_{2}\left(s\right)\right|\geq\frac{1}{2}M_{j}.
		\end{align*}
		For $t-s<\left|x+b-y\right|$, the result can be obtained by symmetry. So we consider $t-s-\left|x+b-y\right|\geq\frac{1}{2}M_{j}$
		and $u\sim\left|z\right|$. By the decay estimate of the kernel, if
		$M_{j}\nsim2^{-2j}\left(t-s\right)=2^{-2j}u$, we apply the first
		decay estimate from \eqref{eq:Kjdecay} and obtain the same estimate
		as \eqref{eq:strongdecayregion}
		\begin{equation}
		\left|J_{j}^{M}\left(f,g\right)\right|\lesssim2^{-jN}M^{-2-N}\left\Vert f\right\Vert _{L_{t}^{2}L_{x}^{2}}\left\Vert g\right\Vert _{L_{t}^{2}L_{x}^{2}}.\label{eq:slowstrong}
		\end{equation}
		When $2^{-2j}u\gtrsim M_{j}$, we have to use the second decay estimate
		from \eqref{eq:Kjdecay}. Then one has the bound
		\begin{align}
			\int_{u\ge2^{2j}M_{j}}\frac{2^{3j}}{2^{j}u}w_{2}\left(s,h+z\right)\left|f\left(s,h+z\right)\overline{g}\left(s+u,h-b\right)\right|w_{1}\left(s+u,h-b\right)\,dhdzduds\nonumber \\
			\lesssim2^{j}M_{j}^{-\frac{1}{2}}\left\Vert f\right\Vert _{L_{t}^{2}L_{x}^{2}}\left\Vert g\right\Vert _{L_{t}^{2}L_{x}^{2}}.\label{eq:slownonstrong}
		\end{align}
		Finally, it remains to analyze the strong interaction region \eqref{eq:sizestronginter}.
		Here we only use the conservation of the $L^{2}$ norm. From the size estimate \eqref{eq:sizestronginter}, one obtains that
		\begin{align}
			\left|\int_{\mathfrak{S}\left(M_{j}\right)}K_{j}\left(t-s,x-y+b\right)w_{2}\left(s,y\right)f\left(s,y\right)w_{1}\left(t,x\right)\overline{g}\left(t,x\right)\,dsdtdxdy\right|\nonumber \\
			\lesssim M_{j}\left\Vert f\right\Vert _{L_{t}^{2}L_{x}^{2}}\left\Vert g\right\Vert _{L_{t}^{2}L_{x}^{2}}.\label{eq:stronginter}
		\end{align}
		Therefore, overall, putting \eqref{eq:stronginter}, \eqref{eq:slownonstrong},
		\eqref{eq:slowstrong}, \eqref{eq:slowbound1} and \eqref{eq:strongdecayregion}
		together, one has
		\begin{equation}
		\left|I_{j}^{M}\left(f,g\right)\right|\lesssim\max\left\{ 2^{2j}M^{\frac{3}{2}-\sigma},M_{j},2^{j}M_{j}^{-\frac{1}{2}},2^{-jN}M^{-2-N}\right\} \left\Vert f\right\Vert _{L_{t}^{2}L_{x}^{2}}\left\Vert g\right\Vert _{L_{t}^{2}L_{x}^{2}}.\label{eq:result1}
		\end{equation}
		To optimize the coefficients in terms of $M_{j}$, we can pick $M_{j}=2^{\frac{2}{3}j}$. 
		
		Moreover, from the uniform decay of the kernel, we also have
		\begin{align}
			\left|J_{j}^{M}\left(f,g\right)\right| & \lesssim2^{\frac{5}{2}j}\int_{u>M}u^{-\frac{3}{2}}\left\Vert w\left(s,x\right)f\left(s,x\right)\right\Vert _{L_{x}^{1}}\left\Vert w\left(s+u,h\right)f\left(s+u,h\right)\right\Vert _{L_{y}^{1}}\,dsdu\nonumber \\
			& \lesssim2^{\frac{5}{2}j}M^{-\frac{1}{2}}\left\Vert f\right\Vert _{L_{t}^{2}L_{x}^{2}}\left\Vert g\right\Vert _{L_{t}^{2}L_{x}^{2}}.\label{eq:result2}
		\end{align}
		Picking $\kappa<1$ and $\eta>0$ appropriately, for $j$ small, we
		use \eqref{eq:result2} and apply \eqref{eq:result1} for $j$ large.
		More precisely, let $\kappa$ be close to $1$ and $\eta$ be small. Then
		\[
		2^{\frac{2}{3}j}\leq2^{j\kappa}M^{-\eta}
		\]
		provided that $j\geq\frac{\eta}{\kappa-\frac{2}{3}}\log_{2}M$. Secondly,
		notice that
		\[
		2^{\frac{5}{2}j}M^{-\frac{1}{2}}\leq2^{\kappa}M^{-\eta}
		\]
		given that $2^{j\left(\frac{5}{2}-\kappa\right)}\leq M^{\frac{1}{2}-\eta}$
		which is ensured by $j\leq\frac{\frac{1}{2}-\eta}{\frac{5}{2}-\kappa}\log_{2}M$.
		Therefore, picking $\frac{2}{3}<\kappa<1$ and $\eta$ small enough
		such that $\frac{\frac{1}{2}-\eta}{\frac{5}{2}-\kappa}\geq\frac{\eta}{\kappa-\frac{2}{3}}$
		we can obtain that
		\[
		\left|I_{j}^{M}\left(f,g\right)\right|\lesssim\min\left\{ C\left(\sigma\right)2^{2j}M^{\frac{3}{2}-\sigma},2^{\kappa j}M^{-\eta}\right\} \left\Vert f\right\Vert _{L_{t}^{2}L_{x}^{2}}\left\Vert g\right\Vert _{L_{t}^{2}L_{x}^{2}}
		\]
		as desired.
	\end{proof}
	With the lemma above, noticing that $\kappa<1$, if we can gain one
	derivative from $f$, it will allow us to sum
	the Littlewood-Paley decomposition.
	\begin{lem}
		\label{lem:truncated}Let $1>\nu>0$ and $\sigma\geq\frac{1}{2}+\frac{2}{\nu}$.
		If $\left\Vert a\right\Vert _{L_{t}^{\infty}}\ll1$ then there is
		$\eta>0$ such that
		\begin{align}
			\left\Vert \left\langle \cdot-\beta_{1}t-c_{1}\left(t\right)\right\rangle ^{-\sigma}\int_{0}^{t-M}e^{\left(t-s\right)i\mathcal{D}}\mathrm{U}_{A}\left(t,s\right)\mathcal{D}^{-1}f\left(s\right)\,ds\right\Vert _{L_{t}^{2}L_{x}^{2}}\nonumber \\
			\lesssim M^{-\eta}\left\Vert \left\langle \cdot-\beta_{2}t+c_{2}\left(t\right)\right\rangle ^{\alpha}f\right\Vert _{L_{t}^{2}L_{x}^{2}}.\label{eq:firsttruncate}
		\end{align}
		Moreover, one also has
		\begin{align}
			\left\Vert \left\langle \cdot-\beta_{1}t-c_{1}\left(t\right)\right\rangle ^{-\sigma}\mathcal{D}^{-\frac{\nu}{2}}\int_{0}^{t-M}e^{\left(t-s\right)i\mathcal{D}}\mathrm{U}_{A}\left(t,s\right)\mathcal{D}^{-1}f\left(s\right)\,ds\right\Vert _{L_{t}^{2}L_{x}^{2}}\nonumber \\
			\lesssim M^{-\eta}\left\Vert \left\langle \cdot-\beta_{2}t+c_{2}\left(t\right)\right\rangle ^{\alpha}\mathcal{D}^{-\frac{\nu}{2}}f\right\Vert _{L_{t}^{2}L_{x}^{2}}.\label{eq:secondtruncate}
		\end{align}
	\end{lem}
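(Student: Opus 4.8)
The plan is to obtain both estimates from the frequency-localized interaction bound of Lemma \ref{lem:interactionLP}, following the duality-plus-Littlewood--Paley scheme already used in the proof of Lemma \ref{lem:shiftslant}, but now exploiting the truncation branch $|I_j^M(f,g)|\lesssim 2^{\kappa j}M^{-\eta}\|f\|_{L_{t}^{2}L_{x}^{2}}\|g\|_{L_{t}^{2}L_{x}^{2}}$ with $\kappa<1$, together with the smoothing operator $\mathcal{D}^{-1}$, in order to sum over $j$ and extract the gain $M^{-\eta}$. All of the delicate stationary-phase and finite-speed-of-propagation analysis has been carried out inside Lemma \ref{lem:interactionLP}, so what remains is the organization of the summation.

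For \eqref{eq:firsttruncate} I would first dualize in $L_{t}^{2}L_{x}^{2}$: it suffices to control, for every $g$ with $\|g\|_{L_{t}^{2}L_{x}^{2}}\le 1$, the pairing of the left-hand side against $g$. Writing $f(s)=w_2(s)\phi(s)$ with $w_2(s,y)=\langle y-\beta_2 s+c_2(s)\rangle^{-\alpha}$, so that $\|\phi\|_{L_{t}^{2}L_{x}^{2}}$ equals the weighted norm on the right of \eqref{eq:firsttruncate}, and $w_1(t,x)=\langle x-\beta_1 t-c_1(t)\rangle^{-\sigma}$, the quantity to estimate is
\[
\Bigl|\int_0^\infty\!\!\int_0^{t-M}\bigl\langle e^{(t-s)i\mathcal{D}}\mathrm{U}_A(t,s)\mathcal{D}^{-1}\bigl(w_2(s)\phi(s)\bigr),\ w_1(t)g(t)\bigr\rangle\,ds\,dt\Bigr|.
\]
Inserting the Littlewood--Paley decomposition $1=\sum_{j\ge0}\Delta_j$ and the almost-orthogonality correction $Q_j$ of \S\ref{ssubec:TruncateInter}, each dyadic block is of the form $I_j^M$ with the source carrying an extra Fourier multiplier $\mathcal{D}^{-1}$, which on frequencies $\sim 2^j$ has operator norm $\lesssim 2^{-j}$; the mismatch between $\Delta_j\mathcal{D}^{-1}w_2$ and $w_2\Delta_j\mathcal{D}^{-1}$ is absorbed into the commutator terms $w^{-1}[\Delta_j,w]$ that are already part of $Q_j$. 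Applying Lemma \ref{lem:interactionLP}, whose hypotheses $\|a\|_{L_{t}^{\infty}}+\|c_1'\|_{L_{t}^{\infty}}+\|c_2'\|_{L_{t}^{\infty}}\ll1$ hold in our setting and whose decay parameter may be taken large, each block is $\lesssim 2^{(\kappa-1)j}M^{-\eta}\|\Delta_j\phi\|_{L_{t}^{2}L_{x}^{2}}$ (with a further, even better, tail from the commutator part of $Q_j$); since $\kappa<1$, Cauchy--Schwarz in $j$ gives $\sum_j 2^{(\kappa-1)j}\|\Delta_j\phi\|_{L_{t}^{2}L_{x}^{2}}\lesssim\|\phi\|_{L_{t}^{2}L_{x}^{2}}$, which is \eqref{eq:firsttruncate}.

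The estimate \eqref{eq:secondtruncate} is then immediate: $\mathcal{D}^{-\nu/2}$ is a Fourier multiplier, hence commutes with $e^{(t-s)i\mathcal{D}}$, with $\mathcal{D}^{-1}$, and with the translation group $\mathrm{U}_A(t,s)$, so
\[
\mathcal{D}^{-\nu/2}\int_0^{t-M} e^{(t-s)i\mathcal{D}}\mathrm{U}_A(t,s)\mathcal{D}^{-1}f(s)\,ds=\int_0^{t-M} e^{(t-s)i\mathcal{D}}\mathrm{U}_A(t,s)\mathcal{D}^{-1}\bigl(\mathcal{D}^{-\nu/2}f(s)\bigr)\,ds,
\]
and applying \eqref{eq:firsttruncate} with $f$ replaced by $\mathcal{D}^{-\nu/2}f$ (the outer weight $w_1$ is untouched) yields \eqref{eq:secondtruncate} at once.

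The main obstacle, and really the only nontrivial point left here, is the bookkeeping in the second paragraph: one must check that placing $\mathcal{D}^{-1}$ in front of the localized source $w_2\phi$ produces exactly one clean factor $2^{-j}$ per Littlewood--Paley block after the weight has been commuted through, with all residual terms genuinely controlled by the $Q_j$ machinery (this uses that $\mathcal{D}^{-1}$ has an exponentially decaying kernel, so it essentially preserves the spatial localization of $w_2\phi$), and that the gain $M^{-\eta}$ of Lemma \ref{lem:interactionLP} is uniform in $j$ — which it is, since in that lemma $M^{-\eta}$ already multiplies $2^{\kappa j}$ rather than a $j$-dependent power of $M$. The hypotheses $\sigma\ge\frac12+\frac2\nu$ and $1>\nu>0$ are then precisely what is needed so that Lemma \ref{lem:interactionLP} (which requires $\sigma>\tfrac32$) applies and so that the statement is consistent with the $\nu$-dependent weighted norms appearing when Lemma \ref{lem:truncated} is invoked in \S\ref{subsec:analysisJchannel}.
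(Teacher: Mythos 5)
Your reduction of \eqref{eq:secondtruncate} to \eqref{eq:firsttruncate} by commuting the multiplier $\mathcal{D}^{-\nu/2}$ through $e^{i(t-s)\mathcal{D}}$, $\mathcal{D}^{-1}$ and $\mathrm{U}_A(t,s)$ is exactly what the paper means by ``a similar argument,'' and your duality-plus-Littlewood--Paley setup for \eqref{eq:firsttruncate}, with the $2^{-j}$ gain from $\mathcal{D}^{-1}$ and the commutators $[\Delta_j,w]$ absorbed by the $Q_j$ device, is the same frame the paper uses. Where you genuinely diverge is in the summation over $j$: you invoke the branch $|I_j^M(f,g)|\lesssim 2^{\kappa j}M^{-\eta}\|f\|\|g\|$ of Lemma \ref{lem:interactionLP} \emph{for every} $j$, obtaining $2^{(\kappa-1)j}M^{-\eta}$ per block and summing since $\kappa<1$. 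The paper does not do this. It uses that branch only in the low-frequency range $j\lesssim\log_2 M$; for high frequencies it introduces a second, $j$-dependent truncation of the time integral at scale $\mathcal{M}_j=2^{\nu j}$, controls the difference by plain $L^2$ conservation (see \eqref{eq:L2part} and \eqref{eq:IMJnu}) to get $|I_j^M(\mathcal{D}^{-1}f,g)|\lesssim 2^{(\nu-1)j}\|f\|\|g\|$ with \emph{no} $M$-gain, and then converts part of that geometric decay into the factor $M^{-\eta}$ once $j\gtrsim\log_2 M$, choosing $\sigma$ large enough so that the two regimes overlap. So, contrary to your closing remark, the hypotheses on $\nu$ and the largeness of $\sigma$ are not mere consistency conditions: they fix the scale $\mathcal{M}_j$ and make the regime-matching possible.

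The reason the paper hedges is worth your attention, because it is the one place your shortcut leans on something delicate. The proof of Lemma \ref{lem:interactionLP} establishes the $2^{\kappa j}M^{-\eta}$ bound by combining \eqref{eq:result2} for $j\lesssim\log_2 M$ with \eqref{eq:result1} for larger $j$; but \eqref{eq:result1} still contains the slow-decay contribution $2^{2j}M^{3/2-\sigma}$ from \eqref{eq:slowbound1}, which exceeds $2^{\kappa j}M^{-\eta}$ as soon as $2^{(2-\kappa)j}\gtrsim M^{\sigma-3/2-\eta}$. Hence, as proved, the $M$-gaining branch is only secured in a window $j\lesssim C(\sigma)\log_2 M$, and the stated ``$\min$'' for all $j$ goes slightly beyond what its proof shows at very high frequency. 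Your argument is therefore a correct derivation if Lemma \ref{lem:interactionLP} is taken at face value, and it is shorter; but the high-frequency part of your sum rests precisely on the uniformity in $j$ that the paper's proof of Lemma \ref{lem:truncated} is structured to avoid. To make the argument self-contained you should either verify that branch for $j\gg\log_2 M$ directly, or adopt the paper's high-frequency mechanism (the $\mathcal{M}_j$-truncation bound $2^{(\nu-1)j}$ traded against $M^{-\eta}$), which is available at no extra cost under the stated hypotheses on $\nu$ and $\sigma$.
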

	
	\begin{proof}
		We first use the Littlewood-Paley decomposition. Testing against $g$,
		it is reduced to analyze
		\[
		I_{j}^{M}\left(\mathcal{D}^{-1}f,g\right).
		\]
		In order to achieve the decay in terms of $M$, we further truncate
		the integration region to analyze $I_{j}^{\mathcal{M}_{j}}\left(\mathcal{D}^{-1}f,g\right)$
		with an lower bound for $t-s$ depending on $j$.
		
		Taking the difference and applying the trivial $L_{x}^{2}$ conservation,
		we have
		\begin{equation}
		\left|I_{j}^{M}\left(\mathcal{D}^{-1}f,g\right)-I_{j}^{\mathcal{M}_{j}}\left(\mathcal{D}^{-1}f,g\right)\right|\lesssim\left(\mathcal{M}_{j}-M\right)\left\Vert \mathcal{D}^{-1}f\right\Vert _{L_{t,x}^{2}}\left\Vert g\right\Vert _{L_{t,x}^{2}}.\label{eq:L2part}
		\end{equation}
		Now recall bounds from Lemma \ref{lem:interactionLP} with the parameter
		$\mathcal{M}_{j}$
		\[
		\min\left\{ 2^{2j}\mathcal{M}_{j}^{\frac{3}{2}-\sigma},2^{\kappa j}\mathcal{M}_{j}^{-\eta}\right\} .
		\]
		For any $1>\nu>0$, we choose $\mathcal{M}_{j}=2^{\nu j}$ and $\sigma$ large to ensure
		\[
		2^{2j}\mathcal{M}_{j}^{\frac{3}{2}-\sigma}\leq\mathcal{M}_{j}=2^{j\nu},\,2^{\kappa j}\mathcal{M}_{j}^{-\eta}\leq2^{j\nu}.
		\]
		Therefore, by Lemma \ref{lem:interactionLP}, with \eqref{eq:L2part},
		we have
		\begin{equation}
		\left|I_{j}^{\mathcal{M}_{j}}\left(\mathcal{D}^{-1}f,g\right)\right|\lesssim2^{\left(\nu-1\right)j}\left\Vert f\right\Vert _{L_{t,x}^{2}}\left\Vert g\right\Vert _{L_{t,x}^{2}}\label{eq:own-1}
		\end{equation}
		and
		\begin{equation}
		\left|I_{j}^{M}\left(\mathcal{D}^{-1}f,g\right)\right|\lesssim2^{\left(\nu-1\right)j}\left\Vert f\right\Vert _{L_{t,x}^{2}}\left\Vert g\right\Vert _{L_{t,x}^{2}}.\label{eq:IMJnu}
		\end{equation}
		By Lemma \ref{lem:interactionLP}, we also know
		\[
		\left|I_{j}^{M}\left(\mathcal{D}^{-1}f,g\right)\right|\lesssim\min\left\{ 2^{j}M^{\frac{3}{2}-\sigma},2^{\left(\kappa-1\right)j}M^{-\eta}\right\} \left\Vert f\right\Vert _{L_{t}^{2}L_{x}^{2}}\left\Vert g\right\Vert _{L_{t}^{2}L_{x}^{2}}.
		\]
		For the low-frequency part, we notice that $2^{\left(\kappa-1\right)j}M^{-\eta}$
		can give the desired smallness in terms of $M$ after summing over
		the Littlewood-Paley decomposition. On the other hand, we note that
		$2^{2j}M^{\frac{3}{2}-\sigma}\leq2^{\left(\nu-1\right)j}M^{-\eta}$
		which can be ensured by
		\[
		j\leq\frac{\sigma-\frac{3}{2}-\eta}{3-\nu}\log_{2}M.
		\]
		For the large-frequency part, we use \eqref{eq:IMJnu} and note that
		\[
		2^{\left(\nu-1\right)j}\leq M^{-\eta}2^{\left(\nu+\epsilon\left(\eta\right)-1\right)j}
		\]
		which is ensured by $j\geq\frac{\eta}{\epsilon\left(\eta\right)}\log_{2}M$.
		We can always pick $\sigma$ large enough such that
		\[
		\frac{\sigma-\frac{3}{2}-\eta}{3-\nu}\geq\frac{\eta}{\epsilon\left(\eta\right)}.
		\]
		Therefore, for all $j\in\mathbb{N}$, we have
		\[
		\left|I_{j}^{M}\left(\mathcal{D}^{-1}f,g\right)\right|\lesssim M^{-\eta}2^{-\mu j}
		\]
		where $\mu=\min\left\{ 1-\kappa,1-\nu-\epsilon\left(\eta\right)\right\} >0$.
		
		Hence we can sum up the Littlewood-Paley decomposition as
		\begin{align*}
			\left|\int\int_{0}^{t-M}\left\langle e^{i\left(t-s\right)\mathfrak{L}_{0}}\mathrm{U}_{1}\left(t,s\right)w_{2}\left(s\right)\mathcal{D}^{-1}f\left(s\right),w_{1}\left(t\right)g\left(t\right)\right\rangle \,dsdt\right|\ \\
			\leq\sum_{j=0}^{\infty}\left|I_{j}^{M}\left(\mathcal{D}^{-1}f,g\right)\right|\ \ \ \ \ \ \ \ \ \ \ \ \ \ \ \ \ \ \ \ \ \ \ \ \ \ \ \ \ \ \ \ \ \ \ \ \\
			\lesssim\sum_{j=0}^{\infty}\sum_{\left|j-k\right|+\left|j-\ell\right|\leq2}M^{-\eta}2^{-\mu j}\left\Vert \varDelta_{k}f\right\Vert _{L_{t,x}^{2}}\left\Vert \varDelta_{\ell}g\right\Vert _{L_{t,x}^{2}}+M^{-\eta}2^{\left(\nu-2\right)j}\left\Vert f\right\Vert _{L_{t,x}^{2}}\left\Vert g\right\Vert _{L_{t,x}^{2}}\\
			\lesssim M^{-\eta}\left\Vert f\right\Vert _{L_{t,x}^2}\left\Vert g\right\Vert _{L_{t,x}^{2}}
		\end{align*}
		which implies via duality,
		\begin{align}
			\left\Vert \left\langle \cdot-\beta_{1}t-c_{1}\left(t\right)\right\rangle ^{-\sigma}\int_{0}^{t-M}e^{i\left(t-s\right)\mathfrak{L}_{0}}\mathrm{U}_{A}\left(t,s\right)\mathcal{D}^{-1}f\left(s\right)\,ds\right\Vert _{L_{t}^{2}L_{x}^{2}}\nonumber \\
			\lesssim M^{-\eta}\left\Vert \left\langle \cdot-\beta_{2}t+c_{2}\left(t\right)\right\rangle ^{\alpha}f\right\Vert _{L_{t}^{2}L_{x}^{2}}.\label{eq:firsttruncate-1}
		\end{align}
		By a similar argument, we can also obtain the bound \eqref{eq:secondtruncate}.
		We are done.
	\end{proof}

	\section{Existence of wave operators}\label{sec:waveop}
	
	In this section, we establish the existence of wave operators restricted
	onto the central direction.  Recall that in this setting, we further assume that all $\phi^0_{j,m}$  from \eqref{eq:modescalar} are $0$. In other words, there are no zero modes.
	
	\subsection{Existence of wave operators}
	
	
	We directly work on the Hamiltonian formalism. Our goal of this section is to show the following:
	\begin{thm}\label{thm:exiWave}
		For any free data $\hm{\phi}_{0}\in\mathcal{H}$,
		there exists the corresponding data $\hm{u}\left(0\right)=\pi_{c}\left(0\right)\hm{u}\left(0\right)$
		such that the solution to the equation 
		\[
		\hm{u}_{t}=\mathfrak{L}\left(t\right)\hm{u}+\hm{F}
		\]
		where
		\begin{equation}
		\mathfrak{L}\left(t\right):=\left(\begin{array}{cc}
		0 & 1\\
		\Delta-1+\sum_{j=1}^{J}(V_{j})_{\beta_{j}(t)}(\cdot-y_{j}(t)) & 0
		\end{array}\right)
		\end{equation}
		with initial data $\hm{u}(0)$
		satisfying
		\[
		\left\Vert \hm{u}(t)-e^{t\mathfrak{L}_0}\hm{\phi}_{0}\right\Vert _{\mathcal{H}}\rightarrow0,\ t\rightarrow\infty.
		\]
		Here $\pi_{c}\left(t\right)$ is the projection on the central space.
	\end{thm}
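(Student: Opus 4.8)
The plan is to construct the perturbed trajectory by solving the equation backward from $t=+\infty$ with prescribed asymptotics $e^{t\mathfrak{L}_0}\hm{\phi}_0$, to obtain a uniform a priori bound for this final-value problem from a time-reversed version of the analysis of Section~\ref{sec:multi}, and finally to read off from the exponential dichotomy that the Cauchy datum at $t=0$, after application of $\pi_c(0)$, lies in $X_c(0)$ and is the unique such datum. The essential new analytic input is the \emph{backward} counterpart of the estimates of Sections~\ref{sec:localenergy}--\ref{sec:multi}: the local energy decay of Theorem~\ref{thm:localenergy} was already proved for $t\le 0$ (the Plancherel argument was run with $\epsilon<0$), and the dispersive estimate of Lemma~\ref{lem:shiftslant}, the truncated interaction estimate of Lemma~\ref{lem:truncated}, and the channel decomposition producing Proposition~\ref{prop:bootstrap} are all insensitive to the direction of time. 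Since by assumption there are no zero modes, the homogeneous flow on $X_c(t)$ grows sub-exponentially in \emph{both} directions, so these ingredients yield, for a solution $\hm v=\pi_c(t)\hm v$ of $\hm v_t=\mathfrak L(t)\hm v+\pi_c(t)\hm G$ on a half-line, a backward estimate $\|\hm v\|_{\mathfrak W_{\mathcal H}\cap S_{\mathcal H}}\lesssim \limsup_{t\to+\infty}\|\hm v(t)\|_{\mathcal H}+\|\hm G\|_{(\mathfrak W\cap S)^*_{\mathcal H}}$, and a completely analogous bound for the final-value problem with prescribed asymptotic free data.

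Write $\mathfrak L(t)=\mathfrak L_0+\mathcal V(t)$ with $\mathcal V(t)$ as in \eqref{eq:Vt}. Because the equation is linear I would not invoke a contraction but rather an approximation by growing intervals together with the above a priori bound. For $n\ge T$, let $\hm u_n$ solve $\partial_t\hm u_n=\mathfrak L(t)\hm u_n+\hm F$ on $[T,n]$ with final data $\hm u_n(n)=e^{n\mathfrak L_0}\hm\phi_0$, a finite-time linear Cauchy problem. The backward counterpart of Proposition~\ref{prop:bootstrap}, obtained by running the channel decomposition of Section~\ref{sec:multi} in the reversed time variable, gives
\[
\|\hm u_n\|_{\mathfrak W_{\mathcal H}\cap S_{\mathcal H}([T,n])}\lesssim \|\hm u_n(n)\|_{\mathcal H}+\|\hm F\|_{(\mathfrak W\cap S)^*_{\mathcal H}}=\|\hm\phi_0\|_{\mathcal H}+\|\hm F\|_{(\mathfrak W\cap S)^*_{\mathcal H}}
\]
uniformly in $n$ and, upon writing $\hm u_n$ against the free flow,
\[
\big\|\hm u_n(t)-e^{t\mathfrak L_0}\hm\phi_0\big\|_{\mathcal H}=\Big\|\int_t^n e^{(t-s)\mathfrak L_0}\big[\mathcal V(s)\hm u_n(s)+\hm F(s)\big]\,ds\Big\|_{\mathcal H}\le \varepsilon(t),
\]
with $\varepsilon(t)\to0$ independent of $n$, by the dual of Lemma~\ref{lem:shiftslant} restricted to $[t,n]$. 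Extracting a weakly-$*$ convergent subsequence yields $\hm u$ on $[T,\infty)$ that solves the PDE, obeys the same norm bound, and satisfies $\|\hm u(t)-e^{t\mathfrak L_0}\hm\phi_0\|_{\mathcal H}\le\varepsilon(t)\to0$. I then extend $\hm u$ to $[0,\infty)$ by solving the linear PDE backward from $\hm u(T)$ on the finite interval $[0,T]$.

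Next I would extract the central datum. Decomposing $\hm u(0)=\pi_c(0)\hm u(0)+\pi_s(0)\hm u(0)+\pi_u(0)\hm u(0)$ via Proposition~\ref{prop:expdicho}: since $\hm u(t)\to e^{t\mathfrak L_0}\hm\phi_0$ in $\mathcal H$ the norm $\|\hm u(t)\|_{\mathcal H}$ stays bounded, whereas $\pi_u(t)\hm u(t)=\mathcal T(t,0)\pi_u(0)\hm u(0)$ grows at rate $\nu_e\sqrt{1-v^2}$ unless $\pi_u(0)\hm u(0)=0$; hence the unstable part vanishes. The stable part $\pi_s(t)\hm u(t)=\mathcal T(t,0)\pi_s(0)\hm u(0)$ decays exponentially, so $\hm v(t):=\pi_c(t)\hm u(t)=\hm u(t)-\pi_s(t)\hm u(t)$ still obeys $\|\hm v(t)-e^{t\mathfrak L_0}\hm\phi_0\|_{\mathcal H}\to0$ and, because $\pi_c(t)$ commutes with the homogeneous flow, $\hm v$ solves the equation with datum $\hm v(0)=\pi_c(0)\hm u(0)\in X_c(0)$ (tracking $\pi_c(t)\hm u(t)$, whose forcing is $\pi_c(t)\hm F(t)$, as in the first formulation of the theorem, or $\hm u$ itself when $\hm F=\pi_c(t)\hm F(t)$, as in the Corollary). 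This produces the desired datum. For uniqueness, if $\hm u^{(1)},\hm u^{(2)}$ have data in $X_c(0)$ with central parts both scattering to $\hm\phi_0$, then $\hm w=\hm u^{(1)}-\hm u^{(2)}$ solves the homogeneous equation with $\hm w(0)\in X_c(0)$ and $\pi_c(t)\hm w(t)\to0$; the argument above forces $\pi_s(0)\hm w(0)=\pi_u(0)\hm w(0)=0$, so $\hm w(t)=\pi_c(t)\hm w(t)\to0$, and the backward estimate with zero asymptotic data and zero forcing gives $\|\hm w\|_{\mathfrak W_{\mathcal H}\cap S_{\mathcal H}([T,\infty))}=0$; thus $\hm w\equiv0$ on $[T,\infty)$ and, by backward uniqueness of the linear flow, on $[0,\infty)$, so $\hm w(0)=0$.

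I expect the backward estimate to be the one genuine obstacle. In forward time the stable directions are harmless since they decay, but in backward time they become the growing directions, so Proposition~\ref{prop:bootstrap} cannot simply be quoted. Instead I would re-run the channel decomposition of Section~\ref{sec:multi} with $t\mapsto -t$: within each potential's channel the forward-stable modes $\mathcal Y^{\pm}_{j,k,\beta_j}$ are handled exactly as the forward-unstable ones were there, integrated from $+\infty$ in the reversed variable, with the split-at-$T$ bootstrap trick applied verbatim, while the genuinely dangerous backward-unstable modes are removed at the outset by restricting to $X_c$; the interaction estimates of Lemmas~\ref{lem:shiftslant} and~\ref{lem:truncated} and the channel-by-channel inversion of $1-T^{B,j}$ carry over unchanged, since they concern only the free half-Klein--Gordon flow. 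Once this backward local-energy/Strichartz estimate in the central direction is in place, the approximation argument and the extraction of $\pi_c(0)\hm u(0)$ are routine, as is the analogue of the scattering statement \eqref{eq:mainscatterh}.
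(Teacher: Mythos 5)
There is a genuine gap at the heart of your construction. Your approximating problems prescribe the \emph{unprojected} final data $\bs{u}_n(n)=e^{n\mathfrak{L}_0}\bs{\phi}_0$ and solve backward on $[T,n]$. Backward in time, the forward-stable directions of Proposition \ref{prop:expdicho} are the growing ones: the component $\pi_{\text{s}}(n)e^{n\mathfrak{L}_0}\bs{\phi}_0$ gets amplified by a factor of order $e^{c(n-T)}$ when transported back to time $T$. That component tends to zero as $n\to\infty$ only through local decay of the free flow, with no rate for general $\bs{\phi}_0\in\mathcal{H}$, so the product is not uniformly bounded and your claimed uniform estimate $\|\bs{u}_n\|_{\mathfrak{W}_{\mathcal H}\cap S_{\mathcal H}([T,n])}\lesssim\|\bs{\phi}_0\|_{\mathcal H}+\|\bs{F}\|_{(\mathfrak{W}\cap S)^*_{\mathcal H}}$ fails in general; everything downstream (the $\varepsilon(t)$ bound, the weak-$*$ extraction) collapses with it. Your remark that the dangerous modes are ``removed at the outset by restricting to $X_c$'' is inconsistent with the construction: the $\bs{u}_n$ are not central solutions, and on a finite interval with data only at the right endpoint there is no condition at the left endpoint (nor at $-\infty$) that would let you run the split-at-$T$ trick on these modes. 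That trick, both in the forward analysis of Section \ref{sec:multi} and in Theorem \ref{thm:backwardStri}, requires a global subexponential-growth condition, i.e.\ membership in the centre(-stable) space — information at the far end \emph{in the direction of growth}, which your final-value problems do not provide.

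The repair is essentially what the paper does. One projects the far-time data, $\bs{u}_{t_0}(t_0)=\pi_{c}(t_0)e^{t_0\mathfrak{L}_0}\bs{\phi}_0$ as in \eqref{eq:gammat0}, so that the (homogeneous, after first removing $\bs F$ by the forward theory) solutions lie in $X_{c}(t)$ for all times and the two-sided estimates of Theorem \ref{thm:backwardStri} apply. The paper then runs these solutions \emph{forward}: their Strichartz/local-energy norms on $[t_0,\infty)$ vanish as $t_0\to\infty$, hence their forward scattering profiles satisfy $\bs{\psi}_{t_0}\to\bs{\phi}_0$ (Lemma \ref{lem:largeStri}, which also uses $\pi_{c}(t_0)e^{t_0\mathfrak{L}_0}\bs{\phi}_0-e^{t_0\mathfrak{L}_0}\bs{\phi}_0\to0$ by local decay of the free flow — a point your proposal never needs to face because it never projects, but which is essential once you do); finally the two-sided bound on the scattering map $\mathscr{L}_S$ of \eqref{eq:Ls} (Lemma \ref{lem:lowerLs}, itself a consequence of the backward estimate for central solutions) upgrades this to strong convergence of the Cauchy data $\bs{u}_{t_n}(0)$ in $\mathcal{H}$, producing $\bs{u}(0)\in\pi_c(0)\mathcal{H}$ directly, with no weak-limit argument. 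Your backward-estimate ingredient in the central direction (forward-stable modes integrated from the appropriate end, interaction lemmas unchanged) is essentially Theorem \ref{thm:backwardStri}, so that part of your plan is sound; the gap is in how you feed it to the approximating family.
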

	After projecting the equation by $\pi_{c}\left(t\right)$, it suffices
	to consider the problem
	\[
	\hm{u}_{t}=\mathfrak{L}\left(t\right)\hm{u}+\pi_{c}\left(t\right)\hm{F}
	\]
	with $\hm{u}=\pi_{c}\left(t\right)\hm{u}\in\pi_{c}\left(t\right)\mathcal{H}$.

	We first remove the inhomogeneous term by considering the following
	data
	\begin{equation}
	\hm{\vartheta}_{t}=\mathfrak{L}\left(t\right)\hm{\vartheta}+\pi_{c}\left(t\right)\hm{F},\:\hm{\vartheta}\left(0\right)=0.\label{eq:reinhomo}
	\end{equation}
	From our computations above and  Theorem \ref{thm:mainthmHam}, we know that there
	exists $\hm{\rho}_{0}\in\mathcal{H}$ such that
	\[
	\left\Vert \hm{\vartheta}\left(t\right)-e^{t\mathfrak{L}_{0}}\hm{\rho}_{0}\right\Vert _{\mathcal{H}}\rightarrow0,\ t\rightarrow\infty.
	\]
	Therefore by considering $\hm{\phi}_{0}-\hm{\rho}_{0}$ as the initial
	data, it reduces to find initial data for the homogeneous problem
	\begin{equation}
	\hm{u}_{t}=\mathfrak{L}\left(t\right)\hm{u},\ \hm{u}\left(t\right)=\pi_{c}\left(t\right)\hm{u}\left(t\right)\label{eq:homoredp}
	\end{equation}
	such that
	\[
	\left\Vert \hm{u}\left(t\right)-e^{t\mathfrak{L}_{0}}\hm{\psi}_{0}\right\Vert _{\mathcal{H}}\rightarrow0,\ t\rightarrow\infty
	\]
	where $\hm{\psi}_{0}=\hm{\phi}_{0}-\hm{\rho}_{0}$. Hereafter, we
	will only consider the homogeneous problem.
	
	\subsection{Basic estimates}

	Consider the following linear problem
	\[
	\hm{u}_{t}=\mathfrak{L}\left(t\right)\hm{u},\ \hm{u}\left(t\right)=\pi_{c}\left(t\right)\hm{u}\left(t\right)\in\pi_{c}\left(t\right)\mathcal{H}.
	\]
	From Theorem \ref{thm:mainthmHam}, for any given initial data $\hm{u}_{0}=\pi_{c}(0)\hm{u}_{0}$
	there exists $\hm{\phi}_{0}$ such that
	\begin{equation}
	\left\Vert \hm{u}(t)-e^{t\mathfrak{L}_{0}}\hm{\phi}_{0}\right\Vert _{\mathcal{H}}\rightarrow0,\ t\rightarrow\infty\label{eq:scatterAss}
	\end{equation}
	with
	\begin{equation}
	\left\Vert \hm{\phi}_{0}\right\Vert _{\mathcal{H}}\lesssim\left\Vert \hm{u}_{0}\right\Vert _{\mathcal{H}}\label{eq:linearscat1}
	\end{equation}
	with implicit constants independent of $\hm{u}$.
	
	This in particular defines a bounded linear map
	\begin{equation}
	\mathscr{L}_{S}:\,\mathcal{H}\rightarrow\mathcal{H},\ \ \mathscr{L}_{S}\left(\hm{u}_{0}\right)=\hm{\phi}_{0}.\label{eq:Ls}
	\end{equation}
	From \eqref{eq:linearscat1}, we know that
	\[
	\left\Vert \mathscr{L}_{S}\left(\hm{u}_{0}\right)\right\Vert _{\mathcal{H}}\lesssim\left\Vert \hm{u}_{0}\right\Vert _{\mathcal{H}}.
	\]
	We also claim the inverse inequality:
	\begin{lem}
		\label{lem:lowerLs}Using the notations above, one has the following
		estimate
		\begin{equation}
		\left\Vert \hm{u}_{0}\right\Vert _{\mathcal{H}}\lesssim\left\Vert \hm{\phi}_{0}\right\Vert _{\mathcal{H}}=\left\Vert \mathscr{L}_{S}\left(\hm{u}_{0}\right)\right\Vert _{\mathcal{H}}\label{eq:lowerLs}
		\end{equation}
		with implicit constants independent of $\hm{u}$.
	\end{lem}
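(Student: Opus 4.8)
The plan is to prove the lower bound \eqref{eq:lowerLs} by running the linear estimates of Theorem~\ref{thm:mainthmHam} \emph{backward in time} and then letting the final time tend to infinity. The key observation is that, on $[0,\infty)$, the hypotheses \eqref{eq:maincond} and the exponential dichotomy of Proposition~\ref{prop:expdicho} are symmetric under time reversal: the trajectories $y_j(t)=\beta_j t+c_j(t)$ remain separated for $t\geq B$, so for any $T\geq B$ the reflected profile $\tilde{\hm{u}}(s):=\hm{u}(T-s)$ solves on $[0,T-B]$ a Klein--Gordon system of the same type, with trajectories $s\mapsto y_j(T-s)$ obeying the same bounds \eqref{eq:maincond} (the $L^1_t\cap L^\infty_t$ norms of $\beta_j'$ and $y_j'-\beta_j$ are invariant under $t\mapsto T-t$), and with the stable and unstable spaces of Proposition~\ref{prop:expdicho} interchanged while the centre space $X_{\text{c}}(t)$ stays invariant.

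First I would record that $\mathfrak{L}_0$ generates a group preserving the free Klein--Gordon energy, so $\|e^{t\mathfrak{L}_0}\hm{\psi}\|_{\mathcal{H}}=\|\hm{\psi}\|_{\mathcal{H}}$ for all $t\in\mathbb{R}$; combining this with the scattering statement \eqref{eq:scatterAss} gives $\|\hm{u}(T)\|_{\mathcal{H}}\to\|\hm{\phi}_0\|_{\mathcal{H}}$ as $T\to\infty$. Next comes the heart of the argument: the backward analogue of the bootstrap estimate of Proposition~\ref{prop:bootstrap}, with a constant independent of $T$. Since $\hm{u}=\pi_{\text{c}}(\cdot)\hm{u}$ has neither a stable nor an unstable component, the centre-unstable projection of the reversed flow acts on $\tilde{\hm{u}}$ as the identity, and the channel analysis of Section~\ref{sec:multi} applied to $\tilde{\hm{u}}$ yields, for all $T\geq B$,
\begin{equation*}
\|\hm{u}\|_{L^\infty_t([0,T];\mathcal{H})}\;\lesssim\;\|\hm{u}\|_{(\mathfrak{W}\cap S)([0,T])}\;\lesssim\;\|\hm{u}(T)\|_{\mathcal{H}},
\end{equation*}
with implicit constant depending only on the data in \eqref{eq:maincond} and on the fixed parameters $B,M$; this is precisely the backward version indicated in the Remark following Theorem~\ref{thm:mainthmlinear}. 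Evaluating at $t=0$ and using the previous step,
\begin{equation*}
\|\hm{u}_0\|_{\mathcal{H}}=\|\hm{u}(0)\|_{\mathcal{H}}\;\lesssim\;\|\hm{u}(T)\|_{\mathcal{H}}\;\longrightarrow\;\|\hm{\phi}_0\|_{\mathcal{H}}=\|\mathscr{L}_S(\hm{u}_0)\|_{\mathcal{H}}\qquad(T\to\infty),
\end{equation*}
which is \eqref{eq:lowerLs}. Alternatively, avoiding the full backward Strichartz estimate, one can use the Duhamel identity $\hm{u}(0)=e^{-T\mathfrak{L}_0}\hm{u}(T)-\int_0^T e^{-s\mathfrak{L}_0}\mathcal{V}(s)\hm{u}(s)\,ds$ with $\mathcal{V}$ as in \eqref{eq:Vt}: the first term has $\mathcal{H}$-norm equal to $\|\hm{u}(T)\|_{\mathcal{H}}$, and the integral is estimated by the dual of the homogeneous bound in Lemma~\ref{lem:shiftslant} through $\|\hm{u}\|_{\mathfrak{W}_{\mathcal{H}}}$, which in turn is controlled by $\|\hm{u}(T)\|_{\mathcal{H}}$ via the backward local-energy estimate; letting $T\to\infty$ gives the same conclusion (and shows the integral converges in $\mathcal{H}$, so that $\hm{u}(0)=\hm{\phi}_0-\int_0^\infty e^{-s\mathfrak{L}_0}\mathcal{V}(s)\hm{u}(s)\,ds$).

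I expect the main obstacle to be making the backward estimate rigorous with a constant uniform in $T$: one must verify that the channel decomposition, the treatment of the discrete stable/unstable modes, and the truncated interaction estimate of Lemma~\ref{lem:truncated} all survive time reversal without deterioration of constants. Because on $[0,\infty)$ the trajectories separate at a linear rate and \eqref{eq:maincond} is time-reversal symmetric, no trajectory crossings occur and the argument of Section~\ref{sec:multi} applies to $\tilde{\hm{u}}$ essentially verbatim; the one genuinely new point is that, for a solution lying entirely in the centre space, the mode ODEs for $\tilde{\hm{u}}$ (solved forward from $s=0$ in the reversed time) carry no exponential growth, so the subexponential a priori bound on $\|\hm{u}(t)\|_{\mathcal{H}}$ needed to close the bootstrap is available with constants depending only on $\|\hm{u}(T)\|_{\mathcal{H}}$.
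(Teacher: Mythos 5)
Your proposal is correct and follows essentially the same route as the paper: the paper proves exactly your backward bound $\left\Vert \hm{u}(0)\right\Vert _{\mathcal{H}}\lesssim\left\Vert \hm{u}(s)\right\Vert _{\mathcal{H}}$ uniformly in $s$ (Theorem \ref{thm:backwardStri}), by rerunning the channel/bootstrap analysis and noting that for a solution confined to the centre space the discrete-mode ODEs produce no exponential growth in either time direction, and then combines it, as you do, with the isometry of $e^{t\mathfrak{L}_{0}}$ on $\mathcal{H}$ and the scattering statement to pass to the limit $\left\Vert \hm{u}(s)\right\Vert _{\mathcal{H}}=\left\Vert e^{-s\mathfrak{L}_{0}}\hm{u}(s)\right\Vert _{\mathcal{H}}\to\left\Vert \hm{\phi}_{0}\right\Vert _{\mathcal{H}}$. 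Your explicit time reflection $\tilde{\hm{u}}(s)=\hm{u}(T-s)$ is only a cosmetic variant of the paper's direct treatment of the stable/unstable modes solved from $\pm\infty$, so the two arguments coincide in substance.
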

	
	\begin{proof}
		The estimate above is a direct consequence of Strichartz estimates
		of the backwards flow. Assume that
		\begin{equation}
		\left\Vert \hm{u}(0)\right\Vert _{\mathcal{H}}\lesssim\left\Vert \hm{u}(s)\right\Vert _{\mathcal{H}}\label{eq:assums0}
		\end{equation}
		with constants independent of $s$ which will be proved in Theorem
		\ref{thm:backwardStri}.
		
		By the construction of $\mathscr{L}_{S}$, the scattering assumption
		implies that
		\[
		\left\Vert e^{-t\mathfrak{L}_{0}}\hm{u}(t)-\hm{\phi}_{0}\right\Vert _{\mathcal{H}}\rightarrow0,\ t\rightarrow\infty
		\]
		strongly in $\mathcal{H}$.
		
		From \eqref{eq:assums0}, it follows that
		\[
		\left\Vert \hm{u}(0)\right\Vert _{\mathcal{H}}\lesssim\left\Vert \hm{u}(s)\right\Vert _{\mathcal{H}}=\left\Vert e^{-s\mathfrak{L}_{0}}\hm{u}(s)\right\Vert _{\mathcal{H}}
		\]
		since $e^{s\mathfrak{L}_{0}}$ is an unitary operator. Passing to the
		limit in the strong sense on the RHS, we conclude
		\[
		\left\Vert \hm{u}_{0}\right\Vert _{\mathcal{H}}=\left\Vert \hm{u}(0)\right\Vert _{\mathcal{H}}\lesssim\lim_{s\rightarrow\infty}\left\Vert e^{-s\mathfrak{L}_{0}}\hm{u}(s)\right\Vert _{\mathcal{H}}=\left\Vert \hm{\phi}_{0}\right\Vert _{\mathcal{H}}
		\]
		as desired.
	\end{proof}
	Now we discuss the estimate \eqref{eq:assums0}. This actually is a
	direct consequence of a more general result on Strichartz estimates.
	\begin{thm}
		\label{thm:backwardStri}Using the notations above, we consider the
		equation
		\[
		\hm{u}_{t}=\mathfrak{L}\left(t\right)\hm{u}+\pi_{c}\left(t\right)\hm{F},\ \hm{u}(t)=\pi_{c}(t)\hm{u}\left(t\right).
		\]
		Using norms from Theorem \ref{thm:mainthmHam}, we have
		\[
		\left\Vert \hm{u}\right\Vert _{S_{\mathcal{H}}\bigcap\mathfrak{W}_{\mathcal{H}}}\lesssim\left\Vert \hm{u}\left(s\right)\right\Vert _{\mathcal{H}}+\left\Vert \hm{F}\right\Vert _{\left(\mathfrak{W}_{\mathcal{H}}\bigcap S_{\mathcal{H}}\right)^{*}}
		\]
		for any $s$. In particular, in the homogeneous case ($\hm{F}=0$) we have
		the backwards estimate
		\begin{equation}
		\left\Vert \hm{u}(0)\right\Vert _{\mathcal{H}}\lesssim\left\Vert \hm{u}(s)\right\Vert _{\mathcal{H}}.\label{eq:assums0-1}
		\end{equation}
	\end{thm}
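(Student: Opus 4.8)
The plan is to derive the estimate from the forward-in-time result already proved, namely Theorem~\ref{thm:mainthmHam} (equivalently Proposition~\ref{prop:bootstrap} together with the Duhamel upgrade to Strichartz norms), applied separately on the intervals $[s,\infty)$ and $[0,s]$. We treat $s\ge 0$; the case $s<0$ is handled the same way by combining a forward and a backward estimate. On $[s,\infty)$ the solution $\hm{u}(t)=\pi_c(t)\hm{u}(t)$ stays in the centre space $X_c(t)\subset X_{cs}(t)$, so we may run the channel decomposition and bootstrap of Section~\ref{sec:multi} with the base time taken to be $B=\max\{s,B_0\}$ rather than a fixed $B_0$; all the relevant hypotheses — the smallness \eqref{eq:maincond}, the trajectory separation, and the centre–space growth bound \eqref{eq:centergrowth} — are invariant under time translation, so the resulting constant is independent of $s$, giving
\[
\big\|\hm{u}\big\|_{(S_{\mathcal{H}}\cap\mathfrak{W}_{\mathcal{H}})([s,\infty))}\lesssim\|\hm{u}(s)\|_{\mathcal{H}}+\big\|\hm{F}\big\|_{(\mathfrak{W}_{\mathcal{H}}\cap S_{\mathcal{H}})^{*}}.
\]
Since $\hm{u}$ lies in the centre space, the discrete stable/unstable modes are absent and the analysis of those modes is vacuous (cf. Remark~\ref{rem:proplinear}); thus this is precisely the special case of Proposition~\ref{pro:linear} in which all the $\alpha^{0},\alpha^{1},\alpha^{\pm}$ pairings vanish identically.

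For the interval $[0,s]$ I would reverse time. Put $\hm{w}(t):=\mathcal{R}\,\hm{u}(s-t)$ for $t\in[0,s]$ with $\mathcal{R}(w_1,w_2)=(w_1,-w_2)$. A direct computation, using that the Lorentz boost $\Lambda_\beta$ depends on $\beta$ only through the rank-one matrix $\beta\otimes\beta$ and the scalar $|\beta|$ (so that $\Lambda_{-\beta}=\Lambda_\beta$ and the potentials are literally unchanged), shows that $\hm{w}$ solves a Klein--Gordon system of the same type with potentials $(V_j)_{\tilde\beta_j(t)}(\cdot-\tilde y_j(t))$, where $\tilde y_j(t)=y_j(s-t)$, $\tilde\beta_j(t)=-\beta_j(s-t)$, and with forcing $\mathcal{R}\,\pi_c(s-t)\hm{F}(s-t)$. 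The reversed trajectories again satisfy \eqref{eq:maincond} and the separation condition, because $\|\tilde\beta_j'\|_{L^1_t\cap L^\infty_t}=\|\beta_j'\|_{L^1_t\cap L^\infty_t}$, $\|\tilde y_j'-\tilde\beta_j\|_{L^1_t\cap L^\infty_t}=\|y_j'-\beta_j\|_{L^1_t\cap L^\infty_t}$, and the pairwise distances of the $\tilde y_j$ equal those of the $y_j$. Moreover the time-$(s-\cdot)$ pullback of $\pi_c$ is the centre-space projection of the reversed flow, since the centre space of the exponential dichotomy is characterized by subexponential growth of the propagator in \emph{both} time directions — a property preserved by time reversal — which is part of the construction in Chen--Jendrej \cite{CJ}. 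Hence Theorem~\ref{thm:mainthmHam}/Proposition~\ref{pro:linear} applies to $\hm{w}$ on $[0,s]$ and gives, with a constant uniform in $s$,
\[
\big\|\hm{u}\big\|_{(S_{\mathcal{H}}\cap\mathfrak{W}_{\mathcal{H}})([0,s])}=\big\|\hm{w}\big\|_{(S_{\mathcal{H}}\cap\mathfrak{W}_{\mathcal{H}})([0,s])}\lesssim\|\hm{w}(0)\|_{\mathcal{H}}+\big\|\hm{F}\big\|_{(\mathfrak{W}_{\mathcal{H}}\cap S_{\mathcal{H}})^{*}}=\|\hm{u}(s)\|_{\mathcal{H}}+\big\|\hm{F}\big\|_{(\mathfrak{W}_{\mathcal{H}}\cap S_{\mathcal{H}})^{*}}.
\]
Concatenating the two intervals yields the asserted bound; taking $\hm{F}=0$ and restricting to $t=0$ gives \eqref{eq:assums0-1}, which then feeds Lemma~\ref{lem:lowerLs}.

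The main obstacle is making every constant uniform in $s$. Concretely one must check that the finite-time ingredients in the bootstrap do not degrade: the Gronwall step up to the base time, the constant written $\lesssim_B$ in \eqref{eq:homoz0}, and the gain factors $e^{-\alpha B}$ and $M^{-\eta}$ must be arranged so that the absorbing parameters ($\delta$ small, $B$ and $M$ large) are fixed \emph{before} letting $T\to\infty$ and are selected from the $s$-independent data ($\delta$, the separation rate $\varpi$, and the exponential decay rates of the eigenfunctions and potentials). Since none of these depends on $s$, one fixes $B_0$ and $M$ once and for all and then uses $B=\max\{s,B_0\}$ on $[s,\infty)$. The only genuinely structural input beyond what is already in Sections~\ref{sec:onepotential}--\ref{sec:multi} is the two-sided subexponential bound on the centre space; everything else is a rerun of the arguments already given, with the initial time shifted.
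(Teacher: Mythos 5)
Your proposal has one genuine gap, and it sits exactly at the point the paper identifies as the crux. You assert that because $\bm{u}(t)=\pi_c(t)\bm{u}(t)$ the pairings $\langle \alpha^{\pm}_{j,k},\bm{u}(t)\rangle$ vanish identically, so that the stable/unstable mode analysis is ``vacuous'' and Proposition \ref{pro:linear} applies. This is false: $X_c(t)$ is defined through the exponential dichotomy of the \emph{full multi-potential} flow (Proposition \ref{prop:expdicho}), while $\alpha^{\pm}_{j,k}$ and $\mathcal{Y}^{\pm}_{j,k,\beta_j}$ are the boosted eigenmodes of the single-potential reference operators used in the channel decomposition; a centre-space solution generically has nonzero components $\lambda_{j,k,\pm}(t)=\omega\big(\bm{v},\mathcal{Y}^{\mp}_{j,k,\beta_j}\big)$. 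Indeed the paper's own proof of Theorem \ref{thm:backwardStri} says the \emph{only} difference from the forward sections is the treatment of precisely these modes: using that a centre-direction solution is bounded or subexponential for $t\in\mathbb{R}$ (both time directions), the unstable modes are represented as integrals from $+\infty$ and the stable modes as integrals from $-\infty$, with no boundary term, and the bootstrap then closes uniformly in the initial time $s$. In your scheme the same issue reappears in the reversed problem on $[0,s]$: the ``unstable'' modes of the reversed flow are the original stable modes and must be solved from reversed-future infinity (original $t\to-\infty$); declaring them absent skips exactly the step the theorem is about. (For the forward interval $[s,\infty)$ the error is harmless but the justification should be replaced by: $X_c(t)\subset X_{cs}(t)$, so the time-translated Theorem \ref{thm:mainthmHam} applies directly, nonzero discrete modes and all.)

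Apart from this, your route is genuinely different from the paper's and could be completed. The paper does not reverse time: it reruns the channel decomposition and bootstrap with data at an arbitrary time $s$, restricting to $(-\infty,-T_0]\cup[T_0,\infty)$ and absorbing $[-T_0,T_0]$ into a constant, with the two-sided integral formulas for $\lambda_{j,k,\pm}$ supplying the $s$-independence. Your splitting at $s$ plus the conjugation $\bm{w}(t)=\mathcal{R}\,\bm{u}(s-t)$, $\mathcal{R}=\mathrm{diag}(1,-1)$, is correct at the level of the equation and of the hypotheses (indeed $\Lambda_{-\beta}=\Lambda_{\beta}$, and \eqref{eq:maincond} and the separation are preserved), and your bookkeeping of the $s$-uniformity of $\delta$, $B$, $M$ matches what the paper needs. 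But to finish you must (i) prove, not just assert, that $\mathcal{R}X_c(s-t)$ lies in the centre-stable space of the reversed flow — this requires the two-sided subexponential characterization of $X_c$, the same structural input the paper uses, so it should be stated and justified (e.g., any nonzero component along the reversed flow's unstable space would force exponential growth backward in original time, contradicting the centre characterization); and (ii) run the forward theorem's discrete-mode machinery for the reversed flow rather than suppressing it. With those repairs the time-reversal argument gives an alternative proof; as written, the vanishing-modes claim is a real error.
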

	
	\begin{proof}
		As we discussed in the forward problem, we only need to restrict on
		the time interval $\left(-\infty,-T_{0}\right)\bigcup[T_{0,}\infty)$
		for some $T_{0}$ large enough so that all potentials are sufficiently
		separated. The estimates for the time internal $\left[-T_{0},T_{0}\right]$
		can be bounded with some constant large enough. As before, we decompose
		the space into $J+1$ channels. The free channel is the same as before.
		For the $j$-th channel, the $P_{c}(t)$ part analysis is again the
		same as the forward direction. 
		
		The only difference in the current setting from the earlier sections
		is the analysis of the stable and unstable spaces. We only sketch
		the difference here. 
		
		We still use the notations from Subsection \ref{subsec:analysisJchannel}. In the $j$-th channel. We denote
		$\tilde{\hm{v}}\left(t,y\right)=\hm{v}\left(t,y-c_{j}\left(t\right)\right)$.
		Then one has
		\[
		\tilde{\hm{v}}_{t}=\mathcal{L}_{j}\left(t\right)\tilde{\hm{v}}+\left[c_{j}'\left(t\right)\cdot\nabla\tilde{\hm{v}}+\hm{\tilde{F}}\right]
		\]
		where $\mathcal{L}_{j}\left(t\right)$ is defined via \eqref{eq:L2t}
		and
		\[
		\tilde{\hm{F}}=\left(\begin{array}{cc}
		0 & 0\\
		-\sum_{\rho\neq j}\left(V_{\rho}\right)_{\rho}\left(\cdot-\beta_{\rho}t+c_{\rho}\left(t\right)-c_{j}\left(t\right)\right) & 0
		\end{array}\right)\tilde{\hm{v}}+\hm{F}\left(t\right).
		\]
		Using the fact that $\left\Vert\tilde{\hm{v}}\right\Vert _{\mathcal{H}}$
		is bounded or grows subexponentially for $t\in\mathbb{R}$ from the
		energy estimate above since we are restricted onto the central direction,
		we obtain the following system
		\[
		\lambda_{j,k,+}\left(t\right)=-\int_{t}^{\infty}e^{\frac{\nu_{j.k}}{\gamma_{j}}\left(t-s\right)}\omega\left(a_{j}\left(t\right)\nabla\tilde{\hm{v}}+\hm{\tilde{F}},\mathcal{Y}_{j,k,\beta_j}^{-}\left(s\right)\right)\,ds
		\]
		\[
		\lambda_{j,k,-}\left(t\right)=-\int_{-\infty}^{t}e^{\frac{\nu_{j.k}}{\gamma_{j}}\left(s-t\right)}\omega\left(a_{j}\left(t\right)\tilde{\hm{v}}+\hm{\tilde{F}},\mathcal{Y}_{j,k,\beta_j}^{+}\left(s\right)\right)\,ds
		\]
		coupled with the equation in the stable space
		\[
		\left(P_{c}\left(t\right)\tilde{\hm{v}}\right)_{t}=\mathcal{L}_{j}\left(t\right)P_{c}\left(t\right)\tilde{\hm{v}}+P_{c}\left(t\right)\left[a_{j}\left(t\right)\nabla\tilde{\hm{v}}+\hm{\tilde{F}}\right].
		\]
		This system is different from the coupled system from before.
		
		To estimate $L_{t}^{2}\bigcap L_{t}^{\infty}$ norm of $\lambda_{j,k,\pm}$,
		we apply Young's inequality. Note that by the decay of eigenfunctions
		given by Agmon's estimate and the decay of the potential, we always
		have that
		\[
		\omega\left(\hm{V}_{\rho}\left(x-\beta_{\rho}t+c_{\rho}\left(t\right)-c_{j}\left(t\right)\right)\tilde{\hm{v}}\left(t\right),\mathcal{Y}_{j,k,\beta_j}^{\pm}\left(t\right)\right)\lesssim e^{-\alpha\left|t\right|}C_{\rho,T}
		\]
		for $\rho\neq j$ and some positive $\alpha$.  For discrete modes,
		one has{\small
			\begin{align}
				\sum_{k=1}^{K_{j}}\left(\left\Vert \lambda_{j,k,+}\left(t\right)\right\Vert _{L_{t}^{2}\bigcap L_{t}^{\infty}}+\left\Vert \lambda_{j,k,-}\left(t\right)\right\Vert _{L_{t}^{2}\bigcap L_{t}^{\infty}}\right) & \lesssim\left(e^{-\alpha B}+\delta\right)\left\Vert\tilde{\hm{v}}\right\Vert _{S_{\mathcal{H}}\bigcap\mathfrak{W}_{\mathcal{H}}}\nonumber +\left\Vert\hm{F}\right\Vert _{\left(\mathfrak{W}_{\mathcal{H}}\bigcap S_{\mathcal{H}}\right)^{*}}\label{eq:boud1-1-1}\\
				& \lesssim\sum_{\rho\neq j}\left(e^{-\alpha B}+\delta\right)C_{\rho,T}+\left\Vert \hm{F}\right\Vert _{\left(\mathfrak{W}_{\mathcal{H}}\bigcap S_{\mathcal{H}}\right)^{*}}.\nonumber 
		\end{align}}
		Therefore, we can apply the same bootstrap argument as before.
		
		Using any fixed $s\in\mathbb{R}$ as the initial data, we obtain that
		\[
		\left\Vert \hm{u}\right\Vert _{S_{\mathcal{H}}\bigcap\mathfrak{W}_{\mathcal{H}}}\lesssim\left\Vert \hm{u}\left(s\right)\right\Vert _{\mathcal{H}}+\left\Vert \hm{F}\right\Vert _{\left(\mathfrak{W}_{\mathcal{H}}\bigcap S_{\mathcal{H}}\right)^{*}}
		\]
		as desired. The estimate \eqref{eq:assums0-1} follow from the estimate
		above as special cases.
	\end{proof}
	
	\subsection{Construction of initial data}
	
	Let $\hm{u}_{t_{0}}\left(t\right)$ be the solution to \eqref{eq:homoredp}
	such that
	\begin{equation}
	\hm{u}_{t_{0}}\left(t_{0}\right)=\pi_{c}(t_{0})e^{t_{0}\mathfrak{L}_{0}}\hm{\phi}_{0}\label{eq:gammat0}
	\end{equation}
	where $\hm{\phi}_{0}\in\mathcal{H}$ is given.

	From our discussion on the scattering behavior the perturbed flow,
	there exists a free profile $\hm{\psi}_{t_{0}}$ such that
	\[
	\left\Vert \hm{u}_{t_{0}}\left(t\right)-e^{t\mathfrak{L}_{0}}\hm{\psi}_{t_{0}}\right\Vert _{\mathcal{H}}\rightarrow0,\ t\rightarrow\infty
	\]
	and
	\begin{equation}\label{eq:scattfinal}
	\left\Vert \hm{u}_{t_{0}}\left(t\right)\right\Vert _{S_{\mathcal{H}}\bigcap\mathfrak{W}_{\mathcal{H}}}\lesssim\left\Vert \pi_{c}(t_{0})e^{t_{0}\mathfrak{L}_{0}}\hm{\phi}_{0}\right\Vert _{\mathcal{H}}.
	\end{equation}
	The final preparation for the main proof the following estimate.
	\begin{lem}
		\label{lem:largeStri}Using that notations above, one has
		\begin{equation}
		\left\Vert \hm{u}_{t_{0}}\left(t\right)\right\Vert _{\left(S_{\mathcal{H}}\bigcap\mathfrak{W}_{\mathcal{H}}\right)_{t_{0}}}\rightarrow0,\,t_{0}\rightarrow\infty\label{eq:largeStr}
		\end{equation}
		where $\left(S_{\mathcal{H}}\bigcap\mathfrak{W}_{\mathcal{H}}\right)_{t_{0}}$
		is the subspace of $S_{\mathcal{H}}\bigcap\mathfrak{W}_{\mathcal{H}}$ from
		Theorem \ref{thm:backwardStri} restricted onto $[t_{0},\infty)$.
		
		From \eqref{eq:largeStr}, we also conclude that
		\begin{equation}
		\hm{\psi}_{t_{0}}\rightarrow\hm{\phi}_{0},\ t_{0}\rightarrow\infty\label{eq:convprofile}
		\end{equation}
		strongly in the $\mathcal{H}$ sense.
	\end{lem}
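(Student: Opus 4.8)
The plan is to run the Duhamel expansion forward from time $t_0$ and to exploit that, once the (exponentially localised) discrete part of the data has been removed, the free main term is a \emph{tail} of a profile with finite global norms, hence small. Since there are no zero modes, $\pi_c(t_0)=\mathrm{Id}-\pi_{\mathrm d}(t_0)$ with $\pi_{\mathrm d}(t_0):=\pi_s(t_0)+\pi_u(t_0)$ the finite–rank projection onto the span of the stable and unstable modes of the potentials centred at $y_j(t_0)$; recall $\|\pi_{\mathrm d}(t_0)\|_{\mathscr L(\mathcal H)}\le C$ uniformly by Proposition~\ref{prop:expdicho}. Writing
\[
\hm u_{t_0}(t)=e^{(t-t_0)\mathfrak L_0}\pi_c(t_0)e^{t_0\mathfrak L_0}\hm\phi_0+\int_{t_0}^{t}e^{(t-s)\mathfrak L_0}\mathcal V(s)\hm u_{t_0}(s)\,ds ,\qquad \mathcal V(s)=\mathfrak L(s)-\mathfrak L_0,
\]
I would first prove that $\|\pi_{\mathrm d}(t_0)e^{t_0\mathfrak L_0}\hm\phi_0\|_{\mathcal H}\to0$ as $t_0\to\infty$. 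Indeed $\|\pi_{\mathrm d}(t_0)\hm g\|_{\mathcal H}\lesssim\sum_{j,k}|\omega(\hm g,\alpha^{\pm}_{j,k,\beta_j}(\cdot-y_j(t_0)))|$ with the modes $\alpha^{\pm}_{j,k,\beta_j}$ exponentially localised (Agmon), so it suffices to bound $\|\langle\cdot-y_j(t_0)\rangle^{-\sigma}e^{t_0\mathfrak L_0}\hm\phi_0\|_{\mathcal H}$. Because $|y_j(t_0)|\le|\beta_j|t_0+O(1)$ with $|\beta_j|<1$, any ball $\{|x-y_j(t_0)|\le t_0^{1/4}\}$ lies uniformly inside the light cone for large $t_0$, so stationary–phase dispersive decay of the Klein–Gordon flow gives this quantity $\to0$ for Schwartz data, and a density argument using the translation–invariant uniform bound $\sup_{t_0}\|\langle\cdot-y_j(t_0)\rangle^{-\sigma}e^{t_0\mathfrak L_0}\|_{\mathscr L(\mathcal H)}<\infty$ extends it to all $\hm\phi_0\in\mathcal H$.

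Next I would re-open the channel decomposition and bootstrap of Proposition~\ref{prop:bootstrap} and Theorem~\ref{thm:backwardStri}, run on the interval $[t_0,\infty)$ (legitimate since for $t_0$ large the potentials are already separated, so $t_0$ plays the role of $B$). Theorem~\ref{thm:backwardStri} already gives the \emph{uniform} a priori bound $\|\hm u_{t_0}\|_{(S_{\mathcal H}\cap\mathfrak W_{\mathcal H})([t_0,\infty))}\lesssim\|\hm u_{t_0}(t_0)\|_{\mathcal H}\lesssim\|\hm\phi_0\|_{\mathcal H}$, so all bootstrap quantities are finite. The point is that, inside that argument, the initial data enters the closed estimate only through the free evolution $e^{(t-t_0)\mathfrak L_0}\pi_c(t_0)e^{t_0\mathfrak L_0}\hm\phi_0=e^{t\mathfrak L_0}\hm\phi_0-e^{(t-t_0)\mathfrak L_0}\pi_{\mathrm d}(t_0)e^{t_0\mathfrak L_0}\hm\phi_0$. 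Setting
\[
\varepsilon(t_0):=\sum_{j}\big\|\langle\cdot-y_j(t)\rangle^{-\sigma}\mathcal D^{-\nu/2}e^{t\mathfrak L_0}\hm\phi_0\big\|_{L^2_t([t_0,\infty))\mathcal H}+\big\|e^{t\mathfrak L_0}\hm\phi_0\big\|_{L^2_t([t_0,\infty))B_{6,2}^{-5/6}{}_{,\mathcal H}}+\|\pi_{\mathrm d}(t_0)e^{t_0\mathfrak L_0}\hm\phi_0\|_{\mathcal H},
\]
the first two terms are tails of quantities that are finite by Lemma~\ref{lem:shiftslant} and Theorem~\ref{thm:StriKGfree}, while the third was handled in the previous paragraph, so $\varepsilon(t_0)\to0$. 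Denoting by $N(t_0)$ the sum of $\|\hm u_{t_0}\|_{\mathfrak W_{\mathcal H}([t_0,\infty))}$ and the dispersive Strichartz norm of $\hm u_{t_0}$ on $[t_0,\infty)$, the interaction terms between distinct potentials (Lemma~\ref{lem:truncated}), the $a_j\nabla$ terms, and the stable/unstable mode ODEs on $[t_0,\infty)$ (the $\lambda_{j,k,-}(t_0)$ boundary term decaying, $\lambda_{j,k,+}$ solved from $+\infty$ using subexponential growth in the centre space) are absorbed exactly as before, yielding $N(t_0)\lesssim\varepsilon(t_0)+(\delta+M^{-\eta})N(t_0)$ and hence $N(t_0)\to0$, which is \eqref{eq:largeStr}.

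Finally, the dual homogeneous estimate of Lemma~\ref{lem:shiftslant} gives $\big\|\int_{t}^{\infty}e^{(t-s)\mathfrak L_0}\mathcal V(s)\hm u_{t_0}(s)\,ds\big\|_{\mathcal H}\lesssim N(t_0)$, so defining
\[
\hm\psi_{t_0}:=e^{-t_0\mathfrak L_0}\pi_c(t_0)e^{t_0\mathfrak L_0}\hm\phi_0+\int_{t_0}^{\infty}e^{-s\mathfrak L_0}\mathcal V(s)\hm u_{t_0}(s)\,ds
\]
one gets $\|\hm u_{t_0}(t)-e^{t\mathfrak L_0}\hm\psi_{t_0}\|_{\mathcal H}\to0$ as $t\to\infty$, so $\hm\psi_{t_0}$ is the scattering profile, and since $e^{-t_0\mathfrak L_0}\pi_c(t_0)e^{t_0\mathfrak L_0}\hm\phi_0=\hm\phi_0-e^{-t_0\mathfrak L_0}\pi_{\mathrm d}(t_0)e^{t_0\mathfrak L_0}\hm\phi_0$,
\[
\|\hm\psi_{t_0}-\hm\phi_0\|_{\mathcal H}\le\|\pi_{\mathrm d}(t_0)e^{t_0\mathfrak L_0}\hm\phi_0\|_{\mathcal H}+\Big\|\int_{t_0}^{\infty}e^{-s\mathfrak L_0}\mathcal V(s)\hm u_{t_0}(s)\,ds\Big\|_{\mathcal H}\longrightarrow0 ,
\]
which is \eqref{eq:convprofile}. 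I expect the main obstacle to be the second step: one must verify, channel by channel, that in the bootstrap of Section~\ref{sec:multi} the data really only enters through $e^{(t-t_0)\mathfrak L_0}\pi_c(t_0)e^{t_0\mathfrak L_0}\hm\phi_0$ (all other contributions being genuinely absorbed with small constants), so that the smallness of $\varepsilon(t_0)$ propagates to $N(t_0)$; the geometric input that $y_j(t)$ stays uniformly inside the light cone — already used to get $\|\pi_{\mathrm d}(t_0)e^{t_0\mathfrak L_0}\hm\phi_0\|_{\mathcal H}\to0$ — is what makes the dispersive tails decay.
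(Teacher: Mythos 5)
Your proposal is correct and is essentially the paper's argument: both control the $\left(S_{\mathcal H}\cap\mathfrak{W}_{\mathcal H}\right)_{t_0}$ norm of $\bs{u}_{t_0}$ by the space--time norm of the free evolution of the data on $[t_0,\infty)$ (the paper invokes this refined form of Theorem \ref{thm:backwardStri} directly, while you re-verify it by re-running the channel/bootstrap scheme), note that these free norms are tails of globally finite quantities, and show that the discrete projection of $e^{t_0\mathfrak{L}_0}\bs{\phi}_0$ vanishes as $t_0\to\infty$ by local decay of the free flow at the moving centres --- exactly the paper's ``pointwise decay'' step --- which gives \eqref{eq:largeStr}. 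The convergence $\bs{\psi}_{t_0}\to\bs{\phi}_0$ is then obtained, as in the paper, by comparing $e^{-t_0\mathfrak{L}_0}\bs{u}_{t_0}(t_0)$ with the scattering profile and bounding the Duhamel tail by the vanishing weighted norm.
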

	
	\begin{proof}
		By Strichartz estimates from Theorem \ref{thm:backwardStri} for $F=0$,
		one has
		\[
		\left\Vert \hm{u}_{t_{0}}\left(t\right)\right\Vert _{\left(S_{\mathcal{H}}\bigcap\mathfrak{W}_{\mathcal{H}}\right)_{t_{0}}}\lesssim\left\Vert e^{t\mathfrak{L}_{0}}\pi_{c}(t_{0})e^{t_{0}\mathfrak{L}_{0}}\hm{\phi}_{0}\right\Vert _{\left(S_{\mathcal{H}}\bigcap\mathfrak{W}_{\mathcal{H}}\right)_{0}}.
		\]
		Taking the limit, we have
		\[
		\lim_{t_{0}\rightarrow\infty}\left\Vert \hm{u}_{t_{0}}\left(t\right)\right\Vert _{\left(S_{\mathcal{H}}\bigcap\mathfrak{W}_{\mathcal{H}}\right)_{t_{0}}}\lesssim\lim_{t_{0}\rightarrow\infty}\left\Vert e^{t\mathfrak{L}_{0}}\pi_{c}(t_{0})e^{t_{0}\mathfrak{L}_{0}}\hm{\phi}_{0}\right\Vert _{\left(S_{\mathcal{H}}\bigcap\mathfrak{W}_{\mathcal{H}}\right)_{0}}.
		\]
		We first note that $\pi_{c}(t_{0})e^{t_{0}\mathfrak{L}_{0}}\hm{\phi}_{0}\rightarrow e^{t_{0}\mathfrak{L}_{0}}\hm{\phi}_{0}$
		as $t_{0}\rightarrow\infty$ due to the pointwise decay of the free
		flow. It follows that
		\begin{align*}
			\lim_{t_{0}\rightarrow\infty}\left\Vert e^{t\mathfrak{L}_{0}}\pi_{c}(t_{0})e^{t_{0}\mathfrak{L}_{0}}\hm{\phi}_{0}\right\Vert _{\left(S_{\mathcal{H}}\bigcap\mathfrak{W}_{\mathcal{H}}\right)_{0}} & =\lim_{t_{0}\rightarrow\infty}\left\Vert e^{t\mathfrak{L}_{0}}e^{t_{0}\mathfrak{L}_{0}}\hm{\phi}_{0}\right\Vert _{\left(S_{\mathcal{H}}\bigcap\mathfrak{W}_{\mathcal{H}}\right)_{0}}\\
			& =\lim_{t_{0}\rightarrow\infty}\left\Vert e^{t\mathfrak{L}_{0}}\hm{\phi}_{0}\right\Vert _{\left(S_{\mathcal{H}}\bigcap\mathfrak{W}_{\mathcal{H}}\right)_{t_{0}}}.
		\end{align*}
		Therefore, by Strichartz estimates for the free flow, one has
		\[
		\lim_{t_{0}\rightarrow\infty}\left\Vert e^{t\mathfrak{L}_{0}}\hm{\phi}_{0}\right\Vert _{\left(S_{\mathcal{H}}\bigcap\mathfrak{W}_{\mathcal{H}}\right)_{t_{0}}}=0
		\]
		which implies
		\[
		\left\Vert e^{t\mathfrak{L}_{0}}\pi_{c}(t_{0})e^{t_{0}\mathfrak{L}_{0}}\hm{\phi}_{0}\right\Vert _{\left(\mathfrak{W}_{\mathcal{H}}\right)_{t_{0}}}\rightarrow0,\,t_{0}\rightarrow\infty,
		\]
		whence, by \eqref{eq:scattfinal}
		\[
		\lim_{t_{0}\rightarrow\infty}\left\Vert \hm{u}_{t_{0}}\left(t\right)\right\Vert _{\left(\mathfrak{W}_{\mathcal{H}}\right)_{t_{0}}}=0.
		\]
		Finally, we analyze \eqref{eq:convprofile}. By construction from the
		scattering and Strichartz estimates, one has
		\begin{align*}
			\left\Vert e^{-t\mathfrak{L}_{0}}\pi_{c}(t_{0})e^{t_{0}\mathfrak{L}_{0}}\hm{\phi}_{0}-\hm{\psi}_{t_{0}}\right\Vert _{\mathcal{H}} & =\left\Vert e^{-t\mathfrak{L}_{0}}\hm{u}_{t_{0}}\left(t_{0}\right)-\hm{\psi}_{t_{0}}\right\Vert _{\mathcal{H}}\\
			& \lesssim\left\Vert \hm{u}_{t_{0}}\left(t\right)\right\Vert _{\left(\mathfrak{W}_{\mathcal{H}}\right)_{t_{0}}}
		\end{align*}
		and
		\begin{align*}
			\lim_{t_{0}\rightarrow\infty}\left\Vert \hm{\phi}_{0}-\hm{\psi}_{t_{0}}\right\Vert _{\mathcal{H}} & =\lim_{t_{0}\rightarrow\infty}\left\Vert e^{-t_{0}\mathfrak{L}_{0}}\hm{u}_{t_{0}}\left(t_{0}\right)-\hm{\psi}_{t_{0}}\right\Vert _{\mathcal{H}}\\
			& \lesssim\lim_{t_{0}\rightarrow\infty}\left\Vert \hm{u}_{t_{0}}\left(t\right)\right\Vert _{\left(\mathfrak{W}_{\mathcal{H}}\right)_{t_{0}}}=0
		\end{align*}
		as desired.
	\end{proof}
	With preparations above, the final result follows naturally.
	\begin{proof}[Proof of Theorem \ref{thm:exiWave}]
		For any $\hm{\phi}_{0}\in\mathcal{H}$, we construct a sequence of
		solutions $\hm{u}_{t_{n}}(t)$ by \eqref{eq:gammat0} with $t_{n}\rightarrow\infty$.
		Using the notation of the scattering map \eqref{eq:Ls}, one has
		\[
		\mathscr{L}_{S}\left(\hm{u}_{t_{n}}(0)\right)=\hm{\psi}_{t_{n}}.
		\]
		From Lemma \ref{lem:largeStri},
		\[
		\left\Vert \hm{\psi}_{t_{n}}-\hm{\phi}_{0}\right\Vert _{\mathcal{H}}\rightarrow0,\ n\rightarrow\infty.
		\]
		Then by Lemma \ref{lem:lowerLs}, the convergence above implies that
		$\hm{u}_{t_{n}}(0)$ converges to some function $\hm{u}(0)\in\pi_{c}(0)\mathcal{H}$
		such that
		\[
		\mathscr{L}_{S}\left(\hm{u}(0)\right)=\phi_{0}.
		\]
		By construction, using $\hm{u}(0)$ as the initial data, the solution
		$\hm{u}(t)$ to the equation \eqref{eq:homoredp} satisfies
		\[
		\left\Vert \hm{u}(t)-e^{t\mathfrak{L}_{0}}\hm{\phi}_{0}\right\Vert _{\mathcal{H}}\rightarrow0,\ t\rightarrow\infty
		\]
		as desired.
	\end{proof}

	\bigskip
	
\end{document}